\numberwithin{equation}{section}
\theoremstyle{plain}
\newcommand{\aj}[1]{\textcolor{black}{#1}}    
\newcommand{\ch}[1]{\textcolor{black}{#1}}  
\newcommand{\norm}[2]{\left\|#1\right\|_{#2}}
\newcommand{\defeq}[0]{\mathrel{\mathop:}=}
\newtheorem{definition}{Definition}[section]
\newtheorem{theorem}{Theorem}
\newtheorem{proposition}{Proposition}[section]
\newtheorem{remark}{Remark}[section]
\newtheorem{lemma}{Lemma} [section]
\newcommand\numberthis{\addtocounter{equation}{1}\tag{\theequation}}
\newcommand\reallywidehat[1]{
	\savestack{\tmpbox}{\stretchto{
			\scaleto{
				\scalerel*[\widthof{\ensuremath{#1}}]{\kern-.6pt\bigwedge\kern-.6pt}
				{\rule[-\textheight/2]{1ex}{\textheight}}
			}{\textheight}
		}{0.5ex}}
	\stackon[1pt]{#1}{\tmpbox}
}
\title{Boundary layer formation in the quasigeostrophic model near nonperiodic rough coasts}
\author{Gabriela L\'opez Ruiz}
\date{}
\begin{document}
	\maketitle
	\begin{abstract}

	\begin{small}
We study the so-called homogeneous model of wind-driven ocean circulation or the single-layer quasigeostrophic model. Our attention focuses on performing a complete asymptotic analysis that highlights boundary layer formation along the coastal line. We assume rough coasts without any particular structure, resulting in the study of a  nonlinear PDE system for the western boundary layer in an infinite domain. As a consequence, we look for the solution in nonlocalized Sobolev spaces. Under this hypothesis,  the eastern boundary layer exhibits a singular behavior at low frequencies far from the rough boundary, leading to issues with convergence. The problem is tackled by imposing ergodicity properties. We establish the well-posedness of the governing boundary layer equations and the approximate solution.  Our results generalize the ones of the paper by Bresch and Gérard-Varet (\aj{Commun. Math. Phys.  (1) 253: (2005), 81-119}) in the context of periodic irregularities.
		
		\bigskip
	\noindent{\bf Keywords:} Boundary layers, nonperiodic roughness, single layer quasi-geostrophic model, 
	\\approximate solution.
	
	\medskip
	\noindent{\bf MSC:} 35B25, 35B40, 35C20, 35Q35.
\end{small}
	\end{abstract}
	This section is devoted to the proof of Theorem \ref{prop:eastern} concerning the well-posedness of problem 
\begin{equation}\label{eastern_gral}
\begin{split}
-\partial_{X_e}\Psi_e-\Delta_e^{2}\Psi_e&=0,\quad\textrm{in}\quad\omega_e^+\cup\omega_e^-\\
\left[\partial_{X_e}^{k}\Psi_e\right]\Big|_{X_e=0}&=\tilde{g}_k,\;k=0,\ldots,3,\\ \Psi_e\big|_{X_e=-\gamma_e(Y)}&=\frac{\partial\Psi_e}{\partial n_e}\big|_{X_e=-\gamma_e(Y)}=0,
\end{split}
\end{equation}
where $\tilde{g}_k\in L^\infty$, for $k=0,\dots,3$. Problem \eqref{eastern_gral} describes the behavior of the eastern boundary layer profiles. Note that the system driving $\Psi_e^0$ is obtained by choosing $\tilde{g}_k\equiv0$, $\forall k$. 

Similar to the previous case of the western boundary layer, we rely on wall laws to show the existence and uniqueness of solutions to system \eqref{eastern_gral} following the steps listed in Section \ref{l_method}. In Section \ref{linear_east}, the analysis of the problem in the half-space exhibits an additional difficulty: the presence of degeneracy at low frequencies. The singular behavior is going to impact the convergence of  $\Psi_e$ when $X_e$ goes infinity. As a result, a probabilistic setting and ergodicity properties are prescribed to prove the convergence of the solution. 

In this context, we will distinguish three different behaviors of $\Psi_e$ far from the boundary: $\Psi_{\mathrm{exp}}$ which decays exponentially to zero, $\Psi_{\mathrm{erg}}$ whose convergence to a specific constant is driven by the ergodic theorem and $\Psi_{\mathrm{alg}}$, a function converging to zero at a polynomial rate when $\varepsilon\rightarrow 0$. The analysis of each one of these functions is going to be conducted separately in subsection \ref{s:diff_behavior}, and final results are summarized in Theorem \ref{prop:eastern}.  The probabilistic scenario is only necessary for the analysis of $\Psi_{\mathrm{erg}}$ since the convergence of the remaining components is obtained using deterministic methods. We pay special attention to the link between the value of the limit of $\Psi_{\mathrm{erg}}$ far for the boundary and the choice of $\tilde{g}_0$.  Later, on Section \ref{construction}, we will see that this translates in a strong connection between the problems driving the $n$-th eastern profile and $\Psi_{int}^n$. Finally, we will briefly discuss the equivalence of solutions of the problems in the half-space and the rough channel at the ``transparent'' boundary, and, hence provide a solution for problem \eqref{rough_part_east} in the whole domain $\omega_e$.

To simplify the notation, we will write $X$ instead of $X_e$ throughout this section.

    \section{Preliminaries and main results}\label{statement}
Before stating the main results, let us first state some hypotheses on the dimensionless problem (\ref{Bresch-GV_1}).  We will assume there is no stratification, therefore,  $\mathrm{Fr}=0$. Our study is solely focused on the effect of rough shores on flow behavior, so the bottom topography parameter $\eta_B$ is considered to be nil. Since for a basin of $1000 \times 1000$ km at a central latitude $\theta_0 = 45^\circ N$ \cite{Dijkstra2005}, we have that
$\eta_B+\mu\sim 1.0 - 100, \quad \beta\sim 10^{3},\;\;\textrm{and}\;\; \mathrm{Re} \sim 1.6 - 160$,
it is possible to consider  
$r = 1$ and $\mathrm{Re} = 1$ to simplify the computations. Up to minor changes, equivalent results can be obtained for arbitrary values of the Reynolds number and $r$.

Let $\varepsilon$ be the natural size of the boundary layers arising in this study,  we consider $\beta = \varepsilon^{-3}$. This choice of scaling preserves the problem's physical accuracy. Moreover, the size of the irregularities is also assumed to be equal to $\varepsilon$. The last hypothesis is mainly of mathematical significance, since it allows for a richer analysis due to the interaction of the linear and non-linear terms of the equation at the main order for the boundary layer problem.  Then, system (\ref{Bresch-GV_1}) becomes
\begin{equation}\label{Bresch-GV_3}
\left\{\begin{array}{rcll}
\left(\partial_t+\nabla^{\perp}\Psi\cdot\nabla\right)\left(\Delta\Psi+\varepsilon^{-3} y\right)+\Delta\Psi&=&\varepsilon^{-3}\,\mathrm{curl}\,\tau+\Delta^{2}\Psi,&\textrm{in }\Omega^{\varepsilon}\\
\Psi|_{\partial\Omega^{\varepsilon}}&=&\frac{\partial\Psi}{\partial n}|_{\partial\Omega^{\varepsilon}}=0,\\
\Psi|_{t=0}&=&\Psi_{ini}.
\end{array}
\right.
\end{equation}

Here, we adopt the notation and terminology in \cite{Bresch2005}. The domain of problem (\ref{Bresch-GV_3}) is defined as follows
$$\Omega^{\varepsilon}=\Omega^{\varepsilon}_w \cup\Sigma_w \cup\Omega\cup\Sigma_e \cup\Omega^{\varepsilon}_e.$$
\vspace*{-.3in}
\begin{figure}[h!]
	\centering
	\includegraphics[width=0.5\linewidth]{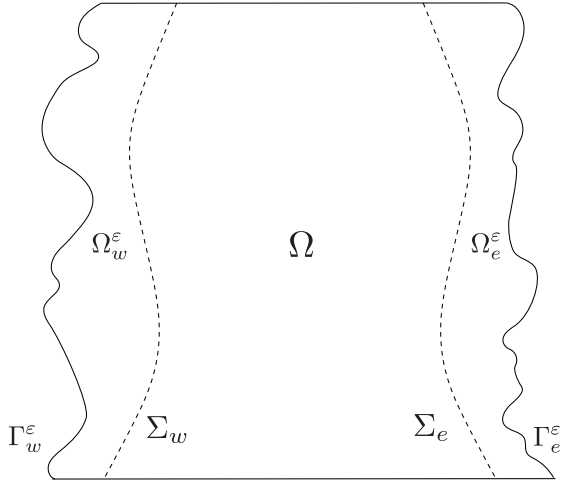}
	\caption{Domain $\Omega^{\varepsilon}$}
	\label{fig:domaine03-03}
\end{figure}
\begin{itemize}
	\item [$\bullet$] The ``interior domain” is given by
	\[\Omega=\left\{\chi_w(y) \leq  x \leq \chi_e(y),\: y \in [y_{min},y_{max}]\right\},
	\]
	where $\chi_w$ and $\chi_e$ are smooth functions defined for $y \in [y_{min},y_{max}]$.
	
	\item [$\bullet$] $\Sigma_w =\left\{(\chi_w(y),y),\: y \in (y_{min},y_{max})\right\} $ and  $\Sigma_e = \left\{(\chi_e(y),y),\ y \in (y_{min},y_{max})\right\}$ are interfaces separating the interior domain from the ``rough shores''.
	\item [$\bullet$] $\Omega^{\varepsilon}_w$ and  $\Omega^{\varepsilon}_e$ are the rough domains. The positive smooth functions $\gamma_w = \gamma_w(Y)$ and $\gamma_e = \gamma_e(Y)$ describe the irregularities. We set
		\begin{eqnarray}
		\Omega^{\varepsilon}_w &=& \left\{(x, y), 0 > x-\chi_w(y) > -\varepsilon\gamma_w(\varepsilon^{-1}y)\right\},\\
		\Omega^{\varepsilon}_e &=& \left\{(x, y),\: 0 < x - \chi_e(y) < \varepsilon\gamma_e(\varepsilon^{-1}y)\right\}.
		\end{eqnarray}
	The lateral boundaries are
	\begin{eqnarray}
	\Gamma_w^{\varepsilon} &=& \{ \left(\chi_w(y)-\varepsilon\gamma_w(\varepsilon^{-1}y),\;y\right) ,y\in(y_{min},y_{max}),\}\nonumber\\  \Gamma_e^{\varepsilon} &=& \{ \left(\chi_e(y)+\varepsilon\gamma_e(\varepsilon^{-1}y),\;y\right) ,\;y\in(y_{min},y_{max})\}.\nonumber
	\end{eqnarray}
	Let us introduce the notation $n_w$ and $n_e$ for the exterior unit normal vectors to the roughness curves $\gamma_w$ and $\gamma_e$.
\end{itemize}

 Let $T > 0$, we assume that  $\tau(t, x, y)\in L^\infty((0;T); H^s)$, for $s$ large enough. We are actually studying well-prepared data, as seen in \cite{Desjardins1999}. In order to avoid steep singularities due to advection of vorticity when approaching the northern and southern extremal points, we assume additionally the irrotational part of the wind vanishes in their vicinity, see \cite{Dalibard2009}. More precisely, we suppose that there exists $\lambda > 0$ such that 
\begin{equation}\label{eq_BGV:11}
\mathrm{curl}\,\tau = 0\quad \textrm{for}\; y \in \left[y_{max} -\lambda,y_{max}\right]\cup\left[y_{min},y_{min} +\lambda\right].
\end{equation} 

As pointed out in \cite{Desjardins1999}, problem (\ref{Bresch-GV_3}) has a unique smooth solution $\Psi^{\varepsilon}$ for all $\varepsilon > 0$.

The approximate solution is sought in form of series in powers of the small parameter $\varepsilon$ with coefficients depending on the global variables $t,x,y$, and the microscopic variables $Y = Y(y,\varepsilon)$, $X=X(x,y,\varepsilon)$
\begin{equation}\label{asymp_exp_int}
\Psi^{\varepsilon}_{app}(t,x,y)\sim \sum_{k=0}^{\infty}\varepsilon^{k}\left(\Psi^{k}_{int}(t,x,y)+\Psi^{k}_w\left(t,y,X_w,Y\right)+\Psi^{k}_e\left(t,y,X_e,Y \right)\right),
\end{equation}
where $\Psi^{k}_{int}(t,x,y)$ correspond to the interior terms, while $\Psi^{k}_w$ and $\Psi^{k}_e$ refer to the corrector terms in the western and eastern boundary layer, respectively. Such a series is substituted in the original problem and  a system of equations is obtained for each one of the profiles by equating to zero all coefficients associated to powers of $\varepsilon$. Here, $X$ and $Y$ are the fast or microscopic variables which depend on the small parameter. They are defined as follows:
\begin{equation*}
    Y=\varepsilon^{-1}y,\quad X_w=\varepsilon^{-1}(x-\chi_w(y)),\quad X_e=\varepsilon^{-1}(\chi_e(y)-x),
\end{equation*}
where $\omega_w$ and $\omega_e$ are respectively the western and eastern boundary layer domains. The former is of the form $\omega_w=\omega_w^{+}\cup\sigma_w\cup\omega^{-}_w$, where
\begin{eqnarray}\label{west_dom}
\omega_w^{+}&=&\left\{X_w>0,\quad Y\in\mathbb{R}\right\},\qquad\sigma_w=\left\{X_w=0,\quad Y\in\mathbb{R}\right\}\nonumber\\
\omega_w^{-}&=&\left\{-\gamma_w(Y)<X_w<0,\quad Y\in\mathbb{R}\right\}.
\end{eqnarray}
The domain $\omega_e$ can be defined in a similar manner.

In a first approximation of the solution, we are confronted with coastal asymmetry: it is impossible to obtain a solution in the eastern boundary layer domain satisfying all boundary conditions and decaying at infinity due to the lack of enough roots with positive real part. A usual choice under these circumstances is to consider  that $\Psi_{int}$ is tangent to the boundary $\Sigma_e$, which results in \begin{equation}\label{int_sol}
\Psi_{int}^0(t,x,y)=-\int^{\chi_e(y)}_{x}\mathrm{curl}\,\tau(t,x',y)dx',
\end{equation} 
and $\Psi_e^0\equiv 0$. Then, the key element in the construction of the approximate solution will be to determine $\Psi_w^0$ which formally solves the problem
\begin{eqnarray}\label{eq_BGV:15}
Q_w(\Psi_w^{0},\Psi_w^{0})+\partial_{X_w}\Psi_w^{0}-\Delta_w^{2}\Psi_w^{0}&=&0,\quad\textrm{in}\quad\omega_w^-\cup \omega_w^+\nonumber\\
\left[\Psi_w^{0}\right]\big|_{\sigma_w}&=&\phi,\\
\left[\partial_X^{k}\Psi_w^{0}\right]\big|_{\sigma_w}&=&0,\;k=1,2,3,\nonumber\\ \Psi_w^{0}\big|_{X=-\gamma_w(Y)}&=&\frac{\partial\Psi_w^{0}}{\partial n_w}\big|_{X=-\gamma_w(Y)}=0.\nonumber
\end{eqnarray}
Here, $[\cdot]\big|_{X=X'}$ denotes the jump operator of the function $f$ at $X=X'$ and is defined as $[f]|_{X=X'} := f(X'^{+},\cdot)- f(X'^{-},\cdot)$. The jump of the function at the western boundary of the interior domain is given by  \begin{equation}\label{def_phi}
    \phi(t,y)=\int_{\chi_w(y)}^{\chi_e(y)}\mathrm{curl}\,\tau(t,x',y)dx'.
\end{equation}
Moreover, for $\alpha_w=\chi_w' (y)$, the differential operators  are given by
\[\nabla_w = (\partial_{X_w}, \partial_Y - \alpha_w\partial_{X_w})^{t} ,\quad \nabla_w^{\bot}(y) = (\alpha_w\partial_{X_w} - \partial_Y , \partial_{X_w})^{t} ,\]
consequently,  $\Delta_w$ and $Q_w$ are defined  as follows
\begin{eqnarray}
\Delta_w&=&\partial_{X_w}^{2} + \left(\alpha\partial_{X_w}-\partial_Y\right)^{2},\nonumber\\
Q_w(\Psi,\widetilde{\Psi}  ) &=& \nabla_w^{\bot} \cdot   \left(\left(\nabla_w^{\bot}\Psi   \cdot \nabla_w\right) \nabla_w^{\bot}\widetilde{\Psi} \right).\nonumber
\end{eqnarray} 
Note $\Delta_w$ and  $\Delta_w^{2}$ are elliptic operators with respect to the variables $Y$ et $X_w$. At the level of the boundary layer, $t$ and $y$ behave as parameters.

Our first result is the existence and uniqueness of the solution for the boundary layer system (\ref{eq_BGV:15}). As usual in the steady Navier–Stokes equations theory, the well-posedness is obtained under a smallness hypothesis. The problem is defined in an unbounded set; therefore, we seek the solution in spaces of uniformly locally integrable functions, also know in the literature as Kato spaces \cite{Kato1975b}. They include a richer spectrum of functions, allowing for some singular behavior or non-decaying functions. Let us briefly recall the definition:

Let $\theta\in C^\infty_0(\mathbb{R}^d)$ be such that $\mathrm{Supp}\theta\subset [-1, 1]^d$, $\theta\equiv 1$ on $[-1/4, 1/4]^d$, and
\begin{equation}\label{DP:1-4}
\sum_{k\in\mathbb{Z}^d}\tau_k\theta(x) = 1,\quad\textrm{for all }x \in\mathbb{R}^d,
\end{equation}
where $\tau_k$ denotes the translation operator defined by $\tau_k f (x) = f (x - k)$. Then, for $s \geq 0$, $p \in  [1, +\infty)$,
\begin{equation}\label{kato_def}
\begin{split}
L^p_{\mathrm{uloc}}(\mathbb{R}^d)&=\left\{u\in L^p_{\mathrm{loc}}(\mathbb{R}^d):\quad\underset{k\in\mathbb{Z}^d}{\sup}\|(\tau_k\theta)u\|_{L^p(\mathbb{R}^d)}\right\},\\
H^s_{\mathrm{uloc}}(\mathbb{R}^d)&=\left\{u\in H^s_{\mathrm{loc}}(\mathbb{R}^d):\quad\underset{k\in\mathbb{Z}^d}{\sup}\|(\tau_k\theta)u\|_{H^s(\mathbb{R}^d)}\right\}
\end{split}
\end{equation}

We show the following:

\begin{theorem}\label{theorem:existence}
	Let $\gamma_w$ be a positive $W^{2,\infty}(\mathbb{R})$ function and $\omega_w$ be defined as before. There exists a constant $\delta_0 > 0$ such that  if $\|\phi\|_{\infty} < \delta_0$, problem (\ref{eq_BGV:15}) has a unique solution in $H^{2}_{\mathrm{uloc}}(\omega_w)$ denoted by $\Psi_w$. Moreover, for a certain constant  $\delta>0$, it satisfies the estimate
	\begin{equation}\label{eq_BGV:16}
	\|e^{\delta X_w}\Psi_w\|_{H^{2}_{\mathrm{uloc}}(\omega_w)} \leq C \|\phi\|_{\infty}.
	\end{equation}
\end{theorem}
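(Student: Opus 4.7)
The strategy is to reduce to a fourth-order elliptic problem with homogeneous interface conditions, solve the associated linear problem in weighted Kato spaces, and close a contraction-mapping argument on a small ball around zero for the quadratic nonlinearity $Q_w$. As a preliminary reduction, I would eliminate the inhomogeneous jump condition by introducing a lift of the form $\Psi_\phi(X_w,Y):=\phi(t,y)\,\chi(X_w)$, supported in $\omega_w^+$, with $\chi\in C_c^\infty([0,\infty))$ satisfying $\chi(0)=1$ and $\chi'(0)=\chi''(0)=\chi'''(0)=0$, taken with small enough support so that $\Psi_\phi$ already vanishes (together with its normal derivative) on the rough boundary. Setting $v:=\Psi_w^0-\Psi_\phi$, the unknown $v$ lies in $H^2_{\mathrm{uloc}}(\omega_w)$ globally, satisfies homogeneous Dirichlet/no-slip conditions and vanishing jumps up to order three at $\sigma_w$, and solves
\begin{equation*}
\partial_{X_w}v-\Delta_w^2 v \;=\; -Q_w(v+\Psi_\phi,v+\Psi_\phi) + F_\phi,
\end{equation*}
with $F_\phi:=\Delta_w^2\Psi_\phi-\partial_{X_w}\Psi_\phi$ compactly supported in $X_w$ and such that $\|e^{\delta X_w}F_\phi\|_{H^{-2}_{\mathrm{uloc}}}\lesssim\|\phi\|_\infty$.

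\emph{Linear theory in weighted Kato spaces.} I would next invoke the linear framework of Section \ref{methodo} for the Munk-type operator $\partial_{X_w}-\Delta_w^2$ with homogeneous boundary data. Because $\omega_w$ is unbounded in $Y$, the classical $L^2$ energy estimate is replaced by a bound on the truncated norm $\|(\tau_k\theta)w\|_{H^2}$ that is uniform in $k\in\mathbb Z$: one tests the equation against $(\tau_k\theta)^2 w$ and, to handle the lack of a global Poincaré inequality in $Y$, runs a Saint-Venant-style induction over the cell index $k$ using the strict dissipation provided by $\Delta_w^2$. To capture exponential decay in $X_w$, I would repeat this procedure with the weight $e^{2\delta X_w}$: the transport term $\partial_{X_w}w$ then generates the coercive contribution $\delta\!\int_{\omega_w}e^{2\delta X_w}|w|^2$, while commutators between $e^{2\delta X_w}$ and $\Delta_w^2$ are absorbed by the bi-Laplacian dissipation for $\delta$ small enough. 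This yields a bounded linear solution operator $\mathcal L:e^{-\delta X_w}H^{-2}_{\mathrm{uloc}}\to e^{-\delta X_w}H^2_{\mathrm{uloc}}$.

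\emph{Fixed-point argument.} Because $H^2_{\mathrm{uloc}}(\omega_w)$ embeds continuously into $W^{1,\infty}_{\mathrm{uloc}}(\omega_w)$ in dimension two and is an algebra, the divergence form $Q_w(\Psi,\widetilde\Psi)=\nabla_w^\perp\cdot\!\bigl((\nabla_w^\perp\Psi\cdot\nabla_w)\nabla_w^\perp\widetilde\Psi\bigr)$ yields a bilinear estimate
\begin{equation*}
\|e^{\delta X_w}Q_w(\Psi_1,\Psi_2)\|_{H^{-2}_{\mathrm{uloc}}} \le C\,\|e^{\delta X_w}\Psi_1\|_{H^2_{\mathrm{uloc}}}\,\|e^{\delta X_w}\Psi_2\|_{H^2_{\mathrm{uloc}}},
\end{equation*}
the weight splitting consistently because one factor already dominates the bound. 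Combined with $\|e^{\delta X_w}\Psi_\phi\|_{H^2_{\mathrm{uloc}}}\lesssim\|\phi\|_\infty$, the map $T(v):=\mathcal L\!\bigl(-Q_w(v+\Psi_\phi,v+\Psi_\phi)+F_\phi\bigr)$ sends the ball $\{\|e^{\delta X_w}v\|_{H^2_{\mathrm{uloc}}}\le C_0\|\phi\|_\infty\}$ into itself and contracts it, provided $C_0$ is taken suitably large and $\|\phi\|_\infty<\delta_0$ is small enough. Banach's fixed point theorem then delivers the unique solution, and (\ref{eq_BGV:16}) follows directly from the fixed-point bound.

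\emph{Main obstacle.} The technical core is the uniform $H^2_{\mathrm{uloc}}$ estimate for the linear operator $\partial_{X_w}-\Delta_w^2$ on the nonperiodic rough strip $\omega_w$: the absence of a Poincaré inequality along the infinite direction $Y$ makes the naive $L^2$ energy identity useless, so the induction-on-cells argument must be carefully tailored to accommodate the interface $\sigma_w$ (where $v$ has traces up to order three), the curvilinear terms $\alpha_w\partial_{X_w}$ produced by the graph parametrization, and the compatibility of the exponential weight with the cell-by-cell truncations. Once this linear tool is secured, the nonlinear fixed point and the exponential decay estimate follow routinely from the smallness of $\phi$.
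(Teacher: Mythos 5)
There is a genuine gap, and it sits exactly where the whole difficulty of the theorem lies: the linear solution operator $\mathcal L$ that you invoke. You assume that $\partial_{X_w}-\Delta_w^2$ with homogeneous boundary data on the \emph{whole} rough domain $\omega_w$ admits a bounded inverse from $e^{-\delta X_w}H^{-2}_{\mathrm{uloc}}$ to $e^{-\delta X_w}H^{2}_{\mathrm{uloc}}$, to be obtained by testing against $(\tau_k\theta)^2 w$ and a Saint-Venant induction. No such global statement is proved in the paper, and Section \ref{methodo} explains why a direct Lady\v{z}enskaya--Solonnikov argument does not apply on $\omega_w$: the domain is unbounded in $X_w$, so the Poincar\'e inequality needed to absorb lower-order and truncation-commutator terms by the dissipation $\|\Delta_w\cdot\|_{L^2}$ is unavailable. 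This is precisely why the paper's proof is organized around a transparent boundary at $X=M$: the half-space problem is solved by Fourier analysis (Theorem \ref{Theorem2_DGV2017}), where the exponential decay in \eqref{eq_BGV:16} comes from the spectral information on the roots (Lemmas \ref{p:roots} and \ref{Dalibard2014_lemma24}), the bounded rough channel is handled by the truncated-energy induction with the Poincar\'e--Steklov operators \eqref{def_PS} as boundary conditions (Proposition \ref{Lemma_14}), and the two pieces are glued by the implicit function theorem after proving higher regularity near $\sigma_w^M$. Your proposed substitute for Poincar\'e --- that the weight $e^{2\delta X_w}$ makes the transport term coercive --- is only formal: the weighted test function is not admissible for a solution that is merely $H^2_{\mathrm{uloc}}$ (the weighted integrals are not known to be finite before decay is established), so the estimate cannot be run as an a priori bound; one would have to build the solution on domains truncated in $X_w$ and pass to the limit, and the cap boundary terms, the interface terms at $\sigma_w$, and the fact that the $Y$-cells are now unbounded in $X_w$ reintroduce exactly the obstructions the decomposition was designed to remove. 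In short, the key lemma of your plan is asserted, not proved, and it is not a minor technical step.

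Two further points would need repair even granting the linear lemma. The justification of your bilinear estimate rests on ``$H^2_{\mathrm{uloc}}$ embeds into $W^{1,\infty}_{\mathrm{uloc}}$ in dimension two and is an algebra'', which is false ($H^2(\mathbb R^2)\hookrightarrow L^\infty$ but not $W^{1,\infty}$); a product estimate at the $H^2/H^{-2}$ level may still be salvageable by Sobolev duality, but the paper deliberately runs the contraction in the high-regularity spaces $\mathcal H^{m+2}$, $m\gg1$ (Proposition \ref{prop:fix}), both to make the quadratic estimates straightforward and because the Poincar\'e--Steklov operators and the $C^1$ dependence needed for the implicit function theorem require smooth traces. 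Finally, Banach's fixed point only yields uniqueness inside the small ball of the weighted space, whereas Theorem \ref{theorem:existence} asserts uniqueness in $H^{2}_{\mathrm{uloc}}(\omega_w)$; the paper gets this from a separate truncated-energy argument on the difference of two solutions under the smallness assumption, an argument your proposal does not address.
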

This theorem generalizes the result of \cite{Bresch2005} for to the case of  nonperiodic roughness. A remarkable feature of this result is that exponential decay to zero persists, despite the arbitrary roughness, and without any additional assumption on the function describing the irregular boundary.

Following the ideas in Masmoudi and G\'erard-Varet \cite{Gerard-Varet2010}, we look for the solution of \eqref{eq_BGV:15} by introducing a transparent boundary which divides the domain in two: a half-space and a bounded rough channel. Then, the problem is solved in each of the subdomains, and a pseudo-differential operator of Poincaré-Steklov is used to relate the behavior of the solutions at both sides of the interface. When \eqref{eq_BGV:15} is considered linear, this last step can be done directly; otherwise, applying the implicit function theorem is needed to join the solutions at the artificial boundary. 

Once we have shown the above result on the western boundary layer, we construct the approximate solution and analyze its closeness to the original problem. The error is computed by calculating the following profiles in a systemic scheme (see Sections \ref{s:asymptotic} and \ref{s:convergence})  and is pretty straightforward.  

At order $\varepsilon^n$ the interior profile satisfies $\Psi^n_{int}=C^n(t,y)-\int_{x}^{\chi_e(y)}F_n\;dx$, where $F_n$ depends on $\Psi^j_{int}$, $j\leq n-1$. The value of $C^n(t,y)$ will be specified later. Then, the $n$-th eastern boundary layer profile meets the conditions 
\begin{equation}\label{eastern_bl_p1}
    \begin{split}
        -\partial_{X_e}\Psi^n_e-\Delta_e^{2}\Psi_e^n&=0,\quad\textrm{in}\quad\omega_e^-\cup\omega_{e}^+\\
        \left[\Psi_e^n\right]\big|_{\sigma_e}&=-C^n(t,y),\\
\left[\partial_{X_e}^k\Psi_e^n\right]\big|_{\sigma_e}&=\tilde{g}_k,\;k=1,\ldots,3,\\ \Psi_e^n\big|_{X_e=-\gamma_e(Y)}&=\frac{\partial\Psi_e^n}{\partial n_e}\big|_{X_e=-\gamma_e(Y)}=0,
    \end{split}
\end{equation}
where $\tilde{g}_k\in L^\infty$, the domain is given by
\begin{eqnarray}\label{east_dom}
	\omega_e^{+}&=&\left\{X_e>0,\quad Y\in\mathbb{R}\right\},\qquad\sigma_e=\left\{X_e=0,\quad Y\in\mathbb{R}\right\}\nonumber\\
	\omega_e^{-}&=&\left\{-\gamma_e(Y)<X_e<0,\quad Y\in\mathbb{R}\right\},
\end{eqnarray}
and the differential operators are defined as follows for $\alpha_e=\chi_e'(y)$:
\begin{eqnarray}
\nabla_e&=&\left(\partial_{X_e},- \alpha_e\partial_{X_e}-\partial_{Y}\right),\quad\nabla_e^\perp=\left(\partial_{Y}+\alpha_e\partial_{X_e},\partial_{X_e}\right) ,\\
\Delta_e&=&\partial_{X_e}^{2} + \left(\alpha\partial_{X_e}+\partial_Y\right)^{2},\quad
Q_e(\Psi,\widetilde{\Psi}  ) = \nabla_e^{\bot} \cdot   \left(\left(\nabla_e^{\bot}\Psi   \cdot \nabla_e\right) \nabla_e^{\bot}\widetilde{\Psi} \right).\nonumber
\end{eqnarray} 

Note that the main equation in \eqref{eastern_bl_p1} is elliptic with respect to $X_e$ and $Y$; $t$ and $y$ are considered parameters. Although the analysis of the well-posedness of  \eqref{eastern_bl_p1} is similar to one described for the western boundary layer, additional issues arise concerning the convergence of  $\Psi_e^n$ when $\varepsilon\rightarrow 0$. Indeed, the analysis of the problem \eqref{eastern_bl_p1} in the half-space reveals the lack of spectral gap, which prevents the decay far from the boundary, see Section \ref{linear_east}.  To guarantee the convergence of the eastern boundary layer profile when $X_e\rightarrow \infty$ some probabilistic assumptions are needed (ergodic properties).

Let $\varepsilon > 0$ and $(P, \Pi, \mu)$ be a probability space. For instance, $P$ could be considered as the set of $K$-Lipschitz functions, with $K>0$; $\Pi$ the borelian $\sigma$-algebra of $P$, seen as a subset of $C_b(\mathbb{R}^2;\mathbb{R})$ and $\mu$ a the probability measure preserved by the translation group $(\tau_{Y} )$ acting
on $P$. Then, the eastern boundary layer domain can be described as follows for $m\in P$: 
\begin{equation*}\label{Bass_22}
	\begin{split}
		\omega_e(m) &=\left\{(X_e, Y)\in\mathbb{R}^2:\:\: X_e>-\gamma_e(m, Y)\right\},
	\end{split}
\end{equation*}
where $\omega_{e}(m)=\omega_{e}^+(m)\cup\sigma_{e}\cup\omega_{e}^-(m)$ and $\omega_{e}^{\pm}(m)=\omega_{e}(m)\cap\{\pm X>0\}$. Here $\gamma_{e}$ are homogeneous and measure-preserving random process.

In this context, we are able to distinguish three components of $\Psi^n_e$ with different asymptotic behavior far from the boundary for which we have obtained the following result:

\begin{theorem}\label{prop:eastern}
Let $\omega_e$ a domain defined as before  and $\gamma_{e}$ an ergodic stationary random process, $K$-Lipschitz almost
surely, for some $K > 0$. Let $\tilde{g}_{k}\in L^{\infty}(\mathbb{R}_{+}\times[y_{\min},y_{\max}]\times\mathbb{R})$, $k=1,2,3$, then there exist a unique measurable map $C^n(t,y)$ such that problem \eqref{eastern_bl_p1} has a unique solution $\Psi_e^n=\Psi_{\mathrm{exp}}^n+\Psi_{\mathrm{alg}}^n+\Psi_\mathrm{erg}^n$ where
\begin{enumerate}
	\item 
	$\left\|\Psi^n_{\mathrm{erg}}\right\|_{L^q(\omega_e^+)}\underset{X_e\rightarrow+\infty}{\xrightarrow{\hspace*{1.2cm}}}0$,
	locally uniformly in $Y$, almost surely and in $L^q(P)$ for all finite $q$,
    \item there exist constants $\delta, C>0$ such that
    	\begin{equation}
	\|e^{\delta X_e}\Psi^n_{\mathrm{exp}}\|_{L^\infty(\omega_e^+)}\leq C\left(\sum_{k=1}^3\|\tilde{g}_k\|_{L^\infty}+\|C^n\|_{\infty}\right),
	\end{equation}
	\item there exists a constant $C>0$ such that
	\begin{equation}
	\|(1+X_e)^{1/4}\Psi^n_{\mathrm{alg}}\|_{L^\infty(\omega_e^+)}\leq C\left(\sum_{k=1}^3\|\tilde{g}_k\|_{L^\infty}+\|C^{n}\|_{\infty}\right).
	\end{equation}
\end{enumerate}
Moreover, $\Psi_e$ satisfies
\begin{equation}\label{eastern_results}
\begin{split}
\|\Psi_e^n\|_{H^2_{\mathrm{uloc}}(\omega_e)}<+\infty,\quad\textrm{almost surely}.
\end{split}
\end{equation}
\end{theorem}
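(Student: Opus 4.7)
As in the western case, the plan is to split $\omega_e=\omega_e^+\cup\sigma_e\cup\omega_e^-$, to build a solution in the half-space $\omega_e^+$ by Fourier analysis in $Y$, to solve the lift into the rough channel $\omega_e^-$ by the localised elliptic methodology of Section~\ref{methodo}, and to glue the two pieces through a Poincaré--Steklov (Dirichlet-to-Neumann) operator on $\sigma_e$. The system \eqref{eastern_bl_p1} is now \emph{linear}, so no smallness hypothesis and no implicit function argument is needed; the only real novelty relative to the western analysis is the lack of a spectral gap at transverse frequency $\xi=0$, which is precisely what forces the decomposition $\Psi^n_e=\Psi^n_{\mathrm{exp}}+\Psi^n_{\mathrm{alg}}+\Psi^n_{\mathrm{erg}}$ and the tuning of the constant $C^n(t,y)$.

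\textbf{Half-space analysis and three regimes.} After shearing out $\alpha_e$, the tangential Fourier symbol of $-\partial_{X_e}-\Delta_e^2$ gives the characteristic equation $(\lambda^2-\xi^2)^2=-\lambda$. For each $\xi\in\mathbb{R}$ this yields two roots with nonpositive real part. At $\xi=0$ they are $\lambda=0$ and $\lambda=-1$; a branch analysis shows that one branch stays uniformly bounded away from the imaginary axis while the other satisfies $\lambda_-(\xi)\sim -c\,\xi^4$ as $\xi\to 0$, which is the source of the slow decay. Introducing two cut-offs $0<\eta_0<\eta_1$, I would split the trace data on $\sigma_e$ into three Fourier pieces: $|\xi|\ge \eta_1$ yields $\Psi^n_{\mathrm{exp}}$ by a uniform spectral gap; $\eta_0\le|\xi|<\eta_1$ yields $\Psi^n_{\mathrm{alg}}$ whose $(1+X_e)^{-1/4}$ rate comes directly from inverse-Fourier evaluation of $e^{\lambda_-(\xi)X_e}$; and $|\xi|<\eta_0$ yields the low-frequency piece $\Psi^n_{\mathrm{erg}}$, which carries all the obstruction. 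The $L^\infty$ bounds in items (2)--(3) follow by standard oscillatory-integral estimates on each Fourier representative, with the $\|C^n\|_\infty$ contribution entering only through the zero mode.

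\textbf{Determination of $C^n$ and ergodic decay.} The jump $-C^n(t,y)$ on $\sigma_e$ is constant in $Y$, so it feeds only the zero mode of the half-space problem. At $\xi=0$ the half-space Poincaré--Steklov symbol degenerates, and a solution bounded as $X_e\to+\infty$ exists iff the ``$Y$-mean'' of the total boundary datum (coming from $\tilde g_1,\tilde g_2,\tilde g_3$ and from the channel response) vanishes. Because $\gamma_e$ is an ergodic stationary process and the data are $L^\infty$ and stationary, Birkhoff's theorem lets me define this mean as the almost-sure and $L^q(P)$ limit of $Y$-averages of a stationary functional; setting $C^n(t,y)$ equal to that limit gives the unique measurable function for which the problem is solvable, and a second application of Birkhoff yields the almost-sure and $L^q(P)$ decay of $\Psi^n_{\mathrm{erg}}$ stated in item (1), locally uniformly in $Y$. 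Coupling with the rough channel $\omega_e^-$ is then a linear, stationary fourth-order problem with homogeneous Dirichlet/Neumann conditions at $X_e=-\gamma_e(Y)$ and the transparent condition on $\sigma_e$ built from the half-space DtN map; well-posedness in $H^2_{\mathrm{uloc}}$ follows from the general scheme of Section~\ref{methodo} (no smallness required), and assembling the three pieces yields \eqref{eastern_results}. Uniqueness is obtained by noting that the difference of two solutions is stationary with vanishing prescribed jump and zero data: its zero mode must vanish by the ergodic determination of $C^n$, and the remaining modes by the coercivity used for existence.

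\textbf{Main obstacle.} The delicate step is the ergodic determination of $C^n$ and the corresponding control of $\Psi^n_{\mathrm{erg}}$. In the periodic setting of \cite{Bresch2005} the zero-mode compatibility condition reduces to a deterministic average over one period of $\gamma_e$, and the low-frequency obstruction is trivially inverted. Here, one must write the compatibility condition explicitly as a stationary linear functional of the data evaluated along the random boundary, show that it admits an almost-sure limit via Birkhoff, and upgrade this to $L^q(P)$ convergence for all finite $q$. The uniform integrability needed for this upgrade comes from the $L^\infty$ bound on the $\tilde g_k$ and from the almost-sure $K$-Lipschitz bound on $\gamma_e$, but implementing it rigorously in conjunction with the DtN decomposition is where the main technical difficulty of the proof lies.
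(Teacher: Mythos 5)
Your overall architecture (transparent boundary, Fourier analysis in the half-space, Lady\v{z}enskaya--Solonnikov in the rough channel, gluing through the Poincar\'e--Steklov operator, no smallness since the problem is linear, and Birkhoff both to fix $C^n$ and to get the almost-sure decay) is exactly the paper's, and your treatment of $C^n$ as the ergodic mean cancelling the far-field limit is the right mechanism. However, there is a concrete gap in how you produce the three components, and it is precisely the step that items (2)--(3) of the theorem hinge on.

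Your splitting into frequency bands $\{|\xi|\ge\eta_1\}$, $\{\eta_0\le|\xi|<\eta_1\}$, $\{|\xi|<\eta_0\}$ does not yield the claimed behaviours. On the middle band the degenerate root satisfies $\Re\,\mu_1^+(\xi)\gtrsim\eta_0^4>0$, so that piece decays \emph{exponentially}; it cannot be the source of the $(1+X_e)^{-1/4}$ rate, which arises only from the self-similar scaling $\xi'=X_e^{1/4}\xi$ of $e^{-c\xi^4X_e}$ \emph{across} a full neighbourhood of $\xi=0$ (this is the mechanism of Lemma~\ref{Dalibard2017_lemma7}). Conversely, your low band $\{|\xi|<\eta_0\}$ lumps together two pieces with genuinely different behaviour. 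The correct decomposition is not by frequency but by root and by coefficient: (i) the contribution of the non-degenerate root $\mu_2^+$ (which has a uniform spectral gap down to $\xi=0$, cf.\ Lemma~\ref{lemma:asymp_behavior}) together with all high frequencies gives $\Psi^n_{\mathrm{exp}}$; (ii) for the degenerate root $\mu_1^+$ one must split the Fourier coefficient as $A_1(\xi)=\bar A_1+\tilde A_1(\xi)$ with $\bar A_1=A_1(0)$ constant and $\tilde A_1(\xi)$ vanishing at $\xi=0$ like a homogeneous symbol; the $\tilde A_1$-part is $\Psi^n_{\mathrm{alg}}$ and inherits the $(1+X_e)^{-1/4}$ weight from the kernel estimate, while only the constant-coefficient part $\chi(D)e^{-\mu_1^+(D)X_e}\bar A_1\psi$ is $\Psi^n_{\mathrm{erg}}$, to which Birkhoff applies (Lemma~\ref{lem:erg_east}). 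Without this coefficient splitting you neither obtain the quantitative bound of item (3) nor isolate a component whose convergence is governed purely by the ergodic theorem, so the decomposition asserted in the theorem is not actually constructed. A minor related slip: boundedness as $X_e\to+\infty$ holds for \emph{any} $C^n$; what fails without the right choice of $C^n$ is convergence to zero, i.e.\ the far-field condition, not solvability.
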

The proof of the above result also relies on the use of wall laws and follows the same ideas of Theorem \ref{theorem:existence} for the western boundary layer. First, we apply Fourier analysis to problem \eqref{eastern_bl_p1} in the half-space, which hints directly to the singular behavior at low frequencies far from the boundary. We show that the properties involving $\Psi^n_{\mathrm{exp}}$ and $\Psi^n_{\mathrm{alg}}$ in Theorem \ref{prop:eastern} remain true in a deterministic setting by following the same ideas used for the western boundary layer. Only the convergence of $\Psi^n_{\mathrm{erg}}$ is shown using the ergodic theorem. Then, we define the associated Poincaré-Steklov operator for boundary data in $H^{3/2}_{\mathrm{uloc}}(\mathbb{R})\times H^{1/2}_{\mathrm{uloc}}(\mathbb{R})$. Finally, we look for the solution of a problem equivalent to \eqref{eastern_bl_p1} defined in a domain in which a transparent boundary condition is prescribed. For the equivalent system, we derive energy estimates in $H^2_{uloc}$ which are then used to prove existence and uniqueness of the solution. 

The eastern boundary layer not converging to zero or not doing it fast enough at infinity poses an issue when solving the problem at $\Psi^n_w$. In particular, the terms $\Psi^n_{\mathrm{alg}}$  and $\Psi^n_{\mathrm{erg}}$ influence the western boundary layer mainly through the nonlinear term. Adding ad hoc correctors allows us to show the following results for $\Psi^\varepsilon-\Psi_{app}^\varepsilon$.
\newline
\begin{theorem}\label{theorem:convergence}
	Let $\Psi^{\varepsilon}$ be the solution of problem (\ref{Bresch-GV_3}) and $\Psi_{app}^\varepsilon$ defined as in \eqref{asymp_exp_int}. Moreover, let $\Psi^\varepsilon_{ini}$ be such that $\Psi^\varepsilon_{ini}|_{\Omega^\varepsilon}=\partial_n\Psi^\varepsilon_{ini}=0$ and $\|\Psi^\varepsilon_{ini}-\Psi_{app}^\varepsilon|_{t=0}\|_{H^1(\Omega^\varepsilon)}\rightarrow 0$. There exists $C_{\infty}$, such that if $\|\mathrm{curl}\;\tau\|_{\infty} < C_{\infty}$, then
	\begin{equation}
	    \begin{split}
	  \|\Psi^\varepsilon-\Psi_{app}^\varepsilon\|_{L^2(0,T;H^2(\Omega^\varepsilon)}+\|\Psi^\varepsilon-\Psi_{app}^\varepsilon\|_{L^\infty(0,T;H^1(\Omega^\varepsilon)}&\rightarrow 0\quad\textrm{as }\varepsilon\rightarrow 0,\quad\textrm{almost surely}.
	    \end{split}
	\end{equation}
\end{theorem}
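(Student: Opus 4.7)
The plan is to set $R^\varepsilon := \Psi^\varepsilon - \Psi^\varepsilon_{app}$ and derive its evolution equation by subtracting from \eqref{Bresch-GV_3} the equation formally satisfied by the truncated ansatz \eqref{asymp_exp_int} at a suitably high order $N$. The remainder obeys
\begin{equation*}
\partial_t \Delta R^\varepsilon + \varepsilon^{-3}\partial_x R^\varepsilon + \Delta R^\varepsilon - \Delta^2 R^\varepsilon + \mathcal{N}(R^\varepsilon,\Psi^\varepsilon_{app}) = \mathcal{E}^\varepsilon,
\end{equation*}
with $R^\varepsilon|_{\partial\Omega^\varepsilon} = \partial_n R^\varepsilon|_{\partial\Omega^\varepsilon} = 0$ up to correctors built into $\Psi^\varepsilon_{app}$. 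Here $\mathcal{N}(R,\Psi_{app}) = \nabla^\perp\Psi_{app}\cdot\nabla\Delta R + \nabla^\perp R\cdot\nabla\Delta\Psi_{app} + \nabla^\perp R\cdot\nabla\Delta R$ collects the transport contributions and $\mathcal{E}^\varepsilon$ is the consistency residual produced by substituting the truncated expansion into \eqref{Bresch-GV_3}.

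The main energy inequality comes from multiplying by $-R^\varepsilon$ and integrating over $\Omega^\varepsilon$. The singular transport term cancels thanks to $\varepsilon^{-3}\int_{\Omega^\varepsilon} R^\varepsilon\partial_x R^\varepsilon = 0$ (integration by parts using the Dirichlet condition), which is the crucial algebraic identity that absorbs the $\varepsilon^{-3}$ prefactor. Using $R^\varepsilon|_{\partial\Omega^\varepsilon}=\partial_n R^\varepsilon|_{\partial\Omega^\varepsilon}=0$, the remaining linear terms produce the coercive combination
\begin{equation*}
\tfrac{1}{2}\tfrac{d}{dt}\|\nabla R^\varepsilon\|_{L^2}^2 + \|\nabla R^\varepsilon\|_{L^2}^2 + \|\Delta R^\varepsilon\|_{L^2}^2.
\end{equation*}
The purely quadratic piece $\int(\nabla^\perp R^\varepsilon\cdot\nabla\Delta R^\varepsilon)R^\varepsilon$ vanishes by the Jacobian identity $\nabla^\perp R^\varepsilon\cdot\nabla R^\varepsilon \equiv 0$; the mixed pieces are dominated by $C\big(\|\nabla\Psi^\varepsilon_{app}\|_{L^\infty}\|\nabla R^\varepsilon\|\|\Delta R^\varepsilon\| + \|\Delta\Psi^\varepsilon_{app}\|_{L^\infty}\|\nabla R^\varepsilon\|^2\big)$. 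Via \eqref{int_sol} and the bounds of Theorems \ref{theorem:existence} and \ref{prop:eastern}, these prefactors are controlled by $\|\mathrm{curl}\,\tau\|_\infty$, so the smallness hypothesis $\|\mathrm{curl}\,\tau\|_\infty<C_\infty$ lets us absorb them on the left by Young's inequality, leaving a differential inequality of Gronwall type driven by $\|\mathcal{E}^\varepsilon\|_{L^2}^2$.

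The hard part, and the reason this goes beyond \cite{Bresch2005}, is showing $\|\mathcal{E}^\varepsilon\|_{L^2(0,T;L^2(\Omega^\varepsilon))}\to 0$ almost surely. The residual splits according to which profile generates it: (i) interior $O(\varepsilon^{N+1})$ tails handled trivially by taking $N$ large; (ii) western residuals plus the exponentially decaying eastern part, which after the rescaling $X_w=\varepsilon^{-1}(x-\chi_w(y))$ use the exponential bound \eqref{eq_BGV:16} and Theorem \ref{prop:eastern}(2) to yield an $O(\varepsilon^{1/2})$ gain; (iii) the large-scale eastern contributions $\Psi^n_{\mathrm{alg}}$ and $\Psi^n_{\mathrm{erg}}$. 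For the algebraic part, the pointwise bound $(1+X_e)^{-1/4}$ combined with the change of variables $X_e=\varepsilon^{-1}(\chi_e(y)-x)$ on $\Omega^\varepsilon$ gives just enough integrability to close the estimate. For the ergodic part, one invokes Theorem \ref{prop:eastern}(1) together with Fubini and the stationarity of $\gamma_e$ to convert the locally-uniform-in-$Y$, $L^q(P)$ and almost-sure convergence into almost-sure vanishing of the $\varepsilon$-scaled spatial integral. This step is exactly where the ergodic framework is indispensable and constitutes the principal obstacle.

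Once the consistency estimate is established, Gronwall applied to the energy inequality combined with the initial-data hypothesis $\|\Psi^\varepsilon_{ini}-\Psi^\varepsilon_{app}|_{t=0}\|_{H^1(\Omega^\varepsilon)}\to 0$ yields the desired convergence in $L^\infty(0,T;H^1(\Omega^\varepsilon))\cap L^2(0,T;H^2(\Omega^\varepsilon))$ almost surely. A secondary technical point, subsumed in the construction of $\Psi^\varepsilon_{app}$, is that the ad hoc correctors added to counterbalance the non-decaying feedback of $\Psi^n_{\mathrm{alg}}$ and $\Psi^n_{\mathrm{erg}}$ on the subsequent western profiles must themselves enter $\mathcal{N}$ with estimates compatible with the smallness balance used above.
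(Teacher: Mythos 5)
Your overall architecture matches the paper's: set $\psi^\varepsilon=\Psi^\varepsilon-\Psi^\varepsilon_{app}$, test the remainder equation with $\psi^\varepsilon$, use $\varepsilon^{-3}\int_{\Omega^\varepsilon}\psi^\varepsilon\partial_x\psi^\varepsilon=0$ to kill the singular term, show the consistency residual is $o(1)$ in $H^{-2}$ (including the ad hoc correctors for the algebraic and ergodic eastern tails), and conclude by Gr\"onwall together with the hypothesis on the initial data. The decomposition of the residual into interior, exponentially decaying, algebraic and ergodic pieces is also the one used in Section \ref{s:convergence}.

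There is, however, a genuine gap in your absorption of the mixed transport term. You bound it by
$C\big(\|\nabla\Psi^\varepsilon_{app}\|_{L^\infty}\|\nabla R^\varepsilon\|\,\|\Delta R^\varepsilon\|+\|\Delta\Psi^\varepsilon_{app}\|_{L^\infty}\|\nabla R^\varepsilon\|^2\big)$ and claim the prefactors are controlled by $\|\mathrm{curl}\,\tau\|_\infty$. They are not: because the western boundary layer depends on the fast variable $X_w=\varepsilon^{-1}(x-\chi_w(y))$, one has $\|\nabla\Psi^0_w\|_{L^\infty}=O(\varepsilon^{-1}\|\mathrm{curl}\,\tau\|_\infty)$ and $\|\Delta\Psi^0_w\|_{L^\infty}=O(\varepsilon^{-2}\|\mathrm{curl}\,\tau\|_\infty)$, so for fixed small $\|\mathrm{curl}\,\tau\|_\infty$ these constants blow up as $\varepsilon\to0$ and cannot be absorbed by Young's inequality into $\|\Delta\psi^\varepsilon\|_{L^2}^2$. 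The paper circumvents this with a weighted estimate: since $\psi^\varepsilon=\partial_n\psi^\varepsilon=0$ on $\partial\Omega^\varepsilon$, Hardy's inequality gives
\begin{equation*}
\Big\|\frac{\nabla^\perp\psi^\varepsilon}{d(\mathbf{x},\Gamma^\varepsilon_w)}\Big\|_{L^2}\leq C\|D^2\psi^\varepsilon\|_{L^2},
\end{equation*}
so the dangerous term is bounded by $\|d(\mathbf{x},\Gamma^\varepsilon_w)\nabla\Psi^0_w\|_{L^\infty}\|D^2\psi^\varepsilon\|_{L^2}^2$, and the weight $d(\mathbf{x},\Gamma^\varepsilon_w)\nabla\Psi^0_w\sim\tfrac{z}{\varepsilon}e^{-z/\varepsilon}$ is bounded uniformly in $\varepsilon$ by a constant proportional to $\|\mathrm{curl}\,\tau\|_\infty$; this is precisely where the smallness hypothesis $\|\mathrm{curl}\,\tau\|_\infty<C_\infty$ enters. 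Without this (or an equivalent mechanism exploiting the vanishing of $\nabla\psi^\varepsilon$ at the boundary at the same rate as the layer width), your energy inequality does not close.
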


In the periodic setting, for which the boundary layer profiles decay
exponentially when $\varepsilon$ goes to zero, the bound becomes $O(\varepsilon^{1/2})$ for the $H^1$ estimate (see \cite{Bresch2005}). In this general setting, the convergence rate is limited by the behavior of eastern boundary layer profiles. Indeed, the lack of spectral gap requires the use of average information to guarantee the convergence of this profile far for the eastern boundary. Consequently, the convergence of $\Psi_{\text{erg}}$ and, therefore,  of $\Psi_{e}$, can be arbitrarily slow and influence the asymptotics of $\Psi_{app}^\varepsilon$. 

The convergence result in Theorem \ref{s:convergence} is obtained by computing energy estimates on  $\Psi^\varepsilon-\Psi_{app}^\varepsilon$. The accuracy of the estimates depends greatly on each element in the approximate solution, their interactions and contributions. Each component needs to be smooth enough, with proper controls on the corresponding derivatives. 

\paragraph{Plan of the paper.} The article is organized as follows. The following section is devoted to the formal construction of the approximate solution. In particular, we present modeling assumptions and discuss in detail the computation of the first profiles. Then, in Section \ref{methodo}, we briefly sketch the methodology of proof of existence and uniqueness of linear, nonlinear, and quasi-linear problems describing the behavior of the boundary layer in a rough domain. The main focus of Section \ref{western_l} is establishing the well-posedness of the linear problem describing the behavior of the western boundary layer. These results are essential to the subsequent proof of Theorem \ref{theorem:existence}. In Section \ref{western_nl}, the existence and uniqueness of the solution of the boundary layer system \eqref{eq_BGV:15} are shown for the case when $\phi$ is a given boundary data, with no decay tangentially to the boundary (Theorem \ref{theorem:existence}). To complete the analysis at the western boundary layer domain, we examine the problem driving the subsequent profiles of the western boundary layer in  Section \ref{s:linearized}. Then, Section \ref{eastern} focuses on the analysis of the singular behavior of the eastern boundary layer \eqref{eastern_bl_p1} which is summarized in Theorem \ref{prop:eastern}.  The final section aims to assess the quality of our approximation by estimating the different contributions to the error term of each one of the elements in the approximate solution. The latter is then used to prove the convergence result in Theorem \ref{theorem:convergence}.
     \section{Formal asymptotic expansion and first profiles}\label{s:asymptotic}
In this section, we construct the approximate solution $\Psi^{\varepsilon}_{app}$ for the singularly perturbed problem \eqref{Bresch-GV_3} employing a matched asymptotic expansion. Inner and outer expansions (boundary layers) are determined in the interior and the rough shores domain. Then, matching conditions at the interface are imposed to obtain an approximate global solution.

Let $X$ and $Y$ define the local variables obtained after scaling: $Y = y/\varepsilon$ while $X_w=\frac{x - \chi_w (y)}{\varepsilon}$, $X_e=\frac{\chi_e (y)-x}{\varepsilon}$. We seek for an approximate solution of (\ref{Bresch-GV_3}) of the form
\begin{equation}\label{ansatz}
\Psi^{\varepsilon}_{\mathrm{app}}(t,x,y)=\sum_{i=0}^{N}\varepsilon^{i}\left(\Psi^{i}_{int}(t,x,y)+\Psi^{i}_w\left(t,y,X_w,Y\right)+\Psi^{i}_e\left(t,y,X_e,Y     \right)\right)+O(\varepsilon^{N+1}),
\end{equation}
where $\Psi^{i}_{int}(t,x,y)$ correspond to the interior terms, while $\Psi^{i}_w$ and
$\Psi^{i}_e$ denote the western and eastern boundary layer profiles. Without loss of generality, we assume that the interior terms are zero outside $\bar{\Omega}$. 

Since the boundary layer terms are expected not to have an effect far from the boundaries, we assume
\begin{equation}\label{bc_far}
\Psi_e^{i}\underset{X_e\rightarrow\infty}{\longrightarrow} 0,\quad\Psi_w^{i}\underset{X_w\rightarrow\infty}{\longrightarrow} 0.
\end{equation}
The approximate solution must additionally satisfy the boundary condition $\Psi^{\varepsilon}_{app}=0$ at $\partial\Omega^{\varepsilon}$. Thus, we have
\begin{equation}\label{bc_dirichlet}
\Psi_w^{i}\Big|_{X_w=-\gamma_w(Y)}=0,\qquad\Psi_e^{i}\Big|_{X_e=-\gamma_e(Y)}=0
\end{equation}

From the homogeneous Neumann condition, we obtain the following conditions on the boundary layer profile: 
\begin{equation}\label{bl_neumman}
\dfrac{\partial\Psi_{w}^{i}}{\partial n_w}\Big|_{X=-\gamma_w(Y)}=0,\qquad\dfrac{\partial\Psi_{e}^{i}}{\partial n_e}\Big|_{X=\gamma_e(Y)}=0.
\end{equation} 
\aj{There is no loss of generality in assuming $\Psi_{w}^{i}|_{\Omega^\varepsilon_e}=0$ and $\Psi_{e}^{i}|_{\Omega^\varepsilon_w}=0$. This condition directly gives \eqref{bc_dirichlet} and \eqref{bl_neumman}.}

Additional conditions are needed at the interfaces separating the interior domain and the boundary layer domains to guarantee the existence of the derivatives in the weak sense over the whole domain. Since the interior
terms are zero outside $\overline{\Omega}$, they create discontinuities at the interfaces $\Sigma_w$ and $\Sigma_e$. Then, boundary layer terms are added to cancel such discontinuities; see for instance \cite{Jaeger2001,Gerard-Varet2003}. To guarantee the approximation is regular enough, we impose the condition:
\begin{equation}
\left[\partial_{X}^{k}\Psi^{\varepsilon}_{app}\right]\big|_{\Sigma_w\cup\Sigma_e}=0,\quad k=0,\ldots,3.
\end{equation}

We have the following jump conditions on the boundary layer
terms:
\begin{equation}\label{eq_BGV:26}
\left[\Psi^{i}_w(\cdot)\right]\big|_{\sigma_w}=-\Psi^{i}_{int}(\cdot)\big|_{x=\chi_w(y)}-\left[\Psi^{i}_e(\cdot)\right]\big|_{\sigma_w},\quad\left[\Psi^{i}_e(\cdot)\right]\big|_{\sigma_e}=-\Psi^{i}_{int}(\cdot)\big|_{x=\chi_e(y)}-\left[\Psi^{i}_w(\cdot)\right]\big|_{\sigma_e},
\end{equation}
and
\begin{equation}\label{eq_BGV:27}
\left[\partial^{k}_X\Psi^{i}_w(\cdot)\right]\big|_{\sigma_w}=f^{i,k}_w,\quad\left[\partial^{k}_X\Psi^{i}_e(\cdot)\right]\big|_{\sigma_e}=f^{i,k}_e,
\end{equation}
where the $f^{i,k}_w$, $k=1,2,3$, depends on the $\Psi^{j}_{int}$ and $\Psi^{e}_{j}$, $j \leq i$. Here, $f^{i,k}_e$ is chosen to be independent of $\Psi^{w}_{j}$, while still relying on the behavior of the interior profiles.

Plugging \eqref{ansatz} into (\ref{Bresch-GV_3}), and equating all terms of the same order in
powers of $\varepsilon$ provide a family of mathematical systems establishing the behavior of each one of the profiles in the ansatz. 

To facilitate the comprehension, we compute some terms of the approximation $\Psi_{app}^{\varepsilon}$. We are particularly interested in the ones corresponding to $i\in\{0,1\}$. 

When $i=0$, we obtain in the interior of the domain the so-called Sverdrup relation:
\begin{equation}\label{eq_int0}
\begin{split}
   \partial_x\Psi_{int}^0 &=\mathrm{curl}\,\tau,\\
\end{split}
\end{equation}
for which only one boundary condition can be prescribed, either on $\Sigma_e$ or on the $\Sigma_w$. 

\begin{remark}\label{remark_4.1} 
	In the non-rough case, the boundary layer problems are described by linear ODEs. Mainly, we have
	\begin{equation}\label{eq_BGV:35}
	-\partial_X\Psi_{e}-(1+\alpha_e^{2})^{2}\partial_X^{4}\Psi_{e}=0,
	\end{equation} 
	and
	\begin{equation*}
	\partial_X\Psi_{w}-(1+\alpha_w^{2})^{2}\partial_X^{4}\Psi_{w}=0.
	\end{equation*}
	Notice that there is an asymmetry between the coasts (see \cite{Desjardins1999,Pedlosky2013}). Indeed, 
	only one boundary condition can be lifted on the east boundary since there is only one root with non-negative real part, whereas the space of admissible (localized) boundary corrections is of dimension two on the western boundary. Consequently, $\Psi_e$ must vanish at first order on the East coast, leaving the solution on the boundary layer at $\Gamma_e$ to correct the trace of $\partial_n\Psi_e$. This phenomenon is still present in the rough case.
\end{remark}

Since the eastern cannot bear a large boundary layer, see Remark \ref{remark_4.1}, it is frequent in the literature to choose $\Psi_{int}^0$ tangent to the boundary $\Sigma_e$, see for example \cite{Desjardins1999,Bresch2005}. Hence, we take
\begin{equation}\label{form_int0}
\Psi_{int}(t,x,y)=\left\{\begin{array}{ccl}
-\int_{x}^{\chi_e(y)}\mathrm{curl}\,\tau(t,x',y)dx'&\textrm{in}&\Omega,\\
0&\textrm{in}&\Omega^{\varepsilon}\setminus\Omega.\end{array}\right.
\end{equation}
and consequently, at order $\varepsilon^{-4}$, the eastern boundary layer profile is $\Psi_{e}^0 \equiv 0$. At the West, we have the following system 
\begin{subeqnarray}
\label{western_bl_0}
    Q_w(\Psi_w^0,\Psi_w^0)+\partial_{X_w}\Psi_w^0-\Delta_w^{2}\Psi_w^0&=&0,\quad\textrm{in}\quad\omega_w^{+} \cup \omega_w^{-}\slabel{western_bl_0:0}\\
\left[\Psi_w^0\right]\big|_{\sigma_w}&=&-\left[\Psi_{int}^0\right]\big|_{\Sigma_w},\slabel{western_bl_0:1}\\ \left[\partial_{X_w}^{k}\Psi_w^0\right]\big|_{\sigma_w}&=&0,\quad k=1,\ldots,3,\\
\Psi_w^0\big|_{X_w=-\gamma_w(Y)}=0,&& \dfrac{\partial\Psi_w^0}{\partial n_w}\big|_{X_w=-\gamma_w(Y)}=0,\\
\Psi_w^0\longrightarrow 0&&\textrm{when}\quad X_w\rightarrow +\infty,
\end{subeqnarray}

Henceforth, the  jump condition\eqref{western_bl_0} is described by a function $\phi$ defined as follows
\begin{equation}\label{new_conds}
    \phi=\int_{\chi_w(y)}^{\chi_e(y)}\mathrm{curl}\,\tau(t,x',y)dx',
\end{equation}
which is a direct result of  \eqref{form_int0}.

It remains to prove the well-posedness of the nonlinear problem \eqref{western_bl_0}. Since it is quite technical, we address the matter later in Section \ref{western_nl}. This step concludes the computations in the main order.

Now, let us compute the next step in the asymptotic expansion. Similarly to the first interior profile, $\Psi^1_{int}$ follows the equation
\begin{equation}
    \partial_x\Psi^{1}_{int}=0,
\end{equation}
hence, $\Psi^{1}_{int}(t,x,y)=C^1(t,y)$. The lack of source term is related to the factor $\varepsilon^{-3}$ multiplying $\partial_x\Psi$ in \eqref{Bresch-GV_3}. Accordingly, the equation driving the behavior of the interior profile becomes nonhomogeneous when $i\geq 3$. 

At order $\varepsilon^{-3}$, the eastern boundary layer function is described by the equations
\begin{equation}\label{eastern_bl_1}
\begin{split}
-\partial_{X_e}\Psi_{e}^1-\Delta_e^{2}\Psi_{e}^1&=0,\quad\textrm{in}\quad\omega_e^{+} \cup \omega_e^{-},\\
\left[\Psi_{e}^1\right]\big|_{\sigma_e}&=-\left[\Psi_{int}^1\right]\big|_{\Sigma_e},\\ \left[\partial_{X_e}^{k}\Psi_{e}^1\right]\big|_{\sigma_e}&=0,\quad k=1,\ldots,3,\\
\Psi_{e}^1\big|_{X_e=-\gamma_e(Y)}=0,&\quad \dfrac{\partial\Psi_{e}^1}{\partial n_e}\big|_{X_e=-\gamma_e(Y)}=0,\\
\Psi_e^1\longrightarrow 0&\quad\textrm{when}\quad X_e\rightarrow +\infty.
\end{split}
\end{equation}

The space of admissible boundary corrections at the rough eastern domain remains insufficient to satisfy simultaneously the boundary conditions and the one at infinity. Beyond imposing conditions on $\Psi^1_{int}$, ergodicity assumptions will be needed to guarantee the existence of a solution $\Psi^1_e$ of \eqref{eastern_bl_1}. This question is the main focus of Section \ref{eastern}.

In the western boundary layer domain, $\Psi^1_w$ satisfies the following system
\begin{equation}
\begin{split}
\label{western_bl_1}
    Q_w(\Psi_w^0,\Psi_w^1)+Q_w(\Psi_w^1,\Psi_w^0)+\partial_{X_w}\Psi_w^1-\Delta_w^{2}\Psi_w^1&=F^1,\quad\textrm{in}\quad\omega_w^{+} \cup \omega_w^{-},\\
\left[\Psi_w^1\right]\big|_{\sigma_w}&=-\left[\Psi_{int}^1\right]\big|_{\Sigma_w}-\left[\Psi_{e}^1\right]\big|_{\sigma_w},\\
\left[\partial_{X_w}\Psi_w^1\right]\big|_{\sigma_w}&=-\left[\partial_x\Psi_{int}^1\right]\big|_{\Sigma_w},\\
\left[\partial_{X_w}^{k}\Psi_w^1\right]\big|_{\sigma_w}&=0,\quad k=2,3,\\
\Psi_w^1\big|_{X_w=-\gamma_w(Y)}=0,&\quad \dfrac{\partial\Psi_w^1}{\partial n_w}\big|_{X_w=-\gamma_w(Y)}=0,\\
\Psi_w^1\longrightarrow 0&\quad\textrm{when}\quad X_w\rightarrow +\infty,
\end{split}
\end{equation}
where $F^1=-(\nabla\Psi^0_{int}\cdot\nabla_w)\Delta\Psi^0_w$. The existence of a solution of problem \eqref{western_bl_1} can be shown by following  the same reasoning used for $\Psi^0_w$ (see Section \ref{s:linearized}). 

In the next section, we provide a formal method of proof of well-posedness for the problems previously mentioned with its core ideas and some general computations.

     \section[Existence and uniqueness of the solutions in a rough domain: methodology]{Existence and uniqueness of the solution of an elliptical problem in a rough domain: methodology}\label{methodo}
In hopes of facilitating the comprehension of this work, we describe a general method to prove the existence and uniqueness of the solution of a general problem encompassing all the possible behaviors within the boundary layer; in particular, the nonlinear and linearized western boundary layer systems and the linear eastern boundary layer equations. For $\alpha\in\mathbb{R}$, we start with elliptic differential systems of the form
\begin{equation}\label{eq:2.1}
\begin{split}
\mathcal{L}_\alpha(\Psi) +\mathcal{Q}_{\alpha}(\Psi, \tilde{\Psi})&=F,
\end{split}
\end{equation}
where $F$ is a regular enough source term with sufficient decay at infinity, $\mathcal{L}_\alpha$ is a fourth order elliptic linear differential operator and $\mathcal{Q}_{\alpha}$ is the nonlinear/quasilinear part of the equation. Let us consider for a fixed $\alpha\in\mathbb{R}$
\begin{eqnarray*}
\mathcal{L}_\alpha(\Psi)&=&\pm\partial_X\Psi-\Delta_\alpha^2\Psi\\
\mathcal{Q}_{\alpha}(\bar{\Psi},\Psi )&=&\frac{j}{2}\left((\nabla_\alpha^\perp\bar{\Psi}\cdot\nabla_\alpha)\Delta_\alpha\Psi+(\nabla_\alpha^\perp\Psi\cdot\nabla_\alpha)\Delta_\alpha\bar{\Psi}\right)
\end{eqnarray*}
for $j\in\{0,1,2\}$, and
\begin{eqnarray*}
\nabla_\alpha&=&(\pm\partial_{X},\partial_Y\mp\alpha\partial_{X} ),\quad\nabla^\perp_\alpha=(-\partial_Y\pm\alpha\partial_{X},\pm\partial_{X} )\\
\Delta_\alpha&=&\nabla_\alpha\cdot\nabla_\alpha=\partial_{X}^2+(\partial_Y\mp\alpha\partial_X)^2,\quad\Delta^2_\alpha=\Delta_\alpha\Delta_\alpha=\left(\partial_{X}^2+(\partial_Y\mp\alpha\partial_X)^2\right)^2.
\end{eqnarray*}
In the definition of $\mathcal{L}_{\alpha}$, the factor multiplying $\partial_X$ is linked to the definition of the local variables provided in Section \ref{s:asymptotic}: positive for the western boundary layer and negative in the eastern boundary layer domain. Moreover, $\alpha$ corresponds to the derivative of the function describing the interface between the interior and rough domain (namely $\chi'(y)$), and, therefore, different on each side. 

Let us suppose that equation \eqref{eq:2.1} holds in a domain $\omega=\omega^{+}\cup\sigma\cup\omega^{-}$, where
\begin{eqnarray*}
	\omega^{+}&=&\left\{X>0,\quad Y\in\mathbb{R}\right\},\qquad\sigma=\left\{X=0,\quad Y\in\mathbb{R}\right\}\\
	\omega^{-}&=&\left\{-\gamma(Y)<X<0,\quad Y\in\mathbb{R}\right\},
\end{eqnarray*}
and $\gamma$ is a positive Lipschitz function such that $\inf\gamma>0$. Problem \eqref{eq:2.1} is supplemented with the following jump and boundary conditions:
\begin{equation}\label{eq:2.2}
\begin{split}
\left[\partial_X^k\Psi\right]\big|_{\sigma}&=g_k,\quad k=0,\ldots,3,\\
\Psi\big|_{X=-\gamma(Y)}&=\dfrac{\partial\Psi}{\partial n}\big|_{X=-\gamma(Y)}=0.
\end{split}
\end{equation} 
Here, $n$ denotes the unit outward normal vector of $\gamma$ and $g_k$ are smooth functions.

Let us point out some difficulties related to the proof of existence and uniqueness of the solution of problem \eqref{eq:2.1}-\eqref{eq:2.2}. First, we consider a domain $\omega$ that is not bounded in the tangential direction. Moreover, functions $g_k$ do not decay as $Y$ goes to infinity, so that standard energy estimates are inefficient. As a consequence, only locally integrable functions are considered, which leads to a completely different treatment of the energy estimates. 

If the problem was set in $\omega^-$, with Dirichlet boundary conditions at $\{X=0\}$, one could build a solution $\Psi$ adapting ideas of Lady\v{z}enskaya and Solonnikov for the case of Navier-Stokes flows in tubes \cite{Ladyzenskaja1980}. The existence of the solution in \cite{Ladyzenskaja1980} is proven using an a priori differential inequality on local energies. Unfortunately, this method relies heavily on the bounded direction hypotheses to make possible the application of the Poincaré inequality. Hence, this reasoning is not applicable in our setting.

Moreover, contrary to what happens for the Laplace equation, one cannot rely on maximum principles to get an $L^{\infty}$ bound since we are dealing with a fourth-degree operator.

This problem has been overcome in the literature for the  Stokes boundary layer flow in \cite{Gerard-Varet2010} and, recently, for highly rotating fluids in \cite{Dalibard2017}. The main idea is to impose a so-called transparent boundary condition when the variable in the normal direction is equal to a certain value $M>0$, see Figure \ref{fig2}. 

\begin{figure}[ht!]
	\centering
	\includegraphics[width=0.7\linewidth]{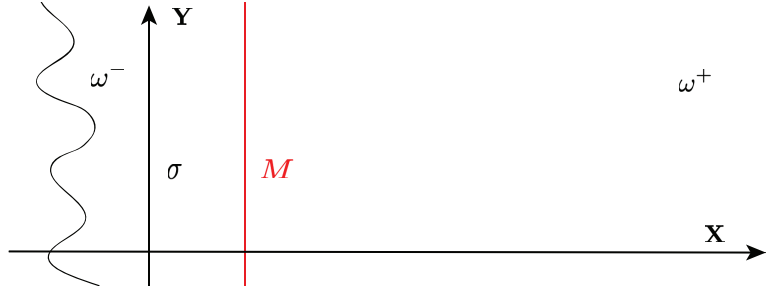}
	\caption{Boundary layer domain $\omega$ with an artificial boundary at $X=M$.}
	\label{fig2}
\end{figure}

This transparent condition separates the original domain in two: a half-plane $\{X>M\}$ and a bumped region bounded in the tangential direction. The Dirichlet problem on the half-space $\{X>M\}$ is solved by means of Fourier analysis and pseudo-differential tools in Kato spaces. The problem on the bumped sub-domain is not suitable for a similar treatment due to the nature of boundary and the fact that Kato spaces are defined through truncations in space\footnote{Similar difficulties arise in \cite{Alazard2016} when studying water waves equations in locally uniform spaces.}. Nevertheless, it is now suitable for the application of the Lady\v{z}enskaya and Solonnikov method \cite{Ladyzenskaja1980}. The remaining step consists of connecting both solutions on the artificial boundary. 

\subsection{Linear case.}\label{l_method}

If $j= 0$, the main steps of the proof are as follows:
\begin{itemize}
	\item [(L1)]	Prove existence and uniqueness of a solution of the linear system in a half-space with boundary data in $H^{3/2}(\mathbb{R})\times H^{1/2}(\mathbb{R})$.
	\begin{equation}\label{pb:halfspace_l}
	\left\{\begin{array}{rcl}
	\mathcal{L}_\alpha(\Psi)&=&F,\quad\textrm{in}\;X>M,\\
	\Psi\big|_{\sigma^M}=\psi_0,&&\partial_X\Psi\big|_{\sigma^M}=\psi_1,
	\end{array}\right.
	\end{equation}
	where $\sigma^M=\left\{X=M,\quad Y\in\mathbb{R}\right\}$. The solution is constructed by means of an integral representation using Fourier analysis. Indeed, we take the Fourier transform with respect to the tangential variable $Y$ and do a thorough analysis of the characteristic equation of the resulting problem. If $F\neq 0$, we compute the fundamental solution using the Green function.    
	\item [(L2)]	Extend this well-posedness result to boundary data in $(\psi_0,\psi_1)\in H^{3/2}_{\mathrm{uloc}}(\mathbb{R})\times H^{1/2}_{\mathrm{uloc}}(\mathbb{R})$ using ideas in \cite{Dalibard2017}. A priori estimates on a solution  of (\ref{pb:halfspace_l}) are established in this scenario. 
	\item [(L3)]	Define the Poincaré-Steklov type operator for functions in $H^{3/2}_{\mathrm{uloc}}(\mathbb{R})\times H^{1/2}_{\mathrm{uloc}}(\mathbb{R})$ using the information recovered from the problem in the half-space $\{X>M\}$ 	and extend the result to the case when the boundary data belongs to a space of uniformly locally integrable functions. The Poincaré-Steklov operator associated to $\mathcal{L}_\alpha(\Psi)$ is a positive  non-local boundary differential operator of the form
\begin{eqnarray}\label{PS}
PS_\alpha:H^{3/2}_{\mathrm{uloc}}(\mathbb{R})\times H^{1/2}_{\mathrm{uloc}}(\mathbb{R})&\rightarrow& H^{-3/2}_{\mathrm{uloc}}(\mathbb{R})\times H^{-1/2}_{\mathrm{uloc}}(\mathbb{R})\\
\begin{pmatrix}
\psi_0\\\psi_1
\end{pmatrix}&\mapsto&\begin{pmatrix}
(1+\alpha^2)\Delta_\alpha\Psi\big|_{\sigma^M}\\
-\left[(1+\alpha^2)\partial_X\mp 2\alpha\partial_Y\right]\Delta_\alpha\Psi\pm\dfrac{\Psi}{2}\Big|_{\sigma^M}\\
\end{pmatrix}=\begin{pmatrix}
\mathcal{A}^{\alpha}_2\left(\psi_0,\psi_1,F\right)\\\mathcal{A}^{\alpha}_3\left(\psi_0,\psi_1,F\right)
\end{pmatrix}\nonumber
\end{eqnarray}
where the form of the differential operators $\mathcal{A}^\alpha_i$, $i=2,3,$ depend greatly on the solution determined in (L1) and is, therefore, particular to each case.
	\item [(L4)] Define an equivalent problem in a domain with transparent boundary condition $\omega^M=\omega\cup \{X=M\}$ and then, solve the problem
	\begin{equation}\label{eq_left_l}
	\begin{split}
	    \mathcal{L}_\alpha(\Psi^-)&=F,\quad\text{in}\:\:\omega^b\setminus\sigma^{M}, \\
	\left[\partial_X^k\Psi^-\right]\big|_{\sigma}&=g_k,\;k=0,\ldots,3,\\
	(1+\alpha^2)\Delta_\alpha\Psi^-\big|_{\sigma^M}&=\mathcal{A}^{\alpha}_2\left(\Psi^-\big|_{\sigma^M},\partial_X\Psi^-\big|_{\sigma^M},F\right),\\
	-\left[(1+\alpha^2)\partial_X\mp2\alpha\partial_Y\right]\Delta_\alpha\Psi^-\pm\dfrac{\Psi^-}{2}\Big|_{\sigma^M}&=\mathcal{A}^{\alpha}_3\left(\Psi^-\big|_{\sigma^M},\partial_X\Psi^-\big|_{\sigma^M},F\right),\\
	\Psi^{-}\big|_{X=-\gamma(Y)}&=\dfrac{\partial\Psi^-}{\partial n}\Big|_{X=-\gamma(Y)}=0,
	\end{split}
	\end{equation}
	where $\omega^{b}$ refers to the rough ``tubular" domain given by $\omega^{b}=\omega^M\setminus(\left\{X>M\right\}\times\mathbb{R})$ and $\mathcal{A}^\alpha_i$, $i=2,3$ are the ones in \eqref{PS}. Note that for $M=0$, $\omega^b=\omega^-$.
	
	\begin{proposition}
		Let $\gamma \in W^{2,\infty}(\mathbb{R})$ and $g_k \in L^\infty(\mathbb{R})$, for $k=1,\dots,3$. Assume that there exists $\delta_0>0$ such that $\|g_k\|_{\infty}<\delta_0$, for all $k=0,\dots,3$. 
		\begin{itemize}
			\item Let $\Psi$ be a solution of \eqref{eq:2.1}-\eqref{eq:2.2} in $\omega$ such that $\Psi\in H^2_{\mathrm{uloc}}(\omega)$. Then $\Psi|_{\omega^M}$ is a solution of \eqref{eq_left_l}, and for $X > M$, $\Psi$ solves \eqref{pb:halfspace_l}, with $\psi_0 := \Psi|_{X=M} \in H^{3/2}_{\mathrm{uloc}}(\mathbb{R})$ and $\psi_1 := \partial_X\Psi|_{X=M} \in H^{1/2}_{\mathrm{uloc}}(\mathbb{R})$.
			\item Conversely, let $\Psi^- \in H^2_{\mathrm{uloc}}(\omega^M)$ be a solution of \eqref{eq_left_l}. Consider the solution $\Psi^+ \in H^2_{\mathrm{uloc}}(\mathbb{R}^2_+)$ of \eqref{pb:halfspace_l}. Setting
			\begin{equation*}
				\Psi(X,\cdot):=\left\{\begin{array}{ccc}
					\Psi^-(X,\cdot)&\mathrm{for}&-\gamma(\cdot)<X<M , \\
					\Psi^+(X,\cdot)& \mathrm{for}&X>M,
				\end{array}\right.
			\end{equation*}
			the function $\Psi \in H^2_{\mathrm{loc}}(\omega)$ is a solution of the problem \eqref{eq:2.1}-\eqref{eq:2.2}.
		\end{itemize}
	\end{proposition}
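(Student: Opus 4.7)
The plan is to establish the two implications separately. For both directions, one relies on the trace theorem extended to the uniformly local spaces $H^s_{\mathrm{uloc}}$, which follows from applying the standard trace theorem to each translate $\tau_k\theta\,u$ in the definition \eqref{kato_def} and taking the supremum over $k$. For the converse direction, the heart of the matter is interpreting the transparent boundary conditions as the precise algebraic relations on $\sigma^M$ that make the distributional jumps of the glued function $\Psi$ vanish when tested against $C^\infty_c(\omega)$ functions.

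\textbf{Forward direction.} Given $\Psi\in H^2_{\mathrm{uloc}}(\omega)$ solving \eqref{eq:2.1}-\eqref{eq:2.2}, its restriction to $\omega^M\setminus\sigma^M$ trivially satisfies the equation, the jump conditions at $\sigma$, and the Dirichlet conditions at $\{X=-\gamma(Y)\}$. Since no jump is prescribed at $\sigma^M$ and $\mathcal{L}_\alpha$ is elliptic there, the traces $\psi_0 := \Psi|_{X=M}$ and $\psi_1 := \partial_X\Psi|_{X=M}$ belong to $H^{3/2}_{\mathrm{uloc}}(\mathbb{R})$ and $H^{1/2}_{\mathrm{uloc}}(\mathbb{R})$ respectively, and $\Psi|_{X>M}$ evidently solves \eqref{pb:halfspace_l}. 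The transparent boundary conditions then hold by the very \emph{definition} of $PS_\alpha$ in \eqref{PS}: applied to $(\psi_0,\psi_1)$ with source $F$, the outputs $\mathcal{A}^\alpha_2$ and $\mathcal{A}^\alpha_3$ are by construction the traces at $\sigma^M$ of the corresponding derivative combinations of $\Psi|_{X>M}$, which by continuity of $\Psi$ across $\sigma^M$ coincide with the analogous traces of $\Psi|_{X<M}$.

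\textbf{Converse direction.} Given $\Psi^-\in H^2_{\mathrm{uloc}}(\omega^M)$ solving \eqref{eq_left_l}, the trace theorem yields $(\psi_0,\psi_1) := (\Psi^-|_{X=M},\partial_X\Psi^-|_{X=M}) \in H^{3/2}_{\mathrm{uloc}}(\mathbb{R})\times H^{1/2}_{\mathrm{uloc}}(\mathbb{R})$, and step (L2) of the methodology furnishes a unique $\Psi^+\in H^2_{\mathrm{uloc}}(\mathbb{R}^2_+)$ solving \eqref{pb:halfspace_l} with these data. By construction the glued function $\Psi$ is continuous across $\sigma^M$ together with its first normal derivative, so $\Psi\in H^2_{\mathrm{loc}}(\omega)$. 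It remains to check that $\mathcal{L}_\alpha(\Psi)=F$ holds in $\mathcal{D}'(\omega)$ across $\sigma^M$. Testing against $\varphi\in C^\infty_c(\omega)$ supported near $\sigma^M$ and integrating by parts on each side produces boundary terms on $\sigma^M$ involving the four quantities $\Psi$, $\partial_X\Psi$, $(1+\alpha^2)\Delta_\alpha\Psi$, and the third-order combination $-[(1+\alpha^2)\partial_X\mp 2\alpha\partial_Y]\Delta_\alpha\Psi\pm\Psi/2$. The first two pair cancel by continuity; the last two cancel precisely because the transparent conditions \eqref{eq_left_l} force both $\Psi^-$ and $\Psi^+$ to produce the same values $\mathcal{A}^\alpha_2(\psi_0,\psi_1,F)$ and $\mathcal{A}^\alpha_3(\psi_0,\psi_1,F)$ at $\sigma^M$.

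\textbf{Main obstacle.} The delicate point is identifying the correct pair of boundary operators whose equality across $\sigma^M$ characterizes weak solutions of the fourth-order problem; this is exactly the pairing encoded in \eqref{PS}, dictated by the integration-by-parts formula for $\mathcal{L}_\alpha=\pm\partial_X-\Delta_\alpha^2$. The $\mp 2\alpha\partial_Y$ contribution to $\mathcal{A}^\alpha_3$ arises from the cross term in $\Delta_\alpha^2$, while the $\pm\Psi/2$ contribution reflects the first-order transport $\pm\partial_X\Psi$; any sign error leaves a spurious jump and destroys the equivalence. Note that the smallness hypothesis $\|g_k\|_\infty<\delta_0$ plays no role in the equivalence itself — it is needed only for existence of $\Psi^-$ and for well-posedness of \eqref{pb:halfspace_l} in (L1)--(L2).
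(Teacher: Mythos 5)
Your argument is correct and is essentially the proof the paper has in mind: the paper states this proposition without proof in the methodology section and, for the analogous equivalence lemmas later (e.g.\ in Sections \ref{s:rough_linear_west} and \ref{rough_part_east}), simply says the result follows from the half-space well-posedness theorem together with the integral representation of the Poincaré--Steklov operator — which is exactly the trace-theorem-plus-weak-formulation gluing you carry out. The only step worth making explicit is that in the forward direction you need the \emph{uniqueness} of the $H^2_{\mathrm{uloc}}$ solution of \eqref{pb:halfspace_l} (and interior elliptic regularity near $\sigma^M$) to identify $\Psi|_{X>M}$ with the solution underlying the definition of $PS_\alpha$, so that the transparent conditions really are satisfied by the traces from the $\omega^M$ side.
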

	\item [(L5)] Consequently, we focus our attention on the existence and uniqueness of solutions of the equivalent problem \eqref{eq_left_l}. To simplify the presentation, we replace in this paragraph the functions $A^\alpha_i\left(\Psi^-\big|_{\sigma^M},-\partial_X\Psi^-\big|_{\sigma^M},F\right)$ by $\rho_i\in H^{3/2-i}_{\mathrm{uloc}}(\mathbb{R})$, $i=2,3$. In fact, the Poincaré-Steklov operator is not local which hinders the application of the ideas in Lady\v{z}enskaya and Solonnikov \cite{Ladyzenskaja1980}, as seen in \cite{Gerard-Varet2010} and \cite{Dalibard2014}. We will address in this difficulty in Section \ref{s:rough_linear_west}. Showing the operators $\mathcal{A}^\alpha_i$ are well-defined depends on the Fourier representation of the solutions in the half-space, and consequently, on the definition of $\nabla_\alpha$ and the domain. We leave the detailed discussion of each case for later. System \eqref{eq_left_l} becomes
	\begin{equation}\label{method_bumped_problem}
	\begin{split}
		\pm\partial_{X}\Psi^--\Delta_{\alpha}^{2}\Psi^-&=F,\;\;\text{in}\;\;\omega^b\setminus\sigma^{M},\\[8pt]
		\left[\partial_X^{k}\Psi^-\right]\big|_{\sigma}&=g_k,\;\;k=0,\ldots,3 ,\\
		(1+\alpha^2)\Delta_{\alpha}\Psi^-\Big|_{\sigma^M}&=\rho_2,\\
				-\left[(1+\alpha^2)\partial_X\mp2\alpha\partial_Y\right]\Delta_\alpha\Psi^-\mp\dfrac{\Psi^-}{2}\Big|_{\sigma^M}&=\rho_3,\\
		\Psi^-\big|_{X=-\gamma(Y)}&=\frac{\partial\Psi^-}{\partial n}\Big|_{X=-\gamma(Y)}=0.
		\end{split}
	\end{equation}
To facilitate the computations, we lift the conditions at the interface $\{X=0\}$ by introducing the function
\begin{equation}\label{lift_lin}
    \Psi^{L}(X,Y)\defeq\chi(X)\sum_{k=0}^3g_k(Y)\dfrac{X^k}{k!},
\end{equation}
where $\chi\in\mathcal{C}_0^{\infty}(\mathbb{R})$ such that $\chi\equiv 1$ near $\sigma$, and $\mathrm{Supp}\chi\subset [0,\frac{M}{2}]$. Thus, $\Psi^{L}\equiv 0$ in $\omega^{-}$ and in $\omega^{+}$ close to $X=M$. Additionally, it satisfies the jump conditions
\begin{equation*}
\left[\partial_X^k\Psi^{L}\right]\Big|_{\sigma}=g_k,\quad k=0,\ldots
,3.
\end{equation*}
For $\tilde{\Psi}=\Psi^{-}-\Psi^{L}$, we have
\begin{equation}
\begin{split}\label{method_lift_problem}
	\pm\partial_{X}\tilde{\Psi}-\Delta_{\alpha}^{2}\tilde{\Psi}&=F^L,\;\;\text{in}\;\;\omega^b\setminus\sigma^M,\\[8pt]
		(1+\alpha^2)\Delta_{\alpha}\tilde{\Psi}\Big|_{\sigma^M}&=\rho_2,\\
				-\left[(1+\alpha^2)\partial_X\mp2\alpha\partial_Y\right]\Delta_\alpha\tilde{\Psi}\mp\dfrac{\tilde{\Psi}}{2}\Big|_{\sigma^M}&=\rho_3,\\
		\tilde{\Psi}\big|_{X=-\gamma(Y)}&=\frac{\partial\tilde{\Psi}}{\partial n}\Big|_{X=-\gamma(Y)}=0,
\end{split}
\end{equation}
where the source term $F^L$ depends also on $g_k$, $k=0,\ldots,3$.

Since a priori estimates are needed, it is useful to write the weak formulation of (\ref{method_lift_problem}). 

\begin{definition}\label{p:weak_formulation_meth}
	Let $\mathcal{V}$ be the space of functions $\varphi\in H^2(\overline{\omega^b})$ such that $\tilde{\Psi}\big|_{\Gamma}=\partial_{\mathrm{n}}\tilde{\Psi}\big|_{\Gamma}=0$ and $\mathrm{Supp}\varphi$ is bounded. A function $\tilde{\Psi}\in H^2_{\mathrm{uloc}}(\omega^b)$ is a solution of \eqref{method_lift_problem}  if it satisfies the homogeneous conditions at the rough boundary, and if, for all $\varphi\in \mathcal{V}$,
	\begin{eqnarray}\label{weak_formulation_meth}
	&&\mp\int_{\omega^b}\partial_{X}\tilde{\Psi}\varphi-\int_{\omega^b}\Delta_\alpha\tilde{\Psi}\Delta_\alpha\varphi\\
	&&\hspace*{1in}=\int_{\omega^b}F^L\varphi-\int_\mathbb{R}\left(\rho_3\pm\dfrac{\tilde{\Psi}}{2}\right)\varphi\big|_{X=M}\;dY-\int_\mathbb{R}\rho_2\partial_{X}\varphi\big|_{X=M}\;dY.\nonumber
	\end{eqnarray}
\end{definition}

Throughout  this step, we will frequently be using the following technical lemma:
\begin{lemma}\label{lemme_tech}
	Let $U$ be a regular open set bounded at least in one direction. Then, for  $f\in H^2(U)$ there exists a constant $C>0$ such that
	\begin{equation}\label{lemme_tech_in}
	\|f\|_{H^2(U)}\leq C\left(\|f\|_{L^2(U)}+\|\Delta_\alpha f\|_{L^2(U)}\right).
	\end{equation} 
		If the function satisfies additionally that $f=\partial_{\mathrm{n}}f=0$ on some part of the boundary $\partial U$, the first term on the right-hand side of \eqref{lemme_tech_in} is not longer needed for the inequality to hold.
	\end{lemma}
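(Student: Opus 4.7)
\textbf{Proof plan for Lemma \ref{lemme_tech}.} The key observation is the ellipticity of $\Delta_\alpha$: written as a second-order operator with constant coefficients, $\Delta_\alpha$ corresponds to the symmetric positive definite matrix $A_\alpha = \begin{pmatrix} 1+\alpha^2 & \mp\alpha \\ \mp\alpha & 1 \end{pmatrix}$ of determinant $1$. The linear, volume-preserving change of variables $(\tilde X,\tilde Y) = (X,\, Y \mp \alpha X)$ turns $\Delta_\alpha$ into the classical Laplacian $\tilde\Delta = \partial_{\tilde X}^2 + \partial_{\tilde Y}^2$ on the image $\tilde U$, which remains a regular open set bounded in at least one direction. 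Since this transformation induces equivalence of the norms $\|\cdot\|_{H^k(U)}$ and $\|\cdot\|_{H^k(\tilde U)}$ (with constants depending only on $\alpha$), it suffices to prove the lemma for $\tilde\Delta$ on $\tilde U$.

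For the first inequality, I would invoke classical $H^2$ elliptic regularity for the Laplacian. In the interior, Plancherel gives the identity $\|D^2 f\|_{L^2(\mathbb{R}^2)} = \|\Delta f\|_{L^2(\mathbb{R}^2)}$, which, combined with the Gagliardo--Nirenberg interpolation $\|\nabla f\|_{L^2}^2 \leq C\|f\|_{L^2}\|D^2 f\|_{L^2}$, produces the desired estimate on compact subsets. Near the boundary, I would locally flatten $\partial U$ via a $C^{1,1}$ diffeomorphism, introduce a suitable cutoff, and close the estimate through the standard difference-quotient argument of Nirenberg (cf.\ Evans Ch.~6 or Gilbarg--Trudinger). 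Summing the local estimates over a finite cover of a neighborhood of $\partial U$ and using the interior estimate on the remaining part yields the claimed inequality, with a constant $C$ depending on $U$ and $\alpha$.

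For the second assertion, when $f$ and $\partial_n f$ vanish on a portion $\Gamma \subset \partial U$, I would apply the Poincar\'e inequality twice in order to absorb the $\|f\|_{L^2(U)}$ term. First, the boundedness of $U$ in at least one direction together with $f|_\Gamma = 0$ yields $\|f\|_{L^2(U)} \leq C\|\nabla f\|_{L^2(U)}$, by the fundamental theorem of calculus in the bounded direction. Second, the condition $f|_\Gamma = 0$ forces every tangential derivative of $f$ to vanish on $\Gamma$; coupled with $\partial_n f|_\Gamma = 0$, this gives $\nabla f|_\Gamma = 0$, so that a componentwise Poincar\'e argument applied to $\nabla f$, followed by the first inequality, gives $\|\nabla f\|_{L^2(U)} \leq C\|D^2 f\|_{L^2(U)} \leq C\|\Delta_\alpha f\|_{L^2(U)}$. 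Chaining these bounds yields $\|f\|_{L^2(U)} \leq C\|\Delta_\alpha f\|_{L^2(U)}$, which may then be reabsorbed to drop the $\|f\|_{L^2}$ term from the right-hand side.

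The main technical obstacle is handling the boundary contributions: one must verify that the regularity of $\partial U$ (and of its image $\partial \tilde U$ under the linear change of variables) is sufficient for the elliptic bootstrap, and that the portion $\Gamma$ where $f$ and $\partial_n f$ vanish supports the one-dimensional Poincar\'e argument in the bounded direction. In the applications of this lemma within the paper, where typically $U = \omega^b$ is a tubular domain of bounded width in $X$ with rough bottom $\{X = -\gamma(Y)\}$ for $\gamma \in W^{2,\infty}$ bounded below, the bounded direction is $X$ and the vanishing portion is a graph over the unbounded variable $Y$, so both requirements are met without difficulty.
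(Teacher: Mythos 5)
Your reduction of $\Delta_\alpha$ to the standard Laplacian by the determinant-one linear change of variables is a legitimate and in fact rather clean way to handle the anisotropy (the paper instead compares the symbols of $\Delta$ and $\Delta_\alpha$ via Plancherel, with a lower bound that as written only works for $|\alpha|<1$, whereas your route works for all $\alpha$). However, there is a genuine gap in your treatment of the second assertion. You claim the chain $\|\nabla f\|_{L^2}\leq C\|D^2 f\|_{L^2}\leq C\|\Delta_\alpha f\|_{L^2}$, with the second inequality justified ``by the first inequality.'' But the first inequality only yields $\|D^2 f\|_{L^2}\leq C\left(\|f\|_{L^2}+\|\Delta_\alpha f\|_{L^2}\right)$, and disposing of the $\|f\|_{L^2}$ term is exactly what you are in the middle of proving; combining it with Poincar\'e gives $\|D^2 f\|_{L^2}\leq C'\|D^2 f\|_{L^2}+C\|\Delta_\alpha f\|_{L^2}$ with no control on $C'$, so nothing can be absorbed. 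The paper closes this loop differently: for $f$ vanishing together with $\nabla f$ on the boundary one has the \emph{identity} $\|D^2 f\|_{L^2}=\|\Delta f\|_{L^2}$, obtained by integrating $\int f_{x_ix_i}f_{x_jx_j}$ by parts twice with no boundary contributions, and only then compares $\Delta$ with $\Delta_\alpha$. In your changed variables the same identity $\|D^2\tilde f\|_{L^2(\tilde U)}=\|\tilde\Delta\tilde f\|_{L^2(\tilde U)}$ is what you need; it must be proved directly, not extracted from the a priori estimate. (Note also that this identity genuinely requires the vanishing of $f$ and $\nabla f$ on the whole of $\partial U$, or a separate treatment of the boundary terms on the remaining portion.)

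A second, related weakness concerns the first inequality itself: the boundary part of the difference-quotient argument cannot be closed for an arbitrary $f\in H^2(U)$ carrying no boundary condition, because the tangential regularity at the boundary is normally recovered from the boundary data. Indeed, with a constant depending only on $U$ the estimate $\|f\|_{H^2(U)}\leq C\left(\|f\|_{L^2(U)}+\|\Delta f\|_{L^2(U)}\right)$ fails on the unit disk for the harmonic functions $\Re(z^n)$ normalized in $L^2$. The interior estimate and the estimate up to any boundary portion where $f=\partial_{\mathrm{n}}f=0$ are fine, but as a global statement your first paragraph (like the lemma's first assertion, which the paper itself only gestures at via Poincar\'e--Wirtinger) needs either a boundary condition or boundary trace terms on the right-hand side.
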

We refer to Appendix \ref{a:lemm_tech} for a proof. Note that a direct result from Lemma \ref{lemme_tech} is that controlling the $L^2$-norm of $\Delta_{\alpha} f$  immediately provides $f\in H^2(U)$. This property is easily generalized to Kato spaces.
	
\begin{itemize}	\item\textit{Energy estimates for \eqref{method_lift_problem}}. We introduce, for all $n\in\mathbb{N}$, $k\in\mathbb{N}$
	\begin{equation}\label{dom_n}
	\begin{split}
		\omega_{n}&\defeq\omega^{b}\cap \left\{(X,Y)\in\mathbb{R}^{2}:\;|Y|<n\right\},\quad \omega_{k+1,k}=\omega_{k+1}\setminus\omega_{k},\\
		\sigma_{n}^{M}&\defeq\left\{X=M,\;\textrm{and}\; |Y|<n, \;Y\in\mathbb{R}\right\},\quad \sigma^M_{k+1,k}=\sigma_{k+1}^M\setminus\sigma_{k}^M,\\
		\Gamma_n&=\{X=-\gamma(Y),\:\:|Y|< n\}.
		\end{split}
	\end{equation}

We consider the system (\ref{method_lift_problem}) in $\omega_{n}$
\begin{eqnarray*}\label{method_lift_problem_n}
	\pm\partial_{X}\tilde{\Psi}_n-\Delta_{\alpha}^{2}\tilde{\Psi}_n&=&F^L\\[8pt]
	(1+\alpha^2)\Delta_{\alpha}\tilde{\Psi}_n\Big|_{\sigma^M}&=&\rho_2\\
	-\left[(1+\alpha^2)\partial_X\mp2\partial_Y\right]\Delta_\alpha\tilde{\Psi}_n\mp\dfrac{\tilde{\Psi}_n}{2}\bigg|_{\sigma^M}&=&\rho_3\\
	\tilde{\Psi}_n\big|_{\Gamma_n}&=&\partial_{\mathrm{n}}\tilde{\Psi}\big|_{\Gamma_n}=0.
\end{eqnarray*}

	In order to prove the existence of the solution of (\ref{eq_left_l}), we derive $H^{2}_{\mathrm{uloc}}$ estimates on $\Psi_n$, uniform with respect to $n$. Then, passing to the limit when $n\rightarrow +\infty$, we achieve our goal. Indeed, taking $\tilde{\Psi}_n$ as a test function in (\ref{weak_formulation_meth}), we obtain
	\begin{eqnarray}\label{Prange_326_m}
	\|\Delta_\alpha\tilde{\Psi}_n\|^{2}_{L^{2}(\omega_n)}&=&-\int_{\omega_{n}}F^L\tilde{\Psi}_{n}+\int_{\sigma_n^{M}}\rho_3\tilde{\Psi}_n+\int_{\sigma_n^{M}}\rho_2\partial_X\tilde{\Psi}_n\nonumber\\
	&\leq&C_1\sqrt{n}\left(\|F\|_{L^2_{\mathrm{uloc}(\omega^b)}}+\sum_{k=0}^3\|g_k\|_{L^\infty(\mathbb{R})}\right)\|\tilde{\Psi}\|_{H^2(\omega_n)}\\
	&&+\,C_2 \sqrt{n}\left(\|\rho_3\|_{L^{2}_{\mathrm{uloc}}(\mathbb{R})}\|\tilde{\Psi}_n\big|_{\sigma^M}\|_{L^{2}([-n,n])}\right.\\
	&&\left.+\|\rho_2\|_{L^{2}(\omega_n)}\|\partial_X\tilde{\Psi}_n\big|_{\sigma^M}\|_{L^{2}(([-n,n])}\right)\nonumber,
	\end{eqnarray}
using the Cauchy-Schwarz and Poincaré inequalities over $\omega_n$. Thus, 
\begin{equation}\label{C0_dep}
\begin{split}
\|\Delta_\alpha\tilde{\Psi}_n\|^{2}_{L^{2}(\omega_n)}&\leq \left(\delta+\|F\|^2_{L^2_{\mathrm{uloc}}(\omega^b)}\right)\left( \|\tilde{\Psi}_n|_{X=M}\|^2_{H^1([-n,n])}+\|\Psi\|^2_{L^2(\omega_n)}\right)\\
&+C_\delta n\left(\sum_{k=0}^3\|g_k\|^2_{L^\infty(\mathbb{R})}+\|\rho_2\|^2_{L^2_{\mathrm{uloc}}(\mathbb{R})}+\|\rho_3\|^2_{L^2_{\mathrm{uloc}}(\mathbb{R})}\right)
\end{split}
\end{equation}

 Notice that the first term on the r.h.s of the previous inequality can be absorbed by the one on the l.h.s for $\delta$ and $F$ small enough. Then, using Poincar\'e inequality over the whole channel yields
\begin{equation}\label{DP:2-37_m}
E_n:=\int_{\omega^{b}}|\Delta_\alpha\tilde{\Psi}_n|^{2}\leq\int_{\omega_n}|\Delta_\alpha\tilde{\Psi}_n|^{2}\leq C_0(g_0,\ldots,g_3,\rho_2,\rho_3)n.
\end{equation}
where constant $C_0$ depends on $\alpha$ and the size of the jumps and the values of the differential operators at the artificial boundary, as seen in \eqref{C0_dep}. 
The existence of $\tilde{\Psi}_n$ in $H^2(\omega_n)$ follows. 
	
Therefore, we resort to performing energy estimates on the system (\ref{method_lift_problem}), following the strategy of Gérard-Varet and
	Masmoudi \cite{Gerard-Varet2010}. The idea is to use the quantity
	\[
	E_k^{n}:=\int_{\omega_k}|\Delta_\alpha\tilde{\Psi}_n|^{2},\]
	to derive an induction inequality on $(E_k^{n})_{k\in\mathbb{N}}$, for all $n\in\mathbb{N}$.
	 Hence, we consider $\varphi=\chi_k\Psi$, where $\chi_k\in\mathcal{C}^{\infty}_0 (\mathbb{R})$ is a cut-off function in the tangential variable such that $\mathrm{Supp}\chi_k \subset [-k-1,k+1]$ and $\chi_n\equiv 1$ on $[-k,k]$ for $k\in\mathbb{N}$. Since the problem is defined in a two-dimensional domain, the support of $\nabla^{j}\chi_k$, $j=1,\ldots,4$, is included in the reunion of two intervals of size $1$.

	Let us explain the overall strategy. We shall first derive the following inequality for all $k\in\left\{1,\ldots, n\right\}$ 
	\begin{equation}\label{inequality1_m}
		E_k^{n}\leq C_1\left((E_{k+1}^{n}-E_k^{n})+\left(\sum_{k=0}^3\|g_k\|^2_{L^\infty(\mathbb{R})}+\|\rho_2\|^2_{H^{m-1/2}_{\mathrm{uloc}}(\mathbb{R})}+\|\rho_3\|^2_{H^{m-3/2}_{\mathrm{uloc}}(\mathbb{R})}\right)(k+1)
		\right).
		\end{equation}
Here,  $C_1$ is a constant depending only on the characteristics of the domain.

	Then, by backward induction on $k$, we deduce that
	\begin{equation*}
		E_k^{n}\leq Ck,\quad\forall k\in\{k_0,\ldots,n\},
	\end{equation*}
	where $k_0\in\mathbb{N}$ is a large, but fixed integer (independent of $n$)  and $E_{k_0}^{n}$ is bounded uniformly in $n$ for a constant $C$ depending on $\omega^{b}$, $g_k$, $k=0,\ldots,3$ and $F$. This provides the uniform boundness for a maximal energy of size $k_0$. Since the derivation of energy estimates is invariant by translation on the tangential variable, we claim that
	\begin{equation}
	\underset{a\in\mathcal{I}_{k_0}}{\sup}\int_{\{{(-1,M)}\times a\}\cap\;\omega^{b}}|\Delta_\alpha\tilde{\Psi}_n|^{2}\leq C.
	\end{equation}
	
	The set $\mathcal{I}_{k_0}$ contains all the intervals of length $2k_0$ in $[-n,n]$ with extremities in $\mathbb{Z}$.
	Consequently, the uniform $H^{2}_{\mathrm{uloc}}$ bound on $\Psi^{n}$ is proved and an exact solution can be found by compactness. Indeed, by a diagonal argument, we can
	extract a subsequence $(\tilde{\Psi}_{r(n)})_{n\in\mathbb{N}}$ such that
	\[\begin{array}{rclcl}
	\tilde{\Psi}_{r(n)}&\rightharpoonup&\Psi&\textrm{weakly in}&H^{2}(\omega_{k}),\\
	\tilde{\Psi}_{r(n)}\Big|_{\sigma^M}&\rightharpoonup&\Psi\Big|_{\sigma^M}&\textrm{weakly in}&H^{3/2}(\sigma^M_{k}),\\
	\partial_{X}\tilde{\Psi}_{r(n)}\Big|_{\sigma^M}&\rightharpoonup&\partial_{X}\Psi\Big|_{\sigma^M}&\textrm{weakly in}&H^{1/2}(\sigma^M_{k}),
	\end{array}\]
	for all $k\in\mathbb{N}$. Of course, $\tilde{\Psi}$ is a solution of \eqref{method_lift_problem}, and, consequently, $\Psi^-\in H^{2}_{\mathrm{uloc}}(\omega^b)$ is solution of system \eqref{eq_left_l}.
	
	To lighten notations in the subsequent proof, we shall denote $E_k$ instead of $E^{n}_k$.

	\item\textit{Deriving the inequality.}  This part contains the proof of (\ref{inequality1_m}). Taking $\chi_k\tilde{\Psi}$ as test function in \eqref{weak_formulation_meth} provides the following expression for the l.h.s.
	\begin{equation}\label{bt_1}
	\begin{split}
	    \mp\int_{\omega^b}\partial_X\tilde{\Psi}\chi_k\tilde{\Psi}-\int_{\omega^b}\Delta_\alpha\tilde{\Psi}\Delta_\alpha(\chi_k\tilde{\Psi})&=\pm\dfrac{1}{2}\int_{\mathbb{R}}\chi_k\Psi^2\big|_{X=M}-E_k\\
	    &-2\int_{\omega^b} \Delta_\alpha\tilde{\Psi}\nabla_\alpha\chi_k\cdot\nabla_\alpha\tilde{\Psi}-\int_{\omega^b}\tilde{\Psi}\Delta_\alpha\tilde{\Psi}\partial_Y^{2}\chi_k\\
	    &-\int_{\omega_{k+1}\setminus\omega_k}\chi_k|\Delta_\alpha\tilde{\Psi}|^2.
	\end{split}
	\end{equation}
	For the third term, we simply use the Cauchy–Schwarz and Poincar\'e inequalities:
	\begin{equation}\label{commut_bound}
	\left|\int_{\omega^b} \Delta_\alpha\tilde{\Psi}\nabla_\alpha\chi_k\cdot\nabla_\alpha\tilde{\Psi}\right|\leq C\left(\int_{\omega_{k+1,k}}|\Delta_\alpha\tilde{\Psi}|^{2}\right)^{1/2}\left(\int_{\omega_{k+1,k}}|\nabla_\alpha\tilde{\Psi}|^{2}\right)^{1/2}\leq C(E_{k+1}-E_k).
	\end{equation}
	In the same fashion, we find that $\int_{\omega^b}\tilde{\Psi}\Delta_\alpha\tilde{\Psi}\partial_Y^{2}\chi_k$ and $\int_{\omega_{k+1}\setminus\omega_k}\chi_k|\Delta_\alpha\tilde{\Psi}|^2$ are also bounded by $C(E_{k+1}-E_k)$.
	
	Gathering all boundary terms stemming from the biharmonic operator and the first term in the r.h.s. of \eqref{bt_1} yields
	\begin{equation*}
	-\int_{\mathbb{R}}\chi_k\left(\rho_3\tilde{\Psi}\big|_{X=M}+\rho_2\partial_X\tilde{\Psi}\Big|_{X=M}\right).
	\end{equation*}
	
	The term above is bounded by
	\[ C\left(\|\rho_2\|_{L^{2}_{\mathrm{uloc}}}^{2}+\|\rho_3\|_{L^{2}_{\mathrm{uloc}}}^{2}\right)\left(E_{k+1}+(k+1)\right)
	\]
	where $C$ depends only on $M$, $\alpha$ and on $\|\gamma\|_{W^{2,\infty}}$. The computation of this bound relies on the trace theorem and Young's inequality. 
	
	We are left with
	\begin{eqnarray*}
		\left|\int_{\omega^b}\chi_k F^L\tilde{\Psi}\right|
		&\leq&C\left(\sum_{k=0}^3\|g_k\|_{L^\infty(\mathbb{R})}\right)E_{k+1}^{1/2}\sqrt{k+1}+\|F\|_{L^2_{\mathrm{uloc}(\omega^b)}}E_{k+1}^{1/2},\\
		&\leq&C_{\nu}\left(\sum_{k=0}^3\|g_k\|^2_{L^\infty(\mathbb{R})}\right)(k+1)+(\nu+\|F\|^2_{L^2_{\mathrm{uloc}}(\omega^b)})E_{k+1}.
	\end{eqnarray*}
	Lastly, combining all the estimates and taking $\nu$ and $\|F\|^2_{L^2_{\mathrm{uloc}}(\omega^b)}$ small enough give
	\begin{eqnarray*}
		E_k&\leq& C_1\left((E_{k+1}-E_k)+C_2(k+1)\right),
	\end{eqnarray*}
where $C_1$ is a constant independent of $k$ and 
\begin{equation}
C_2:=C_2(g_0,\ldots,g_3,\rho_2,\rho_3)=\left(\sum_{k=0}^3\|g_k\|^2_{L^\infty(\mathbb{R})}+\|\rho_2\|_{H^{m-1/2}_{\mathrm{uloc}}}^{2}+\|\rho_3\|_{H^{m-3/2}_{\mathrm{uloc}}}^{2}\right).
\end{equation}

	\item \textit{Induction.}
	Our goal is to show from (\ref{inequality1_m}) that there exists $k_0\in\mathbb{N}\setminus\{0\}$, $C>0$ such that, for all $n\in\mathbb{N}$
	\begin{equation}\label{DP:3-2_m}
	\int_{\omega_{k_0}}|\Delta_\alpha\tilde{\Psi}_n|^{2}\leq C.
	\end{equation}
	From (\ref{inequality1_m}), we claim that induction on $n-k$ indicates there exists a positive constant $C_3$ depending only on $C_0$, $C_1$ and $C_2$ appearing respectively in (\ref{DP:2-37_m}) and (\ref{DP:3-2_m}), such that, for all $k> k_0$,
	\begin{equation}\label{DP:3-3_m}
	E_k\leq C_3C_2(k+1).
	\end{equation}
	Let us insist on the fact that $C_3$ is independent of $n$, $k$ and will be adjusted in the course of the induction argument.
	
	First, notice that thanks to (\ref{DP:2-37_m}), (\ref{DP:3-3_m}) is true for $k= n$ once $C_3> C_0C_2^{-1},$ recalling that $\tilde{\Psi}_n=0$ on $\omega^{b}\setminus\omega_n$. We then assume that (\ref{DP:3-3_m}) holds for $n, n-1,\ldots,k+1$, where $k$ is a positive integer.
	
	We prove (\ref{DP:3-3_m}) at rank $k$ by contradiction. Assume that (\ref{DP:3-3_m}) does not hold at the rank $k$. Then, the induction implies
	\[
	E_{k+1}-E_k<C_3C_2.
	\]
	Since $C_0, C_1> 0$ are fixed and depend on $\alpha$ and $\|\gamma\|_{W^{2,\infty}}$ (see (\ref{DP:2-37_m}) for the definition of $C_0$), substituting the above inequality in (\ref{inequality1_m}) yields
	\begin{eqnarray}\label{DP:3-6_m}
	C_3C_2(k+1)<&E_k&\leq C_1C_2C_3+C_1C_2(k+1).
	\end{eqnarray}
	 Taking $C_3\geq 2C_1$ and plugging it in (\ref{DP:3-6_m}) results in a contradiction for $k>k_0$, where $k_0=2C_1+1$. Therefore, (\ref{DP:3-3_m}) is true at the rank $k>k_0$. Moreover, since $E_k$ is an increasing functional with respect to the value of $k$, we obtain that $E_k$ is also bounded for $k\leq k_0$.
	It follows from \eqref{DP:3-3_m}, choosing $k=2$, that there exists a constant $C>0$, depending only on $C_0,C_1,C_2,C_3$, and therefore only on $\alpha$, $\|\gamma\|_{L^\infty(\mathbb{R})}$ and on the norms on $g_k$, $k=0,\ldots,3$ and $\rho_i$, $i=2,3$, such that,
	\begin{equation}\label{DP:3-9_m}
	E_{ k_0}\leq E_{k_0+1}\leq C(k_0+1).
	\end{equation}
	Let us now consider the set $\mathcal{I}_{k_0}$ of all segments contained in $\{(M,Y):\quad |Y|\leq n\}$ of length $2k_0$. As $\mathcal{I}_{k_0}$ is finite, there exists an interval $a$ in $\mathcal{I}_{k_0}$ which maximizes
	\begin{equation*}
	\left\{\|\tilde{\Psi}_n\|_{H^2(\omega_a)}:\quad a\in\mathcal{I}_{k_0}\right\},
	\end{equation*}
	where $\omega_a =\{x\in\omega^b:\quad Y\in a \}$. We then shift $\tilde{\Psi}_n$ in such a manner that $a$ is centered at $0$. We call $\bar{\Psi}_n$ the shifted function. It is still compactly supported, but in $\omega_{2n}$ instead of $\omega_n$:
	\begin{equation*}
	\int_{\omega_{2n}}|\Delta_{\alpha}\bar{\Psi}_n|^2=\int_{\omega_{n}}|\Delta_{\alpha}\tilde{\Psi}_n|^2\quad\textrm{and}\quad \int_{\omega_{k_0}}|\Delta_{\alpha}\bar{\Psi}_n|^2=\int_{\omega_{a}}|\Delta_{\alpha}\tilde{\Psi}_n|^2.
	\end{equation*}
	Analogously to $E_k$, we define $\bar{E}_k$. The arguments leading to the derivation of energy estimates are
	invariant by horizontal translation, and all constants depend only on the parameter $\alpha$ and the norms on $g_k$, $\rho_i$, $i=2,3$, $F$ and $\gamma$, so \eqref{DP:3-9_m} still holds when $E_k$ is replaced by $\bar{E}_k$. On the other hand, $\bar{E}_{k_0}$ maximizes $\|\bar{\Psi}_n\|^2_{H^2(\omega_a)}$ on the set of intervals of length $2k_0$. This estimate being uniform, we can take $k_0$ large enough and obtain
	\begin{equation*}
	\underset{a\in I_{k_0}}{\sup}\|\tilde{\Psi}_n\|_{H^2(((0,-1)\times a)\cap\omega^b)} <\infty,
	\end{equation*}
	which means that $\tilde{\Psi}_n$ is uniformly bounded in $H^2(\omega^b)$.
		\item \textit{Uniqueness.}  To establish uniqueness, we consider $\Psi=\Psi_1-\Psi_2$, where $\Psi_i$, $i=1,2$, are solutions of  the original problem. The goal is to show that the solution $\tilde{\Psi}$ of the following problem is identically zero. 
\begin{eqnarray}\label{pb:uniqueness_m}
\pm\partial_{X}\Psi-\Delta_\alpha^{2}\Psi&=&0\quad\textrm{in}\;\omega^b,\nonumber\\
\Psi\big|_{X=-\gamma(Y)}=\partial _{n}\Psi\big|_{X=-\gamma(Y)}&=&0,\nonumber\\
\end{eqnarray}
We proceed similarly as in the ``existence part'' by multiplying the equation in (\ref{pb:uniqueness_m}) by $\Psi_k=\chi_k\Psi$ and integrating over $\omega^{b}$. The resulting induction relation is
\begin{equation*}
E_k\leq C(E_{k+1}-E_k).
\end{equation*}	
Since $E_{k+1}-E_k$ is uniformly bounded in $k$, we obtain $E_k\leq C$ uniformly in $k$, meaning that the difference between two solutions belongs to $H^{2}$. Hence, we can multiply the  equation on $\Psi$ by $\Psi$ itself and integrate by parts, disregarding $\chi_k$. This leads to
\begin{equation*}
(1-C\delta_0)\int_{\omega^{b}}|\Delta_\alpha\Psi|^{2}\leq 0,
\end{equation*}
which provides the uniqueness result when $\delta_0 <C^{-1}$.
\end{itemize}
The values of $\rho_2$ and $\rho_3$ are later replaced by the corresponding non-local operators.
\end{itemize}

\subsection{Nonlinear/ linearized problem.}\label{nl_linearized_method}

If $j\in\{1,2\}$, we proceed as follows:
\begin{itemize}
	\item [(NL1)] The well-posedness of the system  on the half-space
	\begin{equation}\label{pb:halfspace_nl}
	\left\{\begin{array}{rcl}
	\mathcal{L}_\alpha(\Psi)+\mathcal{Q}_\alpha(\bar{\Psi},\Psi)&=&F,\quad\textrm{in}\;X>M,\\
	\Psi\big|_{\sigma^M}=\psi_0,\quad\partial_X\Psi\big|_{\sigma^M}&=&\psi_1,
	\end{array}\right.
	\end{equation}
	for small enough but non-decaying boundary data $\psi_0$ and $\psi_1$ and source term $F$ is shown by combining estimates of the linear problem for a certain source function $\tilde{F}$ (steps (L1) and (L2)) with a fixed point argument. 
	The problem is clearly nonlinear when $\bar{\Psi}=\Psi$. This corresponds to the case when $j=1$ and the solution is obtained under a smallness assumption by applying a fixed point theorem in a space of exponentially decaying functions.
	
	The linearized problem ($j=2$), the solution $\Psi$ is sought in a similar manner, with the particularity of only assuming $\bar{\Psi}$ is small enough. 
	\item [(NL2)] For any $(\rho_2,\rho_3)\in H^{-1/2}_{\mathrm{uloc}}(\mathbb{R})\times H^{-3/2}_{\mathrm{uloc}}(\mathbb{R})$ small enough, we introduce the function $\Psi^{-}$ satisfying the following problem in the rough domain $\omega^{b}=\omega\setminus(\left\{X>M\right\}\times\mathbb{R})$
	\begin{equation}\label{eq_left_nl}
	\left\{\begin{array}{rcl}
	\mathcal{L}_\alpha(\Psi^{-})+\mathcal{Q}_\alpha(\bar{\Psi}^{-},\Psi^{-})&=&F,\quad \textrm{in}\quad\omega^{b}\setminus\sigma^M \\
	\left[\partial_X^k\Psi^{-}\right]\big|_{\sigma}&=&g_k,\;k=0,\ldots,3,\\
	\Psi^{-}\big|_{X=-\gamma(Y)}&=&\dfrac{\partial\Psi^{-}}{\partial n}\big|_{X=-\gamma(Y)}=0,\\ 
	\mathcal{A}_i^-\left(\Psi^{-}\big|_{\sigma^M},\partial_X\Psi^{-}\big|_{\sigma^M}\right)&=&\rho_i,\;i=2,3.
	\end{array}\right.
	\end{equation}
	Here, $\mathcal{A}_2$ and $\mathcal{A}_3$ are the second and third-degree components of the Poincaré-Steklov operator, defined at the transparent boundary in the rough channel. The nonlinear/quasilinear nature of $\mathcal{A}_3$ depends on the choice of the function $\bar{\Psi}$ since it contains the boundary terms stemming from $\mathcal{Q}_\alpha$.
	
	The proof of existence and uniqueness of the solution of \eqref{eq_left_nl} follows the same ideas of (L5). The goal is to obtain uniform estimates on the quantity $E_n^k$ by means of backward induction and then apply it to a translated channel to get a uniform local bound. The first obvious difference resides naturally in the induction relation. Here, the inequality is
	\begin{equation}\label{inequality_nl}
	E_k^{n}\leq \tilde{C}_1(E_{k+1}^{n}-E_k^{n})^{3/2}+C_1(E_{k+1}^{n}-E_k^{n})+C_2(k+1),\quad \forall k\in\left\{1,\ldots, n\right\},
	\end{equation}
	where $\tilde{C}_1$ and $C_1$ are constants depending only on the domain, while $C_2$ is determined by the norms of $g_k$, $k=0,\ldots,3$ and $\rho_i$, $i=2,3$. Relation \eqref{inequality_nl} is obtained using a truncation over $\omega_k$ and energy estimates. The smallness assumption on the boundary data (resp. on $\bar{\Psi}$) is essential in the nonlinear (resp. linearized) case since it guarantees for the terms derived from $\mathcal{Q}_\alpha$ to be absorbed by the truncated energy on the r.h.s. In particular, for the linearized case, we have that $\tilde{C}_1=0$ in \eqref{inequality_nl}.  
	\item [(NL3)] Then, we will introduce the solution $\Psi^{+}$ of (\ref{pb:halfspace_nl}) with $\psi_0=\Psi^{-}\big|_{X=M}$ and\\ \hbox{$\psi_1=\partial_X\Psi^{-}\big|_{X=M}$} and connect the solutions $\Psi^-$ and $\Psi^+$  at the transparent boundary. The strategy is to apply the implicit function theorem to a certain map $$\mathcal{F}:=\mathcal{F}(g_0,\ldots,g_3, \rho_2,\rho_3),$$ to find a solution of $\mathcal{F}=0$ in a neighborhood of zero. To do so, we first prove that $\mathcal{F}$ is a $\mathcal{C}^{1}$ mapping in a neighborhood of the transparent boundary, which means, in turn, that higher regularity of the solution is needed. 
	
	\item [(NL4)] Once the regularity estimates have been computed, we define the mapping $\mathcal{F}=(\mathcal{F}_1,\mathcal{F}_2)$, where
	\begin{equation}
	\begin{split}
	&\mathcal{F}_1(g_0,\ldots,g_3,\rho_2, \rho_3)=\mathcal{A}_2\left(\Psi^{+}\big|_{X=M},\partial_X\Psi^{+}\big|_{X=M}, F\right) -\rho_2,\\
	&\mathcal{F}_2(g_0,\ldots,g_3,\rho_2, \rho_3)=\mathcal{A}_3\left(\Psi^{+}\big|_{X=M},\partial_X\Psi^{+},\big|_{X=M}, F\right)-\rho_3.
	\end{split}
	\end{equation} The point will be to establish that for small enough $g_k$, $k=0,\ldots,3$ the system $$\left\{\begin{array}{l}
	\mathcal{F}_1(g_0,\ldots,g_3,\rho_2, \rho_3)=0,\\\mathcal{F}_2(g_0,\ldots,g_3,\rho_2, \rho_3)=0,
	\end{array}\right.$$ has a unique solution , provided that $\mathcal{F}_i(0,\ldots,0)=0$, for $i=1,2$. This result will be obtained via the implicit function theorem. When verifying that $d\mathcal{F}(0,\ldots,0)$ is an isomorphism of $H^{m-1/2}(\mathbb{R})\times H^{m-3/2}(\mathbb{R})$, we need that the only solution of the linear problem  \eqref{eq:2.1}-\eqref{eq:2.2}, when $g_k\equiv 0$ for all $k$ is $\Psi=0$. This shows once again how intrinsically connected the linear and nonlinear/linearized problems are. 
	
	Therefore, the field $\Psi$ defined by $\Psi^{\pm}$ on  each side of the transparent condition will be a solution of (\ref{eq:2.1})-(\ref{eq:2.2}). The definitions of $\Psi^{+}$ and tensors $\mathcal{A}_2^\pm\left(\cdot,\cdot,\cdot\right)$ and $\mathcal{A}_3^\pm\left(\cdot,\cdot,\cdot\right)$ provide that $\left[\partial_{X}^{k}\Psi\right]\big|_{X=M}=0$, for $k=0,\ldots,3$.
\end{itemize}

This section is a blueprint for the proofs in the remainder of the paper, and we will refer to it profusely. Especially in the derivation of energy estimates, where only the terms different from the ones discussed above will be presented.
\section{Western boundary layer: the linear case}\label{western_l}
     This section is devoted to showing the well-posedness of the western boundary layer problems in a general regime. The western boundary layer plays a fundamental role in basin-scale wind-driven ocean circulation, and it has been long studied in several theoretical works, e.g., \cite{Stommel1948,Munk1950}.
In idealized ocean models with a flat bottom, this layer is required not only to balance
the interior Sverdrup transport to close the gyre circulation, but also to dissipate the vorticity imposed by
the wind-stress curl \cite{Yang2003}.

Note that while the boundary layer functions depend on $(t,y)$, these variables behave as parameters at a microscopic scale. On that account, they will be omitted from the boundary layer functions to lighten the notation when no confusion can arise.

We start by studying the linear problem
\begin{equation}\label{e:linear_app}
\begin{array}{rcl}
\partial_{X_w}\Psi_w-\Delta_w^{2}\Psi_w&=&0,\quad\textrm{in}\quad \omega_w^+\cup\omega^-\\
\left[\partial_{X_w}^k\Psi_w\right]|_{X_w=0}&=&g_k,\quad k=0,\ldots,3,\\
\Psi_w\big|_{X=-\gamma(Y)}=0,&&\dfrac{\partial\Psi_w}{\partial n_w}\big|_{X_w=-\gamma_w(Y)}=0,
\end{array}
\end{equation}
where $g_k\in L^{\infty}(\mathbb{R})$, for all $k=0,\ldots,3$.

\begin{theorem}\label{theorem:existence_linear_all}
	Let $\gamma_w$ be a positive $W^{2,\infty}(\mathbb{R})$ function and $\omega_w$ be defined as before. Let $g_k\in L^\infty(\mathbb{R})$, for all $k=0,\ldots,3$. Then, problem (\ref{e:linear_app}) has a unique solution $\Psi_w$ in $H^{2}_{\mathrm{uloc}}(\omega_w\setminus\sigma_w)$ and there exists positive constants $C,\delta > 0$ such that
	\begin{equation}\label{eq_BGV:16_linear}
	\|e^{\delta X_w}\Psi_w\|_{H^{2}_{\mathrm{uloc}}(\omega_w)} \leq C\sum\limits_{k=0}^3 \|g_k\|_{L^{\infty}(\mathbb{R})}.
	\end{equation}
\end{theorem}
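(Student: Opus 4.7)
The plan is to apply the linear methodology of Section \ref{l_method} (steps (L1)--(L5)) to the operator $\mathcal{L}_{\alpha_w}(\Psi) = \partial_{X_w}\Psi - \Delta_w^2\Psi$ with $\alpha = \alpha_w = \chi_w'(y)$ and zero source, then upgrade the resulting $H^2_{\mathrm{uloc}}$ bound to the exponentially weighted estimate \eqref{eq_BGV:16_linear}. Concretely, I introduce a transparent boundary $\sigma_w^M = \{X_w = M\}$ for some $M > 0$, split $\omega_w$ into the half-space $\{X_w > M\}$ and the rough channel $\omega_w^b = \omega_w \cap \{X_w < M\}$, and connect the two sub-solutions at $\sigma_w^M$ through the Poincar\'e--Steklov operator.

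For step (L1), Fourier transformation in $Y$ reduces the half-space equation to the ODE with characteristic polynomial
\[
\lambda - \bigl((1+\alpha_w^2)\lambda^2 - 2i\alpha_w\xi\lambda - \xi^2\bigr)^2 = 0.
\]
At $\xi = 0$ this factors as $\lambda\bigl(1 - (1+\alpha_w^2)^2\lambda^3\bigr) = 0$, whose four roots are $0$, $(1+\alpha_w^2)^{-2/3}$, and $(1+\alpha_w^2)^{-2/3}e^{\pm 2i\pi/3}$; exactly two have strictly negative real part, equal to $-\tfrac{1}{2}(1+\alpha_w^2)^{-2/3}$. For $|\xi|\to\infty$ the rescaling $\lambda = |\xi|\mu$ produces two admissible roots with real part $\sim -|\xi|/(1+\alpha_w^2)$, and continuity in $\xi$ yields a uniform spectral gap $\delta_0 > 0$ such that the two decaying roots $\lambda_\pm(\xi)$ satisfy $\mathrm{Re}\,\lambda_\pm(\xi) \le -\delta_0$ for every $\xi\in\mathbb{R}$. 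The integral representation based on $\lambda_\pm$ solves the half-space problem for data in $H^{3/2}\times H^{1/2}$; the extension to Kato-space data $H^{3/2}_{\mathrm{uloc}}\times H^{1/2}_{\mathrm{uloc}}$ is performed as in step (L2), and the operator $PS_{\alpha_w}$ of \eqref{PS} is then read off from this representation (step (L3)).

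For the channel problem (steps (L4)--(L5)), I lift the four jump data by the cutoff \eqref{lift_lin} and apply the backward-induction energy scheme to the lifted system: the induction relation \eqref{inequality1_m} holds with constants depending only on $\alpha_w$ and $\|\gamma_w\|_{W^{2,\infty}}$, and its iteration delivers a uniform bound $E_{k_0} \le C\sum_{k=0}^3\|g_k\|_{L^\infty}^2$, hence a solution $\Psi_w^- \in H^2_{\mathrm{uloc}}(\omega_w^b)$. Gluing $\Psi_w^-$ to the half-space solution through $PS_{\alpha_w}$ yields a global solution in $H^2_{\mathrm{uloc}}(\omega_w \setminus \sigma_w)$. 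Uniqueness follows exactly as in the uniqueness argument of (L5): the difference of two solutions satisfies the same system with vanishing data, the induction collapses to $E_k \le C(E_{k+1}-E_k)$, and the uniform bound forces $E_k \equiv 0$.

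It remains to establish the exponential weight \eqref{eq_BGV:16_linear}. On $\omega_w^b$, which is bounded in $X_w$, the weight $e^{\delta X_w}$ is uniformly bounded and the estimate reduces to the $H^2_{\mathrm{uloc}}$ bound already obtained. On the half-space $\{X_w > M\}$, picking any $\delta < \delta_0$ leaves $\lambda_\pm(\xi) + \delta$ with strictly negative real part uniformly in $\xi$, so the Fourier representation shows that $e^{\delta X_w}\Psi_w$ is itself the solution of a half-space problem with bounded traces at $X_w = M$; a standard Kato-space truncation argument then promotes this pointwise bound to the weighted $H^2_{\mathrm{uloc}}$ estimate, and the Poincar\'e--Steklov correspondence bounds the constant by $\sum_k\|g_k\|_{L^\infty}$. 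The main technical obstacle, as already in \cite{Gerard-Varet2010,Dalibard2017}, is handling the nonlocal nature of $PS_{\alpha_w}$ during the channel induction: the transparent-boundary contributions in \eqref{weak_formulation_meth} must be absorbed by the leading $\Delta_w$ term without spoiling the uniformity in $k$, and it is here that the careful trace estimates of step (L5) and the choice of $M$ are essential.
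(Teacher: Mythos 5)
Your plan follows the paper's own route almost step for step: transparent boundary at $X_w=M$, Fourier/ODE analysis in the half-space with a uniform spectral gap, Poincar\'e--Steklov operator extended to $H^{s}_{\mathrm{uloc}}$, Lady\v{z}enskaya--Solonnikov truncated energies in the rough channel, gluing at $\sigma_w^M$, and the weighted estimate read off from the spectral gap on the half-space and the boundedness of $e^{\delta X_w}$ on the channel. The root analysis at $\xi=0$ and $|\xi|\to\infty$ is correct (your sign convention is the one of \eqref{eq_edo} rather than \eqref{Dalibard2014_eq24}, but it is internally consistent), and the crucial observation that the two admissible roots stay uniformly away from the imaginary axis — unlike in the eastern layer — is exactly what the paper uses to get \eqref{eq_BGV:16_linear}.

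The one genuine gap is in the channel step. You assert that the induction relation \eqref{inequality1_m} ``holds with constants depending only on $\alpha_w$ and $\|\gamma_w\|_{W^{2,\infty}}$,'' but \eqref{inequality1_m} is derived in Section \ref{l_method} only for the simplified system \eqref{method_bumped_problem} in which the transparent data $\rho_2,\rho_3$ are \emph{given} functions. In the actual problem \eqref{a:linear_equiv} they are replaced by $\mathcal{A}_i[\Psi_w|_{X=M},\partial_X\Psi_w|_{X=M}]$, i.e.\ nonlocal operators acting on the trace of the unknown over the whole line, and the term $\langle \mathcal{A}_i[\psi_0,\psi_1],\chi_k\partial_X^{3-i}\tilde\Psi\rangle$ cannot be bounded by $C(E_{k+1}-E_k)+C(k+1)$: the pairing sees the energy at arbitrarily distant $Y$. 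This is why the paper proves instead the modified Saint-Venant inequality \eqref{DP:3-1}, with an auxiliary scale $m$ and the extra terms $m\sup_{k\le j\le k+m}(E_{j+1}-E_j)$ and $m^{-(4-2\eta)}\sup_{j\ge k+m}(E_{j+1}-E_j)$. The mechanism is the three-scale splitting \eqref{decomp_psi} of the trace into $\chi_k$, $\chi_{k+m}-\chi_k$ and $1-\chi_{k+m}$ parts, combined with the sign property \eqref{negativity} of the Poincar\'e--Steklov operator for the diagonal block, Proposition \ref{prop:estimates_PS} for the intermediate block, and the $|Y|^{-5}$ decay of the kernel (Lemma \ref{DP:lemma_3.1}, via Lemma \ref{DP2014_lemma2.21}) for the far block; the backward induction must then be run with $m$ chosen large at the end. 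You do flag the nonlocality as ``the main technical obstacle,'' but you do not supply this mechanism, and you attribute the resolution to ``the choice of $M$,'' which plays no role — $M>0$ is arbitrary; what matters is the auxiliary truncation parameter $m$ and the kernel decay. Without the corrected Saint-Venant estimate the uniform bound on $E_{k_0}$, and hence existence in $H^2_{\mathrm{uloc}}(\omega_w^b)$, does not follow.
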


The proof of well-posedness of Theorem \ref{theorem:existence_linear_all} relies on the formulation of an equivalent system in a domain where transparent boundary conditions have been added at $X_w=M$, $M>0$. We will be following the steps listed in Section \ref{l_method} for the linear case.

First, we show some preliminary results on a problem in the half-space. Then, we define the pseudo-differential operators of the Poincaré-Steklov type relating the solution in the half-space with the one in the rough domain at the ``transparent'' interface. Finally, we restrict ourselves to the domain $\omega^b_w=\omega_w\cap \{X\leq M\}$ and solve an equivalent problem following the Lady\v{z}enskaya and Solonnikov method \cite{Ladyzenskaja1980}.  

Throughout this section, we write $X$ instead of $X_w$ since the analysis is only focused on the western boundary layer; hence no confusion can arise.
     \subsection{The linear problem on the half-space}\label{s:linear_west}
The main focus of this section is the analysis of the system
\begin{equation}\label{p:linear_nonhomgeneous0}
\begin{split}
   	\partial_X\Psi_{w}-\Delta_w^{2}\Psi_{w}&=F,\quad\textrm{in}\:\:\mathbb{R}_+^2\\
	\Psi_{w}\big|_{X=0}=\psi_0,&\quad\partial_X\Psi_{w}\big|_{X=0}=\psi_1.
\end{split}
\end{equation}
Here, $F$ is a function decaying exponentially as $X_w$ goes to infinity, and we have considered $M=0$ to facilitate the computations. The problem with a source term $F$ is necessary for the subsequent study of the nonlinear problem describing the western boundary layer.

Note that if $\Psi_w$ is a solution of \eqref{p:linear_nonhomgeneous0},  $\Psi_w(X-M, Y)$ is solution of the problem defined on $\{X>M\}$ with $M>0$ as a consequence of the equation being invariant with respect to translations on $X$.
Functional spaces of  $\psi_0$ and $\psi_1$ are provided in the following theorem, which summarizes the main result of the section. 

\begin{theorem}\label{Theorem2_DGV2017}
	Let $m\in\mathbb{N}$ such that $m\gg 1$. Let $\psi_{0} \in H^{m+3/2}_{\mathrm{uloc}}(\mathbb{R})$ and $\psi_{1} \in H^{m+1/2}_{\mathrm{uloc}}(\mathbb{R})$. Let $F$ be such that $e^{\bar{\delta} X} F \in H^{m-2}_{\mathrm{uloc}}(\mathbb{R}_+^2 )$, for $\bar{\delta}\in\mathbb{R}_+^*$. Then, there exists a unique solution $\Psi_w$ of system (\ref{p:linear_nonhomgeneous0}) satisfying 
	\begin{equation}\label{DGV2017_31}
	\|e^{\delta X}\Psi_w\|_{H^{m+2}_{\mathrm{uloc}}(\mathbb{R}_+^2 ))}\leq C\left(\|\psi_{0} \|_{H^{m+3/2}_{\mathrm{uloc}}(\mathbb{R}))}+\|\psi_{1} \|_{H^{m+1/2}_{\mathrm{uloc}}(\mathbb{R}))}+\|e^{\bar{\delta} X}F\|_{H^{m-2}_{\mathrm{uloc}}(\mathbb{R}_+^2 ))}\right),
	\end{equation}
	for a constant $C$ depending on $\alpha$, $\delta<\bar{\delta}$.
	\end{theorem}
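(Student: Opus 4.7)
The approach is to construct the solution of \eqref{p:linear_nonhomgeneous0} by Fourier analysis in the tangential variable $Y$, first in the $L^2$-based Sobolev framework on $\mathbb{R}^2_+$, and then to transfer the estimate to the Kato setting by a partition-of-unity argument, following steps (L1)--(L2) of Section \ref{methodo}.

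\textbf{Step 1: Fourier representation and spectral gap.} Taking the Fourier transform in $Y$ with dual variable $\xi$, the operator $\Delta_w$ has symbol $m_\alpha(\xi, D) = (1+\alpha^2) D^2 - 2i\alpha\xi D - \xi^2$ with $D = \partial_X$, so the equation becomes
\begin{equation*}
D \hat{\Psi}_w - m_\alpha(\xi, D)^2 \hat{\Psi}_w = \hat{F}.
\end{equation*}
The associated characteristic polynomial $P(\lambda;\xi) = \lambda - m_\alpha(\xi, \lambda)^2$ is of degree $4$ in $\lambda$. I would analyze its roots $\lambda_j(\xi)$ in the three regimes $|\xi| \ll 1$ (perturbation of $\lambda((1+\alpha^2)^2\lambda^3 - 1) = 0$, which has two roots with strictly negative real part and one zero root), $|\xi| \sim 1$, and $|\xi| \gg 1$ (biharmonic scaling $\lambda \sim |\xi|/(1+\alpha^2)^{1/2}$). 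The outcome, consistent with Remark~\ref{remark_4.1}, is that exactly two roots $\lambda_1(\xi), \lambda_2(\xi)$ have $\Re \lambda_j(\xi) < 0$ and satisfy the uniform spectral gap $\Re \lambda_j(\xi) \le -\delta_0$ for some $\delta_0 > 0$ independent of $\xi$. This sharply distinguishes the western situation from the eastern one and drives the exponential decay claimed in \eqref{DGV2017_31}.

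\textbf{Step 2: Solution in $H^s(\mathbb{R}^2_+)$.} The decaying solutions of the ODE in $X$ live in the two-dimensional space spanned by $e^{\lambda_1(\xi) X}, e^{\lambda_2(\xi) X}$, so I would write
\begin{equation*}
\hat{\Psi}_w(X, \xi) = A_1(\xi) e^{\lambda_1(\xi) X} + A_2(\xi) e^{\lambda_2(\xi) X} + \hat{\Psi}_{\mathrm{part}}(X, \xi),
\end{equation*}
where $\hat{\Psi}_{\mathrm{part}}$ is a particular solution constructed through a Green function $G(X,X';\xi)$ built by variation of parameters from the four roots of $P(\cdot;\xi)$. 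The boundary conditions at $X=0$ yield a $2\times 2$ linear system for $(A_1, A_2)$ whose determinant $\lambda_2(\xi) - \lambda_1(\xi)$ is controlled by a power of $\langle \xi \rangle$, thanks to the root analysis of Step 1. The spectral gap moreover furnishes a pointwise bound $|G(X,X';\xi)| \lesssim e^{-\delta_0 |X-X'|}$ with polynomial dependence on $\langle\xi\rangle$. Plancherel then yields the weighted estimate \eqref{DGV2017_31} in the $H^s(\mathbb{R}^2_+)$-setting for any $\delta < \min(\delta_0, \bar\delta)$ and for $\psi_0 \in H^{m+3/2}(\mathbb{R})$, $\psi_1 \in H^{m+1/2}(\mathbb{R})$, $e^{\bar\delta X}F \in H^{m-2}(\mathbb{R}^2_+)$.

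\textbf{Step 3: Extension to $H^{s}_{\mathrm{uloc}}$.} Using the partition of unity $1 = \sum_{k \in \mathbb{Z}} \tau_k \theta$ from \eqref{DP:1-4}, I decompose $\psi_i = \sum_k (\tau_k \theta) \psi_i$, and similarly for $F$. Each localized datum has $H^s(\mathbb{R})$ norm bounded uniformly in $k$ by the corresponding Kato norm, so Step 2 supplies a solution $\Psi_w^{(k)}$ for each index. The $H^{m+2}_{\mathrm{uloc}}$ estimate on $\Psi_w = \sum_k \Psi_w^{(k)}$ is recovered by showing that the kernel of Step 2, viewed as a convolution in $Y$, enjoys integrable decay in $|Y - k|$ inherited from the regularity of the symbols $\lambda_j(\xi)$, and combining with the uniform exponential decay in $X$. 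Uniqueness in the class $e^{\delta X} H^{m+2}_{\mathrm{uloc}}(\mathbb{R}^2_+)$ is then obtained by a truncated-energy/induction argument, as in the last bullet of Section \ref{l_method}, where the favorable sign of the boundary contribution produced by $\partial_X$ at $X=0$ forces any homogeneous solution to vanish.

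The main obstacle is Step 3: Fourier methods do not mix with Kato spaces, and the passage from the $L^2$-based estimate to the locally uniform one requires sharp control on the $Y$-decay of the kernel $G(X, X';\xi)$ at low frequencies, where one of the non-decaying roots crosses zero. A secondary technical point is the verification, at Step 1, of the uniform spectral gap across all $\xi$ and for the parameter $\alpha = \chi_w'(y)$, but this is a finite algebraic computation once the three asymptotic regimes above are isolated.
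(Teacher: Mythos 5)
Your overall strategy coincides with the paper's: Fourier analysis in $Y$, root analysis of the characteristic polynomial \eqref{Dalibard2014_eq24} exhibiting the western spectral gap, superposition of a homogeneous solution (lifting the traces) and a particular solution built from a Green function, and transfer to Kato spaces via $Y$-decay of the kernels (Lemmas \ref{Dalibard2017_lemma7} and \ref{Dalibard2017_lemma9}). Your Steps 1 and 3 are faithful to what the paper does.

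There is, however, one concrete step in your Step 2 that fails as stated: the claimed uniform pointwise bound $|G(X,X';\xi)|\lesssim e^{-\delta_0|X-X'|}$. The Green function of the full operator on the line is built from all four roots, not only the two that carry the spectral gap. On the region $X<X'$ it involves the branch that degenerates at low frequencies (in the paper's convention, $\lambda_1^-(\xi)=-|\xi|^4+O(|\xi|^5)$; see Lemma \ref{lemma2}), so the corresponding mode behaves like $e^{-|\xi|^4|X-X'|}$, which is merely bounded, not uniformly exponentially decaying in $|X-X'|$. Consequently the exponential weight in \eqref{DGV2017_31} cannot be extracted from the kernel alone on that region. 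The paper's Proposition \ref{prop:halfspace_linear_nonhomogeneous} repairs exactly this point: the $X'$-integral is split at $X'=X$; for $X'<X$ the kernel does decay like $e^{-\delta(X-X')}$, while for $X'>X$ one only uses boundedness of the kernel (and, for derivatives, the polynomial decay $(1+|Z|)^{-k/4}$ of Lemma \ref{Dalibard2017_lemma7}) and borrows the factor $e^{-\bar\delta X'}$ from the hypothesis $e^{\bar\delta X}F\in H^{m-2}_{\mathrm{uloc}}$, which is why the final rate satisfies $\delta<\bar\delta$. Your estimate is therefore recoverable, but only by this splitting, not by the pointwise bound you invoke. A secondary caveat: the Plancherel step for the homogeneous part is not a routine symbol count; the naive bounds on $\sum_{l,m}A_l\bar A_m/(\lambda_l+\bar\lambda_m)$ at high frequencies overestimate the norm, and the paper's Appendix \ref{a:regularity} relies on nontrivial cancellations between the two modes to obtain the stated $H^{m+2}$ regularity from $\psi_0\in H^{m+3/2}$, $\psi_1\in H^{m+1/2}$.
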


Note that uniqueness consists of showing that if $F = 0$, $\psi_{0} = 0$ and $\psi_1 = 0$, the only solution $\Psi_w$ of (\ref{p:linear_nonhomgeneous0}) is $\Psi_w\equiv0$. The proof of the result is rather easy and will be sketched in paragraph \ref{ss:linear_case}. Consequently, the primary result will be the existence of a solution satisfying estimate (\ref{DGV2017_31}). 
Similarly to \cite{Dalibard2017}, the existence results can be obtained by compactness arguments.  

As the main equation is linear, we use a superposition principle to prove the desired result, meaning a solution of \eqref{p:linear_nonhomgeneous0} is sought of the form $$\Psi_{w}=\underline{\Psi}_{w}+\Psi_{w}^{F},$$ where $\underline{\Psi}_{w}$ is the solution of a homogeneous linear problem
\begin{equation}\label{pb:halfspace_linear_homogeneous}
\left\{\begin{array}{rcl}
\partial_X\underline{\Psi}_w-\Delta_w^{2}\underline{\Psi}_w&=&0,\quad\textrm{in}\quad\mathbb{R}_+^2 \\
\underline{\Psi}_w\big|_{X=0}=\psi^{*}_0,&&\partial_X\underline{\Psi}_w\big|_{X=0}=\psi^{*}_1,
\end{array}\right.
\end{equation}
while, the function $\Psi_w^{F}$ solves the equation
\begin{equation}\label{pb:halfspace_linear_nonhomogeneous}
\partial_X\Psi^{F}_w-\Delta_w^{2}\Psi^{F}_w=F,\quad\textrm{in}\quad\mathbb{R}_+^2 .
\end{equation}
Note that the boundary terms $\psi^{*}_0$ and $\psi^{*}_1$ are different from $\psi_0$ and $\psi_1$.  Indeed, it is convenient to construct the solution of \eqref{pb:halfspace_linear_nonhomogeneous} which does not satisfy homogeneous boundary conditions, and then lift the non-zero traces of $\Psi^F_w$ and $\partial_X\Psi^F_w$ thanks to $\underline{\Psi}_w$.

First, we apply Fourier analysis when looking for the solution of homogeneous problem (\ref{pb:halfspace_linear_homogeneous}) with boundary conditions $\psi_{0}^{*}=\psi_{0}-\Psi^{F}_w\big|_{X=0}$ and $\psi_{1}^{*}=\psi_{1}-\partial_X\Psi^{F}_w\big|_{X=0}$. Then, we tackle the sub-problem regarding function $\Psi_{w}^{F}$. In this case, we disregard temporarily about boundary conditions and focus on the equation $(\ref{pb:halfspace_linear_nonhomogeneous})$. Our goal in this step is to construct a solution by means of an integral representation involving the Green function.
\subsubsection{Homogeneous linear problem}\label{ss:linear_case}
In order to prove the existence and uniqueness of the solution of problem \eqref{pb:halfspace_linear_homogeneous} in $H^2_{\mathrm{uloc}}(\mathbb{R}_+^2)$, we first analyze the problem when the boundary data belongs to usual Sobolev spaces, see Proposition \ref{proposition:DP}, and then, extend the result to Kato spaces. 
\begin{proposition}\label{proposition:DP}
	Let $\psi^{*}_0 \in H^{m+3/2}(\mathbb{R})$ and $\psi^{*}_1 \in H^{m+1/2}(\mathbb{R})$. Then, the system
	\begin{equation}\label{linear_homogeneous}
\left\{\begin{array}{rcl}
\partial_X\underline{\Psi}_w-\Delta_w^{2}\underline{\Psi}_w&=&0,\quad\textrm{in}\quad\mathbb{R}_+^2 \\
\underline{\Psi}_w\big|_{X=0}=\psi^{*}_0,&&\partial_X\underline{\Psi}_w\big|_{X=0}=\psi^{*}_1,
\end{array}\right.
\end{equation}
admits a unique solution $\underline{\Psi}_{w}\in H^{m+2} (\mathbb{R}_+^2)$.
\end{proposition}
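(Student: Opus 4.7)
The plan is to prove Proposition \ref{proposition:DP} by Fourier analysis in the tangential variable $Y$, exploiting the fact that the half-space $\mathbb{R}^2_+$ and the operator $\partial_X - \Delta_w^2$ are translation-invariant in $Y$. After transforming in $Y$ with dual variable $\xi$, the symbol of $\Delta_w$ becomes $(1+\alpha^2)\partial_X^2-2i\alpha\xi\partial_X-\xi^2$, so the equation \eqref{linear_homogeneous} reduces to a fourth-order ODE in $X$ parametrised by $\xi$, with characteristic polynomial
\begin{equation*}
P(\lambda,\xi)=\lambda-\bigl[(1+\alpha^2)\lambda^2-2i\alpha\xi\lambda-\xi^2\bigr]^2.
\end{equation*}

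The first step is to count and locate the roots of $P(\cdot,\xi)$. For $\xi\ne 0$, I will show that $P(\lambda,\xi)$ has no purely imaginary roots (so no resonance with the Fourier mode), and that exactly two roots $\lambda_1^-(\xi),\lambda_2^-(\xi)$ have strictly negative real part while the other two have positive real part; this uses a continuity/degree argument comparing to the limiting cases $|\xi|\to 0$ and $|\xi|\to\infty$, together with the fact that at $\xi=0$ the polynomial degenerates to $\lambda-(1+\alpha^2)^2\lambda^4=\lambda(1-(1+\alpha^2)^2\lambda^3)$, whose nonzero roots are the three cubic roots of $(1+\alpha^2)^{-2}$, exactly one of which has positive real part while two have negative real part. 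Careful asymptotics as $|\xi|\to\infty$ give $\mathrm{Re}\,\lambda_j^-(\xi)\le -c|\xi|$ for $j=1,2$ with a uniform $c>0$, while as $|\xi|\to 0$ one of the negative-real-part roots stays away from zero and the other behaves like $O(|\xi|^{1/3})$ (still with negative real part). These two regimes are the key quantitative input for Sobolev estimates.

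With the roots in hand, the solution is constructed as
\begin{equation*}
\widehat{\underline{\Psi}}_w(X,\xi)=A_1(\xi)e^{\lambda_1^-(\xi)X}+A_2(\xi)e^{\lambda_2^-(\xi)X},
\end{equation*}
with $(A_1,A_2)$ uniquely determined by the Dirichlet/Neumann traces at $X=0$ via the $2\times 2$ linear system
\begin{equation*}
\begin{pmatrix}1 & 1\\ \lambda_1^-(\xi) & \lambda_2^-(\xi)\end{pmatrix}\begin{pmatrix}A_1\\A_2\end{pmatrix}=\begin{pmatrix}\widehat{\psi_0^*}(\xi)\\ \widehat{\psi_1^*}(\xi)\end{pmatrix}.
\end{equation*}
The determinant $\lambda_2^--\lambda_1^-$ does not vanish for $\xi\neq 0$ (and has a controlled behaviour at $\xi=0$), so Cramer's rule provides $A_1,A_2$ as multipliers acting on $(\widehat{\psi_0^*},\widehat{\psi_1^*})$. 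Uniqueness of $\underline{\Psi}_w$ in $H^{m+2}(\mathbb{R}^2_+)$ follows because any solution must decay as $X\to\infty$, which, combined with the Fourier representation and the root count, forces the coefficients of $e^{\lambda_j^+(\xi)X}$ to vanish and then the trace conditions determine $A_1,A_2$ uniquely.

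The $H^{m+2}$ estimate is obtained by integrating the Fourier representation against $(1+\xi^2)^{m+2}$ and performing the $X$-integration explicitly, which produces denominators in $|\mathrm{Re}\,\lambda_j^-(\xi)|$. The main technical step, which I expect to be the key obstacle, is controlling these integrals uniformly in $\xi$, both at high frequency (where the sharp bound $\mathrm{Re}\,\lambda_j^-(\xi)\lesssim -|\xi|$ compensates the weight $(1+\xi^2)^{m+2}$ and yields the right Sobolev exponents by balancing $X$-derivatives against powers of $\lambda_j^-$) and at low frequency (where the weaker $|\mathrm{Re}\,\lambda_j^-|\sim |\xi|^{1/3}$ must be reconciled with the square-integrability in $X$; this is precisely the regime responsible for the regularity loss from $H^{m+3/2}\times H^{m+1/2}$ at the boundary to $H^{m+2}$ in the bulk). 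Once these multiplier estimates are established, a direct application of Plancherel yields \eqref{DGV2017_31} in the non-uniformly-local setting, completing the proof of Proposition \ref{proposition:DP}; the extension to Kato spaces $H^{m+3/2}_{\mathrm{uloc}}\times H^{m+1/2}_{\mathrm{uloc}}$ announced in Theorem \ref{Theorem2_DGV2017} will then follow by the truncation/translation-invariance strategy of \cite{Dalibard2017}.
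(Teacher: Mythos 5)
Your overall route is the same as the paper's: Fourier transform in $Y$, analysis of the quartic characteristic polynomial \eqref{Dalibard2014_eq24} (your $P(\lambda,\xi)$ is the paper's polynomial up to the substitutions $\lambda\mapsto-\lambda$, $\xi\mapsto-\xi$), selection of the two decaying modes, inversion of the $2\times 2$ trace system \eqref{inverse_et}, and Plancherel together with low/high-frequency asymptotics of the roots and coefficients (the paper's Lemmas \ref{p:roots} and \ref{Dalibard2014_lemma24}). The structure of your uniqueness argument also coincides with the paper's.

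However, your low-frequency description of the decaying roots is wrong, and it contradicts your own count at $\xi=0$. With your convention (ansatz $e^{\lambda X}$, so decay means $\Re\lambda<0$), the two negative-real-part roots at $\xi=0$ are the nonzero conjugate pair $\tfrac{-1\pm i\sqrt{3}}{2(1+\alpha^{2})^{2/3}}$; being simple and nonzero they perturb as $\bar\lambda_j+O(|\xi|)$ and stay uniformly in the left half-plane. There is no root behaving like $O(|\xi|^{1/3})$ at all, and the root that does vanish at $\xi=0$ perturbs like $+\xi^{4}$, i.e.\ it has \emph{positive} real part and is a growing mode that never enters the representation of the decaying solution. (The degeneracy you have in mind belongs to the eastern problem, where the sign of $\partial_X$ is reversed and the decaying root behaves like $\xi^4$, cf.\ Lemma \ref{lemma:asymp_behavior}; even there the rate is not $|\xi|^{1/3}$.) This matters because you designate this regime as "the key quantitative input": had the decaying rate genuinely degenerated, the extra boundary regularity $H^{m+3/2}\times H^{m+1/2}$ could not restore low-frequency integrability (Sobolev weights only help at high frequency), so the reconciliation you sketch would not close; in reality the low-frequency regime is harmless precisely because of this spectral gap, which is also what yields the exponential decay in \eqref{eq_BGV:16_linear}. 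The genuinely delicate step is instead at high frequency, where the naive term-by-term bound on $\sum_{l,m}A_l\bar A_m/(\lambda_l+\bar\lambda_m)$ overestimates the solution and one must exploit the cancellations $|A_1+A_2|^2=|\widehat{\psi}^{*}_0|^2$ etc.\ (Appendix \ref{a:regularity}) to land exactly in $H^{m+2}$; your sketch passes over this point.
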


\begin{proof} 
	\textit{Existence.} Let us illustrate the proof for $m=0$. Given $\psi^{*}_0 \in H^{3/2}(\mathbb{R}_+^2)$ and $\psi^{*}_1 \in H^{1/2}(\mathbb{R}_+^2)$, we proceed with the construction of the fundamental solution by means of the Fourier transform. Applying the Fourier transform with respect to $Y$ results in the  following ODE problem
	\begin{equation}\label{eq_edo}
	\begin{split}
	    \partial_X\widehat{\underline{\Psi}}_w-(\partial_X^{2}+(-\alpha\partial_X+i\xi)^{2})^{2}\widehat{\underline{\Psi}}_w&=0,\quad\mathrm{in}\:\:\mathbb{R}_+^2\\
	\widehat{\underline{\Psi}}_w\big|_{X=0}=\widehat{\psi}^{*}_0,\quad\partial_X\widehat{\underline{\Psi}}_w\big|_{X=0}&=\widehat{\psi}^{*}_1,
	\end{split}
	\end{equation}
	where $\xi\in\mathbb{R}$ is the Fourier variable and $\widehat{\psi}^{*}_i$ is the Fourier transform of $\psi^{*}_i$, $i=0,1$. The corresponding characteristic polynomial is 
	\begin{equation}\label{Dalibard2014_eq24}
	P(\lambda)=-\lambda-(\lambda^{2}+(\alpha\lambda+i\xi)^{2})^{2}.
	\end{equation}
	
	We are now interested in identifying possible  degenerate cases using the relations between the coefficients and the roots of its characteristic equation.
	
	\begin{lemma}\label{p:roots}
		Let $P(\lambda)$ be the characteristic polynomial associated to the problem  \eqref{pb:halfspace_linear_homogeneous}. Then, $P(\lambda)$ has four distinct complex roots $\lambda_i^\pm$, $i=1,2$. Moreover, when $\xi\neq 0$, $\Re(\lambda_i^+)>0$ and $\Re(\lambda_i^-)<0$.
	\end{lemma}
	
	We refer the reader to Appendix \ref{s:roots} for a detailed proof of Lemma \ref{p:roots}. 
	
	The solutions of the problem resulting of applying the Fourier transform  are linear combinations of $\exp(-\lambda^+_k X)$ (with coefficients depending
	on $\xi$), where $(\lambda^{+}_k)_{k=1,2}$ are the complex-valued solutions of the characteristic polynomial satisfying $\Re(\lambda^{+}_k)>0$. There exist $A_k^{+}:\mathbb{R}\rightarrow\mathbb{C}$ such that
\begin{equation}\label{linear_fourier}
	\widehat{\underline{\Psi}}_w(X,\xi)=
	\sum_{k=1}^{2}A_k^{+}(\xi) \exp(-\lambda^{+}_k(\xi)X),\quad\textrm{in}\quad\mathbb{R}_+^2.
	\end{equation}
	Combining (\ref{linear_fourier}) with boundary conditions in \eqref{pb:halfspace_linear_homogeneous}, we have that coefficients $A_1^{+}$, $A_2^{+}$ satisfy
	\begin{equation}\label{inverse_et}
	\Lambda\left(\begin{array}{c}
	A_1^{+}(\xi)\\
	A_2^{+}(\xi)
	\end{array}\right)=\left(\begin{array}{c}
	\widehat{\psi}_{0}^{*}\\
-	\widehat{\psi}_{1}^{*}
	\end{array}\right),\quad\textrm{where}\quad
	\Lambda\defeq\left(\begin{array}{cc}
	1&1\\
	\lambda^{+}_1&\lambda^{+}_2
	\end{array}\right).
	\end{equation}

	Note that the coefficients $A_1^{+}$ and $A_2^{+}$ are well-defined since $\Lambda$ is invertible as a direct consequence of all the roots $\lambda^{+}_k$, $k=1,2$ of (\ref{Dalibard2014_eq24}) being simple.
	
It remains to check that the corresponding solution is sufficiently integrable, namely, $\|\underline{\Psi}_w\|_{H^{m+2}(\mathbb{R}_+^2)}<+\infty$. This
    assertion is equivalent to showing that for $0\leq k\leq2$
    \begin{equation}\label{relations}
\int_{\mathbb{R}^{+}\times\mathbb{R}}|\partial_{X}^{k}\widehat{\underline{\Psi}}_w|^{2}d\xi dX+|\xi|^{2k}|\widehat{\underline{\Psi}}_w|^{2}d\xi dX<+\infty.
    \end{equation}
    To that end, we need to investigate the behavior of $\lambda_k^{+}$, $A_k^{+}$ for $\xi$ close to zero and when $|\xi| \rightarrow \infty$. We gather the results in the following lemma, whose proof is postponed to Appendix \ref{ss:appendix_B}:
	\begin{lemma}\label{Dalibard2014_lemma24}
		\begin{itemize}
		\begin{itemize}
			\item As $\xi \rightarrow 0$, 
			\begin{eqnarray*}
				\lambda_{j}^{+}(\xi)&=&\bar{\lambda}_j^+O\left(|\xi|\right),\quad \Re(\bar{\lambda}^+_j)>0,\quad j=1,2,\\
				A_{j}^{+}(\xi)&=&a_{0,j}\widehat{\psi}_{0}^{*}+a_{1,j}\widehat{\psi}_{1}^{*}+O\left(|\widehat{\psi}^{*}_0||\xi|+|\widehat{\psi}^{*}_1||\xi|^2\right),
				\end{eqnarray*}
where $\bar{\lambda}_j^+$, $a_{0,j}$ and $a_{1,j}$ depend continuously on $\alpha$.
		
		\item[] As $|\xi|\rightarrow+\infty$, we have the following asymptotic behavior for $j=1,2$ when 
\begin{eqnarray}
\lambda_{j}^{+}(\xi)&=&p_1|\xi|+(-1)^{j}p_0|\xi|^{-\frac{1}{2}}+O( |\xi|^{-2})\nonumber\\
A_{j}^{+} (\xi)&=&(-1)^{j}m_2|\xi|^{3/2}+(-1)^{j}m_1|\xi|^{1/2}+O\left((|\widehat{\psi}^{*}_0|+|\widehat{\psi}^{*}_1|)|\xi|^{-1}\right),\nonumber
\end{eqnarray}
where $p_1=\zeta^{2}\mathbbm{1}_{\xi>0}+\bar{\zeta}^{2}\mathbbm{1}_{\xi<0}$, $p_0=\dfrac{1}{2}\left(i\zeta\mathbbm{1}_{\xi>0}+\bar{\zeta}\mathbbm{1}_{\xi<0}\right)$,  $m_1=\dfrac{\widehat{\psi}^{*}_1}{|\zeta|^{2}}\left(\zeta\mathbbm{1}_{\xi<0}-i\bar{\zeta}\mathbbm{1}_{\xi>0}\right)$ and $m_2=\widehat{\psi}^{*}_0\left(i\zeta\mathbbm{1}_{\xi>0}-\bar{\zeta}\mathbbm{1}_{\xi<0}\right)$. Here, $\zeta$ refers to the complex quantity $\sqrt{\frac{1-i\alpha}{\alpha^{2} +1}}$ satisfying $\Re(\zeta)>0$.
\end{itemize}
\end{itemize}
	\end{lemma}

Lemma \ref{Dalibard2014_lemma24} is used to show \eqref{relations} is in fact true for even larger values of $k$ if $\psi_0$ and $\psi_0$ are regular enough. The detailed proof can be found on Appendix \ref{a:regularity}.
  
\textit{Uniqueness.}  
	To show the uniqueness the solution, it is enough to solve (\ref{pb:halfspace_linear_homogeneous}) when $\psi_{0}^{*}=\psi_{1}^{*}=0$ and verify $\Psi_{w}\equiv 0$.  Applying the Fourier transform results in the following system  
	\[
	\Lambda\left(\begin{array}{c}
	A_1^{+}(\xi)\\
	A_2^{+}(\xi)
	\end{array}\right)=\left(\begin{array}{c}
	0\\
	0
	\end{array}\right),
	\]
	
	where $\Lambda$ is the invertible matrix previously defined. We conclude that $A_1 = A_2 = 0$, and thus $\widehat{\underline{\Psi}}(X, \xi) \equiv 0$. Since $\underline{\Psi}_w$ is an absolutely integrable function whose Fourier transform is identically equal to zero, then  $\underline{\Psi}_w= 0$.
\end{proof}

\subsubsection{Non homogeneous problem}
We begin the proof of the existence and uniqueness of the solution $\Psi^{F}_w$ of (\ref{pb:halfspace_linear_nonhomogeneous}) with the analysis of the equation
\begin{equation*}
\partial_X\Psi^{F}_w-\Delta_w^{2}\Psi^{F}_w=F,\quad\textrm{in}\quad\mathbb{R}_+^2.
\end{equation*} 
Our approach consists of constructing a particular solution of this equation, satisfying for some large enough $m$ an estimate where the norm of  $F$ controls the norm of the solution in $L^{\infty}({H^{m-2}_{\mathrm{uloc}}})$.
 We look for a integral solution of the form
\begin{equation}\label{solution_v}
\Psi^F_w(X, \cdot)=\int_{0}^{+\infty}G(X-X', D)F(X', \cdot)dX',
\end{equation}
where $G$ is the Green function verifying the equation
\begin{eqnarray}\label{eq_G}
 \partial_{X}G-\Delta_{w}^2 G&=&\delta_{0}.
\end{eqnarray}
Here, $\delta_{0}(\cdot)=\delta(\cdot-X')$ denotes the Dirac delta function.  In other words, $G$ is the fundamental solution over $\mathbb{R}^2$ of the Fourier multiplier $L(X,\xi)$ for any $\xi\in\mathbb{R}$ and satisfies
\begin{equation*}
L(X,\xi)G(X,\xi)=\delta_{0}(X).
\end{equation*}
Away from $X = 0$, $G( X, \xi)$ satisfies the homogeneous equation (\ref{pb:halfspace_linear_homogeneous}), see Section \ref{ss:linear_case}. 

For $X\neq 0$, $G(X,\xi)$ is a linear combination of $e^{-\lambda^{\pm}_iX}$, where $\lambda^{\pm}_i(\xi)$, $i=1,2$ are continuous functions of $\xi$ and roots of the polynomial \eqref{Dalibard2014_eq24}. We define $G$  as follows
\begin{equation}\label{G_def}
G=\left\{
\begin{array}{rcl}
&\sum_{i=1}^{2}B_i^{-}e^{-\lambda_i^{-}X},&\textrm{in}\quad X< 0\\
&\sum_{i=1}^{2}B_i^{+}e^{-\lambda_i^{+}X},&\textrm{in}\quad X> 0,
\end{array}\right.\quad B_i^{\pm}:\mathbb{R}\rightarrow\mathbb{C},\medspace k=1,2.
\end{equation}
Note that if $G$ is considered discontinuous at $X = 0$, with the discontinuity modeled by a step function, then, $\partial_X G \propto \delta_{0}(X)$ and consequently, $\partial_{X}^{k}G \propto \delta^{(k)}_{0}(X)$, $k=2,3,4$. However, \eqref{eq_G} does not involve generalized
functions beyond $\delta_{0}(X)$, and contains no derivatives of $\delta$-functions. Thus,
we conclude that $G(X,X')$ must be continuous throughout the domain and in particular at $X = 0$.
$G(X,\cdot)$ is a $\mathcal{C}^{2}$ function and $\partial^{3}G/\partial X^{3}$  has a finite jump discontinuity of magnitude $-1/(1+\alpha^{2})^{2}$ at $X=X'$. More precisely, $G$ satisfies 
$$[\partial_X^{k}G]|_{X=0} =0,\medspace k=0,1,2,\quad\textrm{and}\quad [\partial_X^{3}G]|_{X=0} =-\dfrac{1}{(1+\alpha^{2})^{2}}.$$
Substituting \eqref{G_def} in the above interface conditions provides a linear system on the coefficients $B_i^{\pm}$. 
\begin{equation}\label{system_Bi}
\left\{
\begin{array}{rcl}
B_1^{+}+B_2^{+}-\left(B_1^{-}+B_2^{-}\right)&=&0\\
\lambda_1^{+}B_1^{+}+\lambda_2^{+}B_2^{+}-\left(\lambda_1^{-}B_1^{-}+\lambda_2^{-}B_2^{-}\right)&=&0\\
(\lambda_1^{+})^{2}B_1^{+}+(\lambda_2^{+})^{2}B_2^{+}-\left((\lambda_1^{-})^{2}B_1^{-}+(\lambda_2^{-})^{2}B_2^{-}\right)&=&0\\
(\lambda_1^{+})^{3}B_1^{+}+(\lambda_2^{+})^{3}B_2^{+}-\left((\lambda_1^{-})^{3}B_1^{-}+(\lambda_2^{-})^{3}B_2^{-}\right)&=&\dfrac{1}{(1+\alpha^{2})^{2}}.
\end{array}
\right.
\end{equation}

The determinant of the Vandermonde matrix associated to \eqref{system_Bi} is of the form
\begin{equation*}
\bar{D}=(\lambda_1^{+} - \lambda_2^{+}) (\lambda_1^{+} - \lambda_1^{-}) (\lambda_2^{+} - \lambda_1^{-}) (\lambda_1^{+} - \lambda_2^{-}) (\lambda_2^{+} - \lambda_2^{-}) (\lambda_1^{-} - \lambda_2^{-}).
\end{equation*}
Since all the $\lambda_i^{\pm}$ are distinct (see Lemma \ref{p:roots}), $\bar{D}$ is non-zero and system \eqref{system_Bi} has a unique set of solutions
\begin{eqnarray}\label{Bi_form}
B_1^{+}&=-&\frac{1}{\left(\alpha ^2+1\right)^2 \left(\lambda_1^{+}-\lambda_2^{+}\right) \left(\lambda_1^{+}-\lambda_1^-\right) \left(\lambda_1^{+}-\lambda_2^{-}\right)},\nonumber\\
B_2^{+}&=&\frac{1}{\left(\alpha ^2+1\right)^2 \left(\lambda_1^{+}-\lambda _2^{+}\right) \left(\lambda_2^{+}-\lambda_1^{-}\right)\left(\lambda_2^{+}-\lambda_2^{-}\right)},\label{coefficients_green}\\
B_1^{-}&=&\frac{1}{\left(\alpha ^2+1\right)^2 \left(\lambda_1^{+}-\lambda_1^{-}\right) \left(\lambda _2^{+}-\lambda_1^{-}\right) \left(\lambda_1^{-}-\lambda_2^{-}\right)},\nonumber\\
B_2^{-}&=&-\frac{1}{\left(\alpha ^2+1\right)^2 \left(\lambda_1^{+}-\lambda_2^{-}\right) \left(\lambda _2^{+}-\lambda _2^{-}\right) \left(\lambda _2^{-}-\lambda_1^{-}\right)}.\nonumber
\end{eqnarray}

This establishes the function $G$. Its asymptotic behavior as $|\xi| \rightarrow 0$ and $|\xi | \rightarrow \infty$ is summarized in the following lemma. 

\begin{lemma}\label{lemma2}
	We have
	$$G=\left\{
	\begin{array}{r}\sum_{j=1}^{2}B_j^{-}e^{-\lambda_j^{-}X},\quad X<0 \\
\;\sum_{j=1}^{2}B_j^{+}e^{-\lambda_j^{+}X},\quad X>0
	\end{array}
	\right.,$$
	where the coefficients $B_j^{\pm}$ are given by (\ref{coefficients_green}) and $\lambda^+_j$ are the ones in Lemma \ref{Dalibard2014_lemma24}. Here, as $|\xi|\rightarrow 0$
	\begin{eqnarray*}
	\lambda_{1}^-&=-|\xi|^4+O\left(|\xi|^5\right),\quad \lambda_{2}^-&=-\dfrac{1}{\left(\alpha ^2+1\right)^{2/3}}+O\left(|\xi|\right).
	\end{eqnarray*}
	If $|\xi|\rightarrow \infty$,
	\begin{eqnarray*}
	\lambda_{j}^{-}(\xi)&=&p_1|\xi|+(-1)^{j}p_0|\xi|^{-\frac{1}{2}}+O( |\xi|^{-2}),
	\end{eqnarray*}
	where $p_i$, $i=0,1$, are defined as in Lemma \ref{Dalibard2014_lemma24} and $\zeta'=\sqrt{\frac{-1-i\alpha}{\alpha^{2} +1}}$.
	
	Asymptotic behavior:
	\begin{itemize}
		\item As $|\xi| \rightarrow 0$, we have that $B_j^\pm\rightarrow \bar{B}_j\in\mathbb{\mathbb{R}}$ independent of $\alpha$.
		
		\item For $|\xi|\gg 1$, we have $B_j^\pm=O(|\xi|^{-3/2})$. 
	\end{itemize}
\end{lemma}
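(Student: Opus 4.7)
The structure of $G$ is inherited directly from the definition in (\ref{G_def}), and the coefficients $B_j^\pm$ were determined in (\ref{coefficients_green}) as the unique solution of the linear interface system (\ref{system_Bi}). So the only content requiring proof is the asymptotic behavior of the two roots $\lambda_j^-$ (the ones with negative real part), together with the consequent asymptotics of the $B_j^\pm$. The method is a direct analogue of the dominant-balance analysis already performed for $\lambda_j^+$ in Lemma \ref{Dalibard2014_lemma24}, so I would adapt that scheme root by root.

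\textbf{Asymptotics of $\lambda_j^-$.} At $\xi=0$, the polynomial $P$ in (\ref{Dalibard2014_eq24}) factors as $P(\lambda)\big|_{\xi=0}=-\lambda\bigl(1+(1+\alpha^2)^2\lambda^3\bigr)$, whose zeros are $\lambda=0$ together with the three cube roots of $-(1+\alpha^2)^{-2}$. Of these cube roots, exactly one is real and negative, namely $-1/(1+\alpha^2)^{2/3}$: this is the unperturbed value of $\lambda_2^-$, and a standard implicit-function argument (the simplicity of the root is guaranteed by Lemma \ref{p:roots}) yields $\lambda_2^-(\xi)=-(1+\alpha^2)^{-2/3}+O(|\xi|)$. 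For $\lambda_1^-$, I look for the root branching from $\lambda=0$; inserting the ansatz $\lambda\ll\xi$ into $P(\lambda)=0$ the nonlinear term reduces to $(i\xi)^4=\xi^4$, hence the dominant balance $-\lambda=\xi^4+O(\xi^5|\lambda|,\lambda^2\xi^2)$ gives $\lambda_1^-(\xi)=-|\xi|^4+O(|\xi|^5)$. At infinity, rescaling $\lambda=|\xi|\nu$ transforms $P(\lambda)=0$ into $(\nu^2+(\alpha\nu\pm i)^2)^2=-\nu|\xi|^{-3}$, whose zeroth-order solutions are the roots of $\nu^2(1+\alpha^2)\pm 2i\alpha\nu-1=0$. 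Selecting the two roots with negative real part (described by $\zeta'=\sqrt{(-1-i\alpha)/(\alpha^2+1)}$) and pushing the expansion to the next order reproduces, mutatis mutandis, the computation from Lemma \ref{Dalibard2014_lemma24}: the correction is of order $|\xi|^{-1/2}$ with opposite signs for the two branches, giving the stated formula $\lambda_j^-(\xi)=p_1|\xi|+(-1)^j p_0|\xi|^{-1/2}+O(|\xi|^{-2})$ (with $p_0,p_1$ built from $\zeta'$ in place of $\zeta$).

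\textbf{Asymptotics of the $B_j^\pm$.} These now follow from direct substitution into (\ref{coefficients_green}), provided one keeps careful track of cancellations in the denominators. As $|\xi|\to 0$, write $\mu:=(1+\alpha^2)^{-2/3}$; then every factor $(\lambda_i^\pm-\lambda_j^\pm)$ appearing in (\ref{coefficients_green}) is homogeneous of degree one in $\mu$ (being a difference of either $0$, $-\mu$, or $-\mu\omega,-\mu\omega^2$ with $\omega=e^{2i\pi/3}$), so each denominator is of the form $(\alpha^2+1)^2\mu^3\cdot c$ for a constant $c$ independent of $\alpha$. Since $\mu^3(\alpha^2+1)^2=1$ by construction, the $\alpha$-dependence cancels exactly, yielding $B_j^\pm\to\bar B_j\in\mathbb{R}$, independent of $\alpha$. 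At $|\xi|\to\infty$, the expansions of $\lambda_j^\pm$ give $\lambda_1^\pm-\lambda_2^\pm=-2p_0^\pm|\xi|^{-1/2}+O(|\xi|^{-2})$ (cancellation of the leading order since both share the same $p_1^\pm$), while $\lambda_i^+-\lambda_j^-$ is $O(|\xi|)$ because $p_1^+\ne p_1^-$. Each denominator in (\ref{coefficients_green}) thus has size $|\xi|^{-1/2}\cdot|\xi|\cdot|\xi|=|\xi|^{3/2}$, producing $B_j^\pm=O(|\xi|^{-3/2})$.

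The only real obstacle I anticipate is the bookkeeping at infinity: one must verify that the two $\lambda_j^-$ branches indeed both have the same leading coefficient $p_1^-$ (so the $O(|\xi|^{-1/2})$ cancellation in $\lambda_1^--\lambda_2^-$ is genuine), while simultaneously checking that $p_1^+\neq p_1^-$ (no cancellation between $+$ and $-$ branches), since both facts are needed for the $O(|\xi|^{-3/2})$ bound on the $B_j^\pm$. This reduces to verifying $\Re(\zeta^2)>0>\Re(\zeta'^2)$, which is immediate from their explicit expressions. Everything else is a straightforward substitution once the expansions of $\lambda_j^-$ are in hand.
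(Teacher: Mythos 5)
Your proposal is correct and follows essentially the same route as the paper's Appendix \ref{a:Green_asymptotic}: derive the expansions of $\lambda_j^-$ by the same factorization at $\xi=0$ and the same rescaling $\lambda=\xi\rho$ at infinity used for $\lambda_j^+$ in Appendix \ref{ss:appendix_B}, then substitute into \eqref{coefficients_green}. If anything you are more explicit than the paper about why the denominators behave as they do — the exact cancellation $(\alpha^2+1)^2\mu^3=1$ at low frequency and the $|\xi|^{-1/2}\cdot|\xi|\cdot|\xi|$ count at high frequency, including the needed checks $\Re(\zeta^2)>0>\Re(\zeta'^2)$ and $p_0\neq0$ — whereas the paper simply records the resulting values $B_j^+=B_2^-=\tfrac13$, $B_1^-=1$.
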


We refer to Appendix \ref{a:Green_asymptotic} for a proof.

We  proceed to rigorously prove that the field
\begin{equation}\label{DGV_314}
\Psi^F_G(X,\cdot)=\int_{0}^{+\infty}G(X-X',D)F(X',\cdot)dX'=\int_{0}^{+\infty}\mathcal{F}^{-1}_{\xi\rightarrow Y'}\left(G(X-X',\cdot)\mathcal{F}_{Y'\rightarrow\xi} F(X',\cdot)\right)dX'
\end{equation}
is well-defined and satisfies $(\ref{pb:halfspace_linear_nonhomogeneous})$. This is made more precise in the following lemma:
\begin{lemma}\label{lemma_3}
	Let $F$ be smooth and compactly supported. The formula (\ref{solution_v}) defines
	a solution $\Psi^F_w$ of (\ref{pb:halfspace_linear_nonhomogeneous}) in $ L^{\infty}_{loc}(\mathbb{R}_+,H^{m+2}(\mathbb{R}))$ for any $m\geq  0$.
\end{lemma}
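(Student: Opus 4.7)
The plan is to verify formula \eqref{solution_v} directly, working in the Fourier variable in $Y$, by exploiting the explicit form of the Green function derived above and the asymptotic estimates for the roots $\lambda_j^{\pm}$ and coefficients $B_j^{\pm}$ from Lemma \ref{lemma2}. The strategy has three stages: first, use the compact support and smoothness of $F$ to establish that the integral is absolutely convergent and that differentiation under the integral sign is legitimate; second, check the PDE pointwise in the Fourier variable using the jump relations built into $G$; third, estimate $\|\Psi^F_w(X,\cdot)\|_{H^{m+2}(\mathbb{R})}$ via Plancherel by splitting the frequency regime.

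\textbf{Fourier reduction and PDE.} Applying the Fourier transform in $Y$ to \eqref{solution_v} yields
\[
\widehat{\Psi^F_w}(X,\xi) \;=\; \int_0^{+\infty} \widehat{G}(X-X',\xi)\,\widehat{F}(X',\xi)\,dX',
\]
where $\widehat{G}$ is given piecewise by \eqref{G_def}. Since $F$ is smooth and compactly supported, $\widehat{F}(X',\cdot)$ is Schwartz in $\xi$ uniformly on the support in $X'$, so absolute convergence follows from Lemma \ref{lemma2}: for $X-X'>0$ we use the $\lambda_j^+$ branch (positive real parts) and for $X-X'<0$ the $\lambda_j^-$ branch (negative real parts), so the exponentials decay in the correct direction. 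To verify the PDE, split the integral as $\int_0^X+\int_X^\infty$. Each piece is smooth in $X$ and, by construction of $\widehat G$, annihilates the symbol $L(X,\xi) = -\partial_X + (\partial_X^2 +(-\alpha\partial_X+i\xi)^2)^2$ (viewed as acting in $X$ only). Differentiating under the integral and taking into account the jump conditions $[\partial_X^k \widehat G]|_{X=0}=0$ for $k\leq 2$ and $[\partial_X^3\widehat G]|_{X=0}=-(1+\alpha^2)^{-2}$, the only surviving contribution is the Dirac source, giving $(\partial_X-\Delta_w^2)\widehat{\Psi^F_w}(X,\xi)=\widehat F(X,\xi)$.

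\textbf{Regularity and main obstacle.} To obtain $\Psi_w^F\in L^\infty_{\mathrm{loc}}(\mathbb{R}_+;H^{m+2}(\mathbb{R}))$, by Plancherel it suffices to bound $\int_\mathbb{R}(1+\xi^2)^{m+2}|\widehat{\Psi^F_w}(X,\xi)|^2\,d\xi$ uniformly in $X$ on compacts. For $|\xi|\ll 1$, Lemma \ref{lemma2} gives $B_j^\pm\to \bar B_j$ and the exponents $\lambda_j^\pm$ tend to zero, so $\widehat G$ is uniformly bounded on compact $X$-intervals, and the smoothness of $\widehat F$ in $\xi$ makes the low-frequency contribution harmless. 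For $|\xi|\gg 1$, one has $|B_j^\pm(\xi)|=O(|\xi|^{-3/2})$ and $\mathrm{Re}(\lambda_j^\pm(\xi))\gtrsim |\xi|$; the exponential factor $e^{-c|\xi||X-X'|}$ combined with the rapid (Schwartz) decay of $\widehat F(X',\xi)$ in $\xi$ absorbs any polynomial weight $|\xi|^{2(m+2)}$. The main technical difficulty is to ensure uniformity of the high-frequency estimate near $X'=X$, where the exponential approaches one; this is handled by isolating a neighborhood of $X'=X$ where the bound reduces to $\int |\xi|^{2(m+2)}|\xi|^{-3}|\widehat F(X',\xi)|^2\,d\xi$, finite thanks to the Schwartz regularity of $\widehat F$, while the complementary region benefits from genuine exponential damping. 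Combining the two regimes proves the claimed bound for any $m\geq 0$, completing the proof.
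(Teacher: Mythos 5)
Your proposal is correct and follows essentially the same route as the paper: Fourier transform in $Y$, the low/high-frequency asymptotics of $B_j^{\pm}$ and $\lambda_j^{\pm}$ from Lemma \ref{lemma2} combined with the Schwartz decay of $\widehat{F}$, and Plancherel to get the $H^{m+2}$ bound. The only difference is that you spell out the jump-condition verification of the PDE, which the paper dispatches by appeal to classical Green-function arguments.
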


\begin{proof} 
 From the hypotheses, we can assert that $\widehat{F} = \widehat{F}(X',\xi)$ is in the Schwartz class with respect to $\xi\neq 0$, smooth and compactly supported in $X'$. We have additionally that $G(X-X', \xi)$ is smooth in $\xi\neq 0$, and continuous in $X, X'$. As a result, the function $J_{X,X'}: \xi\longrightarrow G(X-X', \xi)\widehat{F}(X',\xi)$ belongs to $L^{2}((1+|\xi|^2)^{m/2+1}d\xi)\times L^{2}((1+|\xi|^2)^{m/2+1}d\xi)$ for any $X,X'\geq 0$ and is smooth in $\xi$ and  continuous in $X,X'$. 
At high frequencies, the functional satisfies
	\[
|J_{X,X'}(\xi)|\leq C|\xi|^{N}|\widehat{F}|,
\]
where $N=-3/2$. This value comes from computing the $B_i$ using Lemma \ref{lemma2}. We can conclude that it belongs to $L^{2}$ since $\widehat{F}$ and its $X'$-derivatives are rapidly decreasing in $\xi$ by definition of Schwartz class.	
	
Furthermore,  when $|\xi|\ll1 $, using once again the bounds derived in Lemma \ref{lemma2},
	\[	G(X-X',\xi)\widehat{F}(X',\xi)= O(1).\]
	
	Thus, $\Psi^F_G$ defines a continuous function of $X$ with values in $H^{m+2}(\mathbb{R}^{2}_+)$ for all $m\geq 0$. Moreover, the smoothness of $F$ implies $\Psi^F_G$ is smooth in $X$ with values in the same space.  It will be a solution of the problem due to classical results in the construction of Green functions. 
	
	Finally, as its Fourier 
	transform is a linear combination of $e^{-\lambda_i(\xi)X}B_i(\xi)$, it also satisfies the linear problem treated in section \ref{ss:linear_case}.\end{proof}

\subsubsection{Bounds in Kato spaces}\label{ss:inhomogeneous}
In this section we establish that $\Psi_w$ is controlled by the norms of $\psi_0,\psi_1$ and $F$ in $L^{\infty}\left(H^{m-2}_{\mathrm{uloc}}\right)$ for a large enough $m$. The proof follows \cite{Dalibard2017}.  

Now we need to derive a representation formula for $\Psi_w$ when $\psi_0\in H^{3/2}_{\mathrm{uloc}}(\mathbb{R})$ and $\psi_1\in H^{1/2}_{\mathrm{uloc}}(\mathbb{R})$ by using its Fourier transform. The critical point  is to understand the action of the operators on $L^2_{\mathrm{uloc}}$ functions.

Due to the form of the solution, the end goal will be to establish  that for any $X > 0$, the kernel type $K^{\pm}_{i}=\mathcal{F}^{-1}_{\xi\rightarrow Y}\left( B_i^{\pm}(\xi)e^{-\lambda_i^{+}(\xi)X}\right),$ $i=1,2,$ defines an element of $L^{1}(\mathbb{R})$. The advantage of proving the latter is that $\Psi^{\pm}_i = K^{\pm}_i ( \cdot , z) * \underline{\psi}_i$ will then be (at least) an
$L^{1}_{\mathrm{uloc}}$ function.

\begin{lemma}\label{Dalibard2017_lemma7}
Let $\underline{\psi}$ be $L^1_{\mathrm{uloc}}(\mathbb{R})$. We define $\Psi^{\pm}_i (X,Y)$ by
	\begin{equation}\label{Dalibard2017_319}
	\Psi^{\pm}_{i}(X,\cdot)\defeq\chi(D)P(D)e^{-\lambda^{\pm}_i(D)Z}\underline{\psi},\quad \textrm{for}\quad\medspace i=1,2,
	\end{equation}
	where $\chi=\chi(\xi)\in C_c^{\infty}(\mathbb{R})$ and $P=P(\xi)\in C^{\infty}(\mathbb{R})$ is a is a homogeneous polynomial of degree $k\geq 0$ in the same vicinity. Then, there exists $C$ and $\delta > 0$ independent of $\underline{\psi}$ such that
	\[
	\begin{split}
	\forall Z\geq 0,&\quad\|e^{\delta Z}\Psi_{1}^+\|_{L^{\infty}(\mathbb{R}^2_+)} +\|e^{\delta Z}\Psi_{2}^+\|_{L^{\infty}(\mathbb{R}^2_+)}\leq  C\|\underline{\psi}\|_{L^1_{\mathrm{uloc}}},\\
\forall Z\leq 0,&\quad
	\|(1+|Z|)^{k/4}\Psi_{1}^-\|_{L^{\infty}(\mathbb{R}^2_+)} +\|e^{\delta |Z|}\Psi_{2}^-\|_{L^{\infty}(\mathbb{R}^2_+)}\leq  C\|\underline{\psi}\|_{L^1_{\mathrm{uloc}}}.
	\end{split}
	\]
\end{lemma}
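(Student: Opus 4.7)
The strategy is to represent each $\Psi_i^\pm$ as a convolution in the tangential variable $Y$, reducing the proof to sharp pointwise estimates on the kernels
\[
K_i^\pm(Z, Y) \defeq \mathcal{F}^{-1}_{\xi \to Y}\bigl(\chi(\xi)\,P(\xi)\,e^{-\lambda_i^\pm(\xi)\, Z}\bigr),
\]
so that $\Psi_i^\pm(X, Y) = (K_i^\pm(Z, \cdot) *_Y \underline{\psi})(Y)$. The first ingredient is a general convolution estimate: for every measurable $K$ with $\|K\|_W \defeq \sum_{n \in \mathbb{Z}} \|K\|_{L^\infty([n, n+1])}$ finite and every $\underline{\psi} \in L^1_{\mathrm{uloc}}(\mathbb{R})$, one has
\[
\|K * \underline{\psi}\|_{L^\infty(\mathbb{R})} \leq C\,\|K\|_W\,\|\underline{\psi}\|_{L^1_{\mathrm{uloc}}},
\]
obtained by decomposing $\mathbb{R}$ into unit intervals and using the definition of the uniformly locally integrable norm. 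It then suffices to estimate $\|K_i^\pm(Z, \cdot)\|_W$ with the correct dependence on $Z$.

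For $\Psi_1^+$, $\Psi_2^+$, and $\Psi_2^-$, the corresponding root $\lambda_i^\pm$ has real part of fixed sign bounded away from $0$ uniformly on the compact support of $\chi$, by Lemmas \ref{Dalibard2014_lemma24} and \ref{lemma2}. Hence $|e^{-\lambda_i^\pm(\xi)\, Z}| \leq e^{-\delta|Z|}$ on $\operatorname{supp}\chi$ for some $\delta > 0$, provided $Z$ has the appropriate sign. Combining this with repeated integration by parts in $\xi$---legitimate because $\chi \in C_c^\infty$, both $P$ and $\lambda_i^\pm$ are smooth on $\operatorname{supp}\chi$, and the Leibniz expansion of $\partial_\xi^N\bigl(\chi P e^{-\lambda_i^\pm Z}\bigr)$ only brings down polynomial factors in $Z$---I obtain
\[
|K_i^\pm(Z, Y)| \leq C_N\, \frac{e^{-\delta |Z|/2}}{(1 + |Y|)^N} \qquad \text{for every } N \in \mathbb{N},
\]
which after summing over unit intervals yields $\|K_i^\pm(Z, \cdot)\|_W \leq C\,e^{-\delta|Z|/2}$. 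The convolution estimate then delivers the exponentially decaying bounds of the lemma.

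The heart of the argument, and the main obstacle, is the case of $K_1^-$, for which there is no spectral gap: by Lemma \ref{lemma2}, $\lambda_1^-(\xi) = -\xi^4/(1 + \alpha^2)^2 + O(|\xi|^5)$ near $\xi = 0$, so that for $Z \leq 0$ the phase is only comparable to $e^{-c\xi^4 |Z|}$ in a neighborhood of the origin. The plan is to perform the rescaling $\xi = u\,|Z|^{-1/4}$ in the Fourier integral; using $P(\xi) \sim c_P\,\xi^k$ near $0$, one obtains at leading order
\[
K_1^-(Z, Y) \approx c_P\,\chi(0)\,|Z|^{-(k+1)/4}\,\widehat{h}\!\left(Y\,|Z|^{-1/4}\right), \qquad h(u) \defeq u^k\, e^{-c u^4},
\]
with $h$, and therefore $\widehat h$, Schwartz. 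A Riemann-sum comparison over unit intervals in $Y$ then gives
\[
\|K_1^-(Z, \cdot)\|_W \lesssim |Z|^{-(k+1)/4}\,\bigl(1 + |Z|^{1/4}\bigr) \lesssim (1 + |Z|)^{-k/4} \qquad \text{for } |Z| \geq 1.
\]
Frequencies bounded away from $0$ are handled exactly as in the exponential case and are negligible, while the bound for $|Z| \leq 1$ follows from directly estimating the $L^\infty$ norm of the integrand and using the compact support of $\chi$. The delicate point is to control rigorously the remainder terms coming from the higher-order Taylor expansions of $\lambda_1^-$ and $P$, and to ensure that the integration by parts in $\xi$---needed for the Wiener-amalgam decay in $Y$---does not destroy the $|Z|^{-k/4}$ scaling when derivatives fall on $e^{-\lambda_1^-(\xi) Z}$ and bring down polynomial factors of $Z$; this is where the matched scaling of $Y$ against $|Z|^{1/4}$ is essential. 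Combining these bounds with the convolution estimate of the first step completes the proof.
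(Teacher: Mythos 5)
Your proposal is correct and follows essentially the same route as the paper: pointwise kernel bounds obtained from the spectral gap plus integration by parts in $\xi$ for $K_1^+,K_2^+,K_2^-$, and the parabolic rescaling $\xi\mapsto|Z|^{-1/4}\xi$ (exploiting $\lambda_1^-(\xi)=|\xi|^4\Lambda_1^-(\xi)$ and the homogeneity of $P$, which gives the exact factor $|Z|^{-(k+1)/4}$ and the final count $|Z|^{-(k+1)/4}\cdot|Z|^{1/4}=|Z|^{-k/4}$) for the degenerate root. Your packaging of the unit-interval decomposition into an amalgam-type norm $\|K\|_W$ on the kernel is only a cosmetic reorganization of the paper's partition of unity $(\varphi_q)_{q\in\mathbb{Z}}$ applied to $\underline{\psi}$.
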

This lemma follows the same idea as \cite[Lemma 7]{Dalibard2017}.  For the reader's convenience, we repeat the main ideas of the proof, thus making our exposition self-contained.

\begin{proof}
	We introduce a partition of unity 
	$(\varphi_q)_{q\in\mathbb{Z}}$ with $\varphi_q\in C^\infty_0(\mathbb{R})$, where $\mathrm{Supp}\thinspace\varphi_q \subset B(q,2)$ for $q \in\mathbb{Z}$ and $\sup_q \|\varphi_q\|_{W^{k,\infty}} < +\infty$ for all $k$. We also introduce
	functions $\tilde{\varphi}_q \in C_0^{\infty}(\mathbb{R})$ such that $\tilde{\varphi}_q \equiv1$ on $\mathrm{Supp}\thinspace\varphi_q$, and, say $\mathrm{Supp}\thinspace\tilde{\varphi}_q \subset B(q,3)$. Then, for $i=1,2$
	\begin{eqnarray}\label{Dalibard2017_A1}
	\Psi^{\pm}_i(Z,Y)&=&  \sum_{q\in\mathbb{Z}}\chi(D)P(D)(\varphi_q\underline{\psi})e^{-\lambda^{\pm}_i(D)Z}\\
	&=&\sum_{q\in\mathbb{Z}}\int_{\mathbb{R}}K^{\pm}_i (Z,Y-Y')\underline{\psi}(Y')\varphi_q(Y')dY'\\
	&=&
	\sum_{q\in\mathbb{Z}}\int_{\mathbb{R}}K_{i,q}^{\pm}(Z,Y,Y')\varphi_{q}(Y')\underline{\psi}(Y')dY',\nonumber
	\end{eqnarray}
	where
	\[
	K^{\pm}_{i}(Z,Y)=\int_{\mathbb{R}}e^{iY\cdot\xi}\chi(\xi)P(\xi)e^{-\lambda^{\pm}_i(\xi)Z}d\xi,\quad K_{i,q}^{\pm}(Z,Y,Y')=K^{\pm}_{i}(Z,Y-Y')\tilde{\varphi}_q(Y').
	\]
	We show  that the following  estimate holds:
	\begin{lemma}\label{lemma7}
		There exists $\delta > 0$ and for all $n\in\mathbb{N}$ a constant $C_n \geq 0$ such that for all $Y \in\mathbb{R}$, $Z > 0$ and $i=1,2$
		\begin{equation}\label{Dalibard2017_A2}
		|K^+_{i}(Z,Y)|\leq  C_n\frac{e^{-\delta Z}}{(1+|Y|)^{n}},\quad \textrm{for}\quad\medspace i=1,2.
		\end{equation}
		 When  $Z < 0$, we have for all $Y \in\mathbb{R}$, for all $n\in\mathbb{N}$
		\begin{equation}\label{Dalibard2017_A3}
		|K_{1}^-(Z,Y)|\leq C_n\dfrac{|Z|^{(n-1-k)/4}}{|Z|^{n/4}+|Y|^n},\quad|K^-_{2}(Z,Y)|\leq  C_n\frac{e^{-\delta |Z|}}{(1+|Y|^{n})}.
		\end{equation}
	\end{lemma}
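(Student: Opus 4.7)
The strategy is to analyse each kernel
\begin{equation*}
K^\pm_i(Z,Y)=\int_\mathbb{R} e^{iY\xi}\,\chi(\xi)\,P(\xi)\,e^{-\lambda^\pm_i(\xi)Z}\,d\xi
\end{equation*}
by exploiting the pointwise and asymptotic information on $\lambda^\pm_i$ supplied by Lemmas \ref{Dalibard2014_lemma24} and \ref{lemma2}. The exponents fall into two qualitative regimes: $\lambda^+_1,\lambda^+_2$ (with $Z\geq 0$) and $\lambda^-_2$ (with $Z\leq 0$) satisfy $\pm\Re(\lambda)\geq c|\xi|$ near $\xi=0$ and $\pm\Re(\lambda)\geq c'>0$ away from the origin on $\mathrm{Supp}\chi$, which will yield exponential decay in $|Z|$; whereas $\lambda^-_1\sim -|\xi|^4$ at the origin can only produce algebraic gain in $|Z|$.

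For the exponential case I would split the cutoff as $\chi=\chi^{\mathrm{hi}}+\chi^{\mathrm{lo}}$, with $\chi^{\mathrm{hi}}$ supported away from $\xi=0$ and $\chi^{\mathrm{lo}}$ supported in a small neighbourhood of the origin. On $\mathrm{Supp}\chi^{\mathrm{hi}}$ the roots $\lambda^\pm_j$ are smooth and the integrand is a smooth compactly supported function of $\xi$ enjoying a uniform factor $e^{-c_0|Z|}$; successive integrations by parts against $e^{iY\xi}$ then produce the rapid decay $(1+|Y|)^{-n}$ with $Z$-independent constants. On $\mathrm{Supp}\chi^{\mathrm{lo}}$, where $\lambda\sim\bar\lambda|\xi|$ with $\Re(\bar\lambda)>0$, I would deform the $\xi$-contour slightly into the complex plane (each branch extending holomorphically off the real axis near $\xi=0$, as the characteristic polynomial has simple roots there) to gain a uniform multiplicative factor $e^{-\delta|Z|}$; the lack of $C^1$-regularity of the branches at the origin is handled by a Littlewood--Paley decomposition of $\chi^{\mathrm{lo}}$ on dyadic annuli $\{|\xi|\sim 2^{-j}\}$, on each of which $\lambda$ is essentially homogeneous so that the standard stationary-phase estimate applies, and the resulting bounds are summed geometrically in $j$.

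The algebraic case, namely the bound on $K^-_1$, requires the rescaling $\xi=s|Z|^{-1/4}$ tailored to $\lambda^-_1(\xi)Z\approx -|s|^4$. Using the homogeneity of $P$ to extract $P(\xi)=|Z|^{-k/4}P(s)$, the kernel rewrites as
\begin{equation*}
K^-_1(Z,Y)=|Z|^{-(1+k)/4}\int_\mathbb{R} e^{iY|Z|^{-1/4}s}\,\chi\!\bigl(s|Z|^{-1/4}\bigr)P(s)\,e^{-|s|^4}\bigl(1+R(s,Z)\bigr)\,ds,
\end{equation*}
where $R$ is a smooth correction, small for $|Z|$ large and $s$ bounded. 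Bounding the integral in $L^1_s$ yields $|K^-_1|\lesssim |Z|^{-(1+k)/4}$; performing $n$ integrations by parts in $s$ against the oscillatory factor produces the gain $(|Y||Z|^{-1/4})^{-n}$, giving the alternative bound $|Z|^{(n-1-k)/4}|Y|^{-n}$. Taking the minimum of the two bounds is equivalent, up to constants, to the claimed $|Z|^{(n-1-k)/4}/(|Z|^{n/4}+|Y|^n)$.

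The main obstacle throughout will be the non-smoothness of the characteristic roots $\lambda^\pm_j(\xi)$ at $\xi=0$ — they involve $|\xi|$ or $|\xi|^4$ rather than analytic expressions — which prevents both the contour deformation and the direct integration by parts from going through at the singular point. I expect to handle this, as in \cite{Dalibard2017}, by the dyadic decomposition described above, which on each annulus reduces the estimate to a scaling argument analogous to the one carried out for $K^-_1$, after which the pieces sum geometrically to give \eqref{Dalibard2017_A2} and the second component of \eqref{Dalibard2017_A3}.
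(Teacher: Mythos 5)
Your treatment of $K^-_1$ is essentially the paper's own argument: the rescaling $\xi=s|Z|^{-1/4}$ adapted to $\lambda^-_1\sim-|\xi|^4$, the extraction of $|Z|^{-(1+k)/4}$ from the homogeneity of $P$, the crude $L^1$ bound, and $n$ integrations by parts to trade powers of $|Y||Z|^{-1/4}$, combined by taking the minimum of the two resulting estimates. That part is fine.

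For $K^+_1$, $K^+_2$ and $K^-_2$, however, your starting premise is wrong and the step it is meant to support would fail as written. You assert that near $\xi=0$ these branches satisfy only $\pm\Re(\lambda)\geq c|\xi|$ and conclude that this "will yield exponential decay in $|Z|$"; a lower bound that vanishes at $\xi=0$ cannot produce a uniform factor $e^{-\delta|Z|}$, and the contour deformation you invoke to repair this is not justified: you would need $\pm\Re\bigl(\lambda(\xi+i\eta)\bigr)\geq\delta$ on the shifted contour, which for a branch behaving like $\bar\lambda\xi$ imposes a sign condition on $\Im(\bar\lambda)\eta$ that you never verify and that need not hold for both branches with a single choice of $\eta$. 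In fact the premise is false, in your favor: at $\xi=0$ the characteristic polynomial reduces to $-\lambda\bigl(1+(1+\alpha^2)^2\lambda^3\bigr)$, so $\lambda^+_1,\lambda^+_2$ converge to the two nonreal cube roots of $-(1+\alpha^2)^{-2}$, whose real part is $\tfrac12(1+\alpha^2)^{-2/3}>0$, and $\lambda^-_2$ converges to $-(1+\alpha^2)^{-2/3}$ (see Lemma \ref{lemma2} and Appendix \ref{s:roots}); only $\lambda^-_1\sim-\xi^4$ degenerates. Hence $\Re(\lambda^+_i)\geq\delta>0$ (resp.\ $-\Re(\lambda^-_2)\geq\delta$) on the whole compact set $\mathrm{Supp}\,\chi$, and the proof is correspondingly direct: pull out $e^{-\delta Z}$, note that every $\xi$-derivative of $\chi(\xi)P(\xi)e^{-\lambda(\xi)Z}$ remains bounded by $C_n e^{-\delta Z}$ on $\mathrm{Supp}\,\chi$, and integrate by parts $n$ times against $e^{iY\xi}$ to obtain the $(1+|Y|)^{-n}$ decay. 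The dyadic decomposition and the concern about non-smoothness of the branches at $\xi=0$ are both unnecessary here: the roots are simple at $\xi=0$, hence analytic there, and the $|\xi|$-type non-smoothness only appears in the high-frequency regime, which is excised by $\chi$ and handled separately in Lemma \ref{Dalibard2017_lemma9}. Replacing your low-frequency analysis of the exponentially decaying kernels by this observation closes the gap.
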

We finish the proof of the current lemma and then show the result in Lemma \ref{lemma7}.

Combining (\ref{Dalibard2017_A1}) and \eqref{Dalibard2017_A2} when $i=1,2$ and $Z>0$ yields
\begin{eqnarray}
|\Psi^{+}_{i} (Z,Y)|
&\leq  &Ce^{-\delta Z}\left(\sum_{q\in\mathbb{Z},\;|q-Y|\geq 4}\int|\varphi_q(Y')\underline{\psi}(Y')|dY'\dfrac{1}{|Y-q|^2-3}\right.\\
&&\hspace{1in}\left.+\sum_{q\in\mathbb{Z},\;|q-Y|< 4}\int|\varphi_q(Y')\underline{\psi}(Y')|dY'\right)\nonumber\\
&\leq&  Ce^{-\delta Z}\|\underline{\psi}\|_{L^1_{\mathrm{uloc}}}\nonumber.
\end{eqnarray}
The latter also applies to $\Psi^-_2$. Furthermore, for $|Z|>1$, we have
 \begin{eqnarray}
 |\Psi^-_{1} (Z,Y)|
 &\leq  &C\left(\sum_{q\in\mathbb{Z},\;|q-Y|\geq 4}\int|\varphi_q(Y')\underline{\psi}(Y')|dY'\dfrac{|Z|^{\tfrac{1-k}{4}}}{|Z|^{\tfrac{1}{2}}+|Y-q|^2-3}\right.\\
 &&\qquad\left.+\sum_{q\in\mathbb{Z},\;|q-Y|< 4}\int|\varphi_q(Y')\underline{\psi}(Y')|dY'\dfrac{1}{|Z|^{(1+k)/4}} \right)\nonumber\\
 &\leq  &C\left(|Z|^{1/4}+1\right) |Z|^{-(1+k)/4}\|\underline{\psi}\|_{L^1_{\mathrm{uloc}}}\leq C\left(|Z|+1\right)^{-k/4}\|\underline{\psi}\|_{L^1_{\mathrm{uloc}}}.\nonumber
 \end{eqnarray}
When $|Z|\leq 1$, a similar reasoning yields $|\Psi^-_{1} (Z,Y)|\leq C\|\underline{\psi}\|_{L^1_{\mathrm{uloc}}}$.

Estimate in Lemma \ref{Dalibard2017_lemma7} follows. 
\end{proof}

\begin{proof}[Proof of Lemma \ref{lemma7}]
	With the notation introduce in Lemma \ref{Dalibard2017_lemma7}, we follow the ideas in \cite{Dalibard2017} to obtain the estimates.
	Let us consider $Z>0$. Since $\lambda^{+}_i$, $i=1,2$ are continuous and have non-vanishing real part on the support of $\chi$, there exists a constant $\delta > 0$ such that $\Re(\lambda^{+}_i) \geq \delta$ for all $\xi \in \mathrm{Supp}\thinspace\chi$ and for $i=1,2$. When $|Y|\leq 1$, we obtain
	\begin{equation*}
	|K^+_i(Z,Y)| \leq e^{-\delta Z}\|\chi P\|_{L^1} .
	\end{equation*}
However, this estimate is not enough for greater values of $Y$. Let us define $\chi_i(Z,\xi)=\chi(\xi)\exp(-\lambda_{i}^{+}Z)$ which is  an $L^{\infty}(\mathbb{R}_+, \mathcal{S}(\mathbb{R}))$ function. It follows that for all $n_1, n_2\in\mathbb{N}$ 
	\begin{equation*}
	|\partial_{\xi}^{n_2}\chi_i(Z,\xi)|\leq C_{n_1,n_2}\dfrac{e^{-\delta Z}}{(1+|\xi|^2)^{n_1}}.
	\end{equation*}
	
Integrating by parts with respect to the frequency variable yields
	\begin{equation}\label{ipp_kernel}
	\begin{split}
	Y^n K^+_i(Z,Y)&=
	\int_{\mathbb{R}}e^{iY\cdot\xi}D_{\xi}^n\left[\chi_i(\xi)P(\xi)e^{-\lambda_i^+(\xi)Z}\right]d\xi\\
	&\leq e^{\delta Z}	\int_{\mathbb{R}}e^{iY\cdot\xi}\sum_{m=0}^n\binom{n}{m-2}\partial_{\xi}^{n-m}\chi_i \partial_{\xi}^{m-2}(P (\xi)e^{-\lambda_i^+(\xi)Z})d\xi.
	\end{split}
	\end{equation}
	Note that for any $k\in\{0,...,n\}$, $\partial_{\xi}^{k}P (\xi)$ remains an homogeneous polynomial. Thus, expression \eqref{ipp_kernel} is bounded by a linear combination of integrals of the form
	\begin{equation*}
	\int_{\mathbb{R}}e^{iY\cdot\xi}e^{-\delta_n Z}\mathbbm{1}_{\{|\xi|\leq C\}}d\xi,
	\end{equation*}
	and \eqref{Dalibard2017_A2} follows. The estimate of $K^-_2$ stems from the same ideas.

We proceed to compute a useful estimate on $K^-_1$. When $|Z|\leq 1$, we have $$|K^-_1(Z,Y)|\leq\|\exp(-\delta|\xi|^4)\chi(\xi) P(\xi)\|_{L^1}<+\infty,$$ for all $Y\in\mathbb{R}$ and $k>-1$. Let us now consider $|Z|\geq 1$. By introducing the change of variables $\xi'=|Z|^{1/4}\xi$ and $Y'=|Z|^{-1/4}Y$, $K$ can be rewritten as
\begin{equation*}
K^-_1(Z,Y') =\dfrac{1}{|Z|^{(1+k)/4}}\int_{\mathbb{R}}e^{iY'\cdot\xi'}\chi\left(\frac{\xi'}{|Z|^{1/4}}\right)P(\xi' )e^{-\lambda^-_1\left(\frac{\xi'}{|Z|^{1/4}}\right)|Z|}d\xi'.
\end{equation*}
Since $\lambda_1^-/|\xi|^4\sim 1$ and does not vanish on the support of $\chi$, there exists a positive constant $\delta$ such that $-\lambda_1^-(\xi)\leq-\delta|\xi|^4$ on $\mathrm{Supp}\chi$. Therefore, for $|Y'|\leq 1$, it is easy to see that $$|K^-_1(Z,Y)|\leq |Z|^{-(1+k)/4}\|\exp(-\delta|\xi'|^4)P(\xi')\|_{L^1}.$$
Now, for $|Y'|\geq 1$, we perform integration by parts and obtain for any $n\in\mathbb{N}$,
\begin{align*}
Y'^nK^-_1(Z,Y)=\dfrac{1}{|Z|^{(1+k)/4}}\int_\mathbb{R}e^{iY'\xi'}D^n_{\xi'}&\left[\chi\left(\frac{\xi'}{|Z|^{1/4}}\right)P(\xi')\exp\left(-\lambda_1^-\left(\frac{\xi'}{|Z|^{1/4}})\right)|Z|\right)\right]d\xi'.
\end{align*}
The main issues arise when the derivative acts on the exponential. Note that
$\lambda_1^- (\xi )=|\xi |^4\Lambda_1^{-}(\xi)$, where $\Lambda_1\in C^\infty(\mathbb{R})$ and $\Lambda_1(0)=1$ therefore, for
all $\xi'\in\mathbb{R}$, $Z < 0$,
\begin{equation*}
\exp\left(-\lambda_1\left(\frac{\xi'}{|Z|^{1/4}}\right)|Z|\right)=\exp\left(-|\xi'|^4\Lambda_1^{-}\left(\frac{\xi'}{|Z|^{1/4}}\right)\right).
\end{equation*}
We infer that for $\xi\in\mathrm{Supp}\chi$,
\begin{equation*}
\left|P(\xi')\partial^n_{\xi'}\left(\chi\left(\frac{\xi'}{|Z|^{1/4}}\right)e^{-\lambda_1^{-}\left(\frac{\xi'}{|Z|^{1/4}}\right)|Z|}\right)\right|\leq P_{3n}(\xi')e^{-\delta|\xi'|^4},
\end{equation*}
where  $P_{3n}$ denotes a polynomial on $\xi'$ of degree $3n$. Hence, we have for all $n\in\mathbb{N}$
\begin{equation*}
|Y'^n K^-_1(Z,Y)|\leq C_n |Z|^{-(1+k)/4},
\end{equation*}
which provides in turn the following result for all $n\in\mathbb{N}$
\begin{equation}\label{estimate_n}
|K^-_1(Z,Y)|\leq C \dfrac{|Z|^{-(1+k)/4}}{1+|Y'|^n}= C\dfrac{|Z|^{(n-1-k)/4}}{|Z|^{n/4}+|Y|^n}.
\end{equation}
Taking $n=2$ in the previous inequality guarantees the convergence of integral controlling $\Psi^-_1$, completing the second estimate in the lemma.
\end{proof}

Exponential decay is obtained at high frequencies by using the following result:
\begin{lemma}\label{Dalibard2017_lemma9}
	Let $\chi\in C_c^{\infty}(\mathbb{R})$, with $\chi\equiv 1$ in a ball $B_r:=B(0,r)$, for $r>0$, and $P=P(\xi)\in C_b(B_r^c)$. For $\underline{\psi}\in L^2_{\mathrm{uloc}}(\mathbb{R})$ , we define $\Psi^{i}=\Psi^{i}(X,Y)$ by
	\begin{equation*}
	\Psi^\pm_{i}(X,Y)=(1-\chi(D))e^{-\lambda^\pm_i(D)Z}P(D)\underline{\psi}.
	\end{equation*}
	Then, for $Z\in\mathbb{R}$ and $\delta>0$ small enough,
	\begin{equation}\label{lemma_9_est}
	\|e^{\delta Z}\Psi^{1}\|_{L^{\infty}(\mathbb{R}\times\mathbb{R}^{+})}+\|e^{\delta Z}\Psi^{2}\|_{L^{\infty}(\mathbb{R}\times\mathbb{R}^{+})}\leq C\|\underline{\psi}\|_{H^{N}_{\mathrm{uloc}}(\mathbb{R})}.
	\end{equation} 
\end{lemma}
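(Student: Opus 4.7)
The argument is a close analogue of the proof of Lemma \ref{lemma7}, with the frequency cutoff reversed: $(1-\chi)$ localizes to the high-frequency regime on which the symbols $\lambda^\pm_i(\xi)$ have strictly positive real part. The plan is to decompose $\underline{\psi}$ by a tangential partition of unity $(\varphi_q)_{q\in\mathbb{Z}}$ with $\mathrm{Supp}\,\varphi_q\subset B(q,2)$ and write
$$\Psi^\pm_i(Y,Z) = \sum_{q\in\mathbb{Z}} \bigl(K^\pm_i(\cdot,Z)\ast(\varphi_q\underline{\psi})\bigr)(Y),\qquad K^\pm_i(Y,Z) := \mathcal{F}^{-1}_{\xi\to Y}\bigl[(1-\chi(\xi))\,P(\xi)\,e^{-\lambda^\pm_i(\xi)Z}\bigr].$$
Pointwise estimates on $K^\pm_i$ producing both $e^{-\delta Z}$ decay in the normal variable and arbitrary polynomial decay in the tangential variable then allow the series in $q$ to sum to a bound controlled by $\|\underline{\psi}\|_{H^N_{\mathrm{uloc}}}$.

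The key input is that, on $\mathrm{Supp}(1-\chi)\subset\{|\xi|\geq r\}$, Lemmas \ref{p:roots} and \ref{Dalibard2014_lemma24} combine to give $\Re\lambda^\pm_i(\xi)\geq c\max(1,|\xi|)$ uniformly, for some $c>0$ depending only on $r$ and $\alpha$. Hence $|e^{-\lambda^\pm_i(\xi)Z}|\leq e^{-\delta Z}e^{-c|\xi|Z/2}$ for any $0<\delta<c$, which both produces the weight $e^{\delta Z}$ in \eqref{lemma_9_est} and yields Schwartz-class decay in $\xi$ as soon as $Z>0$. Decay in $Y$ is then obtained by integrating by parts $n$ times in $\xi$ in the representation of $(Y-q)^n K^\pm_i(Y-q,Z)$: derivatives falling on $e^{-\lambda^\pm_i Z}$ produce factors $Z\,\partial_\xi\lambda^\pm_i$ that stay integrable against $e^{-c|\xi|Z/2}$, while derivatives of $(1-\chi)$ are compactly supported and contribute only bounded remainders.

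The main subtlety is that $P$ is merely $C_b$, so integration by parts cannot be applied to it directly. I plan to handle this by keeping the factor $P(\xi)\,\widehat{\varphi_q\underline{\psi}}(\xi)$ paired as a single $L^2$ object and closing the estimate with Cauchy–Schwarz against the weight $(1+|\xi|)^{-N}$, at the price of controlling $\varphi_q\underline{\psi}$ in $H^N$ for some $N>1/2$; the uniform local bound of this norm by $\|\underline{\psi}\|_{H^N_{\mathrm{uloc}}}$ is immediate. The resulting estimate
$$\bigl|\bigl(K^\pm_i(\cdot,Z)\ast(\varphi_q\underline{\psi})\bigr)(Y)\bigr|\leq C_n\,e^{-\delta Z}(1+|Y-q|)^{-n}\,\|\underline{\psi}\|_{H^N_{\mathrm{uloc}}},$$
valid for every $n$, sums over $q\in\mathbb{Z}$ as soon as $n\geq 2$ and delivers \eqref{lemma_9_est}. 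The hardest step is threading the integration by parts through the non-smooth factor $P$; a pure Fourier-multiplier boundedness argument on $H^N_{\mathrm{uloc}}$ followed by Sobolev embedding would give an $L^\infty$ bound uniform in $Y$ but would not provide the pointwise $|Y-q|$-decay required to sum the partition-of-unity series, which is why the kernel-based route is essential.
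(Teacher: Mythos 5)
Your overall architecture (partition of unity in $Y$, pointwise kernel bounds with $e^{-\delta Z}$ decay from $\Re\lambda^\pm_i\gtrsim\max(1,|\xi|)$ on $\mathrm{Supp}(1-\chi)$, summation over $q$) is the natural one, but the step that is supposed to produce the tangential decay does not close. To get the factor $(1+|Y-q|)^{-n}$ in your final displayed estimate you must integrate by parts $n$ times in $\xi$, and Leibniz's rule necessarily places derivatives on \emph{every} factor of the symbol, including $P$, which is only assumed continuous and bounded. Your proposed workaround --- keeping $P(\xi)\widehat{\varphi_q\underline{\psi}}(\xi)$ as a single $L^2$ object and closing with Cauchy--Schwarz against $(1+|\xi|)^{-N}$ --- forbids precisely those integrations by parts, and therefore yields only a bound on $|\Psi_q(Y,Z)|$ that is uniform in $Y$, with no decay in $|Y-q|$. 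As you yourself observe in your last sentence, such a bound cannot be summed over the partition of unity. So the claimed estimate $|(K^\pm_i(\cdot,Z)\ast(\varphi_q\underline{\psi}))(Y)|\leq C_n e^{-\delta Z}(1+|Y-q|)^{-n}\|\underline{\psi}\|_{H^N_{\mathrm{uloc}}}$ is asserted but not derived; the proposal is internally inconsistent at its central point. Relatedly, your requirement ``$N>1/2$'' cannot be right: $N$ must absorb both the growth of $P$ and the powers of $\xi$ generated by the (eventual) integrations by parts, which is why the paper needs $N\geq 2n$ with $n$ large.

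The paper's proof (deferred to Lemma 9 of \cite{Dalibard2017}) takes a genuinely different route that makes the role of the $H^N_{\mathrm{uloc}}$ norm transparent: one factors the symbol as $(1+|\xi|^2)^{-n}\cdot(1+|\xi|^2)^{n}$ and transfers the second factor onto the data, writing $\Psi^i=K_n\star\bigl((1-\partial_Y^2)^n\underline{\psi}\bigr)$ with $K_n=\mathcal{F}^{-1}\bigl[(1+|\xi|^2)^{-n}(1-\chi)Pe^{\mp\lambda^\pm_i Z}\bigr]$; the whole lemma then reduces to showing $K_n\in L^1(\mathbb{R})$ (with suitable decay) for $n$ large, and $(1-\partial_Y^2)^n\underline{\psi}\in L^2_{\mathrm{uloc}}$ costs exactly $\underline{\psi}\in H^{N}_{\mathrm{uloc}}$, $N\geq 2n$ --- which is why the right-hand side of \eqref{lemma_9_est} carries an $H^N_{\mathrm{uloc}}$ norm rather than the $L^1_{\mathrm{uloc}}$ norm of Lemma \ref{Dalibard2017_lemma7}. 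To repair your argument you should either adopt this transfer, or note that in every application in the paper $P$ is of the form $|\xi|^\beta$, hence smooth on $B_r^c$, so the integration by parts can legitimately be carried through all factors; but as written, with $P\in C_b$ only and the Cauchy--Schwarz pairing as the substitute, the proof has a genuine gap.
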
 
The proof of the previous lemma follows almost exactly the one of  Lemma 9 in \cite{Dalibard2017}, and consequently, it is not repeated here. The authors in \cite{Dalibard2017} showed that for $n$ large enough, and any $|Z| \neq 0$,
\begin{equation*}
K_n(Z,Y) := \mathcal{F}^{-1}\left(1 + |\xi|^2)^{-n}(1 - \chi(\xi))P(\xi)e^{\mp\lambda_{i}^{\pm}(\xi)Z}\right)\in L^1(\mathbb{R}).
\end{equation*}
Consequently, $\Psi^i = K_n \star ((1 - \partial_Y^2)^n\underline{\psi})$ is at least an element of $L^2_\mathrm{uloc}$ when $\underline{\psi}\in H^N_{\mathrm{uloc}}(\mathbb{R})$, for $N \geq 2n$. The choice of $N$ is linked to the degree of polynomial $P(D)$, and thus, to the asymptotic behavior of the eigenvalues. 

\begin{proposition}\label{prop:halfspace_linear_nonhomogeneous}
	Let $l\geq 0$ and  $F$ a compactly supported function of $ H^{N}_\mathrm{uloc}(\mathbb{R}^2_+)$, for $N\geq\left\lceil\tfrac{2l-1}{4}\right\rceil$. Then, the solution $\Psi_w^F$ of \eqref{pb:halfspace_linear_nonhomogeneous} satisfies for $0<\delta<\bar{\delta}$ and ,  
\begin{equation}\label{control_F}
\|e^{\delta X}D^l_{X,Y}\Psi^{F}_w\|_{L^{\infty}}\leq C\|e^{\bar{\delta} X}F\|_{L^{\infty}(H^{N}_{\mathrm{uloc}})},
\end{equation}
where $D_{X,Y}$ is the differential operator with respect to the variables $X$ and $Y$.
	\end{proposition}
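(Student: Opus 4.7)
The plan is to start from the integral representation $\Psi_w^F(X, \cdot) = \int_0^\infty G(X-X', D) F(X', \cdot)\, dX'$ furnished by Lemma \ref{lemma_3}, differentiate under the integral, and then estimate the resulting Fourier multipliers by combining the pointwise bounds of Lemmas \ref{Dalibard2017_lemma7} and \ref{Dalibard2017_lemma9}. Applying $D^l_{X, Y}$ converts the symbol of $G$ into a linear combination of terms of the form $(\lambda_i^\pm(\xi))^{l_1}(i\xi)^{l_2} B_i^\pm(\xi) e^{-\lambda_i^\pm(\xi) (X-X')}$ with $l_1 + l_2 \leq l$. A smooth cutoff $\chi \in C_c^\infty(\mathbb{R})$, with $\chi \equiv 1$ near $\xi = 0$, then splits each such term into a low-frequency piece covered by Lemma \ref{Dalibard2017_lemma7} (with homogeneous polynomial $P$ of degree $k\leq l$) and a high-frequency piece covered by Lemma \ref{Dalibard2017_lemma9}.

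Two regimes must be distinguished. When $X' < X$, only the $\lambda_i^+$ branches enter: since $\Re(\lambda_i^+) > 0$ uniformly on the support of $\chi$ (Lemma \ref{p:roots}), Lemma \ref{Dalibard2017_lemma7} yields a bound of the form $e^{-\delta(X-X')}\|F(X',\cdot)\|_{L^1_{\mathrm{uloc}}}$ for the low-frequency part, while Lemma \ref{Dalibard2017_lemma9} provides the matching exponential bound for the high-frequency part in terms of $\|F(X',\cdot)\|_{H^N_{\mathrm{uloc}}}$. When $X' > X$, only the $\lambda_i^-$ branches enter: the $\lambda_2^-$ branch still enjoys exponential decay in $|X-X'|$, but the $\lambda_1^-$ branch contributes only the polynomial decay $(1+|X-X'|)^{-k/4}$ predicted by the second estimate of Lemma \ref{Dalibard2017_lemma7}. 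Because $F$ is compactly supported, this polynomial decay is integrated over a bounded interval and causes no trouble; for $X$ lying to the right of the support of $F$, only the $X' < X$ regime contributes, so the exponential decay in $X$ is immediate.

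To close the estimate, we combine these kernel bounds with the trivial pointwise inequality $\|F(X', \cdot)\|_{H^N_{\mathrm{uloc}}} \leq e^{-\bar{\delta} X'} \|e^{\bar{\delta} X'} F\|_{L^\infty(H^N_{\mathrm{uloc}})}$ and the convolution estimate $\int_0^\infty e^{-\delta(X-X')} e^{-\bar{\delta} X'}\, dX' \leq C e^{-\delta X}$, valid for $0 < \delta < \bar{\delta}$, which together produce the weighted pointwise bound \eqref{control_F}. The main obstacle is the bookkeeping of regularity: the $l$ derivatives produce a symbol growing like $|\xi|^{l}$ at high frequency, partially offset by the $|\xi|^{-3/2}$ gain of $B_i^\pm$ at infinity given by Lemma \ref{lemma2}; the stated threshold $N \geq \lceil(2l-1)/4\rceil$ reflects precisely this net loss together with the exponent $k/4$ appearing in Lemma \ref{Dalibard2017_lemma7}, and is exactly what is needed for Lemma \ref{Dalibard2017_lemma9} to apply with $2n \leq N$ after incorporating the smoothing of $G$.
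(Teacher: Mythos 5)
Your proposal follows essentially the same route as the paper: a frequency cutoff separating a low-frequency piece handled by Lemma \ref{Dalibard2017_lemma7} together with the Green-function asymptotics of Lemma \ref{lemma2}, a high-frequency piece handled by Lemma \ref{Dalibard2017_lemma9}, a splitting of the $X'$-integral at $X'=X$ to isolate the non-decaying $\lambda_1^-$ branch, and the convolution bound $\int_0^X e^{-\delta(X-X')}e^{-\bar\delta X'}dX'\leq Ce^{-\delta X}$, with the same $|\xi|^{l}$ versus $|\xi|^{-3/2}$ bookkeeping for the derivative loss. The only cosmetic difference is that for $X'>X$ you invoke the compact support of $F$ where the paper simply uses the exponential weight $e^{-\bar\delta X'}$; both close the estimate identically.
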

\begin{proof}
	We distinguish between high and low frequencies. We introduce some $\chi=\chi(\xi)\in C^{\infty}_c(\mathbb{R})$ equal to $1$ near $\xi=0$.
	
	Let $\Psi_w^{\flat}$ denote the integral expression when $\xi$ is in a vicinity of zero
	\begin{equation}\label{eq:ALGV3-20}
	\Psi_w^{\flat}=\int_{0}^{+\infty}I(X,X',\cdot)dX',\quad I(X,X',\cdot)=\chi(D)G(X-X',D)F(X',\cdot).
	\end{equation}
Here,
\begin{equation*}
\Psi_w^{\flat}=\int_{0}^X\chi(D)G(X-X',D)F(X',\cdot)dX'+ \int_{X}^{+\infty}\chi(D)G(X-X',D)F(X',\cdot)dX'.
\end{equation*}
 Let us first show \eqref{control_F}. Using Lemmas \ref{lemma2} and \ref{Dalibard2017_lemma7}, there exists a $\delta>0$
\begin{equation*}
\|G(X-X',D)F(X',\cdot)\|_{L^\infty(\mathbb{R})}\sim\left\{\begin{array}{lcl}
e^{-\delta (X-X')}\|F(X',\cdot)\|_{L^1_{\mathrm{uloc}}(\mathbb{R})},&\textrm{if}&X-X'>0\\
\|F(X',\cdot)\|_{L^1_{\mathrm{uloc}}(\mathbb{R})},&\textrm{if}&X-X'<0
\end{array}.\right.
\end{equation*}
Then, assuming $F$ decays exponentially at rate $\delta<\bar{\delta}$
\begin{equation*}
\begin{split}
\left\|\int_{0}^X\chi(D)G(X-X',D)F(X',\cdot)dX'\right\|_{L^\infty(\mathbb{R})}&\leq C\underset{Z\in \mathbb{R}_+}{\sup}\|e^{\bar{\delta }Z}F(Z,\cdot)\|_{L^1_{\mathrm{uloc}}(\mathbb{R})}\int_{0}^Xe^{- \delta(X-X')}e^{-\bar{\delta} X'}dX'\\
&\leq C e^{-\delta X}\underset{Z\in \mathbb{R}_+}{\sup}\|e^{\bar{\delta }Z}F(Z,\cdot)\|_{L^1_{\mathrm{uloc}}(\mathbb{R})}.
\end{split}
\end{equation*}
Moreover,
\begin{equation*}
\begin{split}
\left\|\int_{X}^{\infty}\chi(D)G(X-X',D)F(X',\cdot)dX'\right\|_{L^\infty(\mathbb{R})}&\leq C\underset{Z\in \mathbb{R}_+}{\sup}\|e^{\bar{\delta }Z}F(Z,\cdot)\|_{L^1_{\mathrm{uloc}}(\mathbb{R})}\int_{X}^{\infty}e^{-\bar\delta X'}dX'\\
&\leq Ce^{-\delta X}\underset{Z\in \mathbb{R}_+}{\sup}\|e^{\bar{\delta }Z}F(Z,\cdot)\|_{L^1_{\mathrm{uloc}}(\mathbb{R})}.
\end{split}
\end{equation*}

Consequently,
	\begin{equation}\label{low_freq}
	\|e^{\delta X}\Psi_w^{\flat}\|_{L^{\infty}(\mathbb{R}^2_+)}\leq C\|e^{\bar{\delta} X}F\|_{L^{\infty}(L^{1}_{\mathrm{uloc}}(\mathbb{R}^2_+))}.
	\end{equation}

For high frequencies, we define 
	\begin{equation}
\Psi_w^{\sharp}=\int_{0}^{+\infty}J(\cdot,X,X')dX',\quad J(X,X',\cdot)=(1-\chi(D))G(X-X',D)F(X',\cdot).
\end{equation}

On account of  Lemma \ref{Dalibard2017_lemma9}, we have that for $m\geq 1$
\[
\|J(X,X',\cdot)\|_{L^{\infty}(\mathbb{R})}\leq C e^{-\delta|X-X'|}\|e^{\bar{\delta }X'}F(X',\cdot)\|_{H^{N}_{\mathrm{uloc}}(\mathbb{R})}, \quad l=0,1,
\]
and, consequently,
\begin{equation}\label{high_freq}
\|\Psi_{w}^{\#}\|_{L^{\infty}(\mathbb{R}^2_+)}\leq C \int_0^{+\infty} e^{-\delta|X-X'|}dX'\|F\|_{L^{\infty}(L^{2}_{\mathrm{uloc}}(\mathbb{R}))}\leq C e^{-\delta X}\|e^{\bar{\delta }X'}F(X',\cdot)\|_{L^{\infty}(L^{2}_{\mathrm{uloc}}(\mathbb{R}^2_+))},
\end{equation}
which combined with (\ref{low_freq}) provides \eqref{control_F} for $l=0$.

It remains to show the result for the derivatives of $\Psi^F_w$.  At low frequencies, the coefficients associated to $e^{-\lambda_{i}^+X}$, $i=1,2$ and $e^{\lambda_{2}^-X}$ satisfy the same properties of exponential decay. The terms containing $e^{\lambda_{1}^-X}$ converge to a constant or decay to zero with polynomial weight. In particular, $D^{4k+l}_{X,Y}\Psi_w^\flat$ decays at a rate $O(X^{1-l/4})$, for $l=1,\cdots,3$. Hence, following the same reasoning as for $l=0$ we have
\begin{equation}\label{low_freq_high}
\|e^{\delta X}D^l_{X,Y}\Psi_w^{\flat}\|_{L^{\infty}(\mathbb{R}^2_+)}\leq C\|e^{\bar{\delta} X}F\|_{L^{\infty}(L^{1}_{\mathrm{uloc}}(\mathbb{R}^2_+))}.
\end{equation}
At high frequencies, applying the differential operator $D$ to $\Psi_w$  adds at most a $|\xi|$ factor. Consequently, the $l$-th derivative at high frequencies behaves like $|\xi|^{l-3/2}\exp(-|\xi||Z|)$. We have that $P(D)=|\xi|^{l-3/2-2n}\in C_c(\mathbb{R}\setminus B(0,r))$, $r>0$ if $l-3/2-2n<-1$. Lemma \ref{Dalibard2017_lemma9} gives for $N\geq 2\left\lceil \dfrac{2l-1}{4}\right\rceil$
\begin{equation}\label{high_freq_dev}
\|D^l_{X,Y}\Psi_{w}^{\#}\|_{L^{\infty}(\mathbb{R}^2_+)}\leq C \int_0^{+\infty} e^{-\delta|X-X'|}dX'\|F\|_{L^{\infty}(H^{N}_{\mathrm{uloc}}(\mathbb{R}))}\leq C e^{-\delta X}\|e^{\bar{\delta }X'}F(X',\cdot)\|_{L^{\infty}(H^{N}_{\mathrm{uloc}}(\mathbb{R}))}. 
\end{equation}
Finally, \eqref{control_F} results from gathering (\ref{low_freq_high}) and (\ref{high_freq_dev}).
\end{proof}

\subsubsection{Proof of Theorem \ref{Theorem2_DGV2017}}
In previous sections, we have constructed the solutions $\underline{\Psi}_w$ and $\Psi_w^{F}$ for the subproblems (\ref{pb:halfspace_linear_homogeneous}) and (\ref{pb:halfspace_linear_nonhomogeneous}), respectively. This paragraph deals with the connection to the solution of (\ref{p:linear_nonhomgeneous0}). 

The remarks following Theorem \ref{Theorem2_DGV2017} justify the existence of such a solution for smooth and compactly supported data, and it belongs to $H^{m+2}_{\mathrm{uloc}}$, for $m\gg 1$. We will now focus on retrieving estimate (\ref{DGV2017_31}).

Let us consider $\underline{\Psi}_w=\Psi_w-\Psi_{w}^{F}$. From Section \ref{s:linear_west}, we know the solution of the problem (\ref{pb:halfspace_linear_homogeneous}) will be well-defined for $\psi_{0}^{*}=\psi_{0}-\Psi_{w}^{F}\big|_{X=0}$ and $\psi_{1}^{*}=\psi_{1}-\partial_{X}\Psi_{w}^{F}\big|_{X=0}$ regular enough. Formal solutions of the homogeneous linear with zero source term and inhomogeneous Dirichlet data are given by the equation \eqref{linear_fourier}. Using Lemma \ref{Dalibard2014_lemma24} and Lemma \ref{lemma2}, we study the behavior of $\underline{\Psi}_w$ at low and high frequencies following the ideas of the previous section. 
\begin{lemma}
	Let $m_0\gg 1$. Then, there exists $\delta>0$ and $C>0$ such that the solution $\underline{\Psi}_w$ of \eqref{pb:halfspace_linear_homogeneous} satisfies the estimate
	\begin{equation*}
		\left\|e^{\delta X}\underline{\Psi}_w\right\|_{L^{\infty}(\mathbb{R}^2)}\leq C \left(\|\psi_{0}\|_{H^{m_0+3/2}_{\mathrm{uloc}}(\mathbb{R})}+\|\psi_{1}\|_{H^{m_0+1/2}_{\mathrm{uloc}}(\mathbb{R})}+\|e^{\bar{\delta} X} F\|_{H^{m_0-2}_{\mathrm{uloc}}(\mathbb{R}^2_+)}\right).
	\end{equation*} 
\end{lemma}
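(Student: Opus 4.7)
The plan is to mirror the low/high frequency splitting used for $\Psi_w^F$ in Section \ref{ss:inhomogeneous}, applied now to the explicit Fourier representation $\widehat{\underline{\Psi}}_w(X,\xi) = A_1^+(\xi)e^{-\lambda_1^+(\xi)X} + A_2^+(\xi)e^{-\lambda_2^+(\xi)X}$ from Proposition \ref{proposition:DP}, in which the coefficients $A_j^+$ are linear in $\widehat{\psi_0^*}, \widehat{\psi_1^*}$ via $\Lambda^{-1}$ in \eqref{inverse_et}. The uniform exponential weight is produced by Lemmas \ref{p:roots} and \ref{Dalibard2014_lemma24}: at $\xi=0$ the two positive-real-part roots coincide with the (nonzero) complex cube roots of $-(1+\alpha^2)^{-2}$, while as $|\xi|\to\infty$ the leading order gives $\Re(\lambda_j^+) \sim |\xi|/(1+\alpha^2)$; continuity on the real line then supplies a constant $\delta > 0$ with $\Re(\lambda_j^+(\xi)) \geq \delta$ for every $\xi\in\mathbb{R}$.

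I would introduce a cutoff $\chi\in C_c^\infty(\mathbb{R})$ with $\chi\equiv 1$ near the origin and split $\underline{\Psi}_w = \underline{\Psi}_w^\flat + \underline{\Psi}_w^\sharp$. On the support of $\chi$, the expansion $A_j^+(\xi) = a_{0,j}\widehat{\psi_0^*} + a_{1,j}\widehat{\psi_1^*} + O(|\xi|)(|\widehat{\psi_0^*}|+|\widehat{\psi_1^*}|)$ from Lemma \ref{Dalibard2014_lemma24} realizes $\underline{\Psi}_w^\flat$ as a sum of operators of the type covered by Lemma \ref{Dalibard2017_lemma7} (with homogeneous polynomial of degree zero), yielding the bound $\|e^{\delta X}\underline{\Psi}_w^\flat\|_{L^\infty} \leq C(\|\psi_0^*\|_{L^1_{\mathrm{uloc}}} + \|\psi_1^*\|_{L^1_{\mathrm{uloc}}})$. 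For the high frequency piece, the asymptotics $|A_j^+(\xi)| \lesssim |\xi|^{3/2}|\widehat{\psi_0^*}| + |\xi|^{1/2}|\widehat{\psi_1^*}|$ must be tamed: I would extract weights $(1+|\xi|^2)^{(m_0+3/2)/2}$ and $(1+|\xi|^2)^{(m_0+1/2)/2}$, respectively, to transfer this growth onto Kato norms of the data, leaving residual symbols that are $L^1$ in $\xi$ once $m_0$ is taken large enough; Lemma \ref{Dalibard2017_lemma9} then yields $\|e^{\delta X}\underline{\Psi}_w^\sharp\|_{L^\infty} \leq C(\|\psi_0^*\|_{H^{m_0+3/2}_{\mathrm{uloc}}} + \|\psi_1^*\|_{H^{m_0+1/2}_{\mathrm{uloc}}})$.

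The estimate is closed by unpacking $\psi_0^* = \psi_0 - \Psi_w^F|_{X=0}$ and $\psi_1^* = \psi_1 - \partial_X \Psi_w^F|_{X=0}$ and controlling the two traces by $\|e^{\bar\delta X}F\|_{H^{m_0-2}_{\mathrm{uloc}}(\mathbb{R}^2_+)}$. This follows from the Green-function representation underlying Proposition \ref{prop:halfspace_linear_nonhomogeneous}, whose estimates extend from $L^\infty$ to the Kato--Sobolev setting via the same frequency decomposition, combined with the trace theorem in $H^s_{\mathrm{uloc}}$. The principal technical difficulty is the bookkeeping on the single index $m_0$: it must simultaneously be large enough for the high-frequency kernels from Lemma \ref{Dalibard2017_lemma9} to lie in $L^1$ (accounting for the $|\xi|^{3/2}$ growth of $A_j^+$) and large enough for the traces of $\Psi_w^F$ to be controlled in $H^{m_0+3/2}_{\mathrm{uloc}}$ through the Green function, losing two derivatives on $F$ in the process. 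Both thresholds being finite, a common choice $m_0 \gg 1$ serves, and the final bound on $\|e^{\delta X}\underline{\Psi}_w\|_{L^\infty}$ follows.
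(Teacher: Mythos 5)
Your proposal is correct and follows essentially the same route as the paper: the same low/high frequency cutoff, the same use of the kernel estimates (Lemma \ref{Dalibard2017_lemma7} at low frequencies and Lemma \ref{Dalibard2017_lemma9}, equivalently the symbol-splitting $P(\xi)=|\xi|^{3/2-k-i}$ with $\underline{\psi}=(1-\Delta)^{k/2}\psi_i$, at high frequencies), and the same closing step of controlling the traces $\Psi_w^F|_{X=0}$, $\partial_X\Psi_w^F|_{X=0}$ via the estimate \eqref{control_F}. Your observation that the spectral gap $\Re(\lambda_j^+(\xi))\geq\delta$ holds uniformly for the western roots is exactly the mechanism the paper relies on, so no changes are needed.
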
 
\begin{proof}
	Here, we make use once again of the function $\chi \in C_c^{\infty}(\mathbb{R})$ equal to one in a vicinity of $\xi=0$ and zero elsewhere. At low frequencies, the asymptotic behavior in Lemmas \ref{Dalibard2014_lemma24} and \ref{lemma2} paired with Lemma \ref{Dalibard2017_lemma7} yield 
	\begin{equation}\label{low_linear}
	\|\chi(D)\underline{\Psi}_w\|_{L^\infty}\leq Ce^{-\delta X}\left(\|\psi_{0}\|_{L^{2}_{\mathrm{uloc}}(\mathbb{R})}+\|\psi_{1}\|_{L^{2}_{\mathrm{uloc}}(\mathbb{R})}+\|e^{\bar{\delta} X} F\|_{L^{2}_{\mathrm{uloc}}(\mathbb{R}^2_+)}\right).
	\end{equation}
	Computation of estimates of $\underline{\Psi}_w$ at high frequencies relies on Lemma \ref{lemma7}. From the asymptotic behavior listed in Lemma \ref{Dalibard2014_lemma24} and \ref{lemma2}, the coefficient multiplying $\psi^i$ behaves as $|\xi|^{3/2-i}e^{-|\xi||Z|}$, $i=0,1$. If $m_0> k>1$ and $\psi_i\in H^{3/2-i}_{\mathrm{uloc}}(\mathbb{R})$,  considering  $P(\xi)=|\xi|^{3/2-k-i}$ and $\underline{\psi}=(1-\Delta)^{k/2}\psi_i$ in Lemma \ref{Dalibard2017_lemma7} gives
	\begin{equation}\label{high_linear}
	\left\|(1-\chi(D))\underline{\Psi}_w\right\|_{L^{\infty}(\mathbb{R})}\leq C e^{-\delta X}\left(\|\psi_{0}\|_{H^{m_0+3/2}_{\mathrm{uloc}}(\mathbb{R})}+\|\psi_{1}\|_{H^{m_0+1/2}_{\mathrm{uloc}}(\mathbb{R})}+\|e^{\bar{\delta} X} F\|_{H^{m_0-2}_{\mathrm{uloc}}(\mathbb{R}^2_+)}\right).
	\end{equation}
	Combining (\ref{low_linear}) and (\ref{high_linear}), and in view of the estimate (\ref{control_F}) satisfied by $\Psi^{F}_w$, we have
	\begin{equation*}
		\left\|e^{\delta X}\Psi_w\right\|_{L^{\infty}}\leq C \left(\|\psi_{0}\|_{H^{m_0+3/2}_{\mathrm{uloc}}}+\|\psi_{1}\|_{H^{m_0+1/2}_{\mathrm{uloc}}}+\|e^{\bar{\delta} X} F\|_{H^{m_0-2}_{\mathrm{uloc}}}\right).
	\end{equation*}\end{proof}

	We are left with the task of determining the higher regularity bound (\ref{DGV2017_31}). Taking the derivatives directly on  \eqref{linear_fourier} and \eqref{DGV_314}, it is clear that by considering  larger values of $m_0$ 
	\begin{equation}
		\left\|e^{\delta X}\underline{\Psi}_w\right\|_{W^{2,\infty}}\leq C \left(\|\psi_{0}\|_{H^{m_0+3/2}_{\mathrm{uloc}}}+\|\psi_{1}\|_{H^{m_0+1/2}_{\mathrm{uloc}}}+\|e^{\bar{\delta} X} F\|_{H^{m0-2}_{\mathrm{uloc}}}\right).
		\end{equation}
	
Hence,  $\Psi_w=\underline{\Psi}_w+\Psi^{F}_w$ verifies
\begin{equation}\label{ineq_2}
	\left\|e^{\delta X}\Psi_w\right\|_{W^{2,\infty}}\leq C \left(\|\psi_{0}\|_{H^{m_0+3/2}_{\mathrm{uloc}}}+\|\psi_{1}\|_{H^{m_0+1/2}_{\mathrm{uloc}}}+\|e^{\bar{\delta} X} F\|_{H^{m0-2}_{\mathrm{uloc}}}\right).
	\end{equation}
	From this, it may be concluded that $\Psi_w\in H^2_{\mathrm{uloc}}(\omega^+)$. 
	
	Notice that for all $k\in\mathbb{Z}^2$ and all $m\geq 0$, there exists a constant $C_m>0$ such that 
	\begin{equation*}
\begin{split}
	\|\Psi_w\|_{H^{m+2}(B(k,1)\cap\omega_w^+)}&\leq C\left(\|\psi_{0}\|_{H^{m+3/2}_{\mathrm{uloc}}}+\|\psi_{1}\|_{H^{m+1/2}_{\mathrm{uloc}}}+\| F\|_{H^{m-2}(B(k,2)\cap\omega_w^+)}+\|\Psi_w\|_{H^2(B(k,2)\cap\omega_w^+)}\right).
\end{split}
	\end{equation*}
	This is a classical local elliptic regularity result, see \cite{Necasa}. Using the previous inequality and \eqref{kato_def} provides
	\begin{equation*}
	\begin{split}
	\|e^{\delta X}\Psi_w\|_{H^{m_0+2}_{\mathrm{uloc}}(\omega^+)}&=\underset{k\in\mathbb{Z}^2}{\sup}\|e^{\delta X}\Psi_w\|_{H^{m_0+2}(B(k+1)\cap \omega^+)}\\	&=C_{m_0}\underset{k\in\mathbb{Z}^2}{\sup}\left(\|\psi_{0}\|_{H^{m_0+3/2}_{\mathrm{uloc}}}+\|\psi_{1}\|_{H^{m_0+1/2}_{\mathrm{uloc}}}\right.\\
	&\hspace{0.9in}\left.+e^{\delta k}\| F\|_{H^{m_0-2}(B(k,2)\cap\omega_w^+)}+e^{\delta k}\|\Psi_w\|_{H^2(B(k,2)\cap\omega_w^+)}\right).
	\end{split}
	\end{equation*}
From $\underset{k\in\mathbb{Z}^2}{\sup}e^{\delta k}\| F\|_{H^{m_0-2}(B(k,2)\cap\omega_w^+)}\leq C\| e^{\delta k}F\|_{H^{m_0-2}_{\mathrm{uloc}}(\omega_w^+)}$ and \eqref{ineq_2}, we obtain \eqref{DGV2017_31} in Theorem \ref{Theorem2_DGV2017} for $\bar{\delta}>\delta.$
     \subsection{Differential operators at the transparent boundary}\label{transparent_w}
This paragraph is devoted to the well-posedness of the Poincaré-Steklov type operators defined at the boundary $X=M$. 

Providing explicit representations for the Poincaré-Steklov operator in terms of boundary data and the source term $F\neq 0$ is quite technical and exceeds the scope of this work. From now on, we are only interested in the case where $F=0$. Once again, without loss of generality, we assume $M=0$.

Using Proposition \ref{proposition:DP} and the variational formulation of problem \eqref{linear_homogeneous}, we have the following result:
\begin{definition}
	Let $\underline{\Psi}_w \in H^2(\mathbb{R}^2_+)$ be the unique weak solution of the Dirichlet problem \eqref{linear_homogeneous} for $(\psi_0^*,\psi_1^*)\in H^{3/2}(\mathbb{R})\times H^{1/2}(\mathbb{R})$. Then, the biharmonic matrix-valued Poincaré-Steklov operator is defined by
	\begin{equation}\label{def_PS}
	\begin{split}
	&PS_w: H^{3/2}(\mathbb{R})\times H^{1/2}(\mathbb{R})\rightarrow H^{-1/2}(\mathbb{R})\times H^{-3/2}(\mathbb{R})\\
	&PS_w\begin{pmatrix}
	\psi_0^*\\
	\psi_1^*
	\end{pmatrix}\defeq\begin{pmatrix}
	(1+\alpha^2)\Delta_w\underline{\Psi}\Big|_{X=0}\\
-\left((1+\alpha^2)\partial_X-2\alpha\partial_{Y}\right)\Delta_w\underline{\Psi}\big|_{X=0}+\displaystyle\frac{\underline{\Psi}}{2}\bigg|_{X=0}
	\end{pmatrix}=K_w\ast\begin{pmatrix}
	\psi_0^*\\
	\psi_1^*
	\end{pmatrix},
	\end{split}
	\end{equation}
	where $K_w$ is the distributional kernel.
\end{definition}
	Let us derive the expression of the operator in the Fourier space. We know that the unique solution $\underline{\Psi_w}$ of \eqref{p:linear_nonhomgeneous0} in $H^2(\mathbb{R}^2_+)$ of the linear problem \eqref{pb:halfspace_linear_homogeneous} for boundary data $\psi^*_0\in H^{3/2}(\mathbb{R})$ and $\psi^*_1\in H^{1/2}(\mathbb{R})$ can be written as
	\begin{equation*}
	\widehat{\underline{\Psi}}_w(X,\xi)=\sum_{i=1}^2A_i^+(\xi)\exp(-\lambda^+_i(\xi)X),\end{equation*}
	where $A_i^+(\xi)$ and $\lambda^+_i$, $i=1,2$ are the ones in Lemma \ref{Dalibard2014_lemma24}. Going forward and for simplicity of notation, we drop the $+$ sign from both the coefficients and the eigenvalues in this subsection.
	
	Then, taking the Fourier transform of $PS_w$ with respect to $Y$ provides the following has the Fourier representation at the ``transparent'' boundary
	\begin{equation}\label{coeff_western_rep}
	\widehat{PS}_w\begin{pmatrix}
	\widehat{\psi^*_0}\\\widehat{\psi^*_1}
	\end{pmatrix}=\begin{pmatrix}
	(1+\alpha^2)\sum_{i=1}^2A_i(\xi)\left(\lambda_i(\xi)^2+(\alpha\lambda_i(\xi)+i\xi)^2\right)\\
	\sum_{i=1}^2 A_i(\xi)\left[\left((1+\alpha^2)\lambda_i+2\alpha i\xi\right)(\lambda_i^2+(\alpha\lambda_i+i\xi)^2)+\dfrac{1}{2}\right]
	\end{pmatrix}.
	\end{equation}
	Plugging in the above equation the coefficients $A_i(\xi)$ computed in \eqref{inverse_et} yields 
	\begin{equation*}
	\widehat{PS_w\begin{pmatrix}
	\psi^*_0\\\psi^*_1
	\end{pmatrix}}=M_w\begin{pmatrix}
	\widehat{\psi^*_0}\\\widehat{\psi^*_1}
	\end{pmatrix}.
	\end{equation*}
	
	We investigate the behavior of the matrix $M_w=(m_{i,j})_{2\leq i\leq 3,0\leq j\leq 1}\in M_2(\mathbb{C})$ for $\xi$ close to zero and for $|\xi| \rightarrow \infty$. The results are gathered in the following lemma:
	\begin{lemma}\label{beh_DtN}
		\begin{itemize}
			\item Behavior at low frequencies: when $|\xi|\ll 1$
			\begin{equation*}
			M_w=
			\left(
			\begin{array}{cc}
			-\left(\alpha ^2+1\right)^{2/3}+O\left(|\xi|\right)& -\left(\alpha ^2+1\right)^{4/3}+O\left(|\xi|\right) \\
			-\frac{1}{2}+O\left(|\xi|\right)& -8\alpha i\left(\alpha ^2+1\right)|\xi|^1+O\left(|\xi| ^2\right)
			\end{array}
			\right).
			\end{equation*}
			\item Behavior at high frequencies: when $|\xi|\gg 1$
			
			\begin{equation*}
			M_w=\left(
			\begin{array}{cc}
			 \overline{m}_{2,0}|\xi| ^2+O\left(|\xi|^{-1/2}\right)& \overline{m}_{2,1} |\xi| +O\left(|\xi|^{-1/2}\right)\\
			\overline{m}_{3,0}|\xi| ^3+O\left(|\xi|^{3/2}\right)&\overline{m}_{3,1} |\xi| ^2+O\left(|\xi|^{1/2}\right)
			\end{array}
			\right),
			\end{equation*}
			where $\overline{m}_{i,j}$ is a complex quantity depending on $\alpha$, for $i=2,3, j=0,1$. Notice that the value of this constant at $\xi\rightarrow +\infty$ differs from the one at $\xi\rightarrow -\infty$ (see Lemma \ref{Dalibard2014_lemma24} and Appendix \ref{ss:appendix_B}).
		\end{itemize}
	\end{lemma}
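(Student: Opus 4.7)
We substitute the asymptotic expansions of $\lambda_j^+$ and $A_j^+$ from Lemma~\ref{Dalibard2014_lemma24} directly into the explicit Fourier representation \eqref{coeff_western_rep}. To avoid the apparent singularity $(\lambda_2^+-\lambda_1^+)^{-1}$ stemming from the inversion of $\Lambda$ in \eqref{inverse_et}, we observe that the sums in \eqref{coeff_western_rep} are symmetric under $(\lambda_1^+,A_1^+)\leftrightarrow(\lambda_2^+,A_2^+)$. Each entry $m_{i,j}$ can therefore be rewritten as a polynomial expression in $\xi$ and the elementary symmetric functions $s=\lambda_1^++\lambda_2^+$ and $p=\lambda_1^+\lambda_2^+$, which are themselves smooth functions of $\xi$ whose expansions are obtained from Vieta's formulas applied to the characteristic polynomial \eqref{Dalibard2014_eq24}.

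For the low-frequency regime, we use that $\bar\lambda_j^+:=\lambda_j^+(0)$ are the two roots with positive real part of $(1+\alpha^2)^2\lambda^3+1=0$, which yields $\bar\lambda_1^+\bar\lambda_2^+=(1+\alpha^2)^{-4/3}$ and $\bar\lambda_1^++\bar\lambda_2^+=(1+\alpha^2)^{-2/3}$. Substituting into the symmetric expressions produces the stated leading constants $-(1+\alpha^2)^{2/3}$, $-(1+\alpha^2)^{4/3}$ and $-1/2$ at order $\xi^0$. For the $(3,1)$ entry, a direct computation shows that the zeroth-order contribution cancels, so we must carry the expansion one order further: plugging the first-order correction of $\lambda_j^+$ in $\xi$ into the symmetric formula yields the announced coefficient $-8\alpha i(1+\alpha^2)|\xi|$, with the $O(|\xi|^2)$ remainder coming from the smoothness of $\lambda_j^+$ near $\xi=0$ (guaranteed by the simplicity of the roots established in Lemma~\ref{p:roots}).

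For the high-frequency regime, the decisive observation is that the characteristic relation \eqref{Dalibard2014_eq24} gives $\lambda_j^2+(\alpha\lambda_j+i\xi)^2=\epsilon_j\sqrt{-\lambda_j}$ for appropriate signs $\epsilon_j\in\{\pm 1\}$, so these factors are $O(|\xi|^{1/2})$ rather than the naive $O(|\xi|^2)$. Equivalently, the leading $\xi^2$ contribution cancels because $p_1$ solves $(1+\alpha^2)p_1^2+2i\alpha\,\mathrm{sgn}(\xi)\,p_1-1=0$ on each half-line $\{\pm\xi>0\}$, as can be checked directly from $p_1=\zeta^2$ (resp.\ $\bar\zeta^2$). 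Plugging the refined expansions $\lambda_j^+=p_1|\xi|+(-1)^j p_0|\xi|^{-1/2}+O(|\xi|^{-2})$ and the high-frequency form of $A_j^+$ from Lemma~\ref{Dalibard2014_lemma24} into \eqref{coeff_western_rep}, and collecting terms of the same order in $|\xi|$, produces the announced growth rates $|\xi|^2$, $|\xi|$, $|\xi|^3$, $|\xi|^2$, with leading constants depending on $\alpha$ through $\zeta$ (resp.\ $\bar\zeta$) when $\xi>0$ (resp.\ $\xi<0$).

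The principal difficulty lies in this last step: since $\lambda_2^+-\lambda_1^+=2p_0|\xi|^{-1/2}$, the individual coefficients $A_j^+$ blow up like $|\xi|^{3/2}$, so the stated polynomial bounds on $m_{i,j}$ emerge only after these cancellations are carried out to the right order. Working consistently with symmetric functions in $\lambda_1^+,\lambda_2^+$ and using the identity $\lambda^2+(\alpha\lambda+i\xi)^2=\epsilon\sqrt{-\lambda}$ keeps the bookkeeping transparent and shows that the apparent singularities in $A_j^+$ are systematically compensated by the small factors $\lambda_j^2+(\alpha\lambda_j+i\xi)^2$ appearing in \eqref{coeff_western_rep}.
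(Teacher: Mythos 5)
Your plan is correct and follows essentially the same route as the paper's Appendix~\ref{a:matrix_t}: the explicit entries of $M_w$ given there are exactly the symmetric polynomials in $\lambda_1^+,\lambda_2^+$ you obtain after eliminating the $(\lambda_2^+-\lambda_1^+)^{-1}$ factors, and the paper likewise substitutes the expansions of Lemma~\ref{Dalibard2014_lemma24}, carries the low-frequency expansion of $\lambda_j^+$ to first order in $\xi$ to resolve the cancellation in $m_{3,1}$, and exploits the vanishing of $(1+\alpha^2)p_1^2+2i\alpha\,\mathrm{sgn}(\xi)p_1-1$ to obtain the stated high-frequency orders. Your reformulation of that last cancellation via $\lambda_j^2+(\alpha\lambda_j+i\xi)^2=\epsilon_j\sqrt{-\lambda_j}$ is a tidy bookkeeping device but does not change the argument.
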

	
	The proof of this lemma is elementary and will be given in Appendix \ref{a:matrix_t}. 
	
We have an additional result for the matrix $M_w$:
\begin{lemma}\label{DP2014_lemma2.21}
	At all frequencies, 
	\begin{equation*}
	\partial_\xi^N M_w(\xi)=O((1+|\xi|)^{3-N}),
	\end{equation*}
	for $N\in\mathbb{N}$, $0 \leq  N \leq  5$.
\end{lemma}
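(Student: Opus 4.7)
My plan is to exploit the polynomial structure of $M_w(\xi)$ in the elementary symmetric functions $\sigma_1:=\lambda_1^++\lambda_2^+$ and $\sigma_2:=\lambda_1^+\lambda_2^+$ of the two positive-real-part roots, combined with the smoothness of these functions. The apparent singularity arising from $A_i^+\propto(\lambda_2^+-\lambda_1^+)^{-1}$ cancels systematically, so each entry $m_{i,j}$ of $M_w$ is in fact a polynomial in $\sigma_1,\sigma_2,\xi$ of weighted degree at most $3$ (with $\sigma_1,\xi$ of weight $1$ and $\sigma_2$ of weight $2$). This can be checked directly from \eqref{coeff_western_rep} and \eqref{inverse_et}: the coefficient of $\psi_0^*$ in $\sum_i A_i^+ f(\lambda_i^+)$ is $[\lambda_2^+ f(\lambda_1^+)-\lambda_1^+ f(\lambda_2^+)]/(\lambda_2^+-\lambda_1^+)$, antisymmetric in $(\lambda_1^+,\lambda_2^+)$ modulo the denominator; the quotient is symmetric and hence polynomial in $\sigma_1,\sigma_2$. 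Explicit computation gives, for instance, $m_{2,0}=-(1+\alpha^2)[(1+\alpha^2)\sigma_2+\xi^2]$, $m_{2,1}=-(1+\alpha^2)[(1+\alpha^2)\sigma_1+2i\alpha\xi]$, $m_{3,0}=\tfrac{1}{2}-(1+\alpha^2)^2\sigma_1\sigma_2-2i\alpha\xi^3$, and $m_{3,1}=-(1+\alpha^2)^2(\sigma_1^2-\sigma_2)-4i\alpha(1+\alpha^2)\xi\sigma_1+(1+5\alpha^2)\xi^2$. The worst case $m_{3,0}$ has weighted degree $3$, which accounts for the $|\xi|^3$ growth in Lemma~\ref{beh_DtN}.

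Next, I would check that $\sigma_1,\sigma_2\in C^\infty(\mathbb{R};\mathbb{C})$. By Lemma~\ref{p:roots} the four roots of $P(\cdot,\xi)$ are simple for $\xi\neq 0$; at $\xi=0$ the factorization $P(\lambda,0)=-\lambda\bigl[1+(1+\alpha^2)^2\lambda^3\bigr]$ displays four distinct roots as well, so the implicit function theorem yields smooth (even real-analytic) root branches throughout $\mathbb{R}$, hence smooth symmetric functions $\sigma_1,\sigma_2$.

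Then I would derive the derivative estimates $\partial_\xi^N\sigma_1=O((1+|\xi|)^{1-N})$ and $\partial_\xi^N\sigma_2=O((1+|\xi|)^{2-N})$ for $0\leq N\leq 5$. For $|\xi|\leq 1$ this is immediate by smoothness and compactness. For $|\xi|\geq 1$ the asymptotic expansions of Lemma~\ref{Dalibard2014_lemma24} give
\[
\sigma_1(\xi)=2p_1|\xi|+O(|\xi|^{-2}),\qquad \sigma_2(\xi)=p_1^2|\xi|^2+O(|\xi|^{-1})
\]
(the $|\xi|^{-1/2}$ contributions cancel in $\sigma_1$ and produce the $|\xi|^{-1}$ correction in $\sigma_2$); these expansions can be differentiated term by term on each half-line $\{\xi>0\}$ and $\{\xi<0\}$ to yield the claimed decay. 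An equivalent route is to differentiate the identity $P(\lambda_j^+(\xi),\xi)=0$ implicitly and to proceed by induction, using that $\partial_\lambda P(\lambda_j^+,\xi)\asymp |\xi|^{3/2}$ and $\partial_\xi P(\lambda_j^+,\xi)=O(|\xi|^{3/2})$ at high frequencies, whence $\partial_\xi^N\lambda_j^+=O((1+|\xi|)^{1-N})$. Plugging these estimates into the polynomial expressions of the first step and invoking Leibniz's rule proves $\partial_\xi^N m_{i,j}=O((1+|\xi|)^{3-N})$ for every entry and every $0\leq N\leq 5$.

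The main technical point lies in the derivative estimates above, where one must control $\sigma_1,\sigma_2$ uniformly across the transition between the bounded-frequency regime (handled by compactness) and the high-frequency asymptotic regime (handled via the explicit expansions or implicit differentiation). The delicate cancellation of the $|\xi|^{-1/2}$ corrections between $\lambda_1^+$ and $\lambda_2^+$ in $\sigma_1$ is essential: without it one would obtain only the weaker bound $O((1+|\xi|)^{3/2-N})$ and $m_{3,0}$ would erroneously appear to grow like $|\xi|^{7/2}$ rather than $|\xi|^3$.
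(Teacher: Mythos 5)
Your argument is sound, and it follows the route the paper itself implicitly takes: Lemma~\ref{DP2014_lemma2.21} is stated without proof, but Appendix~\ref{a:matrix_t} records exactly the expressions \eqref{exp_MGW} showing that each entry $m_{i,j}$ is a polynomial in $\sigma_1=\lambda_1^++\lambda_2^+$, $\sigma_2=\lambda_1^+\lambda_2^+$ and $\xi$ of weighted degree at most $3$, and Lemma~\ref{Dalibard2014_lemma24} supplies the root asymptotics you differentiate. Your key observation — that the apparent $(\lambda_2^+-\lambda_1^+)^{-1}\sim|\xi|^{1/2}$ singularity of the $A_i^+$ disappears once the entries are written symmetrically — is the correct mechanism, and the smoothness of the branches plus implicit differentiation of $P(\lambda_j^+(\xi),\xi)=0$ (or, equivalently, termwise differentiation of the convergent Puiseux expansions of these algebraic functions at infinity) legitimately yields $\partial_\xi^N\sigma_1=O((1+|\xi|)^{1-N})$ and $\partial_\xi^N\sigma_2=O((1+|\xi|)^{2-N})$, after which Leibniz closes the proof.

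Two small corrections. First, comparing with \eqref{exp_MGW}, your formula for $m_{3,0}$ omits the term $-4i\alpha(1+\alpha^2)\xi\sigma_2$; it has weighted degree $3$, so nothing in the argument changes, but the explicit expression should be fixed. Second, your closing claim that the cancellation of the $|\xi|^{-1/2}$ corrections inside $\sigma_1$ is \emph{essential} is not right: those corrections are subleading, and even without cancellation they contribute $O(|\xi|^{-1/2-N})$ to $\partial_\xi^N\sigma_1$ and $O(|\xi|^{1/2-N})$ to $\partial_\xi^N\sigma_2$, both harmless against the claimed bounds. The cancellation that genuinely matters is the removal of the $(\lambda_2^+-\lambda_1^+)^{-1}$ denominators via symmetrization, which you already handle in your first step; the two should not be conflated.
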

Then, $\nabla^N[(1-\chi(\xi ))M_w(\xi )] \in L^1(\mathbb{R})$ that for $N=5$, there exists a constant $C > 0$ such that
\begin{equation*}
\mathcal{F}^{-1}\left((1-\chi(\xi))M_w\right)\leq \dfrac{C}{|Y|^5}.
\end{equation*}
The Poincaré-Steklov operator $PS_w$ associated to $\mathbb{R}^2_+$ has been defined as a continuous operator from $H^{3/2}(\mathbb{R})\times H^{1/2}(\mathbb{R})$ to $H^{-1/2}(\mathbb{R})\times H^{-3/2}(\mathbb{R})$. Our aim here is to prove that it has a unique extension to the space $H^{3/2}_{\mathrm{uloc}}(\mathbb{R})\times H^{1/2}_{\mathrm{uloc}}(\mathbb{R})$.

Let us first show this general result:
\textcolor{black}{\begin{lemma}\label{tech_lemma_2}
        Let $s,s'\in \mathbb{R}$. If for $\Psi\in H^s(\mathbb{R})$, the differential operator $\bar{\mathcal{A}}:H^s(\mathbb{R})\rightarrow H^{s'}(\mathbb{R})$ is continuous, then, there exists a unique continuous extension $\mathcal{A}:H^s_{\mathrm{uloc}}(\mathbb{R})\rightarrow H^{s'}_{\mathrm{uloc}}(\mathbb{R})$.
    \end{lemma}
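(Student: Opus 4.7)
The plan is to use the partition of unity \eqref{DP:1-4} to decompose any $\Psi\in H^s_{\mathrm{uloc}}(\mathbb{R})$ into compactly supported pieces lying in $H^s(\mathbb{R})$, apply $\bar{\mathcal{A}}$ to each piece, and sum the results. Writing $\Psi_k\defeq(\tau_k\theta)\Psi$ for $k\in\mathbb{Z}$, one has $\Psi_k\in H^s(\mathbb{R})$, supported in $B(k,1)$, with a uniform estimate $\|\Psi_k\|_{H^s(\mathbb{R})}\leq C\|\Psi\|_{H^s_{\mathrm{uloc}}(\mathbb{R})}$ in $k$. The candidate extension is
$$\mathcal{A}\Psi\defeq\sum_{k\in\mathbb{Z}}\bar{\mathcal{A}}\Psi_k,$$
and the task reduces to proving that this series converges in $H^{s'}_{\mathrm{loc}}(\mathbb{R})$ and defines an element of $H^{s'}_{\mathrm{uloc}}(\mathbb{R})$ with norm controlled by $\|\Psi\|_{H^s_{\mathrm{uloc}}(\mathbb{R})}$.

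To bound the extension in the uloc norm, I would fix $\ell\in\mathbb{Z}$ and estimate $\|(\tau_\ell\theta)\mathcal{A}\Psi\|_{H^{s'}(\mathbb{R})}$ by splitting the sum according to $|k-\ell|$. For $|k-\ell|\leq C_0$ the number of indices is finite and each term is controlled directly by the continuity of $\bar{\mathcal{A}}$ on $H^s$; for $|k-\ell|\gg 1$ the supports of $\tau_\ell\theta$ and $\Psi_k$ are disjoint, and one must appeal to an off-diagonal decay property of $\bar{\mathcal{A}}$. When $\bar{\mathcal{A}}$ is a genuine differential operator with bounded coefficients, locality ensures that the far contributions vanish identically, so the sum is locally finite and the argument is automatic. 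In the pseudo-differential setting in which this lemma is actually used in the paper (namely, the Poincaré-Steklov operator $PS_w$), the same off-diagonal decay is produced by iterated integration by parts against the symbol bounds of Lemma \ref{DP2014_lemma2.21}, yielding a kernel estimate of the form $|K_w(Y)|\leq C(1+|Y|)^{-N}$ with $N$ large enough to make $\sum_k(1+|k-\ell|)^{-N}$ summable, and hence the uniform bound $\|\mathcal{A}\Psi\|_{H^{s'}_{\mathrm{uloc}}(\mathbb{R})}\leq C\|\Psi\|_{H^s_{\mathrm{uloc}}(\mathbb{R})}$.

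The main obstacle is precisely this off-diagonal control: continuity of $\bar{\mathcal{A}}:H^s(\mathbb{R})\to H^{s'}(\mathbb{R})$ by itself is not enough to extend to uloc spaces (constant functions, for instance, cannot be approximated in the $H^s_{\mathrm{uloc}}$ topology by $H^s$ functions), and some locality or kernel decay of $\bar{\mathcal{A}}$ is implicit in calling it a ``differential operator''. Granting this, uniqueness follows from the observation that any continuous extension $\mathcal{A}'$ must agree with $\bar{\mathcal{A}}$ on each $\Psi_k\in H^s(\mathbb{R})$; passing to the limit in the partial sums $\sum_{|k|\leq N}\bar{\mathcal{A}}\Psi_k$ in $H^{s'}_{\mathrm{loc}}(\mathbb{R})$ then forces $\mathcal{A}'\Psi=\mathcal{A}\Psi$. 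A telescoping argument comparing two admissible truncations $\theta$ and $\tilde\theta$ shows in addition that the construction is independent of the choice of truncation function, completing the plan.
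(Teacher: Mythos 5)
Your proposal follows essentially the same route as the paper's proof: decompose $\Psi$ with the partition of unity, split the sum $\sum_q\eta_{q'}\mathcal{A}(\eta_q\Psi)$ into near-diagonal terms controlled by the $H^s\to H^{s'}$ continuity and far terms controlled by off-diagonal kernel decay $|K(Y)|\lesssim|Y|^{-m}$, $m\geq 5$. You are also right that this kernel decay is an implicit extra hypothesis not contained in mere continuity on $H^s$ --- the paper's own proof invokes exactly the same kernel bound (coming from Lemma \ref{DP2014_lemma2.21} in the application), so your remark identifies a genuine imprecision in the statement rather than a gap in your argument.
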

    \begin{proof}
        First, we recall the definition of $H^s_{\mathrm{uloc}}(\mathbb{R})$, that is:
    \begin{equation}\label{def_kato_rec}
    \psi\in H^s_{\mathrm{uloc}}(\mathbb{R})\quad \textit{iff}\quad \|\psi\|_{ H^s_{\mathrm{uloc}}(\mathbb{R})}=\underset{q\in \mathbb{Z}}{\sup}\|\eta_q\Psi\|_{ H^s(\mathbb{R})}<+\infty,
    \end{equation}
    where $(\eta_q)_{q\in\mathbb{Z}}$ is a partition of unity satisfying $\eta_q\in C^\infty_0(\mathbb{R})$ and $\mathrm{Supp}\thinspace\eta_q \subset B(q,1)$ for $q \in\mathbb{Z}$ and $\sup_q \|\eta_q\|_{W^{k,\infty}} < +\infty$ for all $k$. Definition \eqref{def_kato_rec} is independent of the choice of the function $\eta_q$ (see Lemma 7.1 in \cite{Alazard2016}).
    Let $\psi^*$ be function of $H^s_{\mathrm{uloc}}(\mathbb{R})$, we introduce the notation $\psi^*_q$ to denote $\eta_q\psi^*$. Then, we have
    \begin{equation*}
    \psi^*=\sum_{q\in\mathbb{Z}}\psi^*_q.
    \end{equation*}
    We are interested in verifying that $\mathcal{A}\psi^*$ belongs to $H^{s'}_{\mathrm{uloc}}(\mathbb{R})$, that is the same, as showing that $\eta_q'\mathcal{A}\psi^*\in H^{s'}(\mathbb{R})$, $\forall q'\in \mathbb{Z}$. 
    We have the following decomposition
    \begin{equation}\label{trying}
    \begin{split}
    \eta_q'\mathcal{A}\psi^*&=\sum\limits_{q\in\mathbb{Z}}\eta_q'\mathcal{A}(\eta_q\psi^*)\\
    &=\sum\limits_{|q-q'|\leq 4}\eta_q'\mathcal{A}(\eta_q\psi^*)+\sum\limits_{|q-q'|> 4}\eta_q'\mathcal{A}(\eta_q\psi^*).
    \end{split}
    \end{equation}
    The first term in r.h.s can be easily bounded as follows: if $\eta_q\psi^*\in H^s(\mathbb{R})$, we have $A(\eta_q\psi^*)\in H^{s'}(\mathbb{R})$ and furthermore, $\eta_q'A(\eta_q\psi^*)\in H^{s'}(\mathbb{R})$. Then,
    \begin{equation*}
    \left\|\sum\limits_{|q-q'|\leq 4}\eta_q'\mathcal{A}(\eta_q\psi^*)\right\|_{H^s}\leq C\|\bar{\mathcal{A}}\|_{\mathcal{L}(H^s,H^{s'})}\sum\limits_{|q-q'|\leq 4}\|\eta_q\psi^*\|_{H^s}\leq C\|\bar{\mathcal{A}}\|_{\mathcal{L}(H^s,H^{s'})}\|\psi^*\|_{H^s_{\mathrm{uloc}}(\mathbb{R})}<+\infty.
    \end{equation*}
    For the remaining term in \eqref{trying}, we consider the kernel representation of the operators. We have for $d(\textrm{Supp}\eta_{q'},\textrm{Supp}\eta_{q})\geq 1$ and all $Y'\in \textrm{Supp}\;\eta_{q'}$, 
    \begin{equation}
    \begin{split}
    \left|\bar{\mathcal{A}}(\psi^*_q)(Y')\right|&\leq \int_{|Y'-Y|\geq 1}\dfrac{1}{|Y'-Y|^m}|\psi^*_q(y)|dY\leq \dfrac{1}{|q-q'|^m}\|\psi^*_q\|_{L^2}\\
    &\leq \dfrac{1}{|q-q'|^m}\|\psi^*\|_{L^2_{\mathrm{uloc}}(\mathbb{R})}\leq \dfrac{1}{|q-q'|^m}\|\psi^*\|_{H^s_{\mathrm{uloc}}(\mathbb{R})},
    \end{split}
    \end{equation}
     for $m\geq 5$. Thus,
     \begin{equation*}
     \left|\sum\limits_{|q-q'|> 4}\eta_{q'}\mathcal{A}(\psi^*_q)\right|\leq \sum\limits_{|q-q'|> 4}\dfrac{1}{|q-q'|^m}\|\psi^*\|_{H^s_{\mathrm{uloc}}(\mathbb{R})}<+\infty.
 \end{equation*}
     Consequently, $\mathcal{A}(\psi^*)\in H^{s'}_{\mathrm{uloc}}(\mathbb{R})$ if $\psi^*\in H^s_{\mathrm{uloc}}(\mathbb{R})$, which ends the proof.
    \end{proof}
}

Now, it is possible to link the solution of the \eqref{pb:halfspace_linear_homogeneous} with $(\psi_0^{*},\psi_{1}^{*})\in H^{3/2}_{\mathrm{uloc}}(\mathbb{R})\times H^{1/2}_{\mathrm{uloc}}(\mathbb{R})$ and $PS_w(\psi_0^{*},\psi_{1}^{*})$. 

\begin{proposition}\label{ext_op_uloc}
	Let $(\psi_0^{*},\psi_{1}^{*})\in H^{3/2}_{\mathrm{uloc}}(\mathbb{R})\times H^{1/2}_{\mathrm{uloc}}(\mathbb{R})$, and let $\underline{\Psi}_w$ be the unique solution of \eqref{p:linear_nonhomgeneous0} with $F=0$ and  boundary data $\underline{\Psi}_w|_{X=0} =\psi_0^{*}$  and $\partial_X\underline{\Psi}_w|_{X=0} =\psi_1^{*}$.  Then, for all $\varphi\in C^\infty_0(\bar{\mathbb{R}}^2_+)$
	\begin{equation}\label{link_PS}
	\int_{\mathbb{R}^2_+}\partial_{X}\underline{\Psi}_w\varphi-\int_{\mathbb{R}^2_+}\Delta_w\underline{\Psi}_w\Delta_{w}\varphi=\left\langle \mathcal{A}_3[\psi_0^{*},\psi_{1}^{*}]-\dfrac{\psi_{0}^{*}}{2},\varphi\big|_{X=0}\right\rangle+\left\langle \mathcal{A}_2[\psi_0^{*},\psi_{1}^{*}],\partial_X\varphi\big|_{X=0}\right\rangle.
	\end{equation}
	Namely, for $(\psi_0^{*},\psi_{1}^{*})\in H^{3/2}(\mathbb{R})\times H^{1/2}(\mathbb{R})$, the Poincaré-Steklov operator satisfies the condition
	\begin{equation}\label{negativity}
	\left\langle \mathcal{A}_3[\psi_0^{*},\psi_{1}^{*}],\psi_0^{*}\right\rangle+\left\langle \mathcal{A}_2[\psi_0^{*},\psi_{1}^{*}],\psi_1^{*}\right\rangle\leq 0.
	\end{equation}
\end{proposition}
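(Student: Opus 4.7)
I would prove \eqref{link_PS} in two stages---first for smooth data via integration by parts, then extended by truncation to uniformly local data---and deduce the non-positivity \eqref{negativity} by testing \eqref{link_PS} against the solution $\underline{\Psi}_w$ itself. The core technical obstacle lies in the local convergence of truncated solutions, addressed in the uloc extension step via the rapid decay of the kernel $K_w$ established in Lemma~\ref{DP2014_lemma2.21}.

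\textbf{Smooth case.} Take $(\psi_0^*,\psi_1^*)\in\mathcal{S}(\mathbb{R})\times\mathcal{S}(\mathbb{R})$; by Proposition~\ref{proposition:DP} and interior elliptic regularity, $\underline{\Psi}_w\in H^4_{\mathrm{loc}}(\overline{\mathbb{R}^2_+})$. Using $\Delta_w^2\underline{\Psi}_w=\partial_X\underline{\Psi}_w$, I rewrite the left-hand side of \eqref{link_PS} as $\int\Delta_w^2\underline{\Psi}_w\,\varphi-\int\Delta_w\underline{\Psi}_w\,\Delta_w\varphi$ and apply Green's identity for $\Delta_w=(1+\alpha^2)\partial_X^2-2\alpha\partial_X\partial_Y+\partial_Y^2$ twice. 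Integrating by parts in $X$ piece by piece (with no contribution in $Y$ since $\varphi\in C^\infty_0$), the surface integrals at $X=0$ collapse to
\[
\int_\mathbb{R}\Big[(1+\alpha^2)\Delta_w\underline{\Psi}_w\,\partial_X\varphi-\big((1+\alpha^2)\partial_X-2\alpha\partial_Y\big)\Delta_w\underline{\Psi}_w\cdot\varphi\Big]\Big|_{X=0}dY.
\]
The coefficient $2\alpha$ (rather than the symmetric $\alpha$) reflects keeping the mixed term $-2\alpha\partial_X\partial_Y$ in non-symmetric form, and this is precisely what matches the definition \eqref{def_PS} of $\mathcal{A}_3$. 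The contribution $+\underline{\Psi}/2|_{X=0}$ built into $\mathcal{A}_3$ is cancelled by the explicit $-\psi_0^*/2$ on the right-hand side of \eqref{link_PS}, completing the identity in the smooth case.

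\textbf{Uloc extension.} For $(\psi_0^*,\psi_1^*)\in H^{3/2}_{\mathrm{uloc}}\times H^{1/2}_{\mathrm{uloc}}$ and $\varphi\in C^\infty_0(\overline{\mathbb{R}^2_+})$ supported in $[0,R_0]\times[-R_0,R_0]$, choose a cutoff $\chi_R\in C^\infty_c(\mathbb{R})$ with $\chi_R\equiv 1$ on $[-R,R]$ and vanishing outside $[-2R,2R]$, set $\psi_i^{*,R}:=\chi_R\psi_i^*\in H^{3/2}(\mathbb{R})$ or $H^{1/2}(\mathbb{R})$, and denote by $\underline{\Psi}_w^R$ the associated solution. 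The smooth case (combined with a standard density argument in $\mathcal{S}$) applies to $\underline{\Psi}_w^R$, and I pass to the limit $R\to\infty$. On the left, $\underline{\Psi}_w^R\to\underline{\Psi}_w$ in $H^2$ (and hence in $H^4_{\mathrm{loc}}$ by elliptic regularity for $\Delta_w^2-\partial_X$) on $\mathrm{supp}(\varphi)$, quantified by the $O(|Y|^{-5})$ kernel decay of Lemma~\ref{DP2014_lemma2.21}: the tail $\psi_i^*-\psi_i^{*,R}$, supported in $|Y|\geq R$, contributes only $O(R^{-4})$ to the solution on any fixed compact inside $[0,R_0]\times[-R_0,R_0]$. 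On the right, the same kernel estimates together with Lemma~\ref{tech_lemma_2} extend $PS_w$ continuously from $H^{3/2}_{\mathrm{uloc}}\times H^{1/2}_{\mathrm{uloc}}$ to $H^{-1/2}_{\mathrm{uloc}}\times H^{-3/2}_{\mathrm{uloc}}$, and $\mathcal{A}_i[\psi_0^{*,R},\psi_1^{*,R}]\to\mathcal{A}_i[\psi_0^*,\psi_1^*]$ in the appropriate local dual norms tested against the compactly supported $\varphi|_{X=0}$ and $\partial_X\varphi|_{X=0}$. This matching of two genuinely different constructions---Fourier representation on one side, kernel-based uloc extension on the other---is the main technical heart of the proof.

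\textbf{Non-positivity.} With $(\psi_0^*,\psi_1^*)\in H^{3/2}(\mathbb{R})\times H^{1/2}(\mathbb{R})$, Proposition~\ref{proposition:DP} yields $\underline{\Psi}_w\in H^2(\mathbb{R}^2_+)$. I take $\varphi=\underline{\Psi}_w$ in \eqref{link_PS}, justified by approximation with $\underline{\Psi}_w\eta_n$ for radial cutoffs $\eta_n\in C^\infty_c(\overline{\mathbb{R}^2_+})$ satisfying $|\nabla^k\eta_n|=O(n^{-k})$; the commutator terms vanish in the limit by dominated convergence because $\underline{\Psi}_w\in H^2(\mathbb{R}^2_+)$. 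Using
\[
\int_{\mathbb{R}^2_+}\partial_X\underline{\Psi}_w\cdot\underline{\Psi}_w=\tfrac{1}{2}\int_{\mathbb{R}^2_+}\partial_X|\underline{\Psi}_w|^2=-\tfrac{1}{2}\|\psi_0^*\|_{L^2(\mathbb{R})}^2,
\]
the identity \eqref{link_PS} reduces to $-\|\Delta_w\underline{\Psi}_w\|_{L^2(\mathbb{R}^2_+)}^2=\langle\mathcal{A}_3[\psi_0^*,\psi_1^*],\psi_0^*\rangle+\langle\mathcal{A}_2[\psi_0^*,\psi_1^*],\psi_1^*\rangle$, which is exactly \eqref{negativity}.
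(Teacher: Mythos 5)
Your argument is correct and follows essentially the same route the paper indicates for this proposition (which it only sketches): the identity \eqref{link_PS} is obtained for regular data by integration by parts and then extended to uniformly local data by truncating the boundary data near $\mathrm{Supp}\,\varphi$ and exploiting the $O(|Y|^{-5})$ decay of the kernels, while \eqref{negativity} follows by taking $\underline{\Psi}_w$ itself as test function, with the $-\tfrac12\|\psi_0^*\|_{L^2}^2$ contributions from $\int\partial_X\underline{\Psi}_w\,\underline{\Psi}_w$ and from the $-\psi_0^*/2$ term cancelling to leave $-\|\Delta_w\underline{\Psi}_w\|_{L^2}^2\le 0$. Your explicit Green's-identity computation of the boundary terms and the quantified $O(R^{-4})$ tail estimate supply precisely the details the paper leaves to the reader.
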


The proof of \eqref{link_PS} relies once again on defining a smooth function $\tilde{\chi }$, with $\tilde{\chi } = 1$ in an open set containing $\mathrm{Supp}\varphi$ and using the kernel representation formulae of the boundary differential operators. Estimate \eqref{negativity} results from considering $\underline{\Psi}_w$ as test function in \eqref{link_PS}. The detailed verification is left to the reader.

This section ends with other useful estimates on the Poincaré-Steklov operator:
\begin{proposition}\label{prop:estimates_PS}
	Let $\varphi \in C^\infty_0(\mathbb{R})$ such that $\mathrm{Supp}\varphi\subset B(Y_0, R)$, $R\geq 1$, and $(\psi_0^{*},\psi_{1}^{*})\in H^{3/2}_{\mathrm{uloc}}(\mathbb{R})\times H^{1/2}_{\mathrm{uloc}}(\mathbb{R})$. Then, there exists a  constant $C>0$ such that the following property holds.
	\begin{equation}\label{e:estimate_PS}
	\left|\left\langle \mathcal{A}_3[\psi_0^{*},\psi_{1}^{*}],\varphi\right\rangle\right|+\left|\left\langle \mathcal{A}_2[\psi_0^{*},\psi_{1}^{*}],\partial_{X}\varphi\right\rangle\right|\leq C\sqrt{R}\left(\|\varphi\|_{H^{3/2}(\mathbb{R})}+\|\partial_X\varphi\|_{H^{1/2}(\mathbb{R})}\right)\left(\|\psi_0^{*}\|_{H^{3/2}_{\mathrm{uloc}}(\mathbb{R})}+\|\psi_1^{*}\|_{H^{1/2}_{\mathrm{uloc}}(\mathbb{R})}\right)
	\end{equation}
	In particular, if $\psi_j\in H^{3/2-j}(\mathbb{R})$, $j=0,1$,
	\begin{equation}\label{e:estimate_PS_normal}
	\left|\left\langle \mathcal{A}_3[\psi_0^{*},\psi_{1}^{*}],\varphi\right\rangle\right|+\left|\left\langle \mathcal{A}_2[\psi_0^{*},\psi_{1}^{*}],\partial_{X}\varphi\right\rangle\right|\leq C\left(\|\varphi\|_{H^{3/2}(\mathbb{R})}+\|\partial_X\varphi\|_{H^{1/2}(\mathbb{R})}\right)\left(\|\psi_0^{*}\|_{H^{3/2}(\mathbb{R})}+\|\psi_1^{*}\|_{H^{1/2}(\mathbb{R})}\right)
	\end{equation}
	\end{proposition}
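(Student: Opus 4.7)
The plan is to establish the deterministic bound \eqref{e:estimate_PS_normal} first and then bootstrap it to the uniformly local estimate \eqref{e:estimate_PS} via a localization argument that isolates a near-field contribution (handled by \eqref{e:estimate_PS_normal}) from a far-field one (handled by the off-diagonal decay of the kernel of $PS_w$).

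For \eqref{e:estimate_PS_normal}, I would work directly from the Fourier symbol representation \eqref{coeff_western_rep} together with the frequency bounds from Lemma \ref{beh_DtN}. These bounds yield $|m_{i,j}(\xi)| \leq C(1+|\xi|^2)^{s_{ij}/2}$ with $s_{2,0}=2$, $s_{2,1}=1$, $s_{3,0}=3$, $s_{3,1}=2$, which by Plancherel makes $\mathcal{A}_2$ bounded from $H^{3/2}(\mathbb{R}) \times H^{1/2}(\mathbb{R})$ into $H^{-1/2}(\mathbb{R})$ and $\mathcal{A}_3$ bounded from $H^{3/2}(\mathbb{R}) \times H^{1/2}(\mathbb{R})$ into $H^{-3/2}(\mathbb{R})$. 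Dualizing against $\partial_X \varphi \in H^{1/2}(\mathbb{R})$ and $\varphi \in H^{3/2}(\mathbb{R})$ gives \eqref{e:estimate_PS_normal}.

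For \eqref{e:estimate_PS}, I would introduce a smooth cutoff $\zeta_R \in C^\infty_c(\mathbb{R})$ satisfying $\zeta_R \equiv 1$ on $B(Y_0, R+1)$ and $\mathrm{Supp}\,\zeta_R \subset B(Y_0, R+2)$, with derivatives bounded uniformly in $R$, and split $\psi_j^* = \zeta_R \psi_j^* + (1-\zeta_R)\psi_j^*$. The localized piece $\zeta_R \psi_j^*$ is compactly supported, and by summing the $O(R)$ unit-size contributions in the definition \eqref{kato_def} one has $\|\zeta_R \psi_j^*\|^2_{H^{3/2-j}(\mathbb{R})} \leq CR\,\|\psi_j^*\|^2_{H^{3/2-j}_{\mathrm{uloc}}(\mathbb{R})}$. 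Applying \eqref{e:estimate_PS_normal} to $(\zeta_R \psi_0^*, \zeta_R \psi_1^*)$ against $\varphi$ already yields the desired $\sqrt{R}$ bound for the near-field contribution.

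The remaining piece $\langle \mathcal{A}_j[(1-\zeta_R)\psi_0^*, (1-\zeta_R)\psi_1^*], \varphi \rangle$ would be handled via the kernel representation of $PS_w$. By Lemma \ref{DP2014_lemma2.21} and the remark following it, $\mathcal{F}^{-1}((1-\chi)M_w)(Y) \leq C|Y|^{-5}$, while $\mathcal{F}^{-1}(\chi M_w)$ is a Schwartz function; hence the kernel $K_w$ is a function on $\{|Y|\geq 1\}$ with $|K_w(Y)| \leq C|Y|^{-5}$ there. Since $\mathrm{Supp}\,\varphi$ and $\mathrm{Supp}(1-\zeta_R)$ are separated by at least $1$, the distance $|Y-Y'|$ in the double integral is always at least $1$, so the singular behavior of $K_w$ near the origin never appears. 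A pointwise bound gives $\int_{|Y-Y'|\geq 1}|\psi_j^*(Y')|\,|Y-Y'|^{-5}\,dY' \leq C\|\psi_j^*\|_{L^2_{\mathrm{uloc}}(\mathbb{R})}$, and combining this with $\|\varphi\|_{L^1} \leq \sqrt{|\mathrm{Supp}\,\varphi|}\,\|\varphi\|_{L^2} \leq C\sqrt{R}\,\|\varphi\|_{H^{3/2}}$ produces the $\sqrt{R}$ bound for the far-field part. Adding near and far contributions gives \eqref{e:estimate_PS}.

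The main subtlety is that, because $M_w$ grows polynomially, the convolution kernel $K_w$ is only a distribution, with singular behavior concentrated at the origin. The frequency decomposition $M_w = \chi(\xi)M_w + (1-\chi(\xi))M_w$ splits the action of $PS_w$ into a Schwartz (low-frequency) part and a non-smoothing but off-diagonally decaying (high-frequency) part; the argument above is made rigorous as long as one stays away from $Y = Y'$, which is ensured by the geometric separation built into the cutoff $\zeta_R$. All constants depend only on $\alpha$ and on universal quantities, as required.
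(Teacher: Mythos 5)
Your proposal is correct and follows essentially the same route as the paper: the same spatial cutoff adapted to $B(Y_0,R)$, the same near-field/far-field splitting, symbol boundedness plus the $\sqrt{R}$ localization bound for the near field, and the $|Y|^{-5}$ off-diagonal kernel decay with support separation for the far field. The one step you assert without justification is $\|\zeta_R\psi_j^*\|_{H^{3/2-j}(\mathbb{R})}^2\leq CR\,\|\psi_j^*\|_{H^{3/2-j}_{\mathrm{uloc}}(\mathbb{R})}^2$: for the fractional exponents $3/2$ and $1/2$ this does not follow by merely ``summing the $O(R)$ unit-size contributions,'' because the Gagliardo seminorm is nonlocal and the cross terms between distant blocks of the partition of unity must be controlled separately (the paper does this by splitting the double integral over $|k-l|\leq 3$ and $|k-l|>3$ and using the $(|k-l|-2)^{-3}$ decay of the latter); also, $\mathcal{F}^{-1}(\chi(\xi)M_w)$ need not be Schwartz since the low-frequency expansion of $M_w$ involves $|\xi|$, though this does not affect your far-field argument, which only needs kernel decay for $|Y|\geq 1$.
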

\begin{proof}
	This construction is adapted from \cite{Dalibard2014}. We consider a truncation function $\chi\in C^\infty_c(\mathbb{R})$ such that $\chi \equiv 1$ on $B(Y_0, R + 1)$ and $\mathrm{Supp}\chi\subset B(Y_0, R + 2)$, and such that $\|\partial^r_Y\chi\|_{\infty}\leq  C_{r}$, with $C_{r}$ independent of $R$, for all $r\in\mathbb{N}$. For the terms
	\begin{equation*}
	\int_{\mathbb{R}}K_{3,j}\ast ((1-\chi)\psi_j^*)\varphi,\quad \int_{\mathbb{R}}K_{2,j}\ast ((1-\chi)\psi_j^*)\partial_X\varphi,
	\end{equation*}
	where $|K_{i,j}(Y)|\leq C |Y|^{-5}$, $i=2,3$, $j=0,1$, for all $Y\in\mathbb{R}$, we use the following relation
	\begin{equation}\label{est_ga0}
	\begin{split}
	&C\int_{\mathbb{R}\times\mathbb{R}}\dfrac{1}{|Y'|^5}|1-\chi(Y-Y')||\psi_j^*(Y-Y')||\partial_{X}^{3-i}\varphi(Y)|\;dYdY'\\
	&\quad\leq C\int_{\mathbb{R}}|\partial_{X}^{3-i}\varphi(Y)|dY\left(\int_{|Y'|\leq 1}\dfrac{|\psi_j^*(Y-Y')|^2|}{|Y'|^5}\;dY'\right)^{\frac{1}{2}}\left(\int_{|Y'|\leq 1}\dfrac{1}{|Y'|^5}\;dY'\right)^{\frac{1}{2}}\\
	&\quad\leq C\|\psi_j^*\|_{L^2_{\mathrm{uloc}}(\mathbb{R})}\|\partial_{X}^{3-i}\varphi\|_{L^1}\leq C\sqrt{R}\|\psi_j^*\|_{L^2_{\mathrm{uloc}}(\mathbb{R})}\|\partial_{X}^{3-i}\varphi\|_{L^2}.
	\end{split}
	\end{equation}
	It remains to analyze the terms of the type
	\begin{equation*}
	\langle \mathcal{F}^{-1}(m_{i,j}\widehat{\chi \psi_j^{*}}),\partial_X^{3-i}\varphi\rangle_{H^{-(2i-3)/2},H^{(2i-3)/2}}.
	\end{equation*}
	Since $m_{i,j}(\xi)$, $i=2,3, j=0,1$ is a kernel satisfying $\mathrm{Op}(m_{i,j}): H^{3/2-j}(\mathbb{R}) \rightarrow H^{-i+3/2}(\mathbb{R})$, these terms are bounded by
	\begin{equation*}
	C\|\chi\psi^*_j\|_{H^{3/2-j}(\mathbb{R})}\|\partial_X^{3-i}\varphi\|_{H^{i-3/2}(\mathbb{R})}
	\end{equation*}

We proceed to prove the estimate
	\begin{equation}\label{estimate_ga}
	\|\chi\psi^*_j\|_{H^{3/2-j}(\mathbb{R})}\leq C\sqrt{R}\|\psi^*_j\|_{H^{3/2-j}_{\mathrm{uloc}}(\mathbb{R})}
	\end{equation}
	but first let us recall the norm definition in fractional Sobolev spaces.
	\begin{definition}
		Let  $s\in(0, 1)$ be a fractional exponent and $\omega$ be a general, possibly non-smooth, open set in $\mathbb{R}^n$ . For any $p \in[1,+\infty)$, the fractional Sobolev space $W^{s,p}(\omega)$ is defined as follows
		\begin{equation}
		W^{s,p}(\omega) :=\left\{u\in L^p(\omega):\:\dfrac{|u(Y)-u(Y')|}{|Y-Y'|^{\frac{n}{p}+s}}\in L^p(\omega\times\omega) \right\};
		\end{equation}
		i.e. an intermediary Banach space between $L^p(\omega)$ and $W^{1,p}(\omega)$, endowed with the natural norm
		\begin{equation}
		\|u\|_{W^{s,p}(\omega)} :=\left(\int_{\omega}|u|^p\;dY+[u]^p_{W^{s,p}(\omega)} \right)^{1/p}.
		\end{equation}
		Here, the term $[u]_{W^{s,p}(\omega)}$ is the so-called Gagliardo (semi)norm of $u$ defined as
		\begin{equation}
		[u]_{W^{s,p}(\omega)}= \left(\int_{\omega}\int_{\omega}\dfrac{|u(Y)-u(Y')|^p}{|Y-Y'|^{n+sp}}\right)^{1/p}.
		\end{equation}
		If $s = m + \eta$, where $m\in\mathbb{Z}$ and $\eta \in (0, 1)$. The space $W^{s,p}(\omega)$ consists of
		\begin{equation}
		W^{s,p}(\omega) :=\left\{u\in W^{m,p}(\omega):\:D^\zeta u\in W^{\eta ,p}(\omega)\:\text{for any}\:\zeta \:\text{such that}\:|\zeta|=m\right\};
		\end{equation}
		which is a Banach space with respect to the norm
		\begin{equation}\label{Gaglairdo}
		\|u\|_{W^{s,p}(\omega)} :=\left(\int_{\omega}\|u\|^p_{W^{m,p}(\omega)}\;dY+\sum_{|\zeta|=m}\|D^\zeta u\|^p_{W^{\eta,p}(\omega)} \right)^{1/p}.
		\end{equation}
		\end{definition}
	Before proving the estimates, we introduce a cut-off function $\theta$ satisfying \eqref{DP:1-4}, which will allow us to use \eqref{kato_def}. Following the same ideas in \cite[Lemma 2.26]{Dalibard2014}, we have that for a certain $u_0$
	\begin{eqnarray}
	\|\chi u_0\|^2_{L^{2}(\mathbb{R})}\leq\sum_{k\in\mathbb{Z}}\|(\tau_k\theta)\chi u_0\|^2_{L^{2}(\mathbb{R})}\leq \|\chi \|^2_{\infty }\sum_{\substack{k\in\mathbb{Z}\\ k\leq CR\;}}\|(\tau_k\theta)u_0\|^2_{L^{2}(\mathbb{R})}\leq CR\; \|\chi \|^2_{\infty }\underset{k\in\mathbb{Z}}{\sup}\|(\tau_k\theta)u_0\|^2_{L^{2}(\mathbb{R})}.
	\end{eqnarray}
	To deal with the Gagliardo norm, notice that the denominator in \eqref{Gaglairdo} for $p=2$ can be written as
	\begin{align*}
	&|\chi u_0(Y) - \chi u_0(Y')|^2\\
	&=\left(\sum_{\substack{k\in\mathbb{Z}}}k\theta(Y)\chi (Y)u_0(Y) - \tau_k \theta(Y')\chi (Y')u_0(Y')\right)^2\\
	&=\sum_{\substack{k,l\in\mathbb{Z}\\ |k-l|\leq 3}}\left(\tau_k \theta(Y)\chi (Y)u_0(Y) - \tau_k \theta(Y')\chi (Y')u_0(Y')\right)\left(\tau_l\theta(Y)\chi (Y)u_0(Y) - \tau_l\theta(Y')\chi (Y')u_0(Y')\right)\\
	&\quad+\sum_{\substack{k,l\in\mathbb{Z}\\ |k-l|> 3}}\left(\tau_k \theta(Y)\chi (Y)u_0(Y) - \tau_k \theta(Y')\chi (Y')u_0(Y')\right)\left(\tau_l\theta(Y)\chi (Y)u_0(Y) - \tau_l\theta(Y')\chi (Y')u_0(Y')\right).
	\end{align*}
	As a result of the assumptions on $\theta$, for $|k - l| > 3$, we obtain that  $\tau_k \theta(Y)\tau_l \theta(Y) = 0$ for all $Y \in \mathbb{R}$. Moreover, if $\tau_k  (Y)\tau_l (Y') \neq 0$, then, $|x - y| \geq |k - l| - 2$. Also, the first sum above contains $O(R)$ nonzero terms. Hence, using the Cauchy–Schwarz inequality gives
	\begin{align*}
	&[\chi u]^2_{W^{s,p}(\omega)}\\
	&= \int_{\mathbb{R}}\int_{\mathbb{R}}\dfrac{|\chi u_0(Y)-\chi u_0(Y')|^2}{|Y-Y'|^{3}}dY dY'\\
	&\leq CR\;\underset{k\in\mathbb{Z}}{\sup}\int_{\mathbb{R}}\int_{\mathbb{R}}\dfrac{|(\tau_k \theta\chi u(Y))-(\tau_k \theta\chi u(Y'))|^2}{|Y-Y'|^{3}}dY dY'\\
	&+\sum_{\substack{k,l\in\mathbb{Z}\\ |k-l|> 3}}\dfrac{1}{(|k-l|-2)^3}\left(\tau_k \theta(Y)\chi (Y)u_0(Y) - \tau_k \theta(Y')\chi (Y')u_0(Y')\right)\left(\tau_l\theta(Y)\chi (Y)u_0(Y) - \tau_l\theta(Y')\chi (Y')u_0(Y')\right)\\
	&=I_1+I_2.
	\end{align*}
	We have
	\begin{equation*}
	|I_1|\leq CR\;\|\chi\|^2_{W^{1,\infty}}\|u_0\|^2_{H^{1/2}_{\mathrm{uloc}}(\mathbb{R})},\quad\text{and}\quad |I_2|\leq C\|u_0\|^2_{L^2_{\mathrm{uloc}}}.
	\end{equation*}
	Then, for $u_0=\psi_1^*$ and $R>1$, it follows that
	\begin{eqnarray*}
	\|\chi\psi_1^*\|^2_{H^{1/2}(\mathbb{R})}&=&\|\chi\psi_1^*\|^2_{L^{2}(\mathbb{R})}+[\chi\psi_1^*]^2_{H^{1/2}}\leq CR\;\|\psi_1^*\|^2_{H^{1/2}_{\mathrm{uloc}}(\mathbb{R})}+C\|\psi_1^*\|^2_{L^2_{\mathrm{uloc}}}\\
	&\leq& CR\;\|\psi_1^*\|^2_{H^{1/2}_{\mathrm{uloc}}(\mathbb{R})}.
	\end{eqnarray*}
The remaining term is dealt with in a similar manner
\begin{eqnarray*}
	\|\chi\psi_0^*\|^2_{H^{3/2}(\mathbb{R})}&=&\|\chi\psi_0^*\|^2_{L^{2}(\mathbb{R})}+\|\chi D\psi_0^*\|^2_{L^{2}(\mathbb{R})}+[\chi D\psi_0^*]^2_{H^{1/2}}\\
	&\leq& CR\;\|\psi_0^*\|^2_{H^{3/2}_{\mathrm{uloc}}(\mathbb{R})}+C\|D\psi_0^*\|^2_{L^2_{\mathrm{uloc}}}\\
	&\leq& CR\;\|\psi_0^*\|^2_{H^{3/2}_{\mathrm{uloc}}(\mathbb{R})}.
\end{eqnarray*}
From \eqref{est_ga0} and \eqref{estimate_ga} we obtain \eqref{e:estimate_PS}.
The proof of inequality \eqref{e:estimate_PS_normal} is classical and follows from the Fourier representation of the differential operators.
	\end{proof}

\subsection{The problem in the rough channel} \label{s:rough_linear_west}
The section is devoted to proving the existence and uniqueness of weak solutions for the linear problem  \eqref{e:linear_app} by studying an equivalent problem defined in a channel $\omega^b_w$ presenting a transparent boundary at the interface $\{X=M\}$, $M>0$. Here, only an accurate
representation of the solution of the problem linear problem at $\{X=M\}$ is needed in order to obtain a good approximation of the solution of the original problem while solving a similar set of equations in the rough channel (step (L5) in Section \ref{l_method}). The linear problem \eqref{e:linear_app} acts on the new system through the coupling conditions described employing the Poincaré-Steklov operator in \eqref{def_PS}. As before, we are going to consider the linear problem without a source term, i.e., $F=0$ in \eqref{e:linear_app}. 

We define the following problem equivalent to \eqref{e:linear_app} in the bounded channel $\omega^b_w=\omega_w\cap\{X\leq M\}$, $M>0$,

\begin{eqnarray}\label{a:linear_equiv}
\partial_{X}\Psi_w-\Delta^{2}_w\Psi&=&0\quad\textrm{in}\;\omega^b_w\setminus\sigma_w,\nonumber\\
\left[\partial^k_X\Psi_w\right]\big|_{\sigma_w}&=&g_k,\quad k=0,\ldots,3,\\
\left[(1+\alpha^2)\Delta_w\Psi_w\right]\big|_{\sigma_w^M}&=&\mathcal{A}_2\left[\Psi_w|_{X=M},\partial_X\Psi_w|_{X=M}\right],\nonumber\\
\left[-\left((1+\alpha^2)\partial_X-2\alpha\partial_{Y}\right)\Delta_w\Psi_w+\displaystyle\frac{\Psi_w}{2}\right]\Bigg|_{\sigma_w^M}&=&\mathcal{A}_3\left[\Psi_w|_{X=M},\partial_X\Psi_w|_{X=M}\right],\nonumber,\nonumber\\
\Psi_w\big|_{X=-\gamma_w(Y)}=\partial _{\mathrm{n}}\Psi_w\big|_{X=-\gamma_w(Y)}&=&0.\nonumber
\end{eqnarray}

The equivalence between the solution of \eqref{a:linear_equiv} and the one of the original problem is given in the following lemma:
\begin{lemma}
		Let $\gamma \in W^{2,\infty}(\mathbb{R})$ and $g_k \in L^\infty(\mathbb{R})$, for $k=0,\dots,3$. 
		\begin{itemize}
			\item Let $\Psi_w$ be a solution of \eqref{e:linear_app} in $\omega_w$ such that $\Psi_w\in H^2_{\mathrm{uloc}}(\omega)$. Then, $\Psi|_{\omega^b_w}$ is a solution of \eqref{a:linear_equiv}, and for $X > M$, $\Psi$ solves the homogeneous equivalent of problem \eqref{e:linear_app} defined on the half-space $M\times\mathbb{R}$, with $\psi_0 := \Psi_w|_{X=M} \in H^{3/2}_{\mathrm{uloc}}(\mathbb{R})$ and $\psi_1 := \partial_X\Psi_w|_{X=M} \in H^{1/2}_{\mathrm{uloc}}(\mathbb{R})$.
			\item Furthermore, let $\Psi^-_w \in H^2_{\mathrm{uloc}}(\omega^b_w)$ and $\Psi^+_w \in H^2_{\mathrm{uloc}}(\mathbb{R}^2_+)$ be solutions of \eqref{a:linear_equiv} and \eqref{e:linear_app}, respectively. Taking
			
			\begin{equation*}
				\Psi_w(X,\cdot):=\left\{\begin{array}{ccc}
					\Psi_w^-(X,\cdot)&\mathrm{for}&-\gamma_w(\cdot)<X<M , \\
					\Psi_w^+(X,\cdot)& \mathrm{for}&X>M,
				\end{array}\right.
			\end{equation*}
			the function $\Psi \in H^2_{\mathrm{loc}}(\omega)$ is a solution of the problem \eqref{e:linear_app}.
		\end{itemize}
		\end{lemma}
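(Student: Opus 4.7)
The plan is to reduce the equivalence to two ingredients already established in the excerpt: Theorem~\ref{Theorem2_DGV2017}, which provides a unique solution of the half-space problem with prescribed Dirichlet data in $H^{3/2}_{\mathrm{uloc}}\times H^{1/2}_{\mathrm{uloc}}$, and the definition~\eqref{def_PS} together with Proposition~\ref{ext_op_uloc}, which ensure that $PS_w$ acts on such Dirichlet data precisely so as to reproduce the traces of $(1+\alpha^2)\Delta_w$ and of $-((1+\alpha^2)\partial_X-2\alpha\partial_Y)\Delta_w+(\cdot)/2$ applied to the unique half-space solution at $\{X=M\}$.

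For the forward direction, suppose $\Psi_w\in H^2_{\mathrm{uloc}}(\omega_w)$ solves \eqref{e:linear_app}. The trace theorem applied on each cube of the partition defining the Kato norm yields $\psi_0:=\Psi_w|_{X=M}\in H^{3/2}_{\mathrm{uloc}}(\mathbb{R})$ and $\psi_1:=\partial_X\Psi_w|_{X=M}\in H^{1/2}_{\mathrm{uloc}}(\mathbb{R})$. The restriction of $\Psi_w$ to $\{X>M\}$ solves \eqref{pb:halfspace_linear_homogeneous} (after a translation in $X$) with this data, and by the uniqueness part of Theorem~\ref{Theorem2_DGV2017} it coincides with the canonical half-space extension used to define $PS_w$. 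Since $\partial_X-\Delta_w^2$ is elliptic, $\Psi_w$ is smooth across the artificial interface $\{X=M\}$, so the left and right traces of $(1+\alpha^2)\Delta_w\Psi_w$ and of the third-order combination appearing in \eqref{a:linear_equiv} agree, and the latter equal $\mathcal{A}_2[\psi_0,\psi_1]$ and $\mathcal{A}_3[\psi_0,\psi_1]$ by the very definition of $PS_w$. The jump conditions at $\sigma_w$ and the homogeneous Dirichlet and Neumann conditions on $\{X=-\gamma_w(Y)\}$ transfer unchanged, so $\Psi_w|_{\omega^b_w}$ solves \eqref{a:linear_equiv}.

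For the converse, let $\Psi_w^-$ and $\Psi_w^+$ be as in the statement and let $\Psi_w$ be the glued function. The Dirichlet matching $\Psi_w^+|_{X=M}=\Psi_w^-|_{X=M}$ and $\partial_X\Psi_w^+|_{X=M}=\partial_X\Psi_w^-|_{X=M}$ built into the construction of $\Psi_w^+$ yields immediately $\Psi_w\in H^2_{\mathrm{loc}}(\omega_w)$. To verify $\partial_X\Psi_w-\Delta_w^2\Psi_w=0$ in $\mathcal{D}'(\omega_w\setminus\sigma_w)$ I would test against an arbitrary $\varphi\in C^\infty_0(\omega_w\setminus\sigma_w)$, split the resulting integral along $\{X=M\}$, and integrate by parts twice in the biharmonic terms on each subdomain. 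The interior integrals vanish because the PDE is satisfied on each side. The boundary contributions at $\{X=M\}$ from the $\omega^b_w$-side can be rearranged into the two transparent identities of \eqref{a:linear_equiv} tested against $\varphi|_{X=M}$ and $\partial_X\varphi|_{X=M}$, while Proposition~\ref{ext_op_uloc} identifies the matching contributions from the half-space side as exactly the opposite quantities via $PS_w$, so everything cancels. The main subtlety is handling the four possible jumps at $\{X=M\}$, those of $\Psi_w$, $\partial_X\Psi_w$, $\Delta_w\Psi_w$, and of the third-order combination: the two Dirichlet matchings dispose of the first pair, and the two transparent boundary conditions together with \eqref{def_PS} account precisely for the other two through the cancellation just described, after which the distributional equation, and hence by elliptic regularity classical $H^2_{\mathrm{loc}}$ regularity across $\{X=M\}$, both follow.
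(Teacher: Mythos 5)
Your proposal is correct and follows essentially the same route as the paper, which simply remarks that the lemma ``easily follows from Theorem~\ref{Theorem2_DGV2017} and Proposition~\ref{ext_op_uloc}''; you use exactly these two ingredients (uniqueness of the half-space solution to identify the restriction of $\Psi_w$ with the canonical extension defining $PS_w$, and the weak-formulation identity of Proposition~\ref{ext_op_uloc} to cancel the interface terms in the gluing direction). The details you supply — traces on cubes for the Kato regularity, interior elliptic regularity across $\{X=M\}$, and the bookkeeping of the four possible jumps — are the natural fleshing-out of the argument the paper leaves implicit.
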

Note that $\Psi_w^-$ solves \eqref{a:linear_equiv} in the trace sense, and for
all $\varphi\in C^\infty_c(\overline{\omega^b_w})$ satisfies 
	\begin{equation}
	\begin{split}
	    	\int_{\omega_w^b}\partial_{X}\underline{\Psi}_w\varphi-\int_{\omega_w^b}\Delta_w\underline{\Psi}_w\Delta_{w}\varphi&=-\left\langle \mathcal{A}_3\left[\Psi_w|_{X=M},\partial_X\Psi_w|_{X=M}\right]-\dfrac{\psi_{0}^{*}}{2},\varphi\big|_{X=M}\right\rangle\\
	    	&\quad-\left\langle \mathcal{A}_2\left[\Psi_w|_{X=M},\partial_X\Psi_w|_{X=M}\right],\partial_X\varphi\big|_{X=M}\right\rangle.
	\end{split}
	\end{equation}
The above result easily follows from Theorem \ref{Theorem2_DGV2017} and Proposition \ref{ext_op_uloc}. Consequently, we focus our attention on showing a well-posedness of problem \eqref{a:linear_equiv} in the remainder of the section.
\begin{proposition}\label{prop:result_rough_channel_lw}
Let $\gamma_w \in W^{2,\infty}(\mathbb{R})$ and $\omega^b_w=\omega_w\cap\{X\leq M\}$, $M>0$. Assume the Poincaré-Steklov operators $\mathcal{A}_i:H^{3/2}_\mathrm{uloc}(\mathbb{R})\times H^{1/2}_\mathrm{uloc}(\mathbb{R})\rightarrow H^{3/2-i}_\mathrm{uloc}(\mathbb{R})$, $i=2,3$ satisfy the properties in Proposition \ref{ext_op_uloc} and $g_k\in L^\infty(\mathbb{R})$, for $k=0,\ldots,3$. Then, there exists a unique solution $\Psi_w\in H^2_{\mathrm{uloc}}(\omega^b_w\setminus\sigma_w)$ satisfying
\begin{equation}\label{estimate_rough_channel_lw}
    \|\Psi_w\|_{H^2_\mathrm{uloc}(\omega^b_w)}\leq C\sum\limits_{k=0}^{3}\|g_k\|_{L^\infty(\mathbb{R})},
\end{equation}
where $C>0$ is a universal constant.
\end{proposition}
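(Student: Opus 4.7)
The strategy is to implement the blueprint of Section \ref{methodo} (step (L5)) for the operator $\mathcal{L}_\alpha = \partial_X - \Delta_w^2$ with $\mathcal{Q}_\alpha \equiv 0$, feeding in the Poincaré--Steklov operators $\mathcal{A}_2,\mathcal{A}_3$ constructed in Section \ref{transparent_w}. First, I would lift the four jump conditions at $\sigma_w$ by the function $\Psi^L$ defined in \eqref{lift_lin}, so that $\tilde\Psi = \Psi_w - \Psi^L$ satisfies a problem of the form \eqref{method_lift_problem} with source $F^L$ depending linearly on $g_0,\ldots,g_3$ and compactly supported in $X$. The weak formulation, in the sense of Definition \ref{p:weak_formulation_meth}, now contains two boundary terms at $\sigma_w^M$ expressed via $\mathcal{A}_2,\mathcal{A}_3$ acting on $(\tilde\Psi|_{X=M},\partial_X\tilde\Psi|_{X=M})$, and inherits from Proposition \ref{ext_op_uloc} the crucial negativity property \eqref{negativity}.

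Next, I would construct approximate solutions $\tilde\Psi_n$ on the truncated channels $\omega_n$ defined in \eqref{dom_n}, imposing homogeneous conditions on the artificial lateral sides $\{|Y|=n\}$. For each fixed $n$, the bilinear form
\begin{equation*}
a(u,v) = -\int_{\omega_n}\partial_X u\, v - \int_{\omega_n}\Delta_w u\,\Delta_w v + \langle \mathcal{A}_3[u,\partial_X u]-\tfrac{u}{2},v\rangle_{\sigma_n^M} + \langle \mathcal{A}_2[u,\partial_X u],\partial_X v\rangle_{\sigma_n^M}
\end{equation*}
is continuous and, thanks to \eqref{negativity} and Lemma \ref{lemme_tech}, coercive on the Hilbert space of $H^2(\omega_n)$ functions vanishing with their normal derivative on $\Gamma_n\cup\{|Y|=n\}$; Lax--Milgram then yields existence and uniqueness of $\tilde\Psi_n$.

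The heart of the argument is the derivation of the induction inequality \eqref{inequality1_m} on $E_k := E_k^n = \int_{\omega_k}|\Delta_w\tilde\Psi_n|^2$, obtained by testing against $\chi_k\tilde\Psi_n$. The treatment of the bulk integrals is exactly as in Section \ref{methodo}; the only genuinely new terms are the two boundary contributions stemming from the transparent conditions, which, because $\mathcal{A}_i$ is nonlocal, couple all frequencies on $\sigma_w^M$. Here I would invoke Proposition \ref{prop:estimates_PS} with $\varphi = \chi_k\tilde\Psi_n|_{X=M}$ supported in a ball of radius $R \sim k$, so that the $\sqrt{R}$ factor produces a bound of the form $C\sqrt{k+1}\,\|\tilde\Psi_n|_{X=M}\|_{H^{3/2}_{\mathrm{uloc}}}$ on the boundary terms; absorbing the $H^{3/2}_{\mathrm{uloc}}$-trace via the trace theorem and Young's inequality into $E_{k+1}$ and a quantity of size $C_2(k+1)$ with $C_2 = \sum_k\|g_k\|_{L^\infty}^2$ yields \eqref{inequality1_m}. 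Backward induction on $k$ starting from the trivial seed $E_n=0$, as carried out in (L5), then delivers $E_{k_0}\leq C\sum_k\|g_k\|_{L^\infty}^2$ uniformly in $n$.

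The remaining steps are standard consequences of the methodology: translation invariance of the estimate in $Y$ upgrades the local bound to $\sup_{a\in\mathcal{I}_{k_0}}\|\tilde\Psi_n\|_{H^2(\omega_a)}\leq C$, hence to a uniform $H^2_{\mathrm{uloc}}(\omega^b_w)$ bound; a diagonal extraction provides a weak limit $\tilde\Psi\in H^2_{\mathrm{uloc}}$, and passing to the limit in the weak formulation yields a solution, from which $\Psi_w = \tilde\Psi + \Psi^L$ satisfies \eqref{estimate_rough_channel_lw}. For uniqueness, I would take the difference of two solutions, derive the homogeneous analogue $E_k \leq C(E_{k+1}-E_k)$, conclude $E_k\leq C$ uniformly so that $\Psi_1-\Psi_2\in H^2(\omega^b_w)$, and finally test the equation against $\Psi_1-\Psi_2$ itself, using \eqref{negativity} to force $\Delta_w(\Psi_1-\Psi_2)\equiv 0$, which together with the homogeneous boundary data at $\Gamma$ gives $\Psi_1=\Psi_2$. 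The main obstacle throughout is the nonlocality of $\mathcal{A}_2,\mathcal{A}_3$: it is Proposition \ref{prop:estimates_PS}, whose $\sqrt{R}$ decay in the kernel tails is exactly tuned to the linear growth $C_2(k+1)$ admissible by the induction scheme, that makes the energy argument viable.
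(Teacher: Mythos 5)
Your overall architecture (lifting via $\Psi^L$, truncated problems $\tilde\Psi_n$, Saint--Venant induction on $E_k=\int_{\omega_k}|\Delta_w\tilde\Psi_n|^2$, translation invariance, diagonal extraction, and the uniqueness argument via the negativity of the Poincaré--Steklov operator) matches the paper's. But the step where you handle the nonlocal boundary terms does not close, and this is precisely the point the paper flags as the main difficulty. You propose to apply Proposition \ref{prop:estimates_PS} with $\varphi=\chi_k\tilde\Psi_n|_{X=M}$ and accept a bound of the form $C\sqrt{k+1}\,\|\tilde\Psi_n|_{X=M}\|_{H^{3/2}_{\mathrm{uloc}}}\cdot E_{k+1}^{1/2}$. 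The factor $\|\tilde\Psi_n|_{X=M}\|_{H^{3/2}_{\mathrm{uloc}}}$ is a \emph{global} quantity, comparable to $\sup_{j}(E_{j+1}-E_j)^{1/2}$; a priori it is only bounded by $C_0 n$ from the first crude energy estimate \eqref{C0_lw}, and it is exactly what the induction is supposed to prove is $O(1)$. After Young's inequality you are left with a term of order $(k+1)\sup_j(E_{j+1}-E_j)$ on the right-hand side, which cannot be absorbed into $E_{k+1}+C_2(k+1)$ and destroys the backward induction. (Also, the seed of the induction is $E_n\le C_0 n$, not $E_n=0$: $\tilde\Psi_n$ vanishes outside $\omega_n$, but $E_n$ is the full energy over $\omega_n$.)

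The paper resolves this by a three-scale decomposition of the traces, $\psi_j=\chi_k\psi_j+(\chi_{k+m}-\chi_k)\psi_j+(1-\chi_{k+m})\psi_j$ (see \eqref{decomp_psi}), with an auxiliary parameter $m$. The diagonal block is discarded by the negativity \eqref{negativity}; the intermediate annulus is estimated by Proposition \ref{prop:estimates_PS} and is controlled by the \emph{local} increment $(E_{k+m+1}-E_k)^{1/2}E_{k+1}^{1/2}$; the far field is handled by the kernel decay $|K_{i,j}(Y)|\le C|Y|^{-5}$ through Lemma \ref{DP:lemma_3.1}, which yields the small prefactor $m^{-2+\eta}$ in front of $\bigl(\sup_{j\ge k+m}(E_{j+1}-E_j)\bigr)^{1/2}$. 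This produces the modified Saint--Venant inequality \eqref{DP:3-1}, with the two sup terms weighted by $m$ and $m^{-(4-2\eta)}$ respectively, and the backward induction is correspondingly more delicate: one first proves $E_k\le C_2(k+1+m^3+m^{-(4-2\eta)}\sup_{j\ge k+m}(E_{j+1}-E_j))$ and then, choosing $m$ large, deduces $\sup_k(E_{k+1}-E_k)\le C$, which \emph{is} the $H^2_{\mathrm{uloc}}$ bound. Without this splitting and the quantitative gain from the kernel tails, the energy scheme you describe does not yield \eqref{inequality1_m}.
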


\begin{proof}
From now on, we lose the $w$ index to simplify the notation when no confusion can arise.

Before stating the main ideas of the proof, we first lift the nonhomogeneous jump conditions at $\sigma_w$ by introducing the function $\Psi^L$ as in \eqref{lift_lin}.   Then, for $\tilde{\Psi}=\Psi-\Psi^{L}$, we have
 \begin{eqnarray}\label{a:left_original}
\partial_{X}\tilde{\Psi}-\Delta^{2}_w\tilde{\Psi}&=&F^L\quad\textrm{in}\;\;\omega^b_w\setminus\sigma^M_w,\nonumber\\
\left[(1+\alpha^2)\Delta_w\tilde{\Psi}\right]\Big|_{\sigma^M_w}&=&\mathcal{A}_2\left[\Psi_w|_{X=M},\partial_X\Psi_w|_{X=M}\right],\\
\left[-\left((1+\alpha^2)\partial_X-2\alpha\partial_{Y}\right)\Delta_w\tilde{\Psi}+\displaystyle\frac{\tilde{\Psi}}{2}\right]\Bigg|_{\sigma^M_w}&=&\mathcal{A}_3\left[\Psi_w|_{X=M},\partial_X\Psi_w|_{X=M}\right],\nonumber\\
\tilde{\Psi}\big|_{X=-\gamma_w(Y)}=\partial _{\mathrm{n}}\tilde{\Psi}\big|_{X=-\gamma_w(Y)}&=&0,\nonumber
 \end{eqnarray}
 where $F^L$ is a function depending on $g_k$, $k=0,\ldots,3$. The truncation technique introduced by Lady\v{z}enskaya and Solonnikov \cite{Ladyzenskaja1980} is used to prove the existence and uniqueness of the solution of system \eqref{a:left_original} by means of a local uniform bound on $\Delta_w\tilde{\Psi}_n$, where $\tilde{\Psi}_n$ is the solution of the problem
\begin{equation}\label{method_lift_problem_n_west}
\begin{split}
	\partial_{X}\tilde{\Psi}_n-\Delta_w^{2}\tilde{\Psi}_n&=F_n^L\quad\text{in}\:\:\omega_n\\[8pt]
	(1+\alpha^2)\Delta_w\tilde{\Psi}_n\Big|_{\sigma^M_n}&=\mathcal{A}_2\left[\Psi_w|_{X=M},\partial_X\Psi_w|_{X=M}\right],\\
	-\left[(1+\alpha^2)\partial_X-2\partial_Y\right]\Delta_w\tilde{\Psi}_n+\dfrac{\tilde{\Psi}_n}{2}\bigg|_{\sigma^M_n}&=\mathcal{A}_3\left[\Psi_w|_{X=M},\partial_X\Psi_w|_{X=M}\right],\\
	\tilde{\Psi}_n\big|_{\Gamma_n}&=\frac{\partial\tilde{\Psi}}{\partial n_w}\big|_{\Gamma_n}=0,
\end{split}
\end{equation}
where $\omega_n, \sigma^M_n$ and $\Gamma_n$ are the ones in \eqref{dom_n}. The problem on $\tilde{\Psi}_n$ has the following  weak formulation: {\it Let $\varphi \in C^\infty_0(\omega^b)$ such that
\begin{equation}\label{hyp_test}
\varphi= 0\;\;\text{on}\;\;\omega^b\setminus\omega_n,\quad \varphi|_{\Gamma_n}= 0\:\:\text{and}\:\:\partial_{X}\varphi|_{\Gamma_n}=0.
\end{equation}
Then, the solution $\tilde{\Psi}_n\in H^2(\omega_n)$ of \eqref{method_lift_problem_n_west} satisfies 
\begin{equation}\label{link_PSn}
\begin{split}
    \int_{\omega^b}\partial_{X}\tilde{\Psi}_n\varphi-\int_{\omega^b}\Delta_w\tilde{\Psi}_n\Delta_{w}\varphi&=-\left\langle \mathcal{A}_3\left[\Psi_w|_{X=M},\partial_X\Psi_w|_{X=M}\right]-\dfrac{\tilde{\Psi}_n}{2}\Big|_{X=M},\varphi\big|_{X=M}\right\rangle\\
    &\quad-\left\langle \mathcal{A}_2\left[\Psi_w|_{X=M},\partial_X\Psi_w|_{X=M}\right],\partial_X\varphi\big|_{X=M}\right\rangle-\int_{\omega^b}F^L_n\varphi.
\end{split}
\end{equation}}

Taking $\tilde{\Psi}_n$ as test function gives
\begin{equation}
\begin{split}
\int_{\omega^b}|\Delta_w\tilde{\Psi}_n|^2&=\underset{\leq 0}{\underbrace{\left(\left\langle\mathcal{A}_3\left[\Psi_w|_{X=M},\partial_X\Psi_w|_{X=M}\right],\tilde{\Psi}_n\big|_{X=M}\right\rangle+\left\langle \mathcal{A}_2\left[\Psi_w|_{X=M},\partial_X\Psi_w|_{X=M}\right],\partial_X\tilde{\Psi}_n\big|_{X=M}\right\rangle\right)}}\\
&-\int_{\omega^b}F^L_n\tilde{\Psi}_n\\
&\leq C\sqrt{n}\sum_{k=0}^{3}\|g_k\|_{L^{\infty}(\mathbb{R})}\|\tilde{\Psi}_n\|_{H^2(\omega_{n})},
\end{split}
\end{equation}
where the constant $C$ only depends on $\|\gamma_w\|_{W^{2,\infty}}$. Then, applying Poincaré inequality we have
\begin{equation}\label{C0_lw}
\int_{\omega_{k}}|\Delta_w\tilde{\Psi}_n|^2\leq C_0n,
\end{equation}
where $C_0=C_0(g_0,\ldots,g_3)$. The existence of $\tilde{\Psi}_n$ in $H^2(\omega^b)$ follows from Lemma \ref{lemme_tech}. Uniqueness is obtained by following similar arguments as the ones presented in (L5), see Section \ref{l_method}.

 We work with the energy 
\begin{eqnarray}
E_k^n:=\int_{\omega_{k}}|\Delta_w\tilde{\Psi}_n|^2,
\end{eqnarray}
for which  we prove an inequality of the type 
\begin{equation}\label{DP:3-1}
E_k^n \leq C_1\left(k+1+m\underset{k\leq j\leq k+m}{\sup}\left(E_{j+1}^n-E_{j}^n\right)+\frac{1}{m^{4-2\eta}}\underset{j\geq k+m}{\sup}(E_{j+1}^n-E_j^n)\right)\quad \text{for all}\quad k \in \{m, \ldots , n\},
\end{equation}
for any $m > 1$ and $\eta\in ]0,2[$. The constant $C_1 > 0$ is uniform constant in $n$ depending only on $\|\gamma\|_{W^{2,\infty}}$ and $\|g_j\|_{W^{2-j,\infty}}$, $j=0,\dots,3$. The bound in $H^2_{\mathrm{uloc}}(\omega^b)$ is then obtained via a nontrivial induction argument.

The remaining of the overall strategy is the same as the one detailed in step (L5) in Section \ref{l_method}. Consequently, as we advance, we only discuss in detail the computations of the estimates involving the nonlocal differential operators and their incidence on the induction argument.

\begin{itemize}
	\item \textit{Induction.} To shorten the notation in the following paragraphs, we write $E_k$ and $\tilde{\Psi}$ instead of  $E_k^n$ and $\tilde{\Psi}_n$. Let us show by induction on $n-k$ that for $m$ large enough, \eqref{DP:3-1} amounts to
	\begin{equation}\label{DP:3-4}
	E_k \leq C_1\left(k+1+m^3+\frac{1}{m^{4-2\eta}}\underset{j\geq k+m}{\sup}(E_{j+1}-E_j)\right)\quad \text{for all}\quad \forall k\leq n,
	\end{equation}
	where the positive constant $C_2$ depends only on $C_0$ and $C_1$, appearing respectively in \eqref{C0_lw} and \eqref{DP:3-1}. The inequality is clearly true when $k = n$, as soon as $C_2 > C_0$. Let us now assume that 
	\begin{equation}\label{DP:3-5}
	E_{k'} \leq C_2\left({k}+1+m^3+\frac{1}{m^{4-2\eta}}\underset{j\geq k'+m}{\sup}(E_{j+1}-E_j)\right)\quad \text{for all}\quad k\in\left\{\leq k+1,\ldots,n\right\},
	\end{equation}
	holds and show it remains true for index $k$. If it were false, we have
	\begin{equation}\label{MGV_2.20}
	E_{k'} \geq C_2\left({k}+1+m^3+\frac{1}{m^{4-2\eta}}\underset{j\geq k'+m}{\sup}(E_{j+1}-E_j)\right).
	\end{equation}
	Combining inequalities \eqref{DP:3-5} and \eqref{MGV_2.20} implies for all $k+m\geq j\geq k,$
	\begin{equation}
	E_{j+1} - E_{j} \leq E_{j+1} - E_k \leq C_2(m +1).
	\end{equation}
	Then, \eqref{DP:3-1} yields
	\begin{equation}\label{MG:2.22}
	E_k \leq C_1\left(k+1+C_2m(m+1)+\frac{1}{m^{4-2\eta}}\underset{j\geq k+m}{\sup}(E_{j+1}-E_j)\right)\quad \text{for all}\quad k \in \{m, \ldots , n\}.
	\end{equation}
	Note that if $C_2 > C_1$ and $C_1C_2m(m+1) \leq C_2m^3$ in \eqref{MG:2.22} we have a contradiction.  This is verified when  $C_2 > C_1$ and $m$ is large enough. Hence, inequality \eqref{MGV_2.20} is valid for all $k \leq n$. Since equation \eqref{DP:3-1} is invariant by a horizontal translations (see (L5) in Section \ref{l_method}), we obtain
	\begin{equation*}
	E_{k+1}-E_k \leq C_1\left(2+m^3+\frac{1}{m^{4-2\eta}}\underset{j\in\mathbb{N}}{\sup}(E_{j+1}-E_j)\right)\quad \text{for all}\quad k \in \{m, \ldots , n\},
	\end{equation*}
	for all $k$, so that for $m$ large enough, we conclude that
	\begin{equation*}
	\underset{k\in\mathbb{N}}{\sup}(E_{k+1}^n-E_k^n)\leq C_1\left(\frac{2+m^3}{1-m^{-4+2\eta}}\right)=C<+\infty,
	\end{equation*}
	which is a $H^2_{\mathrm{uloc}}$ bound on $\tilde{\Psi}_n$. Hence, we can extract a subsequence of $\tilde{\Psi}_n$ that converges weakly to some $\tilde{\Psi}_n$ satisfying \eqref{a:left_original}. Existence follows from the ideas presented at the beginning of the current section.
	
\item\textit{Establishing the Saint-Venant estimate.}  This paragraph contains the proof of \eqref{DP:3-1}. The main difficulty in computing estimates independent of the size of the support of $\tilde{\Psi}_n$ resides on the nonlocal nature of the Poincaré-Steklov operators. 

Thanks to the representation formula of the Poincaré-Steklov operators, the above formulation makes sense for $\varphi \in H^2(\omega^b_w)$ satisfying \eqref{hyp_test}. To establish the estimates of $E_k$, we first introduce the cut-off function $\chi_k(Y)$ supported in $\sigma^M_{k+1}$ and identically equal to 1 on $\sigma^M_{k}$. Considering $\varphi=\chi_k\tilde{\psi}$, $k<n$, as a test function in \eqref{link_PS} yields for elements in l.h.s an expression equivalent to \eqref{bt_1}. Namely,
\begin{equation}\label{bt_1_lw}
\begin{split}E_k&=\left(\left\langle\mathcal{A}_3\left[\tilde{\Psi}|_{X=M},\partial_X\tilde{\Psi}|_{X=M}\right],\chi_k\tilde{\Psi}\big|_{X=M}\right\rangle+\left\langle \mathcal{A}_2\left[\tilde{\Psi}|_{X=M},\partial_X\tilde{\Psi}|_{X=M}\right],\chi_k\partial_X\tilde{\Psi}\big|_{X=M}\right\rangle\right)\\
&-\int_{\omega^b}F^L\chi_k\tilde{\Psi}+\text{commutator terms stemming from the bilaplacian.}
\end{split}
\end{equation}
All commutator terms are bounded by $C(E_{k+1}-E_k)$. The proof involves applying Poincaré and Young inequalities similarly to \eqref{commut_bound}. Moreover,
\begin{equation}\label{FP}
\left|\int_{\omega^b}F^L\chi_k\tilde{\Psi}\right|\leq C\left(\sum_{k=0}^{3}\|g_k\|_{L^{\infty}}\right)\sqrt{k+1}E_{k+1}^{1/2}\leq C(g_0,\ldots,g_3)\sqrt{k+1}E_{k+1}^{1/2}.
\end{equation}
It remains to handle the non-local terms, i.e., the Poincaré-Steklov operator. Drawing inspiration from \cite{Dalibard2014} and \cite{Masmoudi2000}, we introduce the auxiliary parameter $m \in\mathbb{N}_*$  appearing in \eqref{DP:3-1} and the following decomposition for $\psi_{j}$
\begin{equation}\label{decomp_psi}
\psi_j=\left(\chi_k+(\chi_{k+m}-\chi_k)+(1-\chi_{k+m})\right)\psi_j,\quad j=0,1.
\end{equation}
Then, for $i=2,3$, the transparent operators can be written as
\begin{equation}\label{up}
\begin{split}
&\langle \mathcal{A}_i\left[\tilde{\Psi}|_{X=M},\partial_X\tilde{\Psi}|_{X=M}\right],\chi_k\partial_{X}^{3-i}\tilde{\Psi}|_{X=M}\rangle\\
&\qquad=\langle \mathcal{A}_i\left[\chi_k\tilde{\Psi}|_{X=M},\chi_k\partial_X\tilde{\Psi}|_{X=M}\right],\chi_k\partial_{X}^{3-i}\tilde{\Psi}|_{X=M}\rangle\\
&\qquad+\langle \mathcal{A}_i[\chi_k\tilde{\Psi}|_{X=M},1-\chi_k\partial_X\tilde{\Psi}|_{X=M}],\chi_k\partial_{X}^{3-i}\tilde{\Psi}|_{X=M}\rangle\\
&\qquad+\mathcal{A}_i[1-\chi_k\tilde{\Psi}|_{X=M},\chi_k\partial_X\tilde{\Psi}|_{X=M}],\chi_k\partial_{X}^{3-i}\tilde{\Psi}|_{X=M}\\
&\qquad+\mathcal{A}_i[(1-\chi_k)\tilde{\Psi}|_{X=M},(1-\chi_k)\partial_X\tilde{\Psi}|_{X=M}],\chi_k\partial_{X}^{3-i}\tilde{\Psi}|_{X=M}\\
&\qquad\leq \left|\mathcal{A}_i[(\chi_{k+m}-\chi_k)\tilde{\Psi}|_{X=M},(\chi_{k+m}-\chi_k)\partial_X\tilde{\Psi}|_{X=M}],\chi_k\partial_{X}^{3-i}\tilde{\Psi}|_{X=M}\rangle\right|\\
&\qquad+\left|\langle \mathcal{A}_i[(1-\chi_{k+m})\tilde{\Psi}|_{X=M},(1-\chi_{k+m})\partial_X\tilde{\Psi}|_{X=M}],\chi_k\partial_{X}^{3-i}\tilde{\Psi}|_{X=M}\rangle\right|\\
&\qquad=I_{i,1}+I_{i,2}.
\end{split}
\end{equation}
The inequality in \eqref{up} results from considering the negativity condition satisfied by the transparent operators. For the term $I_{i,1}$ we use to Proposition \ref{prop:estimates_PS} and the estimate
\begin{equation*}
\|\chi_k\tilde{\Psi}\|_{H^2(\omega^b)}+\|\chi_k\tilde{\Psi}|_{X=M}\|_{H^{3/2}(\omega^b)}+\|\chi_k\partial_X\tilde{\Psi}|_{X=M}\|_{H^{1/2}(\omega^b)}\leq C E_{k+1}^{1/2}.
\end{equation*}
Then,
\begin{equation*}
\begin{split}
|I_{2,1}|+|I_{3,1}|\leq&\|A_2[(\chi_{k+m}-\chi_k)\psi_0,(\chi_{k+m}-\chi_k)\psi_1\|_{H^{-1/2}}\left\|\chi_k\partial_{X}\chi_k\tilde{\Psi}|_{X=M}\right\|_{H^{1/2}}\\
&+\|A_3[(\chi_{k+m}-\chi_k)\psi_0,(\chi_{k+m}-\chi_k)\psi_1\|_{H^{-3/2}}\left\|\chi_k\chi_k\tilde{\Psi}|_{X=M}\right\|_{H^{3/2}}\\
\leq& C\left(\|\partial_X\chi_k\tilde{\Psi}|_{X=M}\|_{H^{1/2}(\mathbb{R})}+\|\chi_k\tilde{\Psi}|_{X=M}\|_{H^{3/2}(\mathbb{R})}\right)\\
&\quad\times\left(\|(\chi_{k+m}-\chi_k)\partial_X\tilde{\Psi}|_{X=M}\|_{H^{1/2}(\mathbb{R})}+\|(\chi_{k+m}-\chi_k)\tilde{\Psi}|_{X=M}\|_{H^{3/2}(\mathbb{R})}\right)\\
\leq& C(E_{m+k+1}-E_k)^{1/2}E_{k+1}^{1/2}.
\end{split}
\end{equation*} 
We are left with the task of finding bounds for $I_{i,2}$. Note that for $m \geq 2$, $\mathrm{Supp} \chi_{k+1} \cap \mathrm{Supp} (1-\chi_{k+m}) = \emptyset$, so, for $i=2,3$
\begin{equation}\label{ant}
\begin{split}
\langle \mathcal{A}_{i}\left[\tilde{\Psi}|_{X=M},\partial_X\tilde{\Psi}|_{X=M}\right],\partial_X^{3-i}\chi_k\tilde{\Psi}\rangle
=&\int_{\mathbb{R}}K_{i,0}\ast((1-\chi_{k+m})\tilde{\Psi}|_{X=M})\partial_X^{3-i}\chi_k\tilde{\Psi}\\
&+\int_{\mathbb{R}}K_{i,1}\ast((1-\chi_{k+m})\partial_X\tilde{\Psi}|_{X=M})\partial_X^{3-i}\chi_k\tilde{\Psi}.
\end{split}
\end{equation}
The convolution terms in \eqref{ant} decay like $|Y|^{-5}$. We have the following estimate:
\begin{lemma}\label{DP:lemma_3.1}
	For all $k \geq m$ and $\eta\in\left.\right]0,2\left[\right.$
	\begin{equation}\label{lemma_hf}
	\left\|K_{i,j}\ast(1-\chi_{k+m})\psi_j\right\|_{L^2(\sigma_{k+1}^M)}\leq C_\eta m^{-2+\eta}\left(\underset{j\geq k+m}{\sup}(E_{j+m}-E_j)\right)^{1/2}.
	\end{equation}
\end{lemma}
\begin{proof}
	We use an idea of Gérard-Varet and Masmoudi [2010], that was later used in \cite{Dalibard2014}, to treat the large scales: we decompose the set $\sigma^M\setminus\sigma^M_{k+m}$ as
	\begin{equation*}
	\sigma^M\setminus\sigma^M_{k+m}=\underset{j\geq k+m}{\bigcup}\sigma^M_{j+1}\setminus\sigma^M_{j}.
	\end{equation*}
	On every set $\sigma^M_{j+1}\setminus\sigma^M_{j}$, we bound the $L^2$ norm of $(1-\chi_{k+m})\psi_i$, $i=0,1$,  by $E_{k+m+1}-E_{k+m}$. Thus we work with the quantity
	\begin{equation*}
	\underset{j\geq k+m} {\sup}(E_{j+1} - E_{j}),
	\end{equation*}
	which we expect to be bounded uniformly in $n$, $k$. Now, applying the Cauchy–Schwarz inequality yields,
	\begin{equation}\label{b_3}
	\begin{split}
	&\int_{\sigma_{k+1}^M}dY\int_{\sigma^M\setminus\sigma_{k+m}^M}\dfrac{1}{|Y'-Y|^5}|\partial_X^i\tilde{\Psi}\big|_{X=M}|dY'\\
	&\quad\leq C\left(\int_{\sigma_{k+1}^M}dY\int_{\sigma^M\setminus\sigma_{k+m}^M}\dfrac{1}{|Y'-Y|^{5+2\eta}}dY'\int_{\sigma^M\setminus\sigma_{k+m}^M}\dfrac{1}{|Y'-Y|^{5-2\eta}}|\partial_X^i\tilde{\Psi}\big|_{X=M}|^2dY'\right)^{1/2}\\
&\quad\leq C_{\eta}m^{-2+\eta}\left(\int_{\sigma_{k+1}^M}\int_{\sigma^M\setminus\sigma_{k+m}^M}\dfrac{1}{|Y'-Y|^{5+2\eta}}dY'dY\times\underset{j\geq k+m}{\sup}(E_{j+1}-E_j)\right)^{1/2}.
	\end{split}
	\end{equation}
	The previous result is obtained using the following computations: for $Y\in\sigma_{k+1}^M$
	\begin{equation}\label{b_2}
	\begin{split}
	\int_{\sigma^M\setminus\sigma_{k+m}^M}\dfrac{1}{|Y'-Y|^{5-2\eta}}|\partial_X^i\tilde{\Psi}\big|_{X=M}|^2dY'&=\sum\limits_{j\geq k+m}\int_{\sigma^M_{j+1}\setminus\sigma_{j}^M}\dfrac{1}{|j-Y|^{5-2\eta}}|\partial_X^i\tilde{\Psi}\big|_{X=M}|^2dY'\\
	&\leq C\sum\limits_{j\geq k+m}\left(E_{j+1}-E_{j}\right)\dfrac{1}{|j-Y|^{5-2\eta}}\\
	&\leq C\underset{j\geq k+m}{\sup}\left(E_{j+1}-E_{j}\right)\sum\limits_{j\geq k+m}\dfrac{1}{|j-Y|^{5-2\eta}}\\
	&\leq C\underset{j\geq k+m}{\sup}\left(E_{j+1}-E_{j}\right)\sum\limits_{j-k\geq m+1}\dfrac{1}{|j-k|^{5-2\eta}}\\
	&\leq C_{\eta}m^{-4+2\eta}\underset{j\geq k+m}{\sup}\left(E_{j+1}-E_{j}\right).
	\end{split}
	\end{equation}
	The series above correspond to the Hurwitz zeta function which is absolutely convergent for $\eta\in\left.\right]0,2\left[\right.$. On the other hand,
\begin{equation}\label{b_1}
\begin{split}
\int_{\sigma_{k+1}^M}\int_{\sigma^M\setminus\sigma_{k+m}^M}\dfrac{1}{|Y'-Y|^{5+2\eta}}dY'dY\leq C\int\limits_{\mathbb{R}\setminus\left[0,1\right]}\dfrac{dX}{X^{4+2\eta}}\leq C_{\eta}< +\infty.
\end{split}
\end{equation}
	Estimate \eqref{lemma_hf} is easily obtained from \eqref{b_3}, \eqref{b_2} and \eqref{b_1}.
\end{proof}
Applying several times Lemma \ref{DP:lemma_3.1} combined with \eqref{FP} gives
\begin{equation}
E_k\leq C_{\eta,g}\left(\sqrt{k+1}E_{k+1}^{1/2}+E_{k+1}^{1/2}(E_{k+m+1}-E_{k})^{1/2}+m^{-2+\eta}E_{k+1}^{1/2}\left(\underset{j\geq k+m}{\sup}(E_{j+1}-E_j)\right)^{1/2}\right),
\end{equation}
for $k\geq m\geq 1$. Since $E_k$ is a monotonically increasing function with respect to $k$, we have
\begin{equation*}
E_{k+1}\leq E_k+\left(E_{k+m+1}-E_{k}\right).
\end{equation*}
Furthermore, \begin{equation*}
E_{k+m+1}-E_{k}=\sum\limits_{k\leq j\leq k+m}\left(E_{j+1}-E_{j}\right)\leq m\underset{k\leq j\leq k+m}{\sup}\left(E_{j+1}-E_{j}\right).
\end{equation*} Taking $C=\underset{\eta\in]0,2[}{\max}C_\eta$ and using Young's inequality gives that for all $\nu>0$ there exists $C_\nu$, such that
\begin{equation}
E_k\leq \nu E_k+C_{\nu,g}\left(k+1+m\underset{k\leq j\leq k+m}{\sup}\left(E_{j+1}-E_{j}\right)+\frac{1}{m^{4-2\eta}}\underset{j\geq k+m}{\sup}(E_{j+1}-E_j)\right).
\end{equation}
Inequality \eqref{DP:3-1} follows from choosing $\nu$ sufficiently small. 
		\end{itemize}

\end{proof}
     \section{Nonlinear boundary layer formation near the western coast}\label{western_nl}
     This section is devoted to showing the well-posedness of the western boundary layer when the model presents an advection term. We study the problem 

\begin{equation}\label{e:nonlinear_west}
\begin{array}{rcl}
\partial_{X_w}\Psi_w+Q_w(\Psi_w,\Psi_w)-\Delta_w^{2}\Psi_w&=&0,\quad\textrm{in}\quad \omega_w^+\cup\omega^-\\
\left[\Psi_w\right]|_{X_w=0}&=&\phi,\\
\left[\partial_{X_w}^k\Psi_w\right]|_{X_w=0}&=&0,\quad k=1,\ldots,3,\\
\Psi_w\big|_{X=-\gamma(Y)}=0,&&\dfrac{\partial\Psi_w}{\partial n_w}\big|_{X_w=-\gamma_w(Y)}=0.
\end{array}
\end{equation}
where $\phi\in W^{2,\infty}(\mathbb{R})$ and the nonlinear term is given by $$Q_w(\Psi_w,\tilde{\Psi}_w)=\nabla_{w}^{\perp}\Psi_w\cdot\nabla_{w}(\Delta_{w}\tilde{\Psi}_w)=\nabla_{w}^\perp\left[\left(\nabla_{w}^{\perp}\Psi_w\cdot\nabla_{w}\right)\nabla_{w}^\perp\tilde{\Psi}_w\right].$$  As before, in this section we will write $X$ instead of $X_w$.

The proof of Theorem \ref{theorem:existence} for the nonlinear problem under a smallness assumption follows the general scheme presented in Section \ref{nl_linearized_method}. There are three main parts in our analysis: showing the well-posedness of the nonlinear problem in the half-space; proving the existence, uniqueness and regularity of the solution in the rough channel; and, finally, connecting both solutions at the ``transparent'' interface. 

Later, in Section \ref{s:linearized} special attention will be paid to linearized problems in the western boundary layer domain. In its general form, this kind of problem is crucial in constructing the approximate solution since it describes the behavior of higher-order western profiles and additional correctors.

     \subsection{Nonlinear problem in the half-space}\label{fixed_point}
In this section, the well-posedness of the system \eqref{pb:halfspace_nl} in the half-space is established under a smallness assumption. Namely, we study the problem
\begin{equation}\label{pb:halfspace_nl_w}
\left\{\begin{array}{rcl}
Q_w(\Psi_{w},\Psi_{w})+\partial_X\Psi_{w}-\Delta_w^{2}\Psi_{w}&=&0,\quad\textrm{in}\;X>M\\
\Psi_{w}\big|_{\sigma^M_w}=\psi_0,&&\partial_X\Psi_{w}\big|_{\sigma^M_w}=\psi_1.
\end{array}\right.
\end{equation}
We shall solve  (\ref{pb:halfspace_nl_w}) by means of a fixed point theorem using the a priori estimated provided in Theorem \ref{Theorem2_DGV2017}. Basically,
we use a
contraction mapping argument in a suitable Banach space which will be norm invariant under the
transformations that preserve the set of solutions, mainly the
translations with respect to the $X$ variable.  We introduce the functional spaces:
\begin{equation}\label{def_bs}
\mathcal{H}^m:=\left\{f\in H^m_{\mathrm{uloc}}(\mathbb{R}^2_+):\:\:\left\|e^{\delta X}f \right\|_{H^m_{\mathrm{uloc}}}<+\infty\right\}, \quad m\geq 0,
\end{equation}
with the norm $\|f\|_{\mathcal{H}^m} = C_m \|e^{\delta X}f\|_{H^m_{\mathrm{uloc}}}$. Here, $\delta>0$ is the one in Theorem \ref{Theorem2_DGV2017} and the constant $C_m$ is chosen so that if $g,f\in \mathcal{H}^{m+1}$
\begin{equation*}
\|(\nabla_w^{\perp}f\cdot\nabla_w)\nabla_w^{\perp}g\|_{\mathcal{H}^{m-1}} \leq \|f\|_{\mathcal{H}^{m+1}}\|g\|_{\mathcal{H}^{m+1}} .
\end{equation*} 

We show the following:
\begin{proposition}\label{prop:fix}
Let $m \in\mathbb{N}$, $m \gg 1$. There is $\delta_0>0$ such that for all $\psi_{0} \in H^{m+3/2}_{\mathrm{uloc}}(\mathbb{R})$ and $\psi_{1} \in H^{m+1/2}_{\mathrm{uloc}}(\mathbb{R})$, 
\begin{equation}\label{eq:41}
\|\psi_{0} \|_{H^{m+3/2}_{\mathrm{uloc}}(\mathbb{R})}+\|\psi_{1} \|_{H^{m+1/2}_{\mathrm{uloc}}(\mathbb{R})}<\delta_0,
\end{equation}
the system
\begin{equation}\label{halfspace_M=0}
\left\{\begin{array}{rclr}
Q_w(\Psi_{w},\Psi_{w})+\partial_X\Psi_{w}-\Delta_w^{2}\Psi_{w}&=&0,&\textrm{in}\quad\mathbb{R}^2_+\\
\Psi_{w}\big|_{X=0}=\psi_0,&&\partial_X\Psi_{w}\big|_{X=0}=\psi_1.&
\end{array}\right.
\end{equation}
has a unique solution in $\mathcal{H}^{m+2}$.

\end{proposition}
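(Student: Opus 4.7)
The plan is to obtain $\Psi_w$ as the unique fixed point of a suitable contraction map in a small closed ball of $\mathcal{H}^{m+2}$, using Theorem \ref{Theorem2_DGV2017} as the linear solver and exploiting the fact that the nonlinearity $Q_w$ is bilinear and ``loses'' four derivatives of regularity, which is exactly the order matched by the bilaplacian in the linear estimate \eqref{DGV2017_31}.

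More precisely, I would define a map $\mathcal{T}:\mathcal{H}^{m+2}\to \mathcal{H}^{m+2}$ by setting $\mathcal{T}(\bar{\Psi}):=\Psi$, where $\Psi$ is the unique solution of the linear problem
\begin{equation*}
\partial_X\Psi-\Delta_w^2\Psi = -Q_w(\bar\Psi,\bar\Psi) \text{ in } \mathbb{R}_+^2,\quad \Psi|_{X=0}=\psi_0,\quad \partial_X\Psi|_{X=0}=\psi_1.
\end{equation*}
Since $\bar\Psi\in \mathcal{H}^{m+2}\hookrightarrow\mathcal{H}^{m+1}$, the multilinear estimate implicit in the normalization of $\|\cdot\|_{\mathcal{H}^m}$ gives $\|Q_w(\bar\Psi,\bar\Psi)\|_{\mathcal{H}^{m-2}}\leq C\|\bar\Psi\|_{\mathcal{H}^{m+2}}^2$, so that the source term decays at the rate $2\delta$. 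Setting $\bar\delta=2\delta>\delta$ in Theorem \ref{Theorem2_DGV2017} provides the existence of $\Psi\in \mathcal{H}^{m+2}$ together with the quantitative bound
\begin{equation*}
\|\mathcal{T}(\bar\Psi)\|_{\mathcal{H}^{m+2}}\leq C_1\bigl(\|\psi_0\|_{H^{m+3/2}_{\mathrm{uloc}}}+\|\psi_1\|_{H^{m+1/2}_{\mathrm{uloc}}}\bigr)+C_2\|\bar\Psi\|_{\mathcal{H}^{m+2}}^2.
\end{equation*}

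Next, I would show that $\mathcal{T}$ maps a small ball into itself. Writing $\eta:=\|\psi_0\|_{H^{m+3/2}_{\mathrm{uloc}}}+\|\psi_1\|_{H^{m+1/2}_{\mathrm{uloc}}}<\delta_0$ and choosing $R=2C_1\delta_0$, the condition $C_1\delta_0+C_2 R^2\leq R$ reduces to $4C_1^2 C_2\delta_0\leq C_1$, which holds as soon as $\delta_0$ is sufficiently small. Hence $\mathcal{T}(\overline{B_R(0)})\subset \overline{B_R(0)}$ in $\mathcal{H}^{m+2}$. For the contraction property, I use the bilinearity of $Q_w$ to write
\begin{equation*}
Q_w(\bar\Psi_1,\bar\Psi_1)-Q_w(\bar\Psi_2,\bar\Psi_2)=Q_w(\bar\Psi_1-\bar\Psi_2,\bar\Psi_1)+Q_w(\bar\Psi_2,\bar\Psi_1-\bar\Psi_2),
\end{equation*}
and observe that $\mathcal{T}(\bar\Psi_1)-\mathcal{T}(\bar\Psi_2)$ solves the linear problem with zero boundary data and the above right-hand side as source. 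Theorem \ref{Theorem2_DGV2017} combined with the product estimate then yields $\|\mathcal{T}(\bar\Psi_1)-\mathcal{T}(\bar\Psi_2)\|_{\mathcal{H}^{m+2}}\leq 2C_2 R\,\|\bar\Psi_1-\bar\Psi_2\|_{\mathcal{H}^{m+2}}$, which is strictly contractive provided $\delta_0$ is further reduced so that $4C_1 C_2\delta_0<1$. The Banach fixed point theorem produces a unique $\Psi_w\in \overline{B_R(0)}$ with $\mathcal{T}(\Psi_w)=\Psi_w$, which is the desired solution of \eqref{halfspace_M=0}.

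For uniqueness in the whole space $\mathcal{H}^{m+2}$ (not just in $\overline{B_R(0)}$), I would take two solutions $\Psi_1,\Psi_2\in \mathcal{H}^{m+2}$ of \eqref{halfspace_M=0} and note that the difference solves the linear problem with zero Cauchy data and source $-Q_w(\Psi_1-\Psi_2,\Psi_1)-Q_w(\Psi_2,\Psi_1-\Psi_2)$; applying the linear estimate gives $\|\Psi_1-\Psi_2\|_{\mathcal{H}^{m+2}}\leq C(\|\Psi_1\|_{\mathcal{H}^{m+2}}+\|\Psi_2\|_{\mathcal{H}^{m+2}})\|\Psi_1-\Psi_2\|_{\mathcal{H}^{m+2}}$, from which uniqueness follows whenever both solutions are small. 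The main technical obstacle is the matching of weights and regularity: one must verify that the bilinear estimate on $Q_w$ produces a source term whose exponential decay rate $\bar\delta=2\delta$ is compatible with the weight $e^{\delta X}$ used to define $\mathcal{H}^{m+2}$ and that the jump in Sobolev regularity ($m-2\to m+2$) absorbed by the bilaplacian in Theorem \ref{Theorem2_DGV2017} matches the number of derivatives produced by $Q_w$. Since $Q_w$ is of order three and the linear part is of order four, this balance is sharp but works out, and the doubling of the exponential rate is what ultimately provides the smallness factor needed to close the contraction.
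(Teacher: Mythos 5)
Your proposal is correct and follows essentially the same route as the paper: a Banach fixed-point argument in a small ball of $\mathcal{H}^{m+2}$, using Theorem \ref{Theorem2_DGV2017} as the linear solver with $\bar{\delta}=2\delta$ to absorb the doubled exponential rate coming from the quadratic term, and the bilinearity of $Q_w$ for the contraction estimate. The only cosmetic difference is your explicit choice $R=2C_1\delta_0$ versus the paper's selection of $R_0$ from the roots of $C_0(\delta_0+R^2)\leq R$; both yield the same smallness conditions.
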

\begin{proof}
For any functions $\psi_{0} \in H^{m+3/2}_{\mathrm{uloc}}(\mathbb{R})$ and $\psi_{1} \in H^{m+1/2}_{\mathrm{uloc}}(\mathbb{R})$, let the operator $T_{(\psi_{0},\psi_{1})}$ be defined as follows: given a function $\Psi\in \mathcal{H}^{m+2}$, set $T_{(\psi_{0},\psi_{1})}(\Psi)=\tilde{\Psi}$, where $\tilde{\Psi}$ is the solution of (\ref{p:linear_nonhomgeneous0}) when $F=-\nabla_{w}^\perp\left[(\nabla_w^{\perp}\Psi\cdot\nabla_w)\nabla_w^{\perp}\Psi\right]$.  According to Theorem \ref{Theorem2_DGV2017}, there exists a constant $C_0$ such that for all $\tilde{\Psi}\in \mathcal{H}^{m+2}$,
\[
\|T_{(\psi_{0},\psi_{1})}(\Psi)\|_{\mathcal{H}^{m+2}}\leq C_0\left(\|\psi_{0} \|_{H^{m+3/2}_{\mathrm{uloc}}(\mathbb{R})}+\|\psi_{1} \|_{H^{m+2}_{\mathrm{uloc}}(\mathbb{R})}+\|\Psi\|^{2}_{\mathcal{H}^{m+2}}\right).
\]
The previous inequality results from taking into account that when $\bar{\delta}=2\delta$
\begin{equation*}
\|e^{\bar{\delta}X} F\|_{H^{m-2}_{\mathrm{uloc}}(\mathbb{R}^{2})}\leq C_{\bar{\delta}}\| e^{\bar{\delta} X}(\nabla_w^{\perp}\Psi\cdot\nabla_w)\nabla_w^{\perp}\Psi\|_{H^{m-1}_{\mathrm{uloc}}(\mathbb{R}^{2})}\|<\|\Psi\|^{2}_{\mathcal{H}^{m+2}}.
\end{equation*}

Let us verify that $T_{(\psi_{0},\psi_{1})}$ is a strict contraction under the smallness assumption \eqref{eq:41}. This implies the function has a fixed point in a closed ball of $\mathbb{R}^2_+$. Let $\delta_0 < 1/(4C_0^{2})$, and suppose that (\ref{eq:41}) holds. Thanks to the assumption on $\delta_0$, there exists
$R_0 > 0$ such that
\begin{equation}\label{eq:42}
C_0\left(\delta_0+R_0^{2}\right)\leq R_0.
\end{equation}
Hence, $R_0$ belongs to $[R_{-}, R_+]$, where
\[
R_{\pm}=\frac{1}{2C_0}\left(1\pm\sqrt{1-4\delta_0C_0^{2}}\right).
\]
Therefore $0 < R_- < (2C_0^{-1}$, and it is always possible to choose $2R_0C_0<1$. (\ref{eq:41}) and (\ref{eq:42}) imply
\[
\|\Psi\|_{\mathcal{H}^{m+2}}\leq R_0\Longrightarrow\|T_{(\psi_{0},\psi_{1})}(\Psi)\|_{H^{m+1}_{\mathrm{uloc}}}<R_0.
\]
Now, let $\Psi_i\in \mathcal{H}^{m+2}$ be a function satisfying   $T_{(\psi_{0},\psi_{1})}(\Psi_i)=\tilde{\Psi}_i$, where $\Psi_i$ is the solution of \eqref{pb:halfspace_linear_nonhomogeneous} when $F^i=(\nabla_w^{\perp}\Psi_i\cdot\nabla_w)\nabla_w^{\perp}\Psi_i$, $i=1,2$. If $\|\Psi_{1}\|_{H^{m+2}_{\mathrm{uloc}}}\leq R_0$,  $\|\Psi_{2}\|_{H^{m+2}_{\mathrm{uloc}}}\leq R_0$ and $\bar{\Psi} = T_{(\psi_{0},\psi_{1})}(\Psi_{1})-T_{(\psi_{0},\psi_{1})}(\Psi_{2})$, we have that $\bar{\Psi}$ is a solution of
\eqref{pb:halfspace_linear_nonhomogeneous} with $\bar{\Psi}\big|_{X=0}=\partial_{X}\bar{\Psi}\big|_{X=0}=0$ and source term $F^{1}-F^{2}=\nabla_{w}^\perp\left[(\nabla_w^{\perp}\Psi_{1}\cdot\nabla_w)\nabla_w^{\perp}\Psi_{1}\right]-\nabla_{w}^\perp\left[(\nabla_w^{\perp}\Psi_{2}\cdot\nabla_w)\nabla_w^{\perp}\Psi_{2}\right]$. Thus, using once again
Theorem \ref{Theorem2_DGV2017},
\[
\|T_{(\psi_{0},\psi_{1})}(\Psi^{1})-T_{(\psi_{0},\psi_{1})}(\Psi_{2})\|_{\mathcal{H}^{m+2}}\leq C_0\|F^{1}-F^{2}\|_{\mathcal{H}^{m-1}}\leq2C_0R_0\|\Psi_{1}-\Psi_{2}\|_{\mathcal{H}^{m+2}}.
\]

Since $2C_0R_0 < 1$, $T_{(\psi_{0},\psi_{1})}$ is a contraction over the ball of radius $R_0$ in $\mathcal{H}^{m+2}$. We can then assert that $T_{(\psi_{0},\psi_{1})}$ has a fixed point in $\mathcal{H}^{m+2}$ as a result of Banach's fixed point theorem which concludes the proof of Proposition \ref{prop:fix}.
\end{proof}

	\begin{remark}
	We can retrieve the solution for $X>M$ when $M>0$ thanks to the problem being invariant with respect to translations along the $X$-axis. Let $\Psi_0$ be the solution of \eqref{halfspace_M=0}. Then, the solution  $\Psi_M$ of \eqref{pb:halfspace_nl_w} in the half-space $X>M$, $M>0$, satisfies  $\Psi_M=\Psi_0(X-M)$.
\end{remark}

     \subsection{The problem in the rough channel}\label{s:rough}

The goal in this section is to prove, by the truncation technique employed in \cite{Gerard-Varet2010}, the existence of a solution of problem
\begin{equation}
\left\{\begin{array}{rcl}\label{eq_left1}
	Q_w(\Psi^{-}_{w},\Psi^{-}_{w})+\partial_X\Psi^{-}_{w}-\Delta_w^{2}\Psi^{-}_{w}&=&0,\quad \textrm{in}\quad\omega^{b}\setminus\sigma_w\\
\left[\Psi^{-}_w\right]\big|_{\sigma_w}=\phi,&&\left[\partial_X^{k}\Psi^{-}_w\right]\big|_{\sigma_w}=0,\;k=1,\ldots,3,\\
\Psi^{-}_{w}\big|_{X=-\gamma_w(Y)}&=&\dfrac{\partial\Psi_{w}^{-}}{\partial n_w}\big|_{X=-\gamma_w(Y)}=0,\\ 
\mathcal{A}_2[\Psi_{w}^{-}\big|_{\sigma_w^M},\partial_X\Psi_{w}^{-}\big|_{\sigma_w^M}]=\rho_2,&&\mathcal{A}_3[\Psi_{w}^{-}\big|_{\sigma_w^M},\partial_X\Psi_{w}^{-}\big|_{\sigma_w^M}]=\rho_3,
\end{array}\right.
\end{equation}
where $\omega^{b}_w=\omega_w\setminus\left(\{X>M\}\times\mathbb{R}\right)$ denotes the rough channel. We recall that \[\begin{split}
\mathcal{A}_2[\Psi\big|_{\sigma_w^M},\partial_X\Psi\big|_{\sigma_w^M}]&=(1+\alpha^2)\Delta_w\Psi,\\
 \mathcal{A}_3[\Psi\big|_{\sigma_w^M},\partial_X\Psi\big|_{\sigma_w^M}]&=-\left((1+\alpha^2)\partial_X-2\alpha\partial_{Y}\right)\Delta_w\Psi+\partial_{Y}\left(\displaystyle\frac{|\nabla_w^{\perp}\Psi|^{2}}{2}\right)\\
 &\hspace*{1in}+(\nabla_w^{\perp}\Psi\cdot\nabla_w)\left((1+\alpha^2)\partial_{X}-\alpha\partial_{Y}\right)\Psi+\displaystyle\frac{\Psi}{2}.
\end{split}\]

This part corresponds to step (NL2) in Section \ref{nl_linearized_method}. Although the idea is the same as for the linear case, an important difference resides in working indirectly with the values of the Poincaré-Steklov operators. Indeed, here we  ``join'' the solutions obtained at both sides of the artificial boundary using the implicit function theorem. To do so, higher regularity estimates of the solution near $\sigma^M_w$ are essential. 

As for the tensors at $X=M$, since we will need to construct solutions in $H^{m+2}_{\mathrm{uloc}}$ and due to the form of the differential operators, we look for $\rho_2$ and $\rho_3$ in the $H^{m-1/2}_{\mathrm{uloc}}(\mathbb{R})$ and $H^{m-3/2}_{\mathrm{uloc}}(\mathbb{R})$, respectively. We then claim that the following result holds:
\begin{proposition}\label{Lemma_14}
Let $m\gg 1$ be arbitrary. There exists $\delta> 0$ such that for all $\phi\in W^{2,\infty}(\mathbb{R})$, $\rho_2\in H^{m-1/2}_{\mathrm{uloc}}(\mathbb{R})$ and $\rho_3\in H^{m-3/2}_{\mathrm{uloc}}(\mathbb{R})$ with $\|\phi\|_{W^{2,\infty}(\mathbb{R})}<\delta$, $\|\rho_2\|_{ H^{m-1/2}_{\mathrm{uloc}}(\mathbb{R})}+\|\rho_3\|_{ H^{m-3/2}_{\mathrm{uloc}}(\mathbb{R})}<\delta$, system (\ref{eq_left1}) has a unique solution $\Psi_w^{-}\in H^{2}_{\mathrm{uloc}}(\omega_w^b\setminus\sigma_w)$.

Moreover, $\Psi_w^{-}\in H^{m+2}_{\mathrm{uloc}}((M',M)\times \mathbb{R})$, for all $M'\in ]0,M[$ and
\[
	\|\Psi_w^{-}\|_{H^{m+2}_{\mathrm{uloc}}((M',M)\times \mathbb{R})}\leq C_{M'}\left(\|\phi\|_{W^{2,\infty}}+\|\rho_2\|_{ H^{m-1/2}_{\mathrm{uloc}}}+\|\rho_3\|_{ H^{m-3/2}_{\mathrm{uloc}}}\right).
	\]
\end{proposition}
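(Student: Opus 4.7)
The proof follows the template (NL2) of Section \ref{nl_linearized_method}, using the linear machinery of Section \ref{s:rough_linear_west} (Proposition \ref{prop:result_rough_channel_lw}) and the fixed-point arguments of Section \ref{fixed_point}. The first move is to lift the nonhomogeneous jump: introduce $\Psi^L(X,Y) := \chi(X)\phi(Y)$, where $\chi\in C^\infty_c(\mathbb{R})$ equals $1$ near $\sigma_w$ and is supported in $[0,M/2]$, so that $[\Psi^L]|_{\sigma_w}=\phi$ and $[\partial_X^k\Psi^L]|_{\sigma_w}=0$ for $k=1,2,3$. Setting $\tilde{\Psi}=\Psi^{-}-\Psi^L$, the unknown $\tilde{\Psi}$ satisfies a system with homogeneous jump conditions, modified transparent boundary data (still the same $\rho_2,\rho_3$ since $\Psi^L\equiv 0$ near $\sigma_w^M$), and an augmented source $F^L=-\partial_X\Psi^L+\Delta_w^2\Psi^L-Q_w(\Psi^L,\Psi^L)-Q_w(\Psi^L,\tilde{\Psi})-Q_w(\tilde{\Psi},\Psi^L)$, whose $L^2_{\mathrm{uloc}}$ norm is controlled by $\|\phi\|_{W^{2,\infty}}(1+\|\tilde{\Psi}\|_{H^2_{\mathrm{uloc}}})$.

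\textbf{Truncated energy and induction.} As in Section \ref{l_method}, introduce $\tilde{\Psi}_n$, the solution on the truncated channel $\omega_n$ with homogeneous boundary data on $\Gamma_n$, and the energies $E_k^n=\int_{\omega_k}|\Delta_w\tilde{\Psi}_n|^2$. Testing the weak form against $\chi_k\tilde{\Psi}_n$ produces three new families of terms with respect to the linear treatment: (i)~a boundary term at $\sigma_w^M$ from integrating $Q_w(\tilde{\Psi}_n,\tilde{\Psi}_n)$ by parts, which is absorbed into $\mathcal{A}_3$ by definition and is therefore nonpositive; (ii)~commutator terms of the form $\int\chi_k'(\nabla_w^\perp\tilde{\Psi}_n\cdot\nabla_w)\nabla_w^\perp\tilde{\Psi}_n\cdot(\ldots)$, which by Cauchy--Schwarz and the Sobolev embedding $H^2_{\mathrm{uloc}}\hookrightarrow W^{1,4}_{\mathrm{uloc}}$ are bounded by $\tilde{C}_1(E_{k+1}-E_k)^{3/2}$; (iii)~source terms from $F^L$, bounded by $C_\phi\sqrt{k+1}\,E_{k+1}^{1/2}+C\|\phi\|_{W^{2,\infty}}E_{k+1}$. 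Combining these with the linear contributions, including the nonlocal ones handled by Proposition \ref{prop:estimates_PS} and Lemma \ref{DP:lemma_3.1}, we arrive at
\begin{equation*}
E_k^n\le \tilde{C}_1(E_{k+1}^n-E_k^n)^{3/2}+C_1(E_{k+1}^n-E_k^n)+C_2(k+1)+\frac{C_3}{m^{4-2\eta}}\sup_{j\ge k+m}(E_{j+1}^n-E_j^n),
\end{equation*}
where $C_2$ depends on $\|\phi\|_{W^{2,\infty}}$, $\|\rho_2\|_{H^{-1/2}_{\mathrm{uloc}}}$, $\|\rho_3\|_{H^{-3/2}_{\mathrm{uloc}}}$. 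The main obstacle here is the cubic term $\tilde{C}_1(E_{k+1}^n-E_k^n)^{3/2}$: to close the backward induction we need $E_{k+1}^n-E_k^n$ to remain bounded, which is where the smallness hypothesis $\|\phi\|_{W^{2,\infty}}+\|\rho_2\|_{H^{m-1/2}_{\mathrm{uloc}}}+\|\rho_3\|_{H^{m-3/2}_{\mathrm{uloc}}}<\delta$ is crucial: it allows us to absorb the $3/2$-term into the linear one and recover an estimate of the form \eqref{DP:3-4}. Passing to the limit $n\to\infty$ via a diagonal extraction yields $\tilde{\Psi}\in H^2_{\mathrm{uloc}}(\omega_w^b\setminus\sigma_w^M)$ solving the lifted system.

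\textbf{Uniqueness and higher regularity.} Uniqueness is obtained by subtracting two solutions $\tilde{\Psi}_1,\tilde{\Psi}_2$, observing that the difference $\Psi=\tilde{\Psi}_1-\tilde{\Psi}_2$ solves a linearized system with source $-Q_w(\Psi,\tilde{\Psi}_1)-Q_w(\tilde{\Psi}_2,\Psi)$. The quadratic terms are controlled by $\|\tilde{\Psi}_1\|_{H^2_{\mathrm{uloc}}}+\|\tilde{\Psi}_2\|_{H^2_{\mathrm{uloc}}}\le C\delta$, and applying the same energy/induction argument together with the negativity of $\mathcal{A}_2,\mathcal{A}_3$ gives $(1-C\delta)\int|\Delta_w\Psi|^2\le 0$, forcing $\Psi\equiv 0$ when $\delta$ is small enough. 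For the higher regularity statement near $\sigma_w^M$, we bootstrap: away from $\sigma_w$ and $\Gamma_w$, in any strip $(M',M)\times\mathbb{R}$, the equation $\Delta_w^2\Psi_w^-=\partial_X\Psi_w^-+Q_w(\Psi_w^-,\Psi_w^-)$ has right-hand side in $H^{s-2}_{\mathrm{uloc}}$ whenever $\Psi_w^-\in H^{s}_{\mathrm{uloc}}$ (the product estimate in Kato spaces uses that $H^s_{\mathrm{uloc}}$ is an algebra for $s>1$). Combined with the transparent boundary data $\rho_2\in H^{m-1/2}_{\mathrm{uloc}}$, $\rho_3\in H^{m-3/2}_{\mathrm{uloc}}$, classical interior elliptic regularity for the biharmonic-type operator $\Delta_w^2$ with natural boundary conditions, applied successively on shrinking strips $(M_j,M)\times\mathbb{R}$ with $M_j\searrow M'$, yields the desired control
\begin{equation*}
\|\Psi_w^-\|_{H^{m+2}_{\mathrm{uloc}}((M',M)\times\mathbb{R})}\le C_{M'}\bigl(\|\phi\|_{W^{2,\infty}}+\|\rho_2\|_{H^{m-1/2}_{\mathrm{uloc}}}+\|\rho_3\|_{H^{m-3/2}_{\mathrm{uloc}}}\bigr).
\end{equation*}
The main technical difficulty throughout is the interplay between the nonlocality of the Poincaré-Steklov operators (forcing the auxiliary parameter $m$ in the induction) and the cubic commutator from $Q_w$; both are reconciled only thanks to the joint smallness of the data.
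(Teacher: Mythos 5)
Your overall architecture for the existence part (lifting the jump with $\Psi^L=\chi(X)\phi(Y)$, truncated energies $E_k^n$, a Saint--Venant induction with a cubic commutator term, uniqueness by linearizing around the two solutions) matches the paper's Step 1. However, you have conflated two different settings at the transparent boundary. In system \eqref{eq_left1} the quantities $\rho_2,\rho_3$ are \emph{prescribed data}: the conditions $\mathcal{A}_i[\Psi_w^-|_{\sigma_w^M},\partial_X\Psi_w^-|_{\sigma_w^M}]=\rho_i$ involve only local differential (and quadratic) expressions of the trace, and the matching with the genuinely nonlocal Poincar\'e--Steklov operator of the half-space is deferred to the implicit-function-theorem step of Section \ref{s:join}. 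Consequently the boundary contributions in the energy estimate reduce to $-\int\chi_k(\tilde{\Psi}\rho_3+\partial_X\tilde{\Psi}\rho_2)|_{X=M}$, bounded by $C(\|\rho_2\|_{L^2_{\mathrm{uloc}}}+\|\rho_3\|_{L^2_{\mathrm{uloc}}})(E_{k+1}+(k+1))$ via the trace theorem; there is no nonpositivity to invoke, no auxiliary parameter $m$, and no need for Lemma \ref{DP:lemma_3.1}. The paper points this out explicitly: the backward induction here is \emph{less} involved than in the linear rough-channel problem precisely because one is not dealing directly with the nonlocal operator. Your combined inequality with both the $(E_{k+1}-E_k)^{3/2}$ term and the $m^{-4+2\eta}\sup_j(E_{j+1}-E_j)$ term would require an induction argument that neither section of the paper supplies, and your one-line claim that smallness "absorbs the $3/2$-term into the linear one" is not how the paper closes it (the cubic term survives and is handled by the contradiction argument with a suitable choice of $C_4$ and $k_0$).

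The more serious gap is the higher regularity. The strip $(M',M)\times\mathbb{R}$ has $\{X=M\}$ as part of its boundary, and that is exactly where the (nonlinear) conditions $\mathcal{A}_i[\cdot]=\rho_i$ are imposed; classical \emph{interior} elliptic regularity on shrinking strips can only give control on $(M',M'')\times\mathbb{R}$ with $M''<M$ and never reaches $X=M$. The paper's Step 2 is devoted almost entirely to regularity \emph{up to} the transparent boundary: one localizes in $Y$ with $\varphi_k$, applies tangential difference quotients $\delta_h$, and must then estimate the resulting data $f_h$, $\rho_2^h$, $\rho_3^h$ in negative Sobolev norms (Lemmas \ref{lem:estimate_terms} and \ref{lem:elliptic-reg}), the delicate point being the quadratic boundary terms in $\mathcal{A}_3$ such as $\delta_h[\partial_Y(|\nabla_w^\perp\Psi|^2/2)\varphi_k]$, which require making $\psi_h=\delta_h(\varphi_k\Psi)$ reappear rather than crudely bounding $\delta_h$ by a derivative. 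Moreover, your proposed bootstrap via the algebra property of $H^s_{\mathrm{uloc}}$ for $s>1$ cannot launch: the base case $m=1$ (passing from $H^2$ to $H^3$) is exactly the case where the algebra property is unavailable, which is why the paper treats it separately and at length before iterating. Without this boundary-regularity machinery the stated estimate $\|\Psi_w^-\|_{H^{m+2}_{\mathrm{uloc}}((M',M)\times\mathbb{R})}\leq C_{M'}(\cdots)$ is not established, and the subsequent application of the implicit function theorem (which needs $C^1$ dependence of the traces at $X=M$ in high norms) would not go through.
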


	\begin{proof} First, we will briefly discuss the existence and uniqueness of a solution in $H^2$, as well as the validity of the estimate. Then, the regularity result will be presented. Throughout the proof, we will drop the $w$ from the notation when there is no confusion.
		
\textit{\textbf{Step 1. Existence and uniqueness of the solution.}}  We look for the solution $\tilde{\Psi}=\Psi^{-}-\Psi^{L}$ of  the problem 
\begin{eqnarray}\label{pb:left_lift}
Q_w(\tilde{\Psi},\tilde{\Psi}+\Psi^{L})+Q_w(\Psi^{L},\tilde{\Psi})+\partial_{X}\tilde{\Psi}-\Delta^{2}_w\tilde{\Psi}&=&F^L\quad\textrm{in}\;\omega^b,\nonumber\\
\mathcal{A}_2[\tilde{\Psi}\big|_{\sigma_w^M},\partial_X\tilde{\Psi}\big|_{\sigma_w^M}]=\rho_2,&&
\mathcal{A}_3[\tilde{\Psi}\big|_{\sigma_w^M},\partial_X\tilde{\Psi}\big|_{\sigma_w^M}]=\rho_3,\nonumber\\
\tilde{\Psi}\big|_{X=-\gamma(Y)}=0,&&\dfrac{\partial\tilde{\Psi}}{\partial n_w}\big|_{X=-\gamma(Y)}=0.
\end{eqnarray}
Here, $\Psi^{L}$ is defined as in \eqref{lift_lin} for $g_0=\phi$ and $g_k\equiv0$ for $k=1,2,3$. In \eqref{pb:left_lift}, $F^L$ denotes $\Delta^{2}_w(\Psi^{L})-\partial_{X}(\Psi^{L})-Q_w(\Psi^{L},\Psi^{L})$. Notice that thanks to the regularity and smallness assumptions on $\phi$, we have, 
\begin{align*}
\|F^L\|_{L^2_{\mathrm{uloc}}(\omega^b)}&\leq C\left(\|\phi\|_{W^{2,\infty}}+\|\phi\|_{W^{2,\infty}}^2\right)\leq C\|\phi\|_{W^{2,\infty}}.
\end{align*}
In the nequality above, the constant $C$ depends on $\alpha$ and $\|\gamma_w\|_{W^{2,\infty}}$.  Before computing the a priori estimates, we write the weak formulation of (\ref{pb:left_lift}). 

\begin{definition}\label{p:weak_formulation}
	Let $\mathcal{V}$ be the space of functions $\varphi\in C^\infty_0(\overline{\omega^b})$ such that $\mathrm{Supp}\varphi\cap \partial\Omega=\emptyset$ and $\mathcal{D}^2_0(\omega^b)$ its completion for the norm $\|\Psi\|=\|\Delta_w \Psi\|_{L^2}$. Define for $(\Psi,\tilde{\Psi},\varphi)\in \mathcal{D}^2_0\times\mathcal{D}^2_0\times\mathcal{V}$, the trilinear form  $b(\Psi,\tilde{\Psi},\varphi)=-\int_{\Omega}(\nabla_{w}^{\perp}\Psi\cdot\nabla_{w})\nabla_{w}^{\perp}\tilde{\Psi}\cdot\nabla_w^{\perp}\varphi$. A function $\tilde{\Psi}\in H^2_{\mathrm{uloc}}(\omega^b)$ is a solution of \eqref{pb:left_lift}  if it satisfies the homogeneous conditions $\tilde{\Psi}\big|_{\Gamma_w}=\partial_{n_w}\tilde{\Psi}\big|_{\Gamma_w}=0$ at the rough boundary, and if, for all $\varphi\in \mathcal{V}$, we get
	\begin{eqnarray}\label{weak_formulation}
	&&\int_{\omega^b_w}\partial_{X}\tilde{\Psi}\varphi+b(\tilde{\Psi},\tilde{\Psi},\varphi)-\int_{\omega^b_w}\Delta_w\tilde{\Psi}\Delta_w\varphi\nonumber\\
	&&\hspace*{1in}=-\int_\mathbb{R}\left(\rho_3-\dfrac{\tilde{\Psi}}{2}\bigg|_{X=M}\right)\varphi\big|_{X=M}\;dY\\
	&&\hspace*{1.15in}+\int_\mathbb{R}\left(\dfrac{|\nabla_w^\perp\tilde{\Psi}|^2}{2}\right)\bigg|_{X=M}\partial_{Y}\tilde{\tilde{\Psi}}\bigg|_{X=M}\;dY-\int_\mathbb{R}\rho_2\big|_{X=M}\partial_{X}\varphi\big|_{X=M}\;dY.\nonumber
	\end{eqnarray}
\end{definition}

We consider the system (\ref{pb:left_lift}) in $\omega_{n}$
\begin{eqnarray}\label{pb:left_n}
Q_w(\tilde{\Psi}_{n},\tilde{\Psi}_{n}+\Psi^{L}_n)+Q_w(\Psi^{L}_n,\tilde{\Psi}_n)+\partial_{X}\tilde{\Psi}_n-\Delta^{2}_w\tilde{\Psi}_n&=&F^L_n\quad\textrm{in}\;\omega_n,\nonumber\\
\tilde{\Psi}_{n}&=&0,\quad\textrm{in}\;\omega^{b}\setminus\omega_n,\nonumber\\
\tilde{\Psi}_{n}\big|_{X=-\gamma_n(Y)}=\partial _{n}\tilde{\Psi}_{n}\big|_{X=-\gamma_n(Y)}&=&0,\\ 
\mathcal{A}_2[\tilde{\Psi}_n\big|_{\sigma^M_n}, \partial_X\tilde{\Psi}_n\big|_{\sigma^M_n}]&=&\rho_{2},\nonumber\\
\mathcal{A}_3[\tilde{\Psi}_n\big|_{\sigma^M_n}, \partial_X\tilde{\Psi}_n\big|_{\sigma^M_n}]&=&\rho_{3}.\nonumber
\end{eqnarray}

The domain $\omega_n$ and its components are the same as in  \eqref{dom_n}.

By taking $\tilde{\Psi}_n$ as a test function in (\ref{pb:left_n}), we get a first energy estimate on $\tilde{\Psi}_n$
\begin{eqnarray}\label{Prange_326}
\|\Delta_w\tilde{\Psi}_n\|^{2}_{L^{2}(\omega_n)}&=&b(\tilde{\Psi}_n,\Psi^{L}_n,\tilde{\Psi}_n)+\int_{\sigma_n^{M}}\rho_3\tilde{\Psi}_n+\int_{\sigma_n^{M}}\rho_2\partial_X\tilde{\Psi}_n-\int_{\omega_{n}}F^L\tilde{\Psi}_{n}\nonumber\\
&\leq&C_1\|\Delta_w\phi\|_{L^\infty(\omega_n)}\|\nabla_{w}^{\perp}\tilde{\Psi}_n\|_{L^{2}(\omega_n)}^2+C_2 \sqrt{n}\left(\|\rho_3\|_{H^{m-3/2}_{\mathrm{uloc}}(\mathbb{R})}\|\tilde{\Psi}_n\big|_{X=M}\|_{L^{2}([-n,n])}\right.\nonumber\\
&&\quad\left.+\|\rho_2\|_{H^{m-1/2}(\omega_n)}\|\nabla_{w}\tilde{\Psi}_n\big|_{X=M}\|_{L^{2}(([-n,n])}\right)
+\|F^L\|_{L^{2}_{\mathrm{uloc}}}\|\tilde{\Psi}_n\|_{L^{2}(\omega_n)},
\end{eqnarray}
using the Cauchy-Schwartz and Poincar\'e inequalities over $\omega_n$. Moreover, we have
\begin{equation*}
\|\Delta_w\tilde{\Psi}_n\|^{2}_{L^{2}(\omega_n)}\leq C\left(\|\phi\|_{W^{2,\infty}}+\|\rho_3\|_{H^{m-3/2}_{\mathrm{uloc}}}+\|\rho_2\|_{H^{m-1/2}_{\mathrm{uloc}}}\right)\|\tilde{\Psi}_n\|_{H^{2}(\omega_n)}.
\end{equation*}
Notice that as a consequence of the smallness assumption on $\|\phi\|_{H^2(\omega^{b})}$, the first term on the r.h.s of inequality \eqref{Prange_326} can be absorbed by the one on the l.h.s for $\delta$ small enough. Then, using Poincar\'e inequality over the whole channel yields
\begin{equation}\label{DP:2-37}
E_n:=\int_{\omega^{b}}|\Delta_w\tilde{\Psi}_n|^{2}\leq\int_{\omega_n}|\Delta_w\tilde{\Psi}_n|^{2}\leq C_0n,
\end{equation}
where constant $C_0$ depends on $\alpha$,  $\|\phi\|_{W^{2,\infty}}$, $\|\rho_3\|_{H^{m-3/2}_{\mathrm{uloc}}}$, $\|\rho_2\|_{H^{m-1/2}_{\mathrm{uloc}}}$ and $\|\gamma\|_{W^{2,\infty}}$. 
The existence of $\tilde{\Psi}_n$ in $H^2(\omega_n)$ follows. 

Following the same reasoning as in the linear case ((L5) in Section \ref{l_method} and Section \ref{s:rough_linear_west}), we establish an induction inequality on $(E_k^{n})_{k\in\mathbb{N}}$ for all $n\in\mathbb{N}$. Recall that
\[
E_k^{n}:=\int_{\omega^b}\chi_k|\Delta_w\tilde{\Psi}_n|^{2},\]
where $\chi_k\in\mathcal{C}^{\infty}_0 (\mathbb{R})$ is a cut-off function in the tangential variable such that $\mathrm{Supp}\chi_k \subset [-k-1,k+1]$ and $\chi_n\equiv 1$ on $[-k,k]$ for $k\in\mathbb{N}$. The induction relation allows one to obtain a uniform bound on the $E_k$, from which we deduce a $H^2_{\mathrm{uloc}}$ bound on $\tilde{\Psi}_n$ uniformly in $n$. From this, an exact solution follows by compactness, see (L5) in Section \ref{l_method}. 

Here, we show the inequality for all $k\in\left\{1,\ldots,n\right\}$ 
	\begin{equation}\label{inequality}
	E_k^{n}\leq C_1\left((E_{k+1}^{n}-E_k^{n})^{3/2}+(E_{k+1}^{n}-E_k^{n})+\left(\|\rho_2\|_{H^{m-1/2}_{\mathrm{uloc}}}^{2}+\|\phi\|_{W^{2,\infty}}^{2}+\|\rho_3\|_{H^{m-3/2}_{\mathrm{uloc}}}^{2}\right)(k+1)\right),
	\end{equation}
where  $C_1$ is a constant depending only on the characteristics of the domain. Then, by backwards induction on $k$, we deduce that
	\begin{equation*}
	E_k^{n}\leq Ck,\quad\forall k\in\{k_0,\ldots,n\},
	\end{equation*}
	where $k_0\in\mathbb{N}$ is a large, but fixed integer (independent of $n$)  and $E_{k_0}^{n}$ is bounded uniformly in $n$ for a constant $C$ depending on $\omega^{b}$, $\phi$ and $\rho_i$, $i=2,3$. Then, we use the fact that the derivation of energy estimates is invariant by translation on the tangential variable to prove that the uniform boundness holds not only for a maximal energy of size $k_0$, but for all $k$, similarly to (L5) in Section \ref{l_method}.
	
	To lighten notations in the subsequent proof, we shall denote $E_k$ instead of $E^{n}_k$ .
	\begin{itemize}
	\item\textit{Energy estimates.} This part is devoted to the proof of (\ref{inequality}). We carry out the energy
estimate on the system (\ref{pb:left_lift}), focusing on having constants uniform in $n$ as explained before. Since the linear part of the equation has already been analyzed on Section \ref{western_l}, we discuss in detail only the nonlinear terms. In fact, the main issue consists in handling the quadratic terms $Q_w(\tilde{\Psi},\tilde{\Psi}+\Psi^{L}),\;Q_w(\Psi^{L},\tilde{\Psi})$, which justifies the
presence of the $|\nabla_w^{\perp}\tilde{\Psi}|^{2}$ in one of the tensors at $\sigma^M_w$. Plugging $\varphi=\chi_k\tilde{\Psi}$ into the nonlinear terms of (\ref{weak_formulation}) gives
\begin{eqnarray}
b(\tilde{\Psi}+\Psi^{L},\tilde{\Psi},\chi_k\tilde{\Psi})&=&-\int_{\omega}\chi_k(\nabla_w^{\perp}(\tilde{\Psi}+\Psi^{L})\cdot\nabla_w)\nabla_w^{\perp}\tilde{\Psi}\cdot\nabla_w^{\perp}\tilde{\Psi}\nonumber\\
&&\hfill-\int_{\omega}(\nabla_w^{\perp}(\tilde{\Psi}+\Psi^{L})\cdot\nabla_w)\nabla_w^{\perp}\tilde{\Psi}\cdot(\nabla_w^{\perp}\chi_k)\tilde{\Psi}\label{first_nlt},\\
b(\tilde{\Psi},\Psi^{L},\chi_k\tilde{\Psi})&=&-\int_{\omega}\chi_k(\nabla_w^{\perp}\tilde{\Psi}\cdot\nabla_w)\nabla_w^{\perp}\Psi^{L}\cdot\nabla_w^{\perp}\tilde{\Psi}\nonumber\\
&&\hfill-\int_{\omega}\left(\nabla_w^{\perp}\tilde{\Psi}\cdot\nabla\right)\nabla_w^{\perp}\Psi^{L}\cdot(\nabla_w^{\perp}\chi_k)\tilde{\Psi}\label{second_nlt},
\end{eqnarray}

To bound each one of the terms we will frequently use the Sobolev inequality for all $\omega'\subset\omega^b$,
\begin{equation}\label{sobolev_embedding}
\forall u\in H^1(\omega'),\:u\big|_{\Gamma}=0,\quad\|u\|_{L^q(\omega')}\leq C_q\|\nabla u\|_{L^2(\omega')},\quad q\in[1,+\infty).
\end{equation}
The constant $C_q$ does not depend on $\omega '$.
Let us now illustrate the procedure for the first term in the l.h.s. of \eqref{second_nlt}. By using Cauchy-Schwarz inequality and the properties of $\chi_k$, we find that 
\begin{eqnarray*}
\left|\int_{\omega}\chi_k(\nabla_w^{\perp}\tilde{\Psi}\cdot\nabla_w)\nabla_w^{\perp}\Psi^{L}\cdot\nabla_w^{\perp}\tilde{\Psi}\right|&\leq& C\|\phi\|_{W^{2,\infty}(\omega_{k+1})}\|\nabla_{w}^{\perp}\tilde{\Psi}\|^2_{L^2(\omega_{k+1})}\\
&\leq C&\|\phi\|_{W^{2,\infty}}E_{k+1},
\end{eqnarray*}
where $C$ is a strictly positive constant depending on $\alpha$, $M$ and $\|\gamma_w\|_{W^{2,\infty}}$. On the other hand,
\begin{eqnarray*}
-\int_{\omega}\chi_k(\nabla_w^{\perp}(\tilde{\Psi}+\Psi^{L})\cdot\nabla_w)\nabla_w^{\perp}\tilde{\Psi}\cdot\nabla_w^{\perp}\tilde{\Psi}&=-&\int_\omega\chi_k\nabla_w^{\perp}(\tilde{\Psi}+\Psi^{L})\cdot\nabla_w\left(\frac{|\nabla_w^{\perp}\tilde{\Psi}|^{2}}{2}\right)\\
&=&\int_\omega\nabla_w\chi_k\cdot\nabla_w^{\perp}(\tilde{\Psi}+\Psi^{L})\left(\frac{|\nabla_w^{\perp}\tilde{\Psi}|^{2}}{2}\right)\\
&&\hfil-\int_{\mathbb{R}}\chi_k\left(\frac{|\nabla_w^{\perp}\tilde{\Psi}|^{2}}{2}\partial_{Y}\tilde{\Psi}\right)\Bigg|_{X=M}\;dY.
\end{eqnarray*}
Proceeding as before, it can be easily checked that the first term on the right-hand side is bounded by $C(E_{k+1}-E_{k})^{3/2}+C\|\phi\|_{W^{2,\infty}}(E_{k+1}-E_{k})$.  

Applying integration by parts on the second integral gives
\begin{eqnarray}\label{bord_1}
-\int_{\mathbb{R}}\chi_k\left(\frac{|\nabla_w^{\perp}\tilde{\Psi}|^{2}}{2}\partial_{Y}\tilde{\Psi}\right)\Bigg|_{X=M}\;dY
&=&\int_{\mathbb{R}}\chi_k\partial_{Y}\left(\frac{|\nabla_w^{\perp}\tilde{\Psi}|^{2}}{2}\right)\Bigg|_{X=M}\tilde{\Psi}\;dY\\
&&+\int_{\mathbb{R}}\partial_Y\chi_k\left(\frac{|\nabla_w^{\perp}\tilde{\Psi}|^{2}}{2}\right)\Bigg|_{X=M}\tilde{\Psi}\;dY.\nonumber
\end{eqnarray}
The first term can be grouped with other boundary terms stemming from the bilaplacian, while the second is bounded by $C(E_{k+1}-E_{k})^{3/2}$ as a consequence of the trace theorem.

There remains to consider the r.h.s of \eqref{first_nlt} and \eqref{second_nlt}, i.e, the terms $\int_{\omega}(\nabla_w^{\perp}(\tilde{\Psi}+\Psi^{L})\cdot\nabla_w)\nabla_w^{\perp}\tilde{\Psi}\cdot(\nabla_w^{\perp}\chi_k)\tilde{\Psi}$ and $\int_{\omega}\left(\nabla_w^{\perp}\tilde{\Psi}\cdot\nabla\right)\nabla_w^{\perp}\Psi^{L}\cdot(\nabla_w^{\perp}\chi_k)\tilde{\Psi}$ which are bounded by $C(E_{k+1}-E_{k})^{3/2}+C\|\phi\|_{W^{1,\infty}}(E_{k+1}-E_k)$ and $C\|\phi\|_{W^{2,\infty}}(E_{k+1}-E_k)$, respectively. 

The linear terms defined on $\omega^b$ satisfy \eqref{bt_1} and are bounded by $C(E_{k+1}-E_k)$ as seen in \eqref{commut_bound}. Lastly, we deduce
\begin{equation}\label{bord_x}
\int_{\omega^b_w}\partial_X\tilde{\Psi}\chi_k\tilde{\Psi}=\dfrac{1}{2}\int_{\omega^b_w}\chi_k\partial_X(|\tilde{\Psi}|^2)=\dfrac{1}{2}\int_{\mathbb{R}}\chi_k|\Psi|^2\big|_{X=M}.
\end{equation}
From collecting the boundary terms coming from the bilaplacian with \eqref{bord_1} and \eqref{bord_x}, we get
\begin{equation*}
-\int_{\mathbb{R}}\chi_k\left(\tilde{\Psi}\rho_3+\partial_X\tilde{\Psi}\rho_2\right)\Bigg|_{X=M}.
\end{equation*}
The term above is bounded for any $\delta> 0$ by
\[
C \left(\|\rho_2\|_{L^{2}_{\mathrm{uloc}}}+\|\rho_3\|_{L^{2}_{\mathrm{uloc}}}\right)E_{k+1}+C'\left(\|\rho_2\|_{L^{2}_{\mathrm{uloc}}}+\|\rho_3\|_{L^{2}_{\mathrm{uloc}}}\right)(k+1),
\]
where $C, C'$ depend only on $M$, $\alpha$ and on $\|\gamma\|_{W^{2,\infty}}$. The computation of this bound relies on the trace theorem and Young's inequality. 
We are left with
\begin{eqnarray*}
\left|\int_{\omega^b_w}\chi_k F^L\tilde{\Psi}\right|
&\leq&C\|\phi\|_{W^{2,\infty}}E_{k+1}^{1/2}\sqrt{k+1}.
\end{eqnarray*}
The last bound is not optimal but it suffices for our purposes. Applying once again Young's inequality yields
\begin{equation*}
\left|\int_{\omega^b_w}\chi_k F^L\tilde{\Psi}\right|\leq C\|\phi\|_{W^{2,\infty}}E_{k+1}+C'\|\phi\|_{W^{2,\infty}}(k+1). 
\end{equation*}
For $\|\phi\|_{W^{2,\infty}}, \|\rho_2\|_{H^{m-1/2}_{\mathrm{uloc}}}$ and $\|\rho_3\|_{H^{m-3/2}_{\mathrm{uloc}}}$, gathering all the terms provides the following inequality
\begin{equation}\label{DP_3-3_al}
E_k\leq C_1(E_{k+1}-E_k)^{3/2}+C_2\|\phi\|_{W^{2,\infty}}(E_{k+1}-E_k)+C_3\left(\|\phi\|_{W^{2,\infty}}+\|\rho_2\|_{H^{m-1/2}_{\mathrm{uloc}}}+\|\rho_3\|_{H^{m-3/2}_{\mathrm{uloc}}}\right)(k+1).
\end{equation}
\item \textit{Induction.}
We aim to deduce from (\ref{inequality}) that there exists $k_0\in\mathbb{N}\setminus\{0\}$, $C>0$ such that, for all $n\in\mathbb{N}$
\begin{equation}\label{DP:3-2}
\int_{\omega_{k_0}}|\Delta_w\tilde{\Psi}_n|^{2}\leq C.
\end{equation}
Let $C_2(\phi)$ and $C_3(\phi,\rho_2,\rho_3)$ denote the coefficients associated to second and third $k$-dependent terms in inequality \eqref{DP_3-3_al}.  From (\ref{inequality}), we prove by downward induction on $k$, that there exists a positive constant $C_4$ depending only on $C_0$, $C_1$, $C_2(\phi)$ and $C_3(\phi,\rho_2,\rho_3)$, appearing respectively in (\ref{DP:2-37}) and (\ref{DP_3-3_al}), such that, for all $k> k_0$,
\begin{equation}\label{DP_3-3_n}
E_k\leq C_4C_3(\phi,\rho_2,\rho_3)(k+1).
\end{equation}
Here, $C_4$ is independent of $n$, $k$. 

Note that (\ref{DP_3-3_n}) is holds for $k= n$ if $C_4> C_0(C_3(\phi,\rho_2,\rho_3))^{-1}$
, remembering that $\tilde{\Psi}_n=0$ on $\omega^{b}\setminus\omega_n$. We then assume that (\ref{DP_3-3_n}) holds for $n, n-1,\ldots,k+1$, where $k$ is a positive integer.

To obtain the contradiction that allows us to claim (\ref{DP_3-3_n}) holds at the rank $k$, we assume that (\ref{DP_3-3_n}) is no longer true for $k$. Then, the induction yields
\[
E_{k+1}-E_k<C_4C_3(\phi,\rho_2,\rho_3).
\]
Substituting the above inequality in (\ref{DP_3-3_al}) gives
\begin{eqnarray}\label{DP:3-6}
C_4C_3(\phi,\rho_2,\rho_3)<&E_k&\leq C_1C_4^{3/2}C_3(\phi,\rho_2,\rho_3)^{3/2}+C_2(\phi)C_4C_3(\phi,\rho_2,\rho_3)+C_3(\phi,\rho_2,\rho_3)(k+1)\nonumber
\end{eqnarray}
Even when the values of $C_0, C_1, C_2(\phi), C_3(\phi,\rho_2,\rho_3) > 0$ are fixed, $C_4$ can be conveniently chosen.  Taking $C_4\geq 2$ and plugging it in (\ref{DP:3-6}) results in a contradiction for $k>k_0$, where $k_0=\left \lfloor{C_1C_4^{3/2}C_3(\phi,\rho_2,\rho_3)^{1/2}+C_2(\phi)C_4}\right \rfloor$. Consequently, (\ref{DP_3-3_n}) is true at the rank $k>k_0$ and it also holds when $k\leq k_0$, since $E_k$ is increasingly monotonic with respect to $k$. 
\begin{remark}
	The reader can find a detailed description of the method for the Stokes problem in \cite{Gerard-Varet2010} and for the Stokes-Coriolis system, in \cite{Dalibard2014}. The backward induction in our case is less involved than the works mentioned above since we are not dealing directly with a non-local, non-linear Dirichlet-to-Neumann operator.
\end{remark}
By taking into account the translation invariance of the energy estimates and considering $k_0$ sufficiently large, we conclude that $\tilde{\Psi}_n$ is uniformly bounded in $H^2(\omega^b)$. This implies in turn  that $\tilde{\Psi}$ and consequently, $\Psi$ belong to $H^2_{\mathrm{uloc}}(\omega^b)$.
	\item \textit{Uniqueness.}  Let $\bar{\Psi}=\Psi_1-\Psi_2$, where $\Psi_i$, $i=1,2$, are solutions of \ satisfying the smallness condition $\|\Psi_i\|_{H^{m}_{\mathrm{uloc}}}<\delta$. We show that the solution $\bar{\Psi}$ of the problem
\begin{eqnarray}\label{pb:uniqueness_nl}
Q_w(\bar{\Psi},\Psi_1)+Q_w(\Psi_2,\bar{\Psi})+\partial_{X}\bar{\Psi}-\Delta_w^{2}\bar{\Psi}&=&0\quad\textrm{in}\;\omega^b,\nonumber\\
(1+\alpha^2)\Delta_w\bar{\Psi}\Big|_{\sigma^M_w}&=&0,\nonumber\\
\left(-\left((1+\alpha^2)\partial_{X}-2\alpha\partial_{Y}\right)\Delta_w\bar{\Psi}+\partial_{Y}\left(\displaystyle\frac{\nabla_w^{\perp}\bar{\Psi}\nabla_w^{\perp}(\Psi_1+\Psi_2)}{2}\right)\right.\hspace{7mm}&&\\
+(\nabla_w^{\perp}\bar{\Psi}\cdot\nabla_w)\left((1+\alpha^2)\partial_{X}-\alpha\partial_{Y}\right)
\Psi_1\hspace{7mm}&&\\
\left.+(\nabla_w^{\perp}\Psi_2\cdot\nabla_w)\left((1+\alpha^2)\partial_{X}-\alpha\partial_{Y}\right)\bar{\Psi}+\displaystyle\frac{\bar{\Psi}}{2}\right)\Bigg|_{\sigma^M_w}&=&0,\nonumber\\
\Psi\big|_{X=-\gamma(Y)}=\frac{\partial\Psi}{\partial n_w}\big|_{X=-\gamma(Y)}&=&0,\\
\end{eqnarray}
	is $\bar{\Psi}\equiv 0$. The smallness assumption on $\Psi_i$, $i=1,2$ leads the to following inequality on the truncated energies
	\begin{equation*}
	E_k\leq(\|\Psi_1\|_{H^2_{\mathrm{uloc}}}+\|\Psi_2\|_{H^2_{\mathrm{uloc}}}) C_1(E_{k+1}-E_k),
	\end{equation*}	
	where the constant $C_1$ depends only on the characteristics of the domain. Since $\underset{k}{\sup}(E_{k+1}-E_k)\leq \|\bar{\Psi}\|^2_{H^2_{\mathrm{uloc}}}$, it is possible to show that $E_k$ is uniformly bounded in $k$. Therefore, the difference between two solutions belongs to $H^{2}(\mathbb{R})$ and we can repeat the method but without $\chi_k$. The smallness assumption on  $\|\Psi_i\|_{H^2_{\mathrm{uloc}}}$, $i=1,2,$ ensures for a constant $C>0$
	\begin{equation*}
	(1-C\delta)\int_{\omega^{b}}|\Delta\Psi|^{2}\leq 0,
	\end{equation*}
	which provides the uniqueness result when $\delta <C^{-1}$.
\end{itemize}

\textit{\textbf{Step 2. Regularity.}} Higher regularity estimates for the solution are necessary for the subsequent application of the implicit function theorem. The analysis of the interior regularity and the regularity up to the boundary starts with the case of $m=1$. This case is later used to obtain higher regularity estimates through induction.
\paragraph{First order interior regularity}\label{local_regularity}\label{s2}
\begin{theorem}\label{theorem_firstlocal} For any, $\Psi\in H^2_{\mathrm{loc}}(\omega^b_w)$ which is a solution of
	\begin{equation}
	\partial_X \Psi+(\nabla^\perp_w \Psi\cdot\nabla_w)\Delta_w \Psi-\Delta^2_w \Psi=0\quad\textrm{in}\quad\omega^{b}
	\label{eq:interior-reg}
	\end{equation}
	then $\Psi\in H^s_{\mathrm{loc}}(\omega^b_w)$, $s\in \mathbb{N}$, $s\geq 2$. More precisely, for any bounded open set $\omega '$
		satisfying $\omega'\subset\subset\omega^b_w$,
	\begin{equation}\label{ch4:eq_4.4}
	\|\Psi\|_{H^s(\omega') }\leq C \|\Psi\|_{H^2_{\mathrm{\mathrm{uloc}}}(\omega^b_w)},
	\end{equation}
	where $C$ is a constant depending on $\omega '$.
\end{theorem}
\begin{proof}
	Let $\Psi$ be a solution of \eqref{eq:interior-reg} belonging to  $H^{2}_{\mathrm{uloc}}(\omega^b_w)$ and $\omega'\subset\subset \omega^b_w$. Note that
		$(\nabla^{\perp}_{w}\Psi\cdot \nabla_{w})\Delta_{w}\Psi = \nabla_{w}\cdot((\nabla^{\perp}_{w}\Psi)\Delta_{w}\Psi)$ and
		$\nabla^{\perp}_{w}\Psi\in H^{1}(\omega')$. Therefore, by Sobolev embedding 
		$$\nabla^{\perp}_{w}\Psi\in L^{p}(\omega')\;\text{ for any }p\in [1,+\infty).$$ 
		As $\Delta_{w}\Psi \in L^{2}(\omega')$, we have that $(\Delta_{w}\Psi)\nabla^{\perp}_{w}\Psi \in L^{q}(\omega')$ for any $q\in [1,2)$ and, on that account
		$$\nabla_{w}\cdot((\nabla^{\perp}_{w}\Psi)\Delta_{w}\Psi)\in W^{-1,q}(\omega')\;\text{ for any }q\in [1,2).$$
		Taking into account $\partial_{X}\Psi \in H^{1}(\omega')\subset W^{-1,q}(\omega ')$ and using \eqref{eq:interior-reg} provide $\Delta_{w}(\Delta_{w}\Psi)\in W^{-1,q}(\omega')$ for any $q\in [1,2)$. Therefore, by means of classical elliptic regularity arguments, we obtain $\Delta_{w}\Psi\in W^{1,q}(\omega')$ for any $q\in [1,2)$. This implies in turn that $\Psi\in W^{3,q}(\omega')$ for any $q\in [1,2)$ and
		\begin{equation*}
		\|\Psi\|_{W^{3,q}(\omega ')}\leq C\|\Psi\|_{H^{2}(\omega'')}\leq C \|\Psi\|_{H^{2}_{\mathrm{uloc}}(\omega^b_w)},
		\end{equation*}
	where $\omega'\subset\omega''\subset\omega_w^b$	and $C$ is a constant depending only $\omega '$.
		
		Consequently, $\nabla^{\perp}\Psi\in W^{2,q}(\omega')$ for any $q\in [1,2)$ leading to $\nabla^{\perp}\Psi\in W^{1,p}(\omega')$ for any $p\in [1,+\infty)$. Similarly, $\Delta_{w}\Psi \in W^{1,q}(\omega')$ for any $q\in [1,2)$ which gives that $\Delta_{w}\Psi\nabla^{\perp}\Psi\in W^{1,q}(\omega')$ for any $q\in [1,2)$ and $\Psi\in W^{4,q}(\omega')$ for any $q\in [1,2)$.
		Repeating the procedure results in $\Psi\in W^{s,q}(\omega')$ for any $s\in \mathbb{N}^{*}$. Therefore, $\Psi\in H^{s}(\omega')$ for any $s\in \mathbb{R}$ by Sobolev embedding. In particular, there exists $C$ depending only on $\omega '$ such that 
		\begin{equation*}
		\|\Psi\|_{H^{3}(\omega ')}\leq C \|\Psi\|_{H^{2}_{\mathrm{uloc}}(\omega^b_w)}.
		\end{equation*}
	
\end{proof}

\paragraph{Regularity up to the boundary}
Since we are only interested in the regularity near the artificial boundary, we can consider without loss of generality that the behavior at the interface at $X=0$ and the rough boundary does not influence our analysis. We tackle our regularity analysis for $X>M'$, where $M'\in (0,M)$.
To prove $H^3_{\mathrm{\mathrm{uloc}}}$- regularity up to the boundary, we need to compute a priori estimates for $\partial_Y\Psi$ in $H^2_{\mathrm{\mathrm{uloc}}}$. First, we are going to localize the equation near a fixed $k \in \mathbb{Z}$. Let $\tilde{\varphi}_k\in\mathcal{C}^\infty_0(\mathbb{R})$ be equal to $1$ in a neighborhood of $k \in \mathbb{Z}$, and such that the size of $\mathrm{Supp}\tilde{\varphi}_k$ is bounded uniformly in $k$. Moreover, we set $\varphi_k=\tilde{\varphi}^2_k$.

The idea is to apply a finite difference operator with a step $h>0$ in the direction parallel to the boundary, that is to say parallel to the $Y$-axis, and then, pass to the limit when $h$ goes to zero. This shows that $\partial_{Y}(\varphi_{k} \Psi_{w}^{-} )\in H^{2}(\omega^{b}_{w})$. Then, using the equations, it implies that  $\partial_{X}(\varphi_{k} \Psi_{w}^{-})\in H^{2}(\omega^{b}_{w})$ and thus $\varphi_{k} \Psi_{w}^{-} \in H^{3}(\omega^{b}_{w})$. From the arbitrariness of $k\in\mathbb{Z}$ and $\varphi_{k}$, and from the interior regularity provided for the case when $m=1$, this in turn implies that $\Psi_{w}^{-}\in H^{3}_{\mathrm{\mathrm{uloc}}}(\omega_{w}^{b})$. Going forward, to alleviate the notation,
we omit the $k$-dependence of $\varphi$ and we denote $\Psi$ instead of $\Psi_{w}^{-}$.
We define the finite difference operator $\delta_{h}$ as follows:
	\begin{equation*}
	\delta_h u = \dfrac{\tau_h u-u}{h},\quad  \tau_h u(X,Y)=u(X,Y+h).
	\end{equation*}
	Then, for $\omega'\subset\subset \omega^b_w$, there exists a constant $C>0$ such that $|h|<\text{dist}(\partial\omega,\omega')$, and $f\in W^{k,p}(\omega)$,
	\begin{eqnarray*}
		\|\delta_h f\|_{W^{k-1,p}(\omega')} &\leq&C\|f\|_{W^{k,p}(\omega)},\\
		\underset{h\rightarrow 0}{\lim}\left\|\delta_h(f)-\partial_{Y}f\right\|_{W^{k-1,p}(\omega')}&=&0.
	\end{eqnarray*}

Considering $\psi_h=\delta_{h}(\varphi_k\Psi)$ in \eqref{eq_left1} yields
\begin{eqnarray*}
\partial_{X}\psi_h-\Delta^{2}_w\psi_h&=&f_h\quad\textrm{in}\;\omega^{b}\cap\mathrm{Supp}\varphi_k,\nonumber\\
(1+\alpha^2)\Delta_w\psi_h&=&\rho_2^h,\nonumber\\
-\left[(1+\alpha^2)\partial_{X}-2\alpha\partial_{Y}\right]\Delta_w\psi_h+ \frac{\psi_{h}}{2}\Big|_{X=M}&=&\rho_3^h,\nonumber\\
\psi_h=\partial_X \psi_h&=&0,\quad\textrm{on}\; (M', M)\times\partial\mathrm{Supp}\varphi_k.
\end{eqnarray*}
Taking into account that $\varphi_k$ is independent of $X$, we have that
\begin{align*}
f_h&=-\delta_{h}\left[4\varphi_k^{(3)}\left(\partial_{Y}-\alpha\partial_{X}\right)\Psi+6\varphi_k''\Delta_w\Psi+4\varphi_k'\left(\partial_{Y}-\alpha\partial_{X}\right)\Delta_w\Psi+\varphi_k^{(4)}\Psi\right.\\
&\hspace{.6in}\left.-\nabla_w\Psi\cdot\left(3(0,\varphi_k')\Delta_w\Psi+3\varphi_k''\nabla_w\Psi+(0,\varphi_k^{(3)})\Psi\right)\right]-\delta_{h}\left[(\nabla_w^\perp\Psi\cdot\nabla_w)\Delta_w(\varphi_k\Psi)\right],\\
\rho_2^{h}&=\delta_{h}[\varphi_k\rho_2]-(1+\alpha^2)\delta_h\left[2\varphi_k'\left(\partial_{Y}-\alpha\partial_{X}\right)\Psi+\varphi_k''\Psi\right],\\
\rho_3^{h}&=
\delta_{h}[\varphi_k\rho_3]+\delta_{h}\left[\left((1+\alpha^2)\partial_{X}-2\alpha\partial_{Y}\right)\left(2\partial_{Y}\varphi_k\left(\partial_{Y}-\alpha\partial_{X}\right)\Psi+\partial_{Y}^2\varphi_k\Psi\right)\right.\\
&\left.-\nabla_w^\perp\Psi\cdot\left(-\alpha\nabla_w\left(\Psi\partial_{Y}\varphi_k\right)+\left((1+\alpha^2)\partial_{X}-\alpha\partial_{Y}\right)\Psi(0,\partial_{Y})\varphi_k\right)-\left(\partial_{Y}\left(\tfrac{|\nabla_w^\perp\Psi|^2}{2}\right)\varphi_k\right)\right]\\
&+\delta_{h}\left[(\nabla_w^\perp\Psi\cdot\nabla_w)\left((1+\alpha^2)\partial_{X}-\alpha\partial_{Y})(\varphi_k\Psi)\right)\right]-\delta_{h}[2\alpha\Delta_{w}\Psi\partial_{Y}\varphi_k].
\end{align*}
Let us now state some technical lemmas which are necessary to the proof.
\begin{lemma}\label{lem:estimate_terms} Let $\sigma^M_w=\left\{(M, Y):\: Y\in\mathbb{R}\right\}$. Define $\sigma_k=\sigma^M_w\cap \mathrm{Supp}\varphi$ where $\varphi\in \mathcal{C}^\infty_0(\mathbb{R})$ and it is equal to $1$ in a neighborhood of $\sigma^M_w$. Consider the functions $\Psi\in H^{2}_{\mathrm{\mathrm{uloc}}}(\omega^{b})$, $\rho_2\in H^{1/2}_{\mathrm{\mathrm{uloc}}}(\sigma_{M})$ and $\rho_3\in H^{-1/2}_{\mathrm{\mathrm{uloc}}}(\sigma_{M})$. Then, for any $h\in \mathbb{R}$, we have the estimates
	\begin{equation*}
	\begin{split}
	\|\rho_{3}^{h}\|_{H^{-3/2}(\sigma_{k})}\leq& C(1+\|\varphi\|_{W^{4,\infty}})(\|\rho_{3}\|_{H^{-1/2}_{\mathrm{\mathrm{uloc}}}(\sigma^M_w)}+\|\rho_{2}\|_{H^{1/2}_{\mathrm{\mathrm{uloc}}}(\sigma^M_w)}\\
	&+ \|\Psi\|_{H^{2}_{\mathrm{\mathrm{uloc}}}}(1+\|\psi_{h}\|_{H^{2}_{\mathrm{\mathrm{uloc}}}}+\|\Psi\|_{H^{2}_{\mathrm{\mathrm{uloc}}}})),\\
	\|\rho_{2}^{h} \|_{H^{-1/2}(\sigma_{k})}\leq& C\left( \|\rho_{2}\|_{H^{1/2}_{\mathrm{\mathrm{uloc}}}(\sigma^M_w)}+\|\Psi\|_{H^{2}_{\mathrm{\mathrm{uloc}}}}\right)\|\varphi\|_{W^{3,\infty}},\\
	\|f_{h}\|_{H^{-2}(\omega^b_w)}\leq& C(\|\Psi\|_{H^{2}_{\mathrm{\mathrm{uloc}}}}(1+\|\Psi\|_{H^{2}_{\mathrm{\mathrm{uloc}}}}+\|\psi_{h}\|_{H^{2}_{\mathrm{\mathrm{uloc}}}}+\|\Psi_{L}\|_{H^{2}_{\mathrm{\mathrm{uloc}}}}))(1+\|\varphi\|_{W^{3,\infty}(\omega^b_w)}),
	\end{split}
	\end{equation*}
	where $C$ is a constant depending only on the domain $\omega^b_w$.
	
\end{lemma}
\begin{lemma}\label{lem:elliptic-reg}
	Consider the linear problem
	\begin{align}
	\partial_{X}z-\Delta^{2}_w z&=f_{1}\quad\textrm{in}\;\omega_k,\nonumber\\
	(1+\alpha^2)\Delta_w z&=f_{2},\label{linearpb}\\
	-\left[(1+\alpha^2)\partial_{X}-2\alpha\partial_{Y}\right]\Delta_w z +\frac{z}{2}\Big|_{\sigma_{k}}&=f_{3},\nonumber\\
	z=\partial_Xz&=0,\;\text{on}\;\partial\omega_{k}\setminus\sigma_{k}\nonumber,
	\end{align}
	where $\omega_k=\omega^{b}\cap\mathrm{Supp}\varphi$, $\sigma_{k}=\partial\omega_{k}\cap\partial\omega^b_w$ and $\nu$ stands for the unit outer normal vector at the boundary. If $f_{1}\in H^{-2}(\omega_{k})$, $f_{2}\in H^{-1/2}(\sigma_{k})$, $f_{3}\in H^{-3/2}(\sigma_{k})$. Then, problem \eqref{linearpb} has a unique solution $z\in H^{2}(\omega_{k})$ and satisfies the estimate
	\begin{equation}\label{ellip-reg-est}
	\|z\|_{H^{2}(\omega_{k})}\leq C\left(\|f_{1}\|_{H^{-2}(\omega_{k})}+\|f_{2}\|_{H^{-1/2}(\sigma_{k})}+\|f_{3}\|_{H^{-3/2}(\sigma_{k})}\right).
	\end{equation}
\end{lemma}
	The regularity up to the boundary and the fact that $\varphi\Psi\in H^{3}(\omega_{w}^{b})$ is a consequence of Lemmas \ref{lem:estimate_terms} and \ref{lem:elliptic-reg} for $z=\psi_h$. Combining these two Lemmas with the smallness assumption on $\|\Psi\|_{H^{2}_{\mathrm{\mathrm{uloc}}}(\omega^b_w)}$, we have that $\psi_{h}\in H^{2}(\omega_{w}^{b})$ and, for any $h\in\mathbb{R}^{*}$
	\begin{equation*}
	\|\psi_{h}\|_{H^{2}(\omega_{w}^{b})}\leq C\left(\|\Psi\|_{H^{2}_{\mathrm{\mathrm{uloc}}}(\omega^b_w)}+\|\rho_{2}\|_{H^{1/2}_{\mathrm{\mathrm{uloc}}}(\sigma_{k})}+\|\rho_{3}\|_{H^{1/2}_{\mathrm{\mathrm{uloc}}}(\sigma_{k})}\right),
	\end{equation*}
	for some constant $C$ depending on the bound on $\|\Psi\|_{H^{2}_{\mathrm{\mathrm{uloc}}}(\omega^b_w)}$ but independent of $h$. This implies that $\psi_{h}\in H^{2}(\omega^b_w)$ and therefore, $\partial_{Y}(\Psi\varphi) \in H^{2}(\omega^b_w)$. From the arbitrariness of $\varphi$ and as $\Psi\in H^{2}(\omega^b_w)$, this means that $\partial_{Y}\Psi \in H^{2}_{\mathrm{uloc}}(\omega^b_w)$. Hence, that $\partial_{Y}\partial_{X}\Psi\in H^{1/2}_{\mathrm{uloc}}(\sigma_{k})$. In particular $\partial_{X}\Psi\in H^{3/2}_{\mathrm{uloc}}(\sigma_{k})$. The first boundary condition of \eqref{eq_left1} yields
	\begin{equation*}
	\partial_{X}^{2}\Psi =(1+\alpha)^{-1} \left(-\partial_{Y}^{2}\Psi+2\alpha\partial_{Y}\partial_{X}\Psi +(1+\alpha)^{-1}\rho_{2}\right),
	\end{equation*}
	which implies that $\partial_{X}^{2}\Psi\in H^{1/2}_{\mathrm{uloc}}(\sigma_{k})$. This result combined with the second boundary condition in \eqref{eq_left1} gives $\partial_{X}\nabla_{w}\Psi\in H^{-1/2}_{\mathrm{uloc}}(\sigma_{k})$ and, consequently, $\Delta_{w}(\partial_{X}\Psi)\in H^{-1/2}_{\mathrm{uloc}}(\sigma_{k})$. Using now the main equation in \eqref{eq_left1}, we deduce that $\Delta_{w}^{2}\Psi\in H^{-1}_{\mathrm{uloc}}(\omega^b_w)$ and $\partial_{X}^{2}\Delta_{w}\Psi\in H^{-1}_{\mathrm{uloc}}(\omega^b_w)$. Hence, $\partial_{X}^{2}\Delta\Psi\in H^{-3/2}_{\mathrm{uloc}}(\sigma_{k})$ and $A_{3}[\partial_{X}\Psi]\in H^{-3/2}_{\mathrm{uloc}}(\sigma_{k})$. Let $\chi\in C^{\infty}_c(\omega^b_w)$, setting 
	\begin{equation*}
	\begin{split}
	f_{1}&= \Delta_{w}^{2}(\chi\partial_{X}\Psi) -(\chi\partial_{X}\Psi),\\
	f_{2} &= (1+\alpha^{2})\Delta_{w}(\chi\partial_{X}\Psi),\\
	f_{3}&= -[(1+\alpha^{2})\partial_{X}-2\alpha\partial_{Y}]\Delta_{w}(\chi\partial_{X}\Psi)+\frac{\chi\partial_{X}\Psi}{2},
	\end{split}
	\end{equation*}
	yields that $\chi\partial_{X}\Psi$ is solution to a linear problem of the from \eqref{linearpb} where $f_{1}\in H^{-2}(\sigma_{k})$, $f_{2}\in H^{-1/2}(\sigma_{k})$, $f_{3}\in H^{-2}(\sigma_{k})$. From Lemma \ref{lem:elliptic-reg}, $\chi\partial_{X}\Psi\in H^{2}(\omega^b_w)$ and paired with the arbitrariness  of $\chi$ $\partial_{X}\Psi\in H^{2}(\omega^b_w)$. Finally, combining this with $\partial_{Y}\Psi\in H^{2}(\omega^b_w)$ gives $\Psi\in H^{3}(\omega^b_w)$.
	
It remains to prove the results in Lemmas \ref{lem:estimate_terms} and \ref{lem:elliptic-reg}.
Lemma \ref{lem:elliptic-reg} is a standard elliptic regularity result. The main difficulty resides in the proof of the estimates in Lemma \ref{lem:estimate_terms} when handling the nonlinear terms.
\\

\begin{proof}[Proof of Lemma \ref{lem:elliptic-reg}]
	Before computing the elliptic regularity estimates, let us first briefly comment on the existence of an
	appropriate weak solution $z$ of the boundary-value problem \eqref{linearpb}.
	Note that the weak formulation associated with \eqref{linearpb} is
	\begin{equation}\label{variational_formulation_psi1}
	\begin{split}
	\int_{\omega_{k}} \partial_Xz\theta-\int_{\omega_{k}} \Delta_wz\Delta_w\theta&=\int_{\omega_{k}} f_{1}\theta-\int_{\sigma_{k}}f_{3}\theta\big|_{X=M}\\
	&-\int_{\sigma_{k}}f_{2}\partial_X\theta\big|_{X=M}+\int_{\sigma_{k}\frac{1}{2}z\big|_{X=M}\theta\big|_{X=M}}, \;\;\forall\;\theta\in H^{2}(\omega_{k}),
	\end{split}
	\end{equation}
	which can be written as follows:
	\begin{equation}\label{var_variational_formulation_psi1}
	A(z,\theta)+a(z,\theta)=(f_{1},\theta)-\int_{\sigma_{k}}f_{3}\theta\big|_{X=M}-\int_{\sigma_{k}}f_{2}\partial_X\theta\big|_{X=M}+\int_{\sigma_{k}}
	\frac{1}{2}z\big|_{X=M}\theta\big|_{X=M},
	\end{equation}
	where $A(z,\theta)=\int_{\omega_{k}} \partial_Xz\theta-\int_{\omega_{k}} \Delta_wz\Delta_w\theta$ and $a(z,\theta)$ contains the integral boundary terms. Here, $(\cdot, \cdot)$ refers to the usual product in $L^2$. The differential operator $A$ is a bilinear and continuous form in $H^2(\omega_k)\times H^2(\omega_k)$, as well as, $H^2_0(\omega_k)$-elliptic. The existence and uniqueness of a solution $z\in H^2(\omega_{k})$ is guaranteed by applying \cite[Theorem 3.1, Section 1.3.2]{Necasa}.
	Taking $\theta=z$ in \eqref{variational_formulation_psi1} yields the expression
	\begin{equation}\label{var_for_psi1}
	\int_{\omega_{k}}|\Delta_wz|^2=-\int_{\omega_{k}}f_{1}z+\int_{\sigma_{k}}f_{3}z+\int_{\sigma_{k}}f_2\partial_X z,
	\end{equation}
	which leads naturally to the estimate by using Cauchy-Schwarz inequality, the definition of the norm in dual spaces and the trace theorem
	\begin{equation*}
	\int_{\omega_{k}}|\Delta_wz|^2\leq C\left(\|f\|_{H^{-2}(\omega_{k})}+\|f_2\|_{H^{-1/2}(\sigma_{k})}+\|f_3\|_{H^{-3/2}(\sigma_{k})}\right)\|z\|_{H^2(\omega_{k})}.
	\end{equation*}
	Finally, we obtain the desired result
	\begin{equation*}
	\|z\|_{H^2(\omega_{k})}\leq C\left(\|f\|_{H^{-2}(\omega_{k})}+\|f_{2}\|_{H^{-1/2}(\sigma_{k})}+\|f_{3}\|_{H^{-3/2}(\sigma_{k})}\right).
	\end{equation*}
\end{proof}

\begin{proof}[Proof of Lemma \ref{lem:estimate_terms}]
	This proof is divided in three parts corresponding to each one of the estimates of $\rho_{3}^{h}$, $\rho_{2}^{h}$ and$f^{h}$.

	{\it Estimate of $\rho_{3}^{h}$.}
	We have
	\begin{equation}
	\begin{split}
	\rho_3^{h}&=
	\delta_{h}[\varphi_k\rho_3]+\delta_{h}\left[\left((1+\alpha^2)\partial_{X}-2\alpha\partial_{Y}\right)\left(2\partial_{Y}\varphi_k\left(\partial_{Y}-\alpha\partial_{X}\right)\Psi+\partial_{Y}^2\varphi_k\Psi\right)\right.\\
	&\left.-\nabla_w^\perp\Psi\cdot\left(-\alpha\nabla_w\left(\Psi\partial_{Y}\varphi_k\right)+\left((1+\alpha^2)\partial_{X}-\alpha\partial_{Y}\right)\Psi(0,\partial_{Y})\varphi_k\right)-\left(\partial_{Y}\left(\tfrac{|\nabla_w^\perp\Psi|^2}{2}\right)\varphi_k\right)\right]\nonumber\\
	&+\delta_{h}\left[(\nabla_w^\perp\Psi\cdot\nabla_w)\left((1+\alpha^2)\partial_{X}-\alpha\partial_{Y})(\varphi_k\Psi)\right)\right]-\delta_{h}[2\alpha\Delta_{w}\Psi\partial_{Y}\varphi_k].
	\end{split}
	\label{estim-rho3}
	\end{equation}
	The first term on the r.h.s satisfies
	\begin{equation}\label{estim-r3-0-1}
	\begin{split}
	\lVert \delta_{h}[\varphi_k\rho_3]\rVert_{H^{-3/2}(\sigma_{k})}&\leq C\lVert\rho_3\rVert_{H^{-1/2}_{\mathrm{\mathrm{uloc}}}(\sigma_{M})},
	\end{split}
	\end{equation}
	
	Let us give a closer look at the second term. Enclosed in brackets are terms involving at most one order derivative of $\Psi$ multiplied by a derivative of $\varphi_k$, which has compact support on $\sigma_{k}$. Let us now recall that $\Psi$ belongs to $H^{3/2}_{\mathrm{\mathrm{uloc}}}$ and $\partial_{X}\Psi$ belongs to $H^{1/2}_{\mathrm{\mathrm{uloc}}}$ at $X=M$. 
	
	Moreover, using the conditions at $\sigma^M_w$, we get
	\begin{equation}\label{bc-x-m}
	\lVert \partial_{X}^{2}\Psi\rVert_{H^{-1/2}_{\mathrm{\mathrm{uloc}}}(\sigma^M_w)}\leq C(\lVert\Psi\rVert_{H^{2}_{\mathrm{\mathrm{uloc}}}(\omega^{b})}+\lVert\rho_{2}\rVert_{H^{-1/2}_{\mathrm{\mathrm{uloc}}}(\sigma^M_w)}).
	\end{equation}
	Hence,
	\begin{equation}\label{estim-r3-1-0}
	\begin{split}
	&\left\|\delta_{h}\left[\left((1+\alpha^2)\partial_{X}-2\alpha\partial_{Y}\right)\left(2\partial_{Y}\varphi_k\left(\partial_{Y}-\alpha\partial_{X}\right)\Psi+\partial_{Y}^2\varphi_k\Psi\right)\right]\right\|_{H^{-3/2}(\sigma_{k})}\\
	&\leq C \left(\left\|\rho_{2}\right\|_{H^{-1/2}_{\mathrm{\mathrm{uloc}}}(\sigma^M_w)}+\left\|\Psi\right\|_{H^{3/2}_{\mathrm{\mathrm{uloc}}}(\sigma^M_w)}+\left\|\partial_{X}\Psi\right\|_{H^{1/2}_{\mathrm{\mathrm{uloc}}}(\sigma^M_w)}\right)\|\varphi_k\|_{W^{4,\infty}}.
	\end{split}
	\end{equation}
	Now, observe that for $f\in H^{1/2}_{\mathrm{\mathrm{uloc}}}(\sigma_{k},\mathbb{R}^{2})$, $g\in H^{1/2}_{\mathrm{\mathrm{uloc}}}(\sigma_{k},\mathbb{R}^{2})$, $\theta\in C^{\infty}_{c}(\sigma_{k},\mathbb{R}_{+})$ and as a result of the Sobolev embedding $H^{1/2}(\mathbb{R})\subset L^{p}(\mathbb{R})$, for any $p\in[1,+\infty)$, we have
	\begin{equation}\label{lemma-cool}
	\|fg\theta\|_{H^{-1/2}(\sigma_{k})}\leq\|fg\theta\|_{L^{p}(\sigma_{k})}\leq \|f\|_{H^{1/2}_{\mathrm{\mathrm{uloc}}}(\sigma^M_w)}\|g\|_{H^{1/2}_{\mathrm{\mathrm{uloc}}}(\sigma^M_w)}\|\theta\|_{L^{\infty}}.
	\end{equation}
	Consequently, using Trace theorem
	\begin{equation}\label{estim-r3-1}
	\begin{split}
	&\left\|\delta_{h}\left[-\nabla_w^\perp\Psi\cdot\left(-\alpha\nabla_w\left(\Psi\partial_{Y}\varphi_k\right)+\left((1+\alpha^2)\partial_{X}-\alpha\partial_{Y}\right)\Psi(0,\partial_{Y})\varphi_k\right)\right]\right\|_{H^{-3/2}(\sigma_{k})}\\
	&\leq C\left(\left\|\Psi\right\|_{H^{3/2}_{\mathrm{\mathrm{uloc}}}(\sigma^M_w)}+\left\|\partial_{X}\Psi\right\|_{H^{1/2}_{\mathrm{\mathrm{uloc}}}(\sigma^M_w)}\right)\|\varphi_k\|_{W^{2,\infty}(\omega^b_w)}\\
	&\leq C\|\Psi\|^{2}_{H^{2}_{\mathrm{\mathrm{uloc}}}(\omega^b_w)}\|\varphi_k\|_{W^{2,\infty}(\omega^b_w)}.
	\end{split}
	\end{equation}
	Finally, note that
	\begin{equation*}
	\begin{split}
	(\partial_{Y}^{2}\Psi)\varphi_k &= \partial_{Y}^{2}(\Psi\varphi_k)-2(\partial_{Y}\Psi\partial_{Y}\varphi_k)-(\Psi\partial_{Y}^{2}\varphi_k),\\
	(\partial_{X}^{2}\Psi)\varphi_k &=\partial_{X}^{2}(\Psi\varphi_k),\\ (\partial_{YX}^{2}\Psi)\varphi_k &= \partial_{YX}^{2}(\Psi\varphi_k)-(\partial_{X}\Psi\partial_{Y}\varphi_k).
	\end{split}
	\end{equation*}
	Therefore,
	\begin{equation}\label{estim-carre}
	\begin{split}
	&\delta_{h}\left[-\left(\partial_{Y}\left(\tfrac{|\nabla_w^\perp\Psi|^2}{2}\right)\varphi_k\right)\right] \\
	&=\delta_{h}\left[-\left(\partial_{Y}\left(\tfrac{|\nabla_w^\perp\Psi|^2}{2}\varphi_k\right)\right)\right]-\delta_{h}\left[(\partial_{Y}\varphi_k)\tfrac{|\nabla_w^\perp\Psi|^2}{2}\right].
	\end{split}
	\end{equation}
	The second term is a quadratic in $\Psi$, linear in $\varphi_k$ and involves at most one derivative of $\Psi$ and one derivative of $\varphi_k$. Therefore,
		\begin{equation}\label{estim-carre-0}
		\begin{split}
		\|\delta_{h}\left[(\partial_{Y}\varphi_k)\tfrac{|\nabla_w^\perp\Psi|^2}{2}\right]\|_{H^{-3/2}(\sigma_{k})}&\leq C
		\|(\partial_{Y}\varphi_k)\tfrac{|\nabla_w^\perp\Psi|^2}{2}\|_{H^{-1/2}(\sigma_{k})}\\
		&\leq C \|\varphi_k\|_{W^{1,\infty}(\sigma_{k})}\left(\|\partial_{X}\Psi\|_{H^{1/2}_{\mathrm{\mathrm{uloc}}}(\sigma_{k})}+\|\Psi\|_{H^{3/2}_{\mathrm{\mathrm{uloc}}}(\sigma_{k})}\right)^{2}
		.
		\end{split}
		\end{equation}
		The first term of \eqref{estim-carre} can be treated as follows
		\begin{equation}\label{estim-carre-1}
		\begin{split}
		\delta_{h}(|\nabla_w^\perp\Psi|^2\varphi_k)&=\delta_{h}\left(\left[(1+\alpha^{2})(\partial_{X}\Psi)^{2}-2\alpha\partial_{X}\Psi\partial_{Y}\Psi+(\partial_{Y}\Psi)^{2}\right]\varphi_k\right)\\
		&=2(1+\alpha^{2})(\partial_{X}\psi_{h})(\partial_{X}\Psi)-2\alpha\partial_{X}\psi_{h}\partial_{Y}\Psi-2\alpha\partial_{X}\Psi\partial_{Y}\psi_{h}+2(\partial_{Y}\Psi)(\partial_{Y}\psi_{h})\\
		&+F_{1}(\Psi,\delta_{h}\varphi_k,\varphi_k)+F_{2}(\Psi,\delta_{h}\Psi,\varphi_k),
		\end{split}
		\end{equation}
		where $F_{1}(\Psi,\delta_{h}\varphi_k)$ is a sum of terms quadratic in $\Psi$, linear in $\delta_{h}\varphi_k$ and involving at most one derivative of $\Psi$ and $\delta_{h}\varphi_k$. $F_{2}(\Psi,\delta_{h}\Psi,\varphi_k)$, on the other hand, is a sum of terms linear in $\Psi$, linear in $\delta_{h}\Psi$ and linear in $\varphi_k$ involving at most one derivative in $\Psi$, no derivative in $\delta_{h}\Psi$ and one derivative in $\varphi_k$.
	As a result of \eqref{lemma-cool}, we have
		\begin{equation}\label{estim-r3-2}
		\begin{split}
		\|F_{1}(\Psi,\delta_{h}\varphi_k)\|_{H^{-1/2}(\sigma_{k})}&\leq C \left(\|\Psi\|_{H^{3/2}_{\mathrm{\mathrm{uloc}}}(\sigma^M_w)}+\|\partial_{X}\Psi\|_{H^{1/2}_{\mathrm{\mathrm{uloc}}}(\sigma^M_w)}\right)^{2}\|\varphi_k\|_{W^{1,\infty}},
		\end{split}
		\end{equation}
		and 
		\begin{equation}\label{estim-carre-2}
		\begin{split}
		\|F_{2}(\Psi,\delta_{h}\Psi,\varphi_k),\|_{H^{-1/2}(\sigma_{k})}&\leq C (\|\Psi\|_{H^{3/2}_{\mathrm{\mathrm{uloc}}}(\sigma^M_w)}+\|\partial_{X}\Psi\|_{H^{1/2}_{\mathrm{\mathrm{uloc}}}(\sigma^M_w)})\|\delta_{h}\Psi\|_{H^{1/2}_{\mathrm{\mathrm{uloc}}}(\sigma^M_w)}\|\varphi_k\|_{W^{1,\infty}}\\
		&\leq C (\|\Psi\|_{H^{3/2}_{\mathrm{\mathrm{uloc}}}(\sigma^M_w)}+\|\partial_{X}\Psi\|_{H^{1/2}_{\mathrm{\mathrm{uloc}}}(\sigma^M_w)})^{2}\|\varphi_k\|_{W^{1,\infty}}.
		\end{split}
		\end{equation}
		The reminder of \eqref{estim-carre-1} is now easy to handle using \eqref{lemma-cool}
		\begin{equation}\label{estim-carre-3}
		\begin{split}
		&\|    2(1+\alpha^{2})(\partial_{X}\psi_{h})(\partial_{X}\Psi)-2\alpha\partial_{X}\psi_{h}\partial_{Y}\Psi-2\alpha\partial_{X}\Psi\partial_{Y}\psi_{h}+2(\partial_{Y}\Psi)(\partial_{Y}\psi_{h})\|_{H^{-1/2}_{\mathrm{\mathrm{uloc}}}(\sigma_{k})}\\
		&\leq C
		\left(\|\partial_{X}\Psi\|_{H^{1/2}_{\mathrm{\mathrm{uloc}}}(\sigma_{k})}+\|\Psi\|_{H^{3/2}_{\mathrm{\mathrm{uloc}}}(\sigma_{k})}\right)\left(\|\partial_{X}\psi_{h}\|_{H^{1/2}_{\mathrm{\mathrm{uloc}}}(\sigma_{k})}+\|\psi_{h}\|_{H^{3/2}_{\mathrm{\mathrm{uloc}}}(\sigma_{k})}\right).
		\end{split}
		\end{equation}
		Using \eqref{estim-carre-0}, \eqref{estim-carre-1}, \eqref{estim-r3-2}, \eqref{estim-carre-2}, and \eqref{estim-carre-3} one has
		\begin{equation*}
		\begin{split}
		\hspace{-3mm}&\left\|\delta_{h}\left[-\left(\partial_{Y}\left(\tfrac{|\nabla_w^\perp\Psi|^2}{2}\right)\varphi_k\right)\right]\right\|_{H^{-3/2}(\sigma_{k})}\\
		&\quad\leq
		\left\|\delta_{h}\left[\partial_{Y}\left(\tfrac{|\nabla_w^\perp\Psi|^2}{2}\varphi_k\right)\right]\right\|_{H^{-3/2}(\sigma_{k})}
		+C \|\varphi_k\|_{W^{1,\infty}(\sigma_{k})}\left(\|\partial_{X}\Psi\|_{H^{1/2}_{\mathrm{\mathrm{uloc}}}(\sigma_{k})}+\|\Psi\|_{H^{3/2}_{\mathrm{\mathrm{uloc}}}(\sigma_{k})}\right)^{2}\\
		&\quad\leq C\left\|\delta_{h}\left[\tfrac{|\nabla_w^\perp\Psi|^2}{2}\varphi_k\right]\right\|_{H^{-1/2}(\sigma_{k})}
		+C \|\varphi_k\|_{W^{1,\infty}(\sigma_{k})}\left(\|\partial_{X}\Psi\|_{H^{1/2}_{\mathrm{\mathrm{uloc}}}(\sigma_{k})}+\|\Psi\|_{H^{3/2}_{\mathrm{\mathrm{uloc}}}(\sigma_{k})}\right)^{2}
		\\&\quad
		\leq 
		C \left(\|\varphi_k\|_{W^{1,\infty}(\sigma_{k})}\left(\|\partial_{X}\Psi\|_{H^{1/2}_{\mathrm{\mathrm{uloc}}}(\sigma_{k})}+\|\Psi\|_{H^{3/2}_{\mathrm{\mathrm{uloc}}}}\right)^{2}\right.
		\\&\quad
		+\left.\left(\|\partial_{X}\Psi\|_{H^{1/2}_{\mathrm{\mathrm{uloc}}}(\sigma_{k})}+\|\Psi\|_{H^{3/2}_{\mathrm{\mathrm{uloc}}}(\sigma_{k})}\right)\left(\|\partial_{X}\psi_{h}\|_{H^{1/2}_{\mathrm{\mathrm{uloc}}}(\sigma_{k})}+\|\psi_{h}\|_{H^{3/2}_{\mathrm{\mathrm{uloc}}}(\sigma_{k})}\right)\right).
		\end{split}
		\end{equation*}
	Combining the above result with the Trace theorem gives
	
	\begin{equation*}
	\begin{split}
	\left\|\delta_{h}\left[-\left(\partial_{Y}\left(\tfrac{|\nabla_w^\perp\Psi|^2}{2}\right)\varphi_k\right)\right]\right\|_{H^{-3/2}(\sigma_{k})} \leq C \left(\lVert \psi_{h}\rVert_{H^{2}(\omega^b_w)}\|\Psi\|_{H^{2}_{\mathrm{\mathrm{uloc}}}(\omega^b_w)}+\|\Psi\|_{H^{2}_{\mathrm{\mathrm{uloc}}}(\omega^b_w)}^{2}\right)\|\varphi_k\|_{W^{1,\infty}}.
	\end{split}
	\end{equation*}
	
	It remains to tackle the two last terms of \eqref{estim-rho3}, starting with
	\begin{equation*}
	\begin{split}
	\delta_{h}\left[(\nabla_w^\perp\Psi\cdot\nabla_w)\left((1+\alpha^2)\partial_{X}-\alpha\partial_{Y}\right)(\varphi_k\Psi)\right].
	\end{split}
	\end{equation*}
	Since too many derivatives are involved in the above expression, this term cannot be controlled roughly by controlling $\delta_{h}[\nabla_w^\perp\Psi]$ by $|\nabla (\nabla_w^\perp\Psi)|$. To address this issue, we make $\delta_{h}(\varphi_k\Psi)=\psi_{h}$ appear, similarly as we did for \eqref{estim-carre}.
	We claim
	
		\begin{equation}\label{estim-hard-r3-0}
		\begin{split}
		&    \delta_{h}\left(([\nabla_w^\perp\Psi]\cdot\nabla_w)\left((1+\alpha^2)\partial_{X}-\alpha\partial_{Y}\right)(\varphi_k\Psi)\right)\\
		&= 
		(\tau_{h}[\nabla_w^\perp\Psi]\cdot\nabla_w)\left((1+\alpha^2)\partial_{X}-\alpha\partial_{Y}\right)(\delta_{h}(\varphi_k\Psi))
		+    (\delta_{h}[\nabla_w^\perp\Psi]\cdot\nabla_w)\left((1+\alpha^2)\partial_{X}-\alpha\partial_{Y}\right)(\varphi_k\Psi)\\
		&= (\tau_{h}[\nabla_w^\perp\Psi]\cdot\nabla_w)\left((1+\alpha^2)\partial_{X}-\alpha\partial_{Y}\right)(\psi_{h})
		+(\delta_{h}[\nabla_w^\perp\Psi]\cdot\nabla_w)\left((1+\alpha^2)\partial_{X}-\alpha\partial_{Y}\right)(\varphi_k\Psi)    
		.
		\end{split}
		\end{equation}
		For the first term
		\begin{equation*}
		\begin{split}
		&\|(\tau_{h}[\nabla_w^\perp\Psi]\cdot\nabla_w)\left((1+\alpha^2)\partial_{X}-\alpha\partial_{Y}\right)(\psi_{h})\|_{H^{-3/2}(\sigma_{k})}
		\\
		&\leq C \|(\tau_{h}[\nabla_w^\perp\Psi]\cdot\nabla_w)\left((1+\alpha^2)\partial_{X}-\alpha\partial_{Y}\right)(\psi_{h})\|_{L^{1}(\sigma_{k})}\\
		&\leq\left(\|\psi_{h}\|_{H^{3/2}(\sigma_{k})}+\|\partial_{X}\psi_{h}\|_{H^{1/2}(\sigma_{k})}\right)\left( \|\Psi\|_{H^{3/2}_{\mathrm{\mathrm{uloc}}}(\sigma_{k})}+\|\partial_{X}\Psi\|_{H^{1/2}_{\mathrm{\mathrm{uloc}}}(\sigma_{k})}\right).
		\end{split}
		\end{equation*}
		This inequality results from the Sobolev embedding $L^{1}(\sigma_{k})\subset H^{-3/2}(\sigma_{k})$. It can also be seen coming back to the definition of $H^{-3/2}(\sigma_{k})$ as done below at \eqref{estim-sobolev0}-\eqref{estim-sobolev1}. 
		For the second term of \eqref{estim-hard-r3-0}, we have
	
	\begin{equation}\label{estim-hard-r3}
	\begin{split}
	  (\delta_{h}[\nabla_w^\perp\Psi]&\cdot\nabla_w)\left((1+\alpha^2)\partial_{X}-\alpha\partial_{Y}\right)(\varphi_k\Psi)\\
	=&\left[(\delta_{h}[\nabla_w^\perp\Psi]\cdot\nabla_w)\left((1+\alpha^2)\partial_{X}-\alpha\partial_{Y}\right)\Psi\right]\varphi_k+(\delta_{h}[\nabla_w^\perp\Psi]\cdot\nabla_w)\left(\left[\left((1+\alpha^2)\partial_{X}-\alpha\partial_{Y}\right)\varphi_k\right]\Psi\right)\\
	&+\left[(\delta_{h}[\nabla_w^\perp\Psi]\cdot\nabla_w)\varphi_k\right]\left[\left((1+\alpha^2)\partial_{X}-\alpha\partial_{Y}\right)\Psi\right].
	\end{split}
	\end{equation}
	Note that the second and third terms above are easier to deal with as they have fewer derivatives of $\Psi$. Indeed, it consists of the product of two terms: the first one involves $\delta_{h}[\nabla_w^\perp\Psi]$ while the second one contains at most a first order derivative of $\Psi$ and is multiplied by a term with compact support. Accordingly, the latter belongs to $H^{1/2}(\sigma_{k})$. Using that for $f\in H^{1/2}(\sigma_{k})$ and $g\in H^{3/2}_{0}(\sigma_{k})$, we have $g\in W^{1/2,\infty}$ and $fg\in H^{1/2}(\sigma_{k})$. Thus, 
	\begin{equation}\label{estim-sobolev0}
	\|fg\|_{H^{1/2}(\sigma_{k})}\leq C \|f\|_{H^{1/2}(\sigma_{k})}\|g\|_{H^{3/2}(\sigma_{k})}.\quad
	\end{equation}
	For $f_1\in H^{1/2}_{\mathrm{\mathrm{uloc}}}(\sigma^M_w)$, $f_2\in H^{1/2}_{\mathrm{\mathrm{uloc}}}(\sigma^M_w)$, $\varphi_k\in C_{c}^{\infty}(\sigma_{k})$, and $g\in H^{3/2}_{0}(\sigma_{k})$, using \eqref{lemma-cool} gives
		\begin{equation}
		\begin{split}\label{estim-sobolev}  
		\left|\int_{\mathbb{R}}(\delta_{h}f_{1})f_{2}\varphi_k g\right|&=  \left|\int_{\mathbb{R}}(\delta_{h}f_{1}\bar\varphi_k)f_{2}\varphi_k g\right|\\
		&\leq \|\delta_{h}(f_{1}\bar\varphi_k)\|_{H^{-1/2}(\mathbb{R})}\|f_{2}\varphi_k g\|_{H^{1/2}(\mathbb{R})}\\
		&\leq C \|f_{1}\bar\varphi_k\|_{H^{1/2}(\mathbb{R})}\|f_{2}\|_{H^{1/2}_{\mathrm{\mathrm{uloc}}}(\sigma_{k})}\|\varphi_k\|_{L^{\infty}(\mathbb{R})} \|g\|_{H^{3/2}(\sigma_{k})}\\
		&\leq C \|f_{1}\|_{H^{1/2}_{\mathrm{\mathrm{uloc}}}(\sigma_{k})}\|f_{2}\|_{H^{1/2}_{\mathrm{\mathrm{uloc}}}(\sigma_{k})}\|\varphi_k\|_{W^{1,\infty}(\mathbb{R})} \|g\|_{H^{3/2}(\sigma_{k})},
		\end{split}
		\end{equation}
where $\bar\varphi_k$ belongs to $C^{\infty}_{0}(\mathbb{R})$ and satifies $\text{Supp}(\varphi_k)\subset\text{Supp}(\bar\varphi_k)$. This implies that
		\begin{equation}\label{estim-sobolev1}
		\|(\delta_{h}f_{1})f_{2}\varphi_k\|_{H^{-3/2}(\sigma_{k})}\leq C \|f_{1}\|_{H^{1/2}_{\mathrm{\mathrm{uloc}}}(\sigma_{k})}\|f_{2}\|_{H^{1/2}_{\mathrm{\mathrm{uloc}}}(\sigma_{k})}\|\varphi_k\|_{W^{1,\infty}(\mathbb{R})}.
		\end{equation}
	Hence,
	\begin{equation}\label{estim-hard-r3-1}
	\begin{split}
	\|&(\delta_{h}[\nabla_w^\perp\Psi]\cdot\nabla_w)\left(\left[\left((1+\alpha^2)\partial_{X}-\alpha\partial_{Y}\right)\varphi_k\right]\Psi\right)\\
	&
	+\left[(\delta_{h}[\nabla_w^\perp\Psi]\cdot\nabla_w)\varphi_k\right]\left[\left((1+\alpha^2)\partial_{X}-\alpha\partial_{Y}\right)\Psi\right]\|_{H^{-3/2}(\sigma_{k})}\\
	&\leq  C  (\|\Psi\|_{H^{1/2}_{\mathrm{\mathrm{uloc}}}(\sigma^M_w)}+\|\partial_{X}\Psi\|_{H^{-1/2}_{\mathrm{\mathrm{uloc}}}(\sigma^M_w)})^{2}\|\varphi_k\|_{W^{2,\infty}}.
	\end{split}
	\end{equation}
	Now, the first term in the r.h.s of \eqref{estim-hard-r3} satisfies
	\begin{equation}\label{estim-r3-3}
	\begin{split}
	&\left[(\delta_{h}[\nabla_w^\perp\Psi]\cdot\nabla_w)\left((1+\alpha^2)\partial_{X}-\alpha\partial_{Y}\right)\Psi\right]\varphi_k\\
	&\quad=\left[\varphi_k([\nabla_w^\perp\delta_{h}\Psi]\cdot\nabla_w)\left((1+\alpha^2)\partial_{X}-\alpha\partial_{Y}\right)\Psi\right]\\
	&\quad=\left[([\nabla_w^\perp\delta_{h}(\varphi_k\Psi)]\cdot\nabla_w)\left((1+\alpha^2)\partial_{X}-\alpha\partial_{Y}\right)\Psi\right]-\left[([(\nabla_w^\perp\varphi_k)\delta_{h}(\Psi)]\cdot\nabla_w)\left((1+\alpha^2)\partial_{X}-\alpha\partial_{Y}\right)\Psi\right]\\
	&\quad-\left[([\nabla_w^\perp[\delta_{h}(\varphi_k)\tau_{h}\Psi]]\cdot\nabla_w)\left((1+\alpha^2)\partial_{X}-\alpha\partial_{Y}\right)\Psi\right],
	\end{split}
	\end{equation}
	where $\tau_{h}$ is the translation of amplitude $h$ on the Y axis. Once again, the second and third terms above are easier to deal with as they include fewer derivatives and can be bounded similarly to \eqref{estim-hard-r3-1}. Now, we are left with analyzing the first term. We have
	\begin{equation*}
	\left[([\nabla_w^\perp\delta_{h}(\varphi_k\Psi)]\cdot\nabla_w)\left((1+\alpha^2)\partial_{X}-\alpha\partial_{Y}\right)\Psi\right]=\left[([\nabla_w^\perp\psi_{h}\cdot\nabla_w)\left((1+\alpha^2)\partial_{X}-\alpha\partial_{Y}\right)\Psi\right].
	\end{equation*}
	Similarly to the case of \eqref{estim-hard-r3-1}, using that $\psi_{h}\in H^{3/2}(\sigma_{k})$ and $\partial_{X}\psi_{h}\in H^{1/2}(\sigma_{k})$ yields
	\begin{equation}\label{estim-r3-4}
	\begin{split}
	&\|\left[([\nabla_w^\perp\delta_{h}(\varphi_k\Psi)]\cdot\nabla_w)\left((1+\alpha^2)\partial_{X}-\alpha\partial_{Y}\right)\Psi\right]\|_{H^{-3/2}(\sigma_{k})}\\
	&=\|\left[([\nabla_w^\perp\psi_{h}\cdot\nabla_w)\left((1+\alpha^2)\partial_{X}-\alpha\partial_{Y}\right)\Psi\right]\|_{H^{-3/2}(\sigma_{k})}\\
	&\leq C(\|\psi_{h}\|_{H^{3/2}(\sigma_{k})}+\|\partial_{X}\psi_{h}\|_{H^{1/2}(\sigma_{k})})(\|\Psi\|_{H^{3/2}_{\mathrm{\mathrm{uloc}}}(\sigma^M_w)}+\|\partial_{X}\Psi\|_{H^{1/2}_{\mathrm{\mathrm{uloc}}}(\sigma^M_w)}+\|\rho^{2}_{h}\|_{H^{-1/2}_{\mathrm{\mathrm{uloc}}}(\sigma^M_w)}).
	\end{split}
	\end{equation}
	
		Finally, taking $f\in H^{-1/2}_{\mathrm{\mathrm{uloc}}}(\sigma_{k})$, $l\in C^{\infty}_c(\sigma_{k})$, and $g\in H^{3/2}(\sigma_{k})$ we have
		\begin{equation*}
		\begin{split}
		\left|\int_{\mathbb{R}}fgl \right|&\leq \|f\|_{H^{-1/2}_{\mathrm{\mathrm{uloc}}}(\sigma_{k})} \|gl\|_{H^{1/2}(\sigma_{k})}\\
		&\leq\|f\|_{H^{-1/2}_{\mathrm{\mathrm{uloc}}}(\sigma_{k})}
		\|g\|_{H^{3/2}(\sigma_{k})}\|l\|_{W^{2,\infty}(\sigma_{k})}.
		\end{split}
		\end{equation*}
		Then, we infer
		\begin{equation*}
		\|fl\|_{H^{-3/2}(\sigma_{k})}\leq \|f\|_{H^{-1/2}_{\mathrm{\mathrm{uloc}}}(\sigma_{k})}\|l\|_{W^{2,\infty}(\sigma_{k})}.
		\end{equation*}
		From \eqref{bc-x-m} follows that
		\begin{equation}\label{estim-r3-7}
		\begin{split}
		\|\Delta_{w}\Psi\partial_{Y}\varphi_k\|_{H^{-3/2}(\sigma_{k})}&\leq C \|\Delta_{w}\Psi\|_{H^{-1/2}_{\mathrm{\mathrm{uloc}}}(\sigma_{k})}\|\varphi_k\|_{W^{2,\infty}(\sigma_{k})}\\
		&\leq C\left(\|\Psi\|_{H^{2}_{\mathrm{\mathrm{uloc}}}(\omega^b_w)}+\|\rho_{2}\|_{H^{-1/2}_{\mathrm{\mathrm{uloc}}}(\sigma_{k})}\right)
		\|\varphi_k\|_{W^{2,\infty}(\sigma_{k})}.
		\end{split}
		\end{equation}
	
	Therefore, from \eqref{estim-r3-0-1}, \eqref{estim-r3-1}, \eqref{estim-r3-2}, \eqref{estim-r3-3}, \eqref{estim-hard-r3-1}, \eqref{estim-r3-4}, \eqref{estim-r3-7} and applying the Trace theorem
	\begin{equation*}
	\|\rho_{3}^{h}\|_{H^{-3/2}(\sigma_{k})}\leq C(1+\|\varphi_k\|_{W^{4,\infty}})(\|\rho_{2}\|_{H^{-1/2}_{\mathrm{\mathrm{uloc}}}(\sigma^M_w)}+ \|\Psi\|_{H^{2}_{\mathrm{\mathrm{uloc}}}}(1+\|\psi_{h}\|_{H^{2}_{\mathrm{\mathrm{uloc}}}}+\|\Psi\|_{H^{2}_{\mathrm{\mathrm{uloc}}}})),
	\end{equation*}
	which is the desired estimate.
	\paragraph{Estimate of $f_{h}$.} Let us recall the expression of $f_{h}$,
	\begin{equation}\label{estim-f}
	\begin{split}
	f_h&=-\delta_{h}\left[4\varphi_k^{(3)}\left(\partial_{Y}-\alpha\partial_{X}\right)\Psi+6\varphi_k''\Delta_w\Psi+4\varphi_k'\left(\partial_{Y}-\alpha\partial_{X}\right)\Delta_w\Psi+\varphi_k^{(4)}\Psi\right.\\
	&\hspace{.6in}\left.-\partial_X\Psi\left(3\varphi_k'\Delta_w\Psi+\varphi_k^{(3)}\Psi\right)-3\varphi_k''\Delta_w\Psi\right]-\delta_{h}\left[(\nabla_w^\perp\Psi\cdot\nabla_w)\Delta_w(\varphi_k\Psi)\right].
	\end{split}
	\end{equation}
	Let us analyze the first (large) term of \eqref{estim-f}. Note that all the terms enclosed in brackets that are not quadratic in $\Psi$ involve at most a third order derivative of $\Psi$ and are all proportional to a derivative of $\varphi_k$ (hence compactly supported). This implies that 
	\begin{equation}\label{estim-f-2}
	\begin{split}
	&\left\|        \delta_{h}\left[4\varphi_k^{(3)}\left(\partial_{Y}-\alpha\partial_{X}\right)\Psi+6\varphi_k''\Delta_w\Psi+4\varphi_k'\left(\partial_{Y}-\alpha\partial_{X}\right)\Delta_w\Psi+\varphi_k^{(4)}\Psi\right]\right\|_{H^{-2}(\omega^b_w)}\\
	&\leq C\left\|4\varphi_k^{(3)}\left(\partial_{Y}-\alpha\partial_{X}\right)\Psi+6\varphi_k''\Delta_w\Psi+4\varphi_k'\left(\partial_{Y}-\alpha\partial_{X}\right)\Delta_w\Psi+\varphi_k^{(4)}\Psi\right\|_{H^{-1}(\omega^b_w)}\\
	&\leq C\left\|\Psi\right\|_{H^{2}_{\mathrm{\mathrm{uloc}}}}\|\varphi_k\|_{W^{4,\infty}(\omega^b_w)}.
	\end{split}
	\end{equation}
	The quadratic terms between the brackets in the second term of \eqref{estim-f} can be treated in an analogous manner
	\begin{equation}\label{estim-f-3}
	\begin{split}
	&\left\|    \delta_{h}\left[-\partial_X\Psi\left(3\varphi_k'\Delta_w\Psi+\varphi_k^{(3)}\Psi\right)-3\varphi_k''\Delta_w\Psi\right]\right\|_{H^{-2}(\omega^b_w)}\\
	&\leq 
	C\left\|  
	-\partial_X\Psi\left(3\varphi_k'\Delta_w\Psi+\varphi_k^{(3)}\Psi\right)-3\varphi_k''\Delta_w\Psi\right\|_{H^{-1}(\omega^b_w)}\\
	&\leq 	C\left\|\Psi\right\|^2_{H^{2}_{\mathrm{\mathrm{uloc}}}}\|\varphi_k\|_{W^{3,\infty}(\omega^b_w)}.
	\end{split}
	\end{equation}
	On the last term on the r.h.s of \eqref{estim-f}, we use the property
		\begin{equation*}
		\delta_{h}\left[(\left(\nabla_w^\perp\Psi\right)\cdot\nabla_w)\Delta_w(\varphi_k\Psi)\right] = \delta_{h}\left(\nabla_{w}\cdot\left(\Delta_w(\varphi_k\Psi)\nabla_w^\perp\Psi\right)\right),
		\end{equation*}
		by virtue of $\nabla_{w}\cdot\nabla_{w}^{\perp}(\delta_{h}\Psi)=0$. Therefore,
		\begin{equation*}
		\|\delta_{h}\left[(\left(\nabla_w^\perp\Psi\right)\cdot\nabla_w)\Delta_w(\varphi_k\Psi)\right] \|_{H^{-2}(\omega^b_w)}\leq C       \|\delta_{h}\left(\Delta_w(\varphi_k\Psi)\nabla_w^\perp\Psi\right) \|_{H^{-1}(\omega^b_w)},
		\end{equation*}
		and it suffices to estimate $\delta_{h}\left(\Delta_w(\varphi_k\Psi)\left(\nabla_w^\perp\Psi\right)\right) $ is the $H^{-1}(\omega^b_w)$ norm. We get
		\begin{equation}\label{estim-hard-f-0}
		\begin{split}
		\delta_{h}\left(\Delta_w(\varphi_k\Psi)\left(\nabla_w^\perp\Psi\right)\right)&=
		\Delta_w(\varphi_k\Psi)\delta_{h}\left(\nabla_w^\perp\Psi\right)+
		\delta_{h}\Delta_w(\varphi_k\Psi)\tau_{h}\left(\nabla_w^\perp\Psi\right)\\
		&=\Delta_w(\varphi_k\Psi)\delta_{h}\left(\nabla_w^\perp\Psi\right)+
		\Delta_w(\psi_{h})\tau_{h}\left(\nabla_w^\perp\Psi\right).
		\end{split}
		\end{equation}
		The second term satisfies
		\begin{equation}\label{estim-f-plus}
		\begin{split}
		\|
		\Delta_w(\psi_{h})\tau_{h}\left(\nabla_w^\perp\Psi\right)
		\|_{H^{-1}(\omega^b_w)}
		\leq C \|\psi_{h}\|_{H^{2}(\omega^b_w)}\|\Psi\|_{H^{2}_{\mathrm{\mathrm{uloc}}}(\omega)}.
		\end{split}
		\end{equation}
		The above estimate is obtained from applying the following result: {\it let $f\in H^{1}_{\mathrm{\mathrm{uloc}}}(\omega^b_w)$, $g\in L^{2}(\omega^b_w)$ with compact support, and $l\in H^{1}(\omega^b_w)$,
			\begin{equation}\label{Sobolev-1}
			\begin{split}
			\left|\int_{\mathbb{R}} f g l\right|&\leq \|g\|_{L^{2}(\omega^b_w)}\|f l\|_{L^{2}_{\mathrm{\mathrm{uloc}}}(\omega^b_w)}\leq C\|g\|_{L^{2}(\omega^b_w)}\|f\|_{L^{4}_{\mathrm{\mathrm{uloc}}}(\omega^b_w)}\|l\|_{L^{4}(\omega^b_w)},\\
			&\leq C\|g\|_{L^{2}(\omega^b_w)}\|f\|_{H^{1}_{\mathrm{\mathrm{uloc}}}(\omega^b_w)}\|l\|_{H^{1}(\omega^b_w)},
			\end{split}
			\end{equation}
			hence,
			\begin{equation}\label{Sobolev-2}
			\|fg\|_{H^{-1}(\omega^b_w)}\leq C\|g\|_{L^{2}(\omega^b_w)}\|f\|_{H^{1}_{\mathrm{\mathrm{uloc}}}(\omega^b_w)}.
			\end{equation}}
			
		The first term in \eqref{estim-hard-f-0} satisfies
	\begin{equation}\label{estim-hard-f}
	\begin{split}
	\Delta_w(\varphi_k\Psi)\delta_{h}\left(\nabla_w^\perp\Psi\right) =\varphi_k\Delta_w(\Psi)\delta_{h}\left(\nabla_w^\perp\Psi\right)
	+F(\delta_{h}\Psi,\Psi,\varphi_k),
	\end{split}
	\end{equation}
	where $F(\delta_{h}\Psi,\Psi,\varphi_k)$ is a linear combination of functions depending linearly of $\Psi$,  $\partial_{h}\Psi$ and $\varphi_k$, compactly supported and involving at most a first order derivative of $\delta_{h}\Psi$, a first order derivative of $\Psi$ and a second order derivative in $\varphi_k$. Therefore, proceeding similarly as in \eqref{estim-sobolev} but in $H^{1}(\omega^b_w)$ norm, we have
	\begin{equation}\label{estim-f-4}
	\begin{split}
	\|F(\delta_{h}\Psi,\Psi,\varphi_k)\|_{H^{-1}(\omega^b_w)}&\leq C\|\delta_{h}\Psi\|_{H^{1}_{\mathrm{\mathrm{uloc}}}}\|\Psi\|_{H^{2}_{\mathrm{\mathrm{uloc}}}}\|\varphi_k\|_{W^{2,\infty}}\\
	&\leq C\|\Psi\|_{H^{2}_{\mathrm{\mathrm{uloc}}}}^{2}\|\varphi_k\|_{W^{2,\infty}}.
	\end{split}
	\end{equation}
	The first term of \eqref{estim-hard-f} can be estimated noting that $\delta_{h}(fg) = (\delta_{h}f)g+\delta_{h}g\tau_{h}f$ where $\tau_{h}$ denotes the translation operator of size $h$ along the $Y$-axis. This gives
	\begin{equation*}
	\begin{split}
	&\varphi_k\Delta_w(\Psi)\delta_{h}\left(\nabla_w^\perp\Psi\right)=
	\Delta_w(\Psi)\delta_{h}\left(\nabla_w^\perp(\Psi\varphi_k)\right)\\
	&-
	\Delta_w(\Psi)\delta_{h}\left(\Psi\nabla_w^\perp\varphi_k\right)
	-\Delta_w(\Psi)\left(\delta_{h}\varphi_k\tau_{h}(\nabla_w^\perp\Psi)\right)\\
	&=\Delta_w(\Psi)\left(\nabla_w^\perp\psi_{h}\right)-
	\Delta_w(\Psi)\delta_{h}\left(\Psi\nabla_w^\perp\varphi_k\right)
	-\Delta_w(\Psi)\left(\delta_{h}\varphi_k\tau_{h}(\nabla_w^\perp\Psi)\right).
	\end{split}
	\end{equation*}
	The last two terms are now easier to bound. Using again \eqref{Sobolev-1}-\eqref{Sobolev-2} for a function $f\in H^{1}(\omega^b_w)$ with compact support and $g\in L^{2}_{\mathrm{\mathrm{uloc}}}(\omega^b_w)$ leads to
		\begin{equation}\label{estim-f-5-b}
		\begin{split}
		\|\varphi_k\Delta_w(\Psi)\delta_{h}\left(\nabla_w^\perp\Psi\right)\|_{H^{-1}_{\mathrm{\mathrm{uloc}}}(\omega^b_w)}&\leq C\|\Delta_{w}\Psi\|_{L^{2}_{\mathrm{\mathrm{uloc}}}(\omega^b_w)}\|\nabla_{w}^{\perp}\psi_{h}\|_{H^{1}(\omega^b_w)}
		+\|\varphi_k\|_{W^{2,\infty}(\omega^b_w)}\|\Psi\|^{2}_{H^{2}(\omega^b_w)}\\
		&\leq C \|\Psi\|_{H^{2}_{\mathrm{\mathrm{uloc}}}(\omega^b_w)}\left(\|\psi_{h}\|_{H^{2}(\omega^b_w)}+\|\varphi_k\|_{W^{2,\infty}(\omega^b_w)} \|\Psi\|_{H^{2}_{\mathrm{\mathrm{uloc}}}(\omega^b_w)}\right).
		\end{split}
		\end{equation}
	
	Therefore, from equations \eqref{estim-f-2}, \eqref{estim-f-3}, \eqref{estim-f-plus}, \eqref{estim-f-4},
	and \eqref{estim-f-5-b},
	we have 
	\begin{equation*}
	\|f_{h}\|_{H^{-2}(\omega^b_w)}\leq C(\|\Psi\|_{H^{2}_{\mathrm{\mathrm{uloc}}}}(1+\|\Psi\|_{H^{2}_{\mathrm{\mathrm{uloc}}}}+\|\psi_{h}\|_{H^{2}_{\mathrm{\mathrm{uloc}}}}))(1+\|\varphi_k\|_{W^{4,\infty}(\omega^b_w)}).
	\end{equation*}
	\paragraph{Estimate of $\rho_{2}^{h}$.}
	We have
	\begin{equation*}
	\begin{split}
	\rho_{2}^{h} =& \delta_{h}[\varphi_k\rho_2]+(1+\alpha^2)\delta_h\left[2\partial_{Y}\varphi_k\left(\partial_{Y}-\alpha\partial_{X}\right)\Psi+\partial_{Y}^2\varphi_k\Psi\right]\\
	=& \delta_{h}[\varphi_k\rho_2]+(1+\alpha^2)\left[2\partial_{Y}\varphi_k\left(\partial_{Y}-\alpha\partial_{X}\right)\psi_{h}+\partial_{Y}^2\varphi_k\psi_{h}\right]\\
	&+
	(1+\alpha^2)\left[2\partial_{Y}\delta_{h}\varphi_k\left(\partial_{Y}-\alpha\partial_{X}\right)\Psi+\partial_{Y}^2\delta_{h}\varphi_k\Psi\right].
	\end{split}
	\end{equation*}
	Since $\|\delta_{h}f\|_{H^{p}(\sigma_{k})}\leq C\|f\|_{H^{p+1}(\sigma_{k})}$ for $h$ sufficiently small, where $C$ is independent of $h$, we have
	\begin{equation*}
	\begin{split}
	\|\rho_{2}^{h} \|_{H^{-1/2}(\sigma_{k})}\leq& C\left( \|\rho_{2}\|_{H^{1/2}_{\mathrm{\mathrm{uloc}}}(\sigma^M_w)}\|\varphi_k\|_{W^{1,\infty}}+\|\varphi_k\|_{W^{1,\infty}}\|\Psi\|_{H^{3/2}(\sigma_{k})}\right.\\
	&\left.+\|\varphi_k\|_{W^{3,\infty}}\|\Psi\|_{H^{-1/2}(\sigma_{k})}+\|\varphi_k\|_{W^{2,\infty}}\|\Psi\|_{H^{1/2}(\sigma_{k})}\right)\\
	\leq& C\left(\|\rho_{2}\|_{H^{1/2}_{\mathrm{\mathrm{uloc}}}(\sigma^M_w)}\|\varphi_k\|_{W^{1,\infty}}+\|\varphi_k\|_{W^{3,\infty}}\|\Psi\|_{H^{3/2}(\sigma_{k})}\right).
	\end{split}
	\end{equation*}
	Therefore, using the Trace theorem
	\begin{equation*}
	\|\rho_{2}^{h} \|_{H^{-1/2}(\sigma_{k})}\leq C\left( \|\rho_{2}\|_{H^{1/2}_{\mathrm{\mathrm{uloc}}}(\sigma^M_w)}+\|\Psi\|_{H^{2}_{\mathrm{\mathrm{uloc}}}}\right)\|\varphi_k\|_{W^{3,\infty}},
	\end{equation*}
	which is the desired estimate.
\end{proof}

\paragraph{Higher interior regularity.}
We intend to iterate the argument used in the case $m=1$, thereby deducing
our solution in various higher Sobolev spaces. As before, we start with the interior regularity analysis.

\label{sec:interior-higher}
\
\begin{proposition}\label{t:interior-higher}
	Let $m$ be a nonnegative integer and  $\Psi\in H^2_{\mathrm{\mathrm{uloc}}}(\omega^b_w)$ be a solution of the PDE
	\begin{equation*}
	\partial_X \Psi+(\nabla_w^\perp \Psi\cdot\nabla_w)\Delta_w \Psi-\Delta^2_w \Psi=0\quad in\: \omega^b_w.
	\end{equation*}
	Then, $\Psi\in H^{m+2}_{\mathrm{\mathrm{uloc}}}(\omega^b_w)$ and for each $\omega '\subset\subset\omega$
	\begin{equation*}
	\| \Psi\|_{H^{2+m}(\omega')}\leq c(\omega') \| \Psi\|_{H^{2}_{\mathrm{\mathrm{uloc}}}(\omega^b_w) }
	\end{equation*}
\end{proposition}

This proposition is a direct consequence of Theorem \ref{theorem_firstlocal}.

\paragraph{Regularity up to the boundary for $m>1$.} We now complete the proof of Theorem \ref{Lemma_14} for $m\geq 2$ using an induction argument. Taking $m=1$ as the base case, we assume that the theorem holds up to $m\in\mathbb{N}^{*}$ and, then, prove that it is true as well for $m+1$. Again we localize the solution near a fixed $k\in\mathbb{Z}$ using $\tilde \varphi_k$ and apply a finite difference operator $\delta_{h}$ to show that 
\begin{equation}\label{prin-estimate-higher}
\|\delta_{h}(\varphi_k \Psi)\|_{H^{m+2}(\omega^{b}_{w})}\leq C(\varphi_k) (\|\rho_{2}\|_{H^{m+1/2}_{\mathrm{\mathrm{uloc}}}}+\|\rho_{2}\|_{H^{m-1/2}_{\mathrm{\mathrm{uloc}}}}).
\end{equation}
Therefore, $\tilde \varphi_k\Psi$ belongs to $H^{m+3}(\omega^{b}_{w})$ if $\rho_{2}\in H^{m+1/2}_{\mathrm{\mathrm{uloc}}}(\omega^{b}_{w})$ and $\rho_{3}\in H^{m-1/2}_{\mathrm{\mathrm{uloc}}}(\omega^{b}_{w})$ which are exactly the hypotheses of Theorem \ref{Lemma_14} when adding one degree of regularity to $m$. From the interior regularity result given earlier, we have that $\Psi\in H^{m+3}_{\mathrm{\mathrm{uloc}}}(\omega^{b}_{w})$. Hence, Theorem \ref{Lemma_14} indeed holds for $m+1$, which concludes the induction.

The proof of estimate \eqref{prin-estimate-higher} follows the same ideas as the ones presented for the case $m = 1$. First, we have the following result
\begin{lemma}\label{lem:elliptic-reg-higher}
	Consider the linear problem
	\begin{align}
	\partial_{X}z-\Delta^{2}_w z&=f_{1}\quad\textrm{in}\;\omega_k,\nonumber\\
	(1+\alpha^2)\Delta_w z&=f_{2},\label{linearpb-higher}\\
	\left[(1+\alpha^2)\partial_{X}-2\alpha\partial_{Y}\right]\Delta_w z -\frac{z}{2}\Big|_{\sigma_{k}}&=f_{3},\nonumber\\
	z=\partial_Xz&=0,\;\text{on}\;\partial\omega_{k}\setminus\sigma_{k}\nonumber,
	\end{align}
	where $\omega_k=\omega^{b}\cap\mathrm{Supp}\varphi_k$, $\sigma_{k}=\partial\omega_{k}\cap\partial\omega^b_w$ and $\mathrm{n}$ stands for the unit outer normal vector at the boundary. If $f_{1}\in H^{m-2}(\omega_{k})$, $f_{2}\in H^{m-1/2}(\sigma_{k})$, $f_{3}\in H^{m-3/2}(\sigma_{k})$. This problem has a unique solution $z\in H^{m+2}(\omega_{k})$ and it satisfies the estimate
	\begin{equation}\label{ellip-reg-est-higher}
	\|z\|_{H^{m+2}(\omega_{k})}\leq C\left(\|f_{1}\|_{H^{m-2}(\omega_{k})}+\|f_{2}\|_{H^{m-1/2}(\sigma_{k})}+\|f_{3}\|_{H^{m-3/2}(\sigma_{k})}\right).
	\end{equation}
\end{lemma}
This lemma is a direct consequence of Lemma \ref{lem:elliptic-reg}. As before, we need the following result
\begin{lemma}
	\label{lem:estimate_terms-higher} Let $\sigma_k=\sigma^M_w\cap \mathrm{Supp}\varphi_k$ where $\varphi_k\in \mathcal{C}^\infty_0(\mathbb{R})$ and is equal to $1$ in a neighborhood of $\sigma^M_w$. Consider the functions $\Psi\in H^{m+2}_{\mathrm{\mathrm{uloc}}}(\omega)$, $\rho_2\in H^{m-1/2}_{\mathrm{\mathrm{uloc}}}(\mathbb{R})$ and $\rho_3\in H^{m-3/2}_{\mathrm{\mathrm{uloc}}}(\mathbb{R})$. Then, for any $h\in \mathbb{R}$ we have the estimates 
	\begin{equation*}
	\begin{split}
	\|\rho_{3}^{h}\|_{H^{m-3/2}(\sigma_{k})}&\leq C(1+\|\varphi_k\|_{W^{m+4,\infty}})(\|\rho_{3}\|_{H^{m-1/2}_{\mathrm{\mathrm{uloc}}}(\sigma^M_w)}+ \|\Psi\|_{H^{m+2}_{\mathrm{\mathrm{uloc}}}}(1+\|\psi_{h}\|_{H^{m+2}_{\mathrm{\mathrm{uloc}}}}+\|\Psi\|_{H^{m+2}_{\mathrm{\mathrm{uloc}}}})),\\
	\|\rho_{2}^{h} \|_{H^{m-1/2}(\sigma_{k})}&\leq C\left( \|\rho_{2}\|_{H^{m+1/2}_{\mathrm{\mathrm{uloc}}}(\sigma^M_w)}+\|\Psi\|_{H^{l}_{\mathrm{\mathrm{uloc}}}}\right)\|\varphi_k\|_{W^{l+3,\infty}},\\
	\|f_{h}\|_{H^{m-2}(\omega^b_w)}&\leq C(\|\Psi\|_{H^{m+2}_{\mathrm{\mathrm{uloc}}}}(1+\|\Psi\|_{H^{m+2}_{\mathrm{\mathrm{uloc}}}}+\|\psi_{h}\|_{H^{m+2}_{\mathrm{\mathrm{uloc}}}}+\|\Psi_{L}\|_{H^{m+2}_{\mathrm{\mathrm{uloc}}}}))(1+\|\varphi_k\|_{W^{l+1,\infty}(\omega^b_w)}),
	\end{split}
	\end{equation*}
	where $C$ is a constant depending only on the domain $\omega^b_w$.
	
\end{lemma}
Assume that Lemma \ref{lem:elliptic-reg-higher} and \ref{lem:estimate_terms-higher} holds. Then, for $l\in \mathbb{N}^{*}$ and $l\geq 2$, \eqref{prin-estimate-higher} follows directly by induction on $\{2,...,l\}$. We now show a way to adapt the proof of \ref{lem:estimate_terms-higher} to prove Lemma \ref{lem:estimate_terms-higher}.\\

\begin{proof}[Proof of Lemma \ref{lem:estimate_terms-higher}]
	Let assume that $\Psi \in H^{l}$ for $l\geq 3$ (the case $l=2$ was shown in Lemma \ref{lem:elliptic-reg}).
	The estimate of the linear terms are exactly the same as in the proof of \ref{lem:estimate_terms}. Therefore we are left with showing the regularity of the quadratic terms in $f_{h}$ and $\rho_{3}^{h}$. \paragraph{Estimate of $\rho_{3}^{h}$.}  We have
	\begin{equation*}
	\begin{split}
	\rho_3^{h}&=
	\delta_{h}[\varphi_k\rho_3]+\delta_{h}\left[\left((1+\alpha^2)\partial_{X}-2\alpha\partial_{Y}\right)\left(2\partial_{Y}\varphi_k\left(\partial_{Y}-\alpha\partial_{X}\right)\Psi+\partial_{Y}^2\varphi_k\Psi\right)\right.\\
	&\left.-\nabla_w^\perp\Psi\cdot\left(-\alpha\nabla_w\left(\Psi\partial_{Y}\varphi_k\right)+\left((1+\alpha^2)\partial_{X}-\alpha\partial_{Y}\right)\Psi(0,\partial_{Y})\varphi_k\right)-\left(\partial_{Y}\left(\tfrac{|\nabla_w^\perp\Psi|^2}{2}\right)\varphi_k\right)\right]\\
	&+(\delta_{h}[\nabla_w^\perp\Psi]\cdot\nabla_w)\left((1+\alpha^2)\partial_{X}-\alpha\partial_{Y}\right)(\varphi_k\Psi).
	\end{split}
	\end{equation*}
	All the terms can be estimated as before. In fact most of them are easier to compute as $H^{s}(\sigma_{k})$ is an algebra for $s>1/2$. For $f\in H^{l-3/2}(\sigma_{k})$ with compact support and $g\in H^{l-3/2}_{\mathrm{uloc}}(\sigma_{k})$, we have
		\begin{equation*}
		\|fg\|_{H^{l-3/2}(\sigma_{k})}\leq C\|f\|_{H^{l-3/2}(\sigma_{k})}\|g\|_{H^{l-3/2}(\sigma_{k})}.
		\end{equation*}
		which replaces \eqref{lemma-cool}.
		Let us illustrate the case of 
		$\delta_{h}\left[\partial_{Y}\left(\frac{|\nabla^{\perp}\Psi|^{2}}{2}\right)\varphi_k\right]$. As before, this term can be decomposed as in \eqref{estim-carre} and the difficulty consists of estimating
		\begin{equation*}
		\begin{split}
		\delta_{h}\left[\partial_{Y}\left(\frac{|\nabla^{\perp}\Psi|^{2}}{2}\varphi_k\right)\right]
		\end{split}
		\end{equation*}
		From the algebra property, it follows that
		\begin{equation*}
		\begin{split}
		\left\|\delta_{h}\left[\partial_{Y}\left(\frac{|\nabla^{\perp}\Psi|^{2}}{2}\varphi_k\right)\right]\right\|_{H^{l-7/2}(\sigma_{k})}&\leq C \||\nabla^{\perp}\Psi|^{2}\varphi_k\|_{H^{l-3/2}(\sigma_{k})}\\
		&\leq C\|\nabla^{\perp}\Psi\|^{2}_{H^{l-3/2}_{\mathrm{uloc}}(\sigma_{k})}\|\varphi_k\|_{H^{l-3/2}(\sigma_{k})}.\\
		&\leq C\left(\|\partial_{X}\Psi\|_{H^{l-3/2}_{\mathrm{uloc}}(\sigma_{k})}+\|\Psi\|_{H^{l-1/2}_{\mathrm{uloc}}(\sigma_{k})}\right)\|\varphi_k\|_{H^{l-3/2}(\sigma_{k})}.
		\end{split}
		\end{equation*}
		Note that such a direct estimate does not work for $l=2$, which explains why this term was treated previously in a lengthier way, also involving the norm of $\psi_{h}$.
	
	\paragraph{Estimate of $\rho_{3}^{h}$.} Concerning the estimate of $f_{h}$ the same can be done by assuming that $l\geq 3$ and noticing that $H^{2}(\omega^b_w)\in L^{\infty}(\omega^b_w)$. Therefore, for $f \in H^{l-1}(\omega^b_w)$, $g\in H^{l-2}(\omega^b_w)$ and $\theta\in C_{c}^{\infty}(\overline{\omega^b_w})$, we obtain
	\begin{equation*}
	\|f g \theta \|_{H^{l-3}(\omega^b_w)}\leq \|f\|_{H^{l-1}(\omega^b_w)}\|g\|_{H^{l-2}(\omega^b_w)}\|\theta \|_{W^{l,\infty}(\omega^b_w)}.
	\end{equation*}
	Noticing then that $H^{s}_{\mathrm{\mathrm{uloc}}}(\omega^b_w)$ is an algebra as soon as $s>1$, we have
	\begin{equation*}
	\begin{split}
	\|\left[(-\partial_{Y}+\alpha\partial_X)\psi_{h}\partial_{X} +\partial_{X}\psi_{h}(\partial_{Y}-\alpha\partial_X))\right]\Delta_{w}\Psi\|_{H^{-2}(\omega^b_w)}\leq C\|\psi_{h}\|_{H^{2}(\omega^b_w)}\|\Psi\|_{H^{2}_{\mathrm{\mathrm{uloc}}}}^{2}.
	\end{split}
	\end{equation*}
	all other terms of $f_{h}$ can be dealt with exactly as in the proof of Lemma \ref{lem:estimate_terms} and this ends the proof of Lemma \ref{lem:estimate_terms-higher}.
\end{proof}

This concludes the proof of Theorem \ref{Lemma_14}.
	\end{proof}
     \subsection{Connecting the solutions at the artificial boundary}\label{s:join}
In previous sections, we showed the existence and uniqueness of the solution on both the half-space and the bumped domain. It remains to connect both results at the artificial boundary $X=M$. This local analysis is based on the implicit function theorem and will allow us to establish the solution of the problem on the whole boundary layer domain.

On account of Theorem \ref{Lemma_14}, we know the solution $\Psi^{-}$ of \eqref{eq_left1} is well-defined and satisfies the estimate
\begin{equation*}
\left\|\Psi^{-}\big|_{X=M}\right\|_{H^{m+3/2}_{\mathrm{uloc}}}+\left\|\partial_X\Psi^{-}\big|_{X=M}\right\|_{H^{m+1/2}_{\mathrm{uloc}}}\leq C\left(\|\phi\|_{W^{2,\infty}}+\|\rho_2\|_{ H^{m-1/2}_{\mathrm{uloc}}}+\|\rho_3\|_{ H^{m-3/2}_{\mathrm{uloc}}}\right),
\end{equation*}
for some positive constant $C$ depending on $\alpha$, $M$ and $\|\gamma_w\|_{W^{2,\infty}}$. The existence and uniqueness of the solution $\Psi^{+}$ of (\ref{pb:halfspace_nl_w})  with $\Psi^{+}\big|_{X=M}=\Psi^{-}\big|_{X=M}$ and $\partial_{X}\Psi^{+}\big|_{X=M}=\partial_{X}\Psi^{-}\big|_{X=M}$ is guaranteed by Proposition \ref{prop:fix}  when $C\left(\|\phi\|_{W^{2,\infty}}+\|\rho_2\|_{ H^{m-1/2}_{\mathrm{uloc}}}+\|\rho_3\|_{ H^{m-3/2}_{\mathrm{uloc}}}\right)$  smaller than a certain quantity $\delta_0>0$.

Furthermore, $\mathcal{A}_2[\Psi^{+}\big|_{X=M},\partial_X\Psi^{+}\big|_{X=M}]$ and $\mathcal{A}_3[\Psi^{+}\big|_{X=M},\partial_X\Psi^{+}\big|_{X=M}]$ belong to $H^{m-1/2}_{\mathrm{uloc}}$ and $H^{m-3/2}_{\mathrm{uloc}}$, respectively. Thus, the mapping $$\mathcal{F}:W^{2,\infty}(\omega_w)\times H^{m-1/2}_{\mathrm{uloc}}(\mathbb{R})\times H^{m-3/2}_{\mathrm{uloc}}(\mathbb{R})\rightarrow H^{m-1/2}_{\mathrm{uloc}}(\mathbb{R})\times H^{m-3/2}_{\mathrm{uloc}}(\mathbb{R})$$ given by
\begin{equation*}
\begin{array}{rcl}
\mathcal{F}(\phi,\rho_2,\rho_3)&=&\left(\mathcal{A}_2[\Psi^{+}\big|_{X=M},\partial_X\Psi^{+}\big|_{X=M}]-\rho_2,\mathcal{A}_3[\Psi^{+}\big|_{X=M},\partial_X\Psi^{+}\big|_{X=M}]-\rho_3\right)
\end{array}
\end{equation*}
is well-defined. Note that, when $\phi, \rho_2$ and $\rho_3$ are simultaneously equal to zero, $\mathcal{F}(0,0,0)=0$ as a direct consequence of Theorem \ref{Lemma_14}.  The main idea consists in applying the implicit function theorem $\mathcal{F}$ to find a solution of $\mathcal{F} (\phi, \rho_2, \rho_3 ) = 0$ for $\phi$ in a vicinity of zero. Thus, we need to first check the following hypotheses:
\begin{itemize}
	\item $\mathcal{F}$ is continuously Fréchet differentiable;
	\item $(v_1,v_2)\mapsto d\mathcal{F}(0,0,0)(0,v_1,v_2)$ is a Banach space isomorphism on $H^{m-1/2}_{\mathrm{uloc}}(\mathbb{R})\times H^{m-3/2}_{\mathrm{uloc}}(\mathbb{R})$, where $d$ is the differential with respect to $\rho_2$ and $\rho_3$.
\end{itemize}

\textit{$F$ is a $\mathcal{C}^{1}$ mapping in a neighborhood of zero:} Let $(\phi^{0},\rho_2^{0}, \rho_3^{0})$ and $(\phi,\rho_2, \rho_3)$ be in a vicinity of zero in the sense of the functional norm of $H^{m+3/2}_{\mathrm{uloc}}(\mathbb{R})\times H^{m-1/2}_{\mathrm{uloc}}(\mathbb{R})\times H^{m-3/2}_{\mathrm{uloc}}(\mathbb{R})$. Let $\Psi_0^{\pm}$ and $\Psi^{\pm}$ be  solutions of (\ref{pb:halfspace_nl_w}), (\ref{eq_left1}) associated to $(\phi^{0},\rho_2^{0}, \rho_3^{0})$ and $(\phi+\phi^{0},\rho_2+\rho_2^{0}, \rho_3+\rho_3^{0})$ respectively. We introduce the functions $\psi^{\pm}=\Psi^{\pm}-\Psi^{\pm}_0$. Then, easy computations show that $\psi^{-}$ satisfies
\begin{eqnarray}\label{problem_c1}
\partial_X\psi^{-}-\Delta_w^{2}\psi^{-}+Q_w(\psi^{-},\psi^{-}+\Psi^{-}_0)+Q_w(\Psi^{-}_0,\psi^{-})&=&0,\quad\textrm{in}\quad\omega_w^b\setminus\sigma_w\nonumber\\
\left[\psi^{-}\right]\Big|_{\sigma_w}=\phi,\quad
\left[\partial_X^{k}\psi^{-}\right]\Big|_{\sigma_w}&=&0,\quad k=1,2,3\nonumber\\
(1+\alpha^2)\Delta_w\psi^{-}\Big|_{\sigma_w^M}&=&\rho_2,\nonumber\\
\left[-\left((1+\alpha^2)\partial_{X}-2\alpha\partial_{Y}\right)\Delta_w\psi^-+\partial_{Y}\left(\displaystyle\frac{\nabla_w^{\perp}\psi^{-}\nabla_w^{\perp}(\psi^{-}+\Psi_0^-)}{2}\right)\right.\hspace{7mm}&&\\
+(\nabla_w^{\perp}\psi^{-}\cdot\nabla_w)\left((1+\alpha^2)\partial_{X}-\alpha\partial_{Y}\right)
(\psi^{-}+\Psi_0^-)\hspace{7mm}&&\nonumber\\
\left.+(\nabla_w^{\perp}\Psi_0^-\cdot\nabla_w)\left((1+\alpha^2)\partial_{X}-\alpha\partial_{Y}\right)\psi^{-}+\displaystyle\frac{\psi^-}{2}\right]\Bigg|_{\sigma_w^M}&=&\rho_3,\nonumber\\
\psi^{-}\big|_{X=-\gamma(Y)}=\partial _{n}\psi^{-}\big|_{X=-\gamma(Y)}&=&0.\nonumber
\end{eqnarray}
Similar arguments to the ones of Theorem \ref{Lemma_14} apply to (\ref{problem_c1}). This leads to the estimate
\begin{equation*}
\left\|\psi^{-}\right\|_{H^{2}_{\mathrm{uloc}}(\omega_w)}+\left\|\psi^{-}\big|_{X=M}\right\|_{H^{m+3/2}_{\mathrm{uloc}}}+\left\|\partial_X\psi^{-}\big|_{X=M}\right\|_{H^{m+1/2}_{\mathrm{uloc}}}\leq C\left(\|\phi\|_{W^{2,\infty}}+\|\rho_2\|_{ H^{m-1/2}_{\mathrm{uloc}}}+\|\rho_3\|_{ H^{m-3/2}_{\mathrm{uloc}}}\right),
\end{equation*}
for $\|\phi^{0}\|_{H^2_{\mathrm{uloc}}}+\|\rho_2^{0}\|_{ H^{m-1/2}_{\mathrm{uloc}}}+\|\rho_3^{0}\|_{ H^{m-3/2}_{\mathrm{uloc}}}$ and $\|\phi\|_{W^{2,\infty}}+\|\rho_2\|_{ H^{m-1/2}_{\mathrm{uloc}}}+\|\rho_3\|_{ H^{m-3/2}_{\mathrm{uloc}}}$ small enough. Hence, the solution $\psi^{-}$ belongs to $H^{2}_{\mathrm{uloc}}(\omega_w^b)$ and to $H^{m+2}_{\mathrm{uloc}}((M,M')\times\mathbb{R})$, for $M'>\sup\,(-\gamma_w)$. Moreover, we can assume
\[
\psi^{-}=\psi^{-}_{\ell}+O(\|\phi\|^2_{W^{2,\infty}}+\|\rho_2\|^2_{ H^{m-1/2}_{\mathrm{uloc}}}+\|\rho_3\|^2_{ H^{m-3/2}_{\mathrm{uloc}}}),
\]
where $\psi^{-}_{\ell}$ is the solution of 
\begin{eqnarray}\label{pb:psiL_-}
\partial_X\psi^{-}_{\ell}-\Delta_w^{2}\psi^{-}_{\ell}+Q_w(\psi^{-}_{\ell},\Psi^{-}_0)+Q_w(\Psi^{-}_0,\psi^{-}_{\ell})&=&0,\quad\textrm{in}\quad\omega_w^b\nonumber\\
\left[\psi^{-}_{\ell}\right]\Big|_{X=0}=\phi,\quad
\left[\partial_X^{k}\psi^{-}_{\ell}\right]\Big|_{X=0}&=&0,\quad k=1,2,3\nonumber\\
(1+\alpha^2)\Delta_w\psi^{-}_{\ell}\Big|_{X=M}&=&\rho_2,\nonumber\\
\left[-\left((1+\alpha^2)\partial_{X}-2\alpha\partial_{Y}\right)\Delta_w\psi^-_{\ell}\right.\hspace{7mm}&&\\
+(\nabla_w^{\perp}\psi^{-}_{\ell}\cdot\nabla_w)\left((1+\alpha^2)\partial_{X}-\alpha\partial_{Y}\right)\Psi_0^-\hspace{7mm}&&\nonumber\\
\left.+(\nabla_w^{\perp}\Psi_0^-\cdot\nabla_w)\left((1+\alpha^2)\partial_{X}-\alpha\partial_{Y}\right)\psi^{-}_{\ell}+\displaystyle\frac{\psi^-}{2}\right]\Bigg|_{X=M}&=&\rho_3,\nonumber\\
\psi^{-}_{\ell}\big|_{X=-\gamma(Y)}=\partial _{n}\psi^{-}_{\ell}\big|_{X=-\gamma(Y)}&=&0,\nonumber
\end{eqnarray}

Note that (\ref{pb:psiL_-}) is similar to system (\ref{problem_c1}) but lacks the quadratic terms. Solution of problem (\ref{pb:psiL_-}) can be sought using once again Lady\v{z}henskaya and Solonnikov's truncated energy method, which provides a dependence on the $\phi$ and $\rho_i$, $i=2,3,$ at the main order. 
An analogous conclusion can be drawn from applying Theorem \ref{Theorem2_DGV2017} to the problem defined on the half space. In this case, we have $\psi^{+}=\psi^{+}_{\ell}+O(\|\phi\|_{W^{2,\infty}}^2+\|\rho_2\|^2_{ H^{m-1/2}_{\mathrm{uloc}}}+\|\rho_3\|^2_{ H^{m-3/2}_{\mathrm{uloc}}})$. Here, $\psi^{+}_{\ell}$ satisfies
\begin{eqnarray}\label{pb:psiL_+}
Q_w(\psi^{+}_{\ell},\Psi^{+}_0)+Q_w(\Psi^{+}_0,\psi^{+}_{\ell})+\partial_X\psi^{+}_{\ell}-\Delta_w^{2}\psi^{+}_{\ell}&=&0,\quad\textrm{in}\;X>M\nonumber\\
\psi^{+}_{\ell}\big|_{X=M}&=&\psi^{-}_{\ell}\big|_{X=M},\\
\partial_X\psi^{+}_{\ell}\big|_{X=M}&=&\partial_X\psi^{-}_{\ell}\big|_{X=M}.\nonumber
\end{eqnarray}
From Theorem \ref{Theorem2_DGV2017}, we infer that
\begin{equation*}
\|\psi^{+}_{\ell}\|_{H^{m+2}_{\mathrm{uloc}}}\leq C\left(\left\|\psi^{+}_{\ell}\big|_{X=M}\right\|_{H^{m+3/2}_{\mathrm{uloc}}}+\left\|\partial_{X}\psi^{+}_{\ell}\big|_{X=M}\right\|_{H^{m+1/2}_{\mathrm{uloc}}}\right)\leq C\left(\|\phi\|_{W^{2,\infty}}+\|\rho_2\|_{ H^{m-1/2}_{\mathrm{uloc}}}+\|\rho_3\|_{ H^{m-3/2}_{\mathrm{uloc}}}\right).
\end{equation*}
More details on the resolution of problems similar to \eqref{pb:psiL_-} and \eqref{pb:psiL_+} can be found in Section \ref{s:linearized}. Furthermore,
\begin{equation}\label{frechet_nl}
\begin{split}
\mathcal{F}_1(\phi+\phi^{0},\rho_2+\rho_2^{0}, \rho_3+\rho_3^{0})-\mathcal{F}_1(\phi^{0},\rho_2^{0}, \rho_3^{0})&=-\rho_2+(1+\alpha^2)\Delta_w\psi^{+}_{\ell}\\
&+O\left(\|\phi\|^2_{W^{2,\infty}}+\|\rho_2\|^2_{ H^{m-1/2}_{\mathrm{uloc}}}+\|\rho_3\|^2_{ H^{m-3/2}_{\mathrm{uloc}}}\right),\\
\mathcal{F}_2(\phi+\phi^{0},\rho_2+\rho_2^{0}, \rho_3+\rho_3^{0})-\mathcal{F}_2(\phi^{0},\rho_2^{0}, \rho_3^{0})&=-\rho_3+\left((1+\alpha^2)\partial_{X}-2\alpha\partial_{Y}\right)\Delta_w\psi^{+}_{\ell}-\dfrac{\psi^{+}_{\ell}}{2}\\
&+(\nabla_w^{\perp}\psi^{+}_{\ell}\cdot\nabla_w)\partial_{X}\left((1+\alpha^2)\partial_{X}-\alpha\partial_{Y}\right)\Psi^{+}_0\\
&+(\nabla_w^{\perp}\Psi^{+}_0\cdot\nabla_w)\left((1+\alpha^2)\partial_{X}-\alpha\partial_{Y}\right)\psi^{+}_{\ell}\\&+O\left(\|\phi\|^2_{W^{2,\infty}}+\|\rho_2\|^2_{ H^{m-1/2}_{\mathrm{uloc}}}+\|\rho_3\|^2_{ H^{m-3/2}_{\mathrm{uloc}}}\right).\end{split}
\end{equation}
We see that the Fréchet differential  of $\mathcal{F}$ at  $(\phi^{0},\rho_2^{0}, \rho_3^{0})$ is defined by $L=(L_1, L_2)$ where
\begin{equation*}
\begin{split}
L_1&=-\rho_2+(1+\alpha^2)\Delta_w\psi^{+}_{\ell},\\ L_2&=-\rho_3-\left((1+\alpha^2)\partial_{X}-2\alpha\partial_{Y}\right)\Delta_w\psi^{+}_{\ell}+\dfrac{\psi^{+}_{\ell}}{2}+(\nabla_w^{\perp}\psi^{+}_{\ell}\cdot\nabla_w)\partial_{X}\left((1+\alpha^2)\partial_{X}-\alpha\partial_{Y}\right)\Psi^{+}_0\\
&+(\nabla_w^{\perp}\Psi^{+}_0\cdot\nabla_w)\left((1+\alpha^2)\partial_{X}-\alpha\partial_{Y}\right)\psi^{+}_{\ell}.
\end{split}
\end{equation*}

It is important to emphasize that (\ref{pb:psiL_+}) is a linear problem with respect to the perturbation function $\psi^{+}_{\ell}$. Therefore, it is possible to obtain the exact form of its solution by applying the same reasoning as the one in Section \ref{s:linear_west}. The use of Fourier analysis to (\ref{pb:psiL_+})  provides a solution  $\psi^{+}_{\ell}$ showing continuous dependence on $\Psi^{+}_0$, as well as, on the boundary condition. Note that $\psi^{+}_{\ell}\big|_{X=M}$ and $\partial_{X}\psi^{+}_{\ell}\big|_{X=M}$ depend in turn on $\Psi_{0}^{-}$. Hence, $\psi^{\pm}$ depend continuously on $\Psi_0^{\pm}$, and conversely on $\phi^{0}, \rho^{0}_2, \rho^{0}_3$. Therefore, $\mathcal{F}$ is a $\mathcal{C}^{1}$ function in a neighborhood of zero.

\textit{$d\mathcal{F}(0,0,0)$ is invertible:} Since $d\mathcal{F}(0,0,0) = L^0(\cdot, \cdot)$, we consider the systems satisfied by $\psi^{\pm}_{\ell}$ with $\Psi_{0}^{\pm}=0$ and $\phi=0$. 
\begin{equation}\label{pb:psiL_w_psi_zero}
\left\{\begin{array}{rcl}
\partial_X\psi^{-}_{\ell}-\Delta_w^{2}\psi^{-}_{\ell}&=&0,\quad\textrm{in}\quad X\leq M\nonumber\\
(1+\alpha^2)\Delta_w\psi^{-}_{\ell}\Big|_{X=M}&=&\rho_2,\\
\begin{array}{l}\left[-\left((1+\alpha^2)\partial_{X}-2\alpha\partial_{Y}\right)\partial_{X}\Delta_w\psi^{-}_{\ell}+\displaystyle\frac{\psi^{-}_{\ell}}{2}\right]\end{array}\Bigg|_{X=M}&=&\rho_3,\nonumber\\
\psi^{-}_{\ell}\big|_{X=-\gamma(Y)}=\partial _{n}\psi^{-}_{\ell}\big|_{X=-\gamma(Y)}&=&0,\nonumber
\end{array}\right.,
\end{equation}
and
\begin{equation}\label{pb:psiL_w_psi_zero_+}
\left\{\begin{array}{rcl} \partial_X\psi^{+}_{\ell}-\Delta_w^{2}\psi^{+}_{\ell}&=&0,\quad\textrm{in}\;X>M\nonumber\\
\psi^{+}_{\ell}\big|_{X=M}&=&\psi^{-}_{\ell}\big|_{X=M},\\
\partial_X\psi^{+}_{\ell}\big|_{X=M}&=&\partial_X\psi^{-}_{\ell}\big|_{X=M}.\end{array}\right.
\end{equation}
If $L^0(\rho_2,\rho_3)=(0,0)$, then $\psi_{\ell}:=\mathbbm{1}_{X\leq M}\psi^{-}_{\ell}+\mathbbm{1}_{X>M}\psi^{+}_{\ell}$ is a solution of the linear system in the whole western boundary layer domain 
\begin{equation}\label{pb:psiL_linear}
\begin{array}{rcl}
\partial_X\psi_{\ell}-\Delta_w^{2}\psi_{\ell}&=&0,\quad\textrm{in}\quad \omega_w\\
\psi_{\ell}\big|_{X=-\gamma(Y)}=\partial _{n}\psi_{\ell}\big|_{X=-\gamma(Y)}&=&0.
\end{array}
\end{equation}

The existence and uniqueness of the solution $\psi_{\ell}\equiv 0$ of the linear elliptic problem \eqref{pb:psiL_linear} is guaranteed by Theorem \ref{theorem:existence_linear_all},  and therefore $\rho_2=0$ and $\rho_3=0$. Consequently,  $\mathrm{ker}\;d\mathcal{F}(0,0,0)=\left\{(0,0)\right\}$, and $\mathrm{ker}\;d\mathcal{F}(0,0,0)$ is one-to-one. 
Solving the equation
\begin{equation*}
L^0(\rho_2,\rho_3)=(\bar{\rho}_{2},\bar{\rho}_{3}),
\end{equation*}
for a given $(\bar{\rho}_{2},\bar{\rho}_{3})\in H^{m-1/2}(\mathbb{R})\times H^{m-3/2}(\mathbb{R})$ is equivalent to finding the solution of the problem
\begin{equation}\label{ps_problem}
\begin{split}
	\partial_X\psi^{-}_{\ell}-\Delta_w^{2}\psi^{-}_{\ell}&=0,\quad\textrm{in}\quad X\leq M\\
	\left((1+\alpha^2)\Delta_w\psi^{-}_{\ell},-\left((1+\alpha^2)\partial_{X}-2\alpha\partial_{Y}\right)\Delta_w\psi^{-}_{\ell}+\displaystyle\frac{\psi^{-}_{\ell}}{2}\right)\Bigg|_{X=M}&=-\left(\bar{\rho}_2,\bar{\rho}_3\right)+PS_w(\psi^{-}_{\ell}\big|_{X=M}, \partial_X\psi^{-}_{\ell}\big|_{X=M}),\\
	\psi^{-}_{\ell}\big|_{X=-\gamma(Y)}=\partial _{n}\psi^{-}_{\ell}\big|_{X=-\gamma(Y)}&=0,
\end{split}
\end{equation}
where $PS_w$ denotes the Poincar\'e-Steklov operator $$PS_w(\psi_0,\psi_1) :  H^{m+3/2}_{\mathrm{uloc}}(\mathbb{R})\times\in H^{m+1/2}_{\mathrm{uloc}} (\mathbb{R})\rightarrow H^{m-1/2}_{\mathrm{uloc}}(\mathbb{R})\times\in H^{m-3/2}_{\mathrm{uloc}}(\mathbb{R}).$$  The existence of a unique solution $\psi^{-}_{\ell} \in H^{2}_{\mathrm{uloc}}(\omega^{b}_w)$ of problem \eqref{ps_problem} follows from the ideas of the proof of Proposition \ref{prop:result_rough_channel_lw} in Section \ref{western_l}.

The only point remaining concerns proving that  $\psi_{\ell}^{-}$ is a  $H^{m+2}_{\mathrm{uloc}}$ function in $[M',M]\times\mathbb{R}$ for all $\sup(-\gamma_w)<M'<M$. Therefore, we notice that
in the domain $(M',M)\times\mathbb{R}$, the derivatives up to order $k$ of $\psi_{\ell}^{-}$  satisfy a linear system similar to the one above. It follows that $\partial_{Y}^l\psi_{\ell}^{-}\in H^{2}_{\mathrm{uloc}}([M',M]\times\mathbb{R}$ for all $l\leq k$. In particular, $\partial_{Y}^l\psi_{\ell}^{-}\big|_{X=M} \in H^{3/2}_{\mathrm{uloc}}([M',M]\times\mathbb{R})$ and $\partial_X\partial_{Y}^l\psi_{\ell}^{-}\big|_{X=M} \in H^{1/2}_{\mathrm{uloc}}([M',M]\times\mathbb{R})$, hence, $\psi_{\ell}^{-} \in H^{m+3/2}_{\mathrm{uloc}}(\mathbb{R})$ and $\partial_n\psi_{\ell}^{-} \in H^{m+1/2}_{\mathrm{uloc}}(\mathbb{R})$. Consequently, $$(\rho_2,\rho_3)=\begin{pmatrix}(1+\alpha^2)\Delta_w\psi^{-}_{\ell},-\left((1+\alpha^2)\partial_X-2\alpha\partial_{Y}\right)\Delta_w\psi^{-}_{\ell}+\frac{\psi^{-}_{\ell}}{2}
\end{pmatrix}.$$ We can finally assert that $d\mathcal{F}(0,0,0)$ is an isomorphism of $H^{m-1/2}(\mathbb{R})\times H^{m-3/2}(\mathbb{R})$.
Using the implicit function theorem, we deduce that for all $\phi\in W^{2,\infty}$ in a neighborhood of zero, there exists $(\rho_2,\rho_3)\in H^{m-1/2}_{\mathrm{uloc}}(\mathbb{R})\times H^{m-3/2}_{\mathrm{uloc}}(\mathbb{R})$ such that $\mathcal{F}(\phi,\rho_{2},\rho_{3})=0$. Let $\Psi_w:=\mathbbm{1}_{X\leq M}\Psi_w^{-}+\mathbbm{1}_{X> M}\Psi_w^{-}$, where $\Psi_w^{-},\Psi_w^{+}$ are the solutions of \eqref{eq_left1} and \eqref{pb:halfspace_nl_w} associated to $(\phi,\rho_2, \rho_3)$. By definition, the jump of $\Psi_w$ across the transparent boundary $\{X=M\}$ is zero, and since $\mathcal{F}(\phi,\rho_2, \rho_3)=(0,0)$,
\begin{equation*}
\mathcal{A}_i\left[\Psi^{-}_w\big|_{X=M},\partial_X\Psi^{-}_w\big|_{X=M}\right]=\rho_i=\mathcal{A}_i\left[\Psi^{+}_w\big|_{X=M},\partial_X\Psi^{+}_w\big|_{X=M}\right],\quad i=2,3.
\end{equation*}
Since $\partial^k_X\Psi_w^{+}\big|_{X=M}=\partial^k_X\Psi_w^{-}\big|_{X=M}$, $k=0,1$ we deduce that
\begin{equation*}
\begin{split}
&\begin{pmatrix}(1+\alpha^2)\Delta_w\Psi^{-},-\left((1+\alpha^2)\partial_X-2\alpha\partial_{Y}\right)\Delta_w\Psi^{-}+\dfrac{\Psi^{-}}{2}
\end{pmatrix}\bigg|_{X=M}\\
&\hspace{4cm}=\\
&\begin{pmatrix}(1+\alpha^2)\Delta_w\Psi^{+},-\left((1+\alpha^2)\partial_X-2\alpha\partial_{Y}\right)\Delta_w\Psi^{+}+\dfrac{\Psi^{+}}{2}
\end{pmatrix}\bigg|_{X=M}.
\end{split}
\end{equation*}
Accordingly, these operators are continuous across $\{X=M\}\times \mathbb{R}$, and therefore $\Psi_w$ is a solution of the western boundary layer system in the whole domain $\omega_w$. This concludes the proof of Theorem \ref{theorem:existence}.
     \section{Linearized problem}\label{s:linearized}
This section focuses on the well-posedness analysis of the linearized problems driving the higher-order western profiles of the approximate solution and the correctors needed to deal with the influence of the east boundary layer on the western side of $\Omega^\varepsilon$. 

We are interested in the system
\begin{equation}\label{linearised_west}
\begin{split}
    \partial_X\Psi_w+Q_w(\Psi_w,\Psi^0_w)+Q_w(\Psi^0_w,\Psi_w)-\Delta^2_w\Psi_w&=F,\quad\textrm{in}\:\:\omega_w^{+}\cup\omega_w^{-}\\
    [\partial^k_X\Psi_w]\big|_{X=0}&=g^k,\quad k=0,\ldots,3\\
    \Psi_w\big|_{X=-\gamma_w(Y)}=0,\quad\dfrac{\partial\Psi_w}{\partial n_w}\big|_{X=-\gamma_w(Y)}&=0,
\end{split}
\end{equation}
where $g_k\in L^{\infty}(\mathbb{R})$. Assume $\Psi^0_w\in H^{2}_{\mathrm{uloc}}(\omega_w^{+}\cup\omega_w^{-})$ and $F\in H^{-2}_{\mathrm{uloc}}(\omega_w)$ are exponentially decaying functions.  The existence and uniqueness of a solution of \eqref{linearised_west} depend additionally on a smallness hypothesis made on the first profile of the western boundary layer $\Psi^0_w$. 

The main result concerning  problem \eqref{linearised_west} is summarized in the following theorem:

\begin{theorem}\label{t:linearized_hs}
Let $\omega_w$ defined as in \eqref{west_dom} and $\gamma_w$ be a bounded Lipschitz function. Suppose $g_k\in \ch{L^{\infty}}(\mathbb{R})$, $k=0,\ldots, 3$, and $\delta>0$  and $\Psi_0\in H^2_{\mathrm{uloc}}(\omega_w^+\cup\omega_{w}^-)$  a function  satisfying
\begin{equation*}
\|e^{\delta X}\Psi^0_w\|_{H^2_{\mathrm{uloc}}(\omega_w^+\cup\omega_{w}^-)}< \delta_0.
\end{equation*}
Additionally, suppose that $e^{\delta}F\in H^{m-2}_{\mathrm{uloc}}(\omega_w)$. Then, for $\delta_0$ small enough,
\eqref{linearised_west} has a unique solution $\Psi$ in $H^2_\mathrm{uloc}(\omega_w^{+}\cup\omega_w^{-})$. Moreover, the following estimate holds
	\begin{equation}\label{regularity_est}
	\|e^{\delta X}\Psi\|_{H^{2}_{\mathrm{uloc}}(\omega_w^{+}\cup\omega_w^{-})} \leq C\left( \sum_{k=0}^3\|g_k\|_{L^{\infty}(\mathbb{R})}+\|e^{\delta X}F\|_{H^{m-2}_{\mathrm{uloc}}(\omega_w)}\right),
	\end{equation}
	for a universal constant $C$. Here, $\delta$ is the one in Theorem \ref{Theorem2_DGV2017}.
\end{theorem}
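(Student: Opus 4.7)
The plan is to follow the blueprint of Section \ref{nl_linearized_method}, specialized to the linearized situation $j=2$, essentially retracing the steps used to prove Theorem \ref{theorem:existence} but with the smallness assumption now transferred from the boundary data $\phi$ onto the reference profile $\Psi^0_w$. In particular, I will decompose the domain by the artificial boundary $\{X=M\}$, solve the linearized problem on the half-space $\{X>M\}$ via a contraction argument based on Theorem \ref{Theorem2_DGV2017}, solve the equivalent problem in the rough channel $\omega^b_w$ by the Lady\v{z}enskaya--Solonnikov truncated energy method, and glue the two solutions across $\{X=M\}$ by means of the implicit function theorem (taking advantage of the fact that the linear isomorphism property of $d\mathcal{F}(0,0,0)$ was already established in Section \ref{s:join}).

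First, on the half-space I would introduce the affine operator $\mathcal{T}(\Psi)=\tilde{\Psi}$, where $\tilde{\Psi}$ solves the linear system \eqref{p:linear_nonhomgeneous0} with boundary data $(\psi_0,\psi_1)$ and source term $F-Q_w(\Psi,\Psi^0_w)-Q_w(\Psi^0_w,\Psi)$. Using the algebra-like control $\|e^{\bar\delta X}(\nabla_w^\perp f\cdot\nabla_w)\nabla_w^\perp g\|_{H^{m-1}_\mathrm{uloc}}\lesssim \|f\|_{\mathcal{H}^{m+1}}\|g\|_{\mathcal{H}^{m+1}}$ from \eqref{def_bs} and Theorem \ref{Theorem2_DGV2017}, one gets
\begin{equation*}
\|\mathcal{T}(\Psi_1)-\mathcal{T}(\Psi_2)\|_{\mathcal{H}^{m+2}}\leq C_0\|\Psi^0_w\|_{\mathcal{H}^{m+2}}\,\|\Psi_1-\Psi_2\|_{\mathcal{H}^{m+2}},
\end{equation*}
so $\mathcal{T}$ is a contraction provided $\|e^{\delta X}\Psi^0_w\|_{H^2_\mathrm{uloc}}<\delta_0$ is small, and Banach's fixed point theorem gives a unique solution in $\mathcal{H}^{m+2}$, with a linear bound in terms of $(\psi_0,\psi_1,F)$. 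This is the analog, in the linearized context, of Proposition \ref{prop:fix}.

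Second, in the rough channel $\omega^b_w$ I would lift the jumps via the auxiliary function \eqref{lift_lin} and derive the weak formulation analogous to Definition \ref{p:weak_formulation}. Localizing with the cut-off $\chi_k$ and taking $\chi_k\tilde{\Psi}$ as test function, the new terms $b(\Psi,\Psi^0_w,\chi_k\tilde{\Psi})+b(\Psi^0_w,\Psi,\chi_k\tilde{\Psi})$ are each bounded, thanks to the exponential decay of $\Psi^0_w$, by $C\|e^{\delta X}\Psi^0_w\|_{H^2_\mathrm{uloc}}(E_{k+1}+E_{k+1}-E_k)$; the first contribution can be absorbed into the left-hand side for $\delta_0$ small enough. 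Unlike the fully nonlinear case \eqref{inequality}, no cubic term $(E_{k+1}-E_k)^{3/2}$ appears, and the induction inequality reads
\begin{equation*}
E_k^n\leq C_1(E_{k+1}^n-E_k^n)+C_2(k+1),
\end{equation*}
with $C_2$ depending linearly on $\|g_k\|_{L^\infty}$, $\|e^{\delta X}F\|_{H^{m-2}_\mathrm{uloc}}$ and the Poincaré--Steklov data $\rho_2,\rho_3$. The backward induction argument of Section \ref{s:rough_linear_west} then yields a uniform $H^2_\mathrm{uloc}$ bound, and uniqueness follows from the same absorption trick. Higher regularity near $\{X=M\}$ is obtained by the difference-quotient argument of Section \ref{local_regularity}, exactly as before.

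Third, joining the two sides via the implicit function theorem applied to the map $\mathcal{F}$ defined in Section \ref{s:join} produces a unique solution of \eqref{linearised_west} on the whole domain. The main obstacle I anticipate is the derivation of the \emph{exponential} decay estimate \eqref{regularity_est}: one cannot simply invoke Theorem \ref{Theorem2_DGV2017} on the half-space because the linearized advection term couples $\Psi$ to $\Psi^0_w$. My plan is to apply the weighted norm $e^{\delta X}$ directly to the equation and treat $Q_w(\Psi,\Psi^0_w)+Q_w(\Psi^0_w,\Psi)$ as a perturbation of the biharmonic operator: thanks to the exponential decay of $\Psi^0_w$, the commutator with $e^{\delta X}$ remains small in $H^{m-2}_\mathrm{uloc}$, and the fixed-point argument of the first step then produces the solution in the weighted space $\mathcal{H}^{m+2}$. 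Combining this half-space estimate on $\{X>M\}$ with a standard weighted Caccioppoli estimate in the bounded channel $\omega^b_w$ (where $e^{\delta X}$ is bounded) yields the global exponential bound, concluding the proof.
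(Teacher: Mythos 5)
Your proposal is correct and follows essentially the same route as the paper: a contraction argument in the weighted space $\mathcal{H}^{m+2}$ for the half-space problem (Proposition \ref{prop:fix_linearized}), the Lady\v{z}enskaya--Solonnikov truncated-energy scheme in $\omega^b_w$ with the induction inequality $E_k\leq C_1\left((E_{k+1}-E_k)+C_2(k+1)\right)$ free of the cubic term, and the implicit function theorem to glue across $\{X=M\}$. The exponential decay you flag as a potential obstacle is in fact already delivered by running the fixed point in the weighted space $\mathcal{H}^{m+2}$, exactly as you propose.
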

Note that the smallness condition on $\Psi^0_w$ is ensured by choosing $\phi$ small enough in \eqref{e:nonlinear_west}. Theorem \ref{t:linearized_hs} results from following the same ideas of the nonlinear case. As a result, we are going to discuss the relevant steps of the construction listed in Section \ref{nl_linearized_method} but only emphasize the differences concerning the previous section. The details are left to the reader.
\subsection{The problem on the half space}
We study the following problem on the half-space
\begin{equation}\label{linearised_west_hs}
\begin{split}
  \partial_X\Psi_w+Q_w(\Psi,\Psi^0_w)+Q_w(\Psi^0_w,\Psi_w)-\Delta^2_w\Psi_w&=F,\quad\textrm{in}\:\:\mathbb{R}^2_+\\
    \Psi_w\big|_{X=0}=\psi_0,\quad\partial_n\Psi_w\big|_{X=0}&=\psi_1,
\end{split}
\end{equation}
where $\psi_0\in H^{m-3/2}_{\mathrm{uloc}}(\mathbb{R})$ and $\psi_1\in H^{m-1/2}_{\mathrm{uloc}}(\mathbb{R})$. 

The result in this section equivalent to Proposition \ref{prop:fix} is as follows:
\begin{proposition}\label{prop:fix_linearized}
Let $m \in\mathbb{N}$, $m \gg 1$ and $\mathcal{H}^m$ the functional space defined in \eqref{def_bs}. There is small constant $\delta_0>0$ such that for all $\psi_{j} \in H^{m-j+3/2}_{\mathrm{uloc}}(\mathbb{R})$, $j=0,1$ and $\Psi^{0}_w\in H^{m+2}_{\mathrm{uloc}}$ such that for $\delta>0$, $\|e^{\delta X}\Psi^{0}_w\|_{H^{m+2}_{\mathrm{uloc}}}\leq \delta_0$ and 
\begin{equation}\label{eq:41_linearized}
\|\psi_{0} \|_{H^{m+3/2}_{\mathrm{uloc}}(\mathbb{R})}+\|\psi_{1} \|_{H^{m+1/2}_{\mathrm{uloc}}(\mathbb{R})}+\|e^{\delta X}F\|_{H^{m-2}_\mathrm{uloc}}<\delta_0.
\end{equation}
Then, the problem

\begin{equation}
\left\{\begin{array}{rclr}\label{nonlin_hs_lin}
\partial_X\Psi_w+Q_w(\Psi^0_w,\Psi_w)+Q_w(\Psi,\Psi^0_w)-\Delta_w^{2}\Psi_{w}&=&F,&\textrm{in}\quad\mathbb{R}^2_+\\
\Psi_{w}\big|_{X=0}=\psi_0,&&\partial_X\Psi_{w}\big|_{X=0}=\psi_1.&
\end{array}\right.
\end{equation}
has a unique solution in $\mathcal{H}^{m+2}$.
\end{proposition}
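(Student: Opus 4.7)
The plan is to mimic the Banach fixed point argument used for the nonlinear problem (Proposition \ref{prop:fix}), exploiting the fact that here the smallness is on the \emph{coefficient} $\Psi^0_w$ rather than on the solution itself, so the fixed point map becomes affine and a smallness-of-data hypothesis is not needed for contraction. First I would set up the mapping $T_{(\psi_0,\psi_1,F)}:\mathcal{H}^{m+2}\to\mathcal{H}^{m+2}$ by defining $T_{(\psi_0,\psi_1,F)}(\Psi)=\tilde\Psi$, where $\tilde\Psi$ is the unique solution (given by Theorem \ref{Theorem2_DGV2017}) of the purely linear half-space problem
\begin{equation*}
\partial_X\tilde\Psi-\Delta_w^2\tilde\Psi=F-Q_w(\Psi^0_w,\Psi)-Q_w(\Psi,\Psi^0_w),\quad \tilde\Psi|_{X=0}=\psi_0,\quad \partial_X\tilde\Psi|_{X=0}=\psi_1.
\end{equation*}
Any fixed point of $T_{(\psi_0,\psi_1,F)}$ is, by construction, a solution of \eqref{nonlin_hs_lin}, and conversely.

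Next I would quantify $T_{(\psi_0,\psi_1,F)}$ using the a priori estimate \eqref{DGV2017_31} with $\bar\delta=2\delta$, giving
\begin{equation*}
\|T_{(\psi_0,\psi_1,F)}(\Psi)\|_{\mathcal{H}^{m+2}}\leq C_0\left(\|\psi_0\|_{H^{m+3/2}_{\mathrm{uloc}}}+\|\psi_1\|_{H^{m+1/2}_{\mathrm{uloc}}}+\|e^{2\delta X}F\|_{H^{m-2}_{\mathrm{uloc}}}+\|e^{2\delta X}\mathcal{Q}\|_{H^{m-1}_{\mathrm{uloc}}}\right),
\end{equation*}
where $\mathcal{Q}=(\nabla_w^\perp\Psi^0_w\cdot\nabla_w)\nabla_w^\perp\Psi+(\nabla_w^\perp\Psi\cdot\nabla_w)\nabla_w^\perp\Psi^0_w$. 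Since $\mathcal{H}^{m+2}$ with its exponentially-weighted $H^{m+2}_{\mathrm{uloc}}$ norm forms an algebra for the derivatives arising in $Q_w$ (for $m$ large enough, by the same algebra estimates that made the constant $C_m$ in \eqref{def_bs} admissible in the nonlinear proof), the bilinear estimate
\begin{equation*}
\|e^{2\delta X}\mathcal{Q}\|_{H^{m-1}_{\mathrm{uloc}}}\leq C\,\|\Psi^0_w\|_{\mathcal{H}^{m+2}}\,\|\Psi\|_{\mathcal{H}^{m+2}}\leq C\,\delta_0\,\|\Psi\|_{\mathcal{H}^{m+2}}
\end{equation*}
holds with $C$ independent of $\delta_0$. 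Writing $T_{(\psi_0,\psi_1,F)}=\Psi_\star+L$, with $\Psi_\star$ affine in the data and $L$ linear in $\Psi$, the bound $\|L(\Psi_1)-L(\Psi_2)\|_{\mathcal{H}^{m+2}}\leq C_0 C\,\delta_0\,\|\Psi_1-\Psi_2\|_{\mathcal{H}^{m+2}}$ follows identically, since $L(\Psi_1)-L(\Psi_2)$ solves the same linear half-space problem with zero boundary data and source $-Q_w(\Psi^0_w,\Psi_1-\Psi_2)-Q_w(\Psi_1-\Psi_2,\Psi^0_w)$.

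Finally I would fix $\delta_0$ so that $C_0 C\,\delta_0<1/2$. Then $T_{(\psi_0,\psi_1,F)}$ is a strict contraction on the whole Banach space $\mathcal{H}^{m+2}$, and the Banach fixed point theorem supplies a unique $\Psi\in\mathcal{H}^{m+2}$ solving \eqref{nonlin_hs_lin}. The quantitative estimate $\|\Psi\|_{\mathcal{H}^{m+2}}\leq 2C_0(\|\psi_0\|_{H^{m+3/2}_{\mathrm{uloc}}}+\|\psi_1\|_{H^{m+1/2}_{\mathrm{uloc}}}+\|e^{2\delta X}F\|_{H^{m-2}_{\mathrm{uloc}}})$ follows from summing the geometric series. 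The main obstacle is really the bilinear estimate on $\mathcal{Q}$: one must verify that the exponential weights multiply correctly (the product $e^{\delta X}\Psi^0_w\cdot e^{\delta X}\Psi$ yields a factor $e^{2\delta X}=e^{\bar\delta X}$ matching the weight required by Theorem \ref{Theorem2_DGV2017}) and that the uniform-local Kato structure is preserved by the product, which is exactly the role of the constant $C_m$ fixed in the definition \eqref{def_bs}; the rest is a direct transcription of the nonlinear argument.
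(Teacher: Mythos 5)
Your proposal is correct and follows essentially the same route as the paper: the same fixed-point map $\Psi\mapsto\tilde\Psi$ solving the linear half-space problem with source $F-Q_w(\Psi^0_w,\Psi)-Q_w(\Psi,\Psi^0_w)$, the same a priori estimate from Theorem \ref{Theorem2_DGV2017} with $\bar\delta=2\delta$, and the same bilinear bound $\|e^{2\delta X}\mathcal{Q}\|_{H^{m-1}_{\mathrm{uloc}}}\leq C\|\Psi^0_w\|_{\mathcal{H}^{m+2}}\|\Psi\|_{\mathcal{H}^{m+2}}$ (which is exactly the normalization built into the constant $C_m$ of \eqref{def_bs}). The only cosmetic difference is that you exploit the affineness of the map to contract on the whole space rather than on a ball of radius $R_0$ as the paper does; both are valid and yield the same conclusion.
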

The strategy of proof is the same as to the one of Proposition \ref{fixed_point}. Indeed, the mapping $T_{(\psi_{0},\psi_{1})}: H^{m+2}_{\mathrm{uloc}}\rightarrow H^{m+2}_{\mathrm{uloc}}$
such that $T_{(\psi_{0},\psi_{1})}(\Psi_w)=\tilde{\Psi}$ is the solution of \eqref{p:linear_nonhomgeneous0} when $\widetilde{F}=F -Q_w(\Psi^0_w, \Psi_w)-Q_w(\Psi_w, \Psi^0_w)$ satisfies the estimate for $\bar{\delta}=2\delta$
\begin{equation*}
    \begin{split}
        \|T_{(\psi_{0},\psi_{1})}(\Psi_w)\|_{\mathcal{H}^{m+2}}&<C_1\left(\|\psi_{0} \|_{H^{m+3/2}_{\mathrm{uloc}}(\mathbb{R})}+\|\psi_{1} \|_{H^{m+2}_{\mathrm{uloc}}(\mathbb{R})}+\|e^{\delta X}F\|_{H^{m-2}_{\mathrm{uloc}}}\right.\\
&\hspace{1in}\left.+\|\Psi^0_w\|_{\mathcal{H}^{m+2}}\|\tilde{\Psi}\|_{\mathcal{H}^{m+2}}\right),
    \end{split}
\end{equation*}
for $C_1>0$. 
When considering the functions $\Psi^i$, $i=1,2$, such that $T_{(\psi_{0},\psi_{1})}(\Psi^i_w)=\tilde{\Psi}^i_w$ are solutions of \eqref{linearised_west} for $\widetilde{F}^i=F-Q_w(\Psi^i_w,\Psi^0_w)-Q_w(\Psi^0_w,\Psi^i_w)$, we can show that $T_{(\psi_{0},\psi_{1})}(\Psi^{1})-T_{(\psi_{0},\psi_{1})}(\Psi_{2})$ is a strict contraction in a ball of radius $R_0>0$. Indeed, it is always possible to choose $R_0\in ]\delta(\eta),\frac{1}{2C_1}[$ when $0<\delta_0\leq\delta(\eta)=\frac{\eta -1}{2 C_1 \eta -C_1}$, $\eta>1$. The existence result is a consequence of applying the fixed point theorem in $\mathcal{H}^{m+2}$. Since the ideas of proof are the same of Proposition \ref{prop:fix}, they are not repeated here.

\subsection{The linearized problem on the rough domain}

This section is devoted to the well-posedness of the problem in the rough channel $\omega^b=\omega_w\setminus\left\{X<M\right\}$
\begin{equation}\label{lin_rough}
\begin{split}
\partial_{X}\Psi^-_w+Q_w(\Psi^-_w,\Psi^0_w)+Q_w(\Psi^0_w,\Psi^-_w)-\Delta_{w}^2\Psi^-_w&=F,\quad\mathrm{in}\quad\omega_w,\quad\text{in}\quad\omega^b\setminus\sigma_w\\
\left[\partial_{X}^k\Psi^-_w\right]\big|_{\sigma_w}&=g_k,\quad k=0,\ldots,3,\\
\mathcal{A}_2[\Psi^-_w|_{\sigma_w^M}, \partial_{X}\Psi^-_w|_{\sigma_w^M}]&=\rho_2,\\
\tilde{\mathcal{A}}_3[\Psi^-_w|_{\sigma_w^M}, \partial_{X}\Psi^-_w|_{\sigma_w^M}]&=\rho_3,\\
\Psi^-_w|_{X=-\gamma_w(Y)}=0,&\quad\dfrac{\partial\Psi^-_w}{\partial n_w}\Big|_{X=-\gamma_w(Y)}=0,
\end{split}
\end{equation}
where $g_k\in W^{m+2-k,\infty}(\mathbb{R})$ and $\rho_2$ and $\rho_3$ belong to $H^{m-1/2}_{\mathrm{uloc}}(\mathbb{R})$ and $H^{m-3/2}_{\mathrm{uloc}}(\mathbb{R})$, respectively. The source term $F$ is a function of $H^{m-2}_{\mathrm{uloc}}(\omega^b_w)$. The transparent operators are defined by
\begin{equation}
\begin{split}
\mathcal{A}_2[\Psi, \partial_{X}\Psi]&=(1+\alpha^2)\Delta_{w}\Psi,\\
\tilde{\mathcal{A}}_3[\Psi, \partial_{X}\Psi]&=
-((1+\alpha^2)\partial_{X}-2\alpha\partial_{Y})\Delta_{w}\Psi\\
&\quad+(\nabla_{w}^\perp\Psi\cdot\nabla_w)((1+\alpha^2)\partial_X-\alpha\partial_Y)\Psi^0_w+(\nabla_{w}^\perp\Psi^0_w\cdot\nabla_w)((1+\alpha^2)\partial_X-\alpha\partial_Y)\Psi+\dfrac{\Psi}{2}.
\end{split}
\end{equation}
We claim the following result:
\begin{proposition}\label{Lemma_14_lin}
	Let $m\gg 1$ be arbitrary and $g_k\in W^{m+2-k,\infty}(\mathbb{R})$, $k=0,\ldots,3$. There exists $\delta> 0$ such that for $\delta>0$, $F\in H^{m-2}_{\mathrm{uloc}}(\omega^b_w)$, $\rho_2\in H^{m-1/2}_{\mathrm{uloc}}(\mathbb{R})$ and $\rho_3\in H^{m-3/2}_{\mathrm{uloc}}(\mathbb{R})$ satisfy
	\begin{equation}\label{smallness_lin}
		\|F\|_{H^{m-2}_{\mathrm{uloc}}(\mathbb{R})}+\|\Psi^0_w\|_{H^{m+2}_{\mathrm{uloc}}(\mathbb{R})}+\|\rho_2\|_{ H^{m-1/2}_{\mathrm{uloc}}(\mathbb{R})}+\|\rho_3\|_{ H^{m-3/2}_{\mathrm{uloc}}(\mathbb{R})}<\delta_0.
	\end{equation}
	Then, system (\ref{lin_rough}) has a unique solution $\Psi_w\in H^{2}_{\mathrm{uloc}}(\omega_w^b)$.
	
	Moreover, $\Psi_w\in H^{m+2}_{\mathrm{uloc}}((M',M)\times \mathbb{R})$, for all $M'\in ]\sup (-\gamma_w),M[$ and
	\begin{equation}\label{trans_reg_estimate}
	\|\Psi_w\|_{H^{m+2}_{\mathrm{uloc}}((M',M)\times \mathbb{R})}\leq C_{M'}\left(\sum\limits_{k=0}^3\|g_k\|_{W^{m+2-k,\infty}}+\|\rho_2\|_{ H^{m-1/2}_{\mathrm{uloc}}(\mathbb{R})}+\|F\|_{H^{m-2}_{\mathrm{uloc}}((M',M)\times\mathbb{R}}+\|\rho_3\|_{ H^{m-3/2}_{\mathrm{uloc}}(\mathbb{R})}\right)
	\end{equation}
\end{proposition}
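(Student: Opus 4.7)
The plan is to adapt the truncation-plus-backward-induction scheme of Sections \ref{s:rough_linear_west} and \ref{s:rough} to the present linearized setting, exploiting the fact that, since $Q_w(\Psi,\Psi^0_w)+Q_w(\Psi^0_w,\Psi)$ is linear in $\Psi$ and $\Psi^0_w$ is small, the cubic term $\tilde C_1(E_{k+1}^n-E_k^n)^{3/2}$ that drives the analysis of the nonlinear problem disappears. I would first lift the nonhomogeneous jumps at $\sigma_w$ by introducing $\Psi^L(X,Y)=\chi(X)\sum_{k=0}^3 g_k(Y)X^k/k!$ as in \eqref{lift_lin}, so that $\tilde\Psi:=\Psi^-_w-\Psi^L$ satisfies a problem of the same form with homogeneous jumps and a modified source $\tilde F:=F+\Delta_w^2\Psi^L-\partial_X\Psi^L-Q_w(\Psi^L,\Psi^0_w)-Q_w(\Psi^0_w,\Psi^L)$, controlled in $H^{m-2}_{\mathrm{uloc}}$ by $\sum_k\|g_k\|_{W^{m+2-k,\infty}}$, $\|\Psi^0_w\|_{H^{m+2}_{\mathrm{uloc}}}$ and $\|F\|_{H^{m-2}_{\mathrm{uloc}}}$. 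Truncating in the tangential variable, the existence of $\tilde\Psi_n\in H^2(\omega_n)$ follows by Lax-Milgram applied to the weak formulation analogous to \eqref{link_PSn}, the continuity and coercivity on $H^2_0$ being preserved thanks to the smallness of $\Psi^0_w$, which allows the linearized bilinear terms to be absorbed exactly as in \eqref{Prange_326}.

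The core step is the derivation of the Saint-Venant type inequality for $E_k^n=\int_{\omega_k}|\Delta_w\tilde\Psi_n|^2$. Taking $\chi_k\tilde\Psi_n$ as a test function, the linear commutator terms are bounded by $C(E_{k+1}^n-E_k^n)$ as in \eqref{commut_bound}, the source contribution by $C\bigl(\sum_k\|g_k\|_{L^\infty}+\|F\|_{H^{m-2}_{\mathrm{uloc}}}\bigr)\sqrt{k+1}\,(E_{k+1}^n)^{1/2}$ as in \eqref{FP}, and the boundary terms stemming from the bilinear part $b(\tilde\Psi_n,\Psi^0_w,\chi_k\tilde\Psi_n)+b(\Psi^0_w,\tilde\Psi_n,\chi_k\tilde\Psi_n)$ are estimated after integration by parts by $C\|\Psi^0_w\|_{H^{m+2}_{\mathrm{uloc}}}(E_{k+1}^n+(k+1))$, so that the smallness hypothesis \eqref{smallness_lin} lets them be absorbed on the left. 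The nonlocal contribution from the modified Poincaré-Steklov operators $\mathcal{A}_2$ and $\tilde{\mathcal{A}}_3$ is handled by the decomposition \eqref{decomp_psi} combined with Lemma \ref{DP:lemma_3.1}, the new boundary terms produced by the extra Poincaré-Steklov contributions in $\tilde{\mathcal{A}}_3$ being controlled by $C\|\Psi^0_w\|_{H^{m+2}_{\mathrm{uloc}}}E_{k+1}^n$ and therefore again absorbed. Collecting these estimates yields the linear-type induction
\begin{equation*}
E_k^n\leq C_1\Bigl(k+1+m\sup_{k\leq j\leq k+m}(E_{j+1}^n-E_j^n)+\tfrac{1}{m^{4-2\eta}}\sup_{j\geq k+m}(E_{j+1}^n-E_j^n)\Bigr),
\end{equation*}
for $k\geq m$ and $\eta\in(0,2)$, and the backward-induction argument of Section \ref{s:rough_linear_west} then provides a uniform $H^2_{\mathrm{uloc}}(\omega^b_w)$ bound on $\tilde\Psi_n$, whence existence by weak compactness. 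Uniqueness is obtained similarly, applying the scheme to the difference of two solutions and using once more the smallness of $\Psi^0_w$ to conclude $(1-C\delta_0)\|\Delta_w(\Psi_1-\Psi_2)\|_{L^2(\omega^b_w)}^2\leq 0$.

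The higher regularity \eqref{trans_reg_estimate} near $\sigma^M_w$ is obtained by the induction on $m$ of Section \ref{s2}: apply the tangential finite-difference operator $\delta_h$ localized by $\varphi_k$, then use the elliptic regularity result of Lemma \ref{lem:elliptic-reg-higher} together with the analogues of Lemma \ref{lem:estimate_terms-higher} for the modified source and boundary data. The linearization actually simplifies these estimates, since $\psi_h$ no longer appears quadratically; the terms originating from $Q_w(\cdot,\Psi^0_w)+Q_w(\Psi^0_w,\cdot)$ contribute at most one derivative of $\psi_h$ multiplied by a factor controlled by $\|\Psi^0_w\|_{H^{m+2}_{\mathrm{uloc}}}$, whose smallness closes the estimates. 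The main obstacle is really a bookkeeping one, namely to verify that the modified boundary operator $\tilde{\mathcal{A}}_3$ retains a negativity-type estimate analogous to \eqref{negativity} in the computation of $E_k^n$: the extra boundary contributions involving $\nabla_w^\perp\Psi^0_w$ and $\nabla_w^\perp\tilde\Psi$ do not have a definite sign, but since they are bilinear with one factor of $\Psi^0_w$, the smallness assumption \eqref{smallness_lin} on $\Psi^0_w$ again makes them absorbable, so the same induction scheme goes through.
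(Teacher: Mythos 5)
Your overall architecture is the right one and coincides with the paper's: lift the jumps with $\Psi^L$ so that the new source $F^L_0=F-Q_w(\Psi^L,\Psi^0_w)-Q_w(\Psi^0_w,\Psi^L)$ is controlled by the data, run the Lady\v{z}enskaya--Solonnikov truncated energies $E^n_k$, absorb the linearized advection terms $b(\tilde\Psi,\Psi^0_w,\chi_k\tilde\Psi)+b(\Psi^0_w,\tilde\Psi,\chi_k\tilde\Psi)$ using the smallness of $\Psi^0_w$ (so that no $(E_{k+1}-E_k)^{3/2}$ term appears), conclude by backward induction and compactness, prove uniqueness by the same scheme without the cut-off, and obtain \eqref{trans_reg_estimate} by tangential finite differences plus the analogues of Lemmas \ref{lem:elliptic-reg-higher} and \ref{lem:estimate_terms-higher}, which indeed simplify because $\psi_h$ no longer enters quadratically.

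The one substantive discrepancy is your treatment of the boundary terms at $\sigma^M_w$. In \eqref{lin_rough} the conditions $\mathcal{A}_2[\cdot]=\rho_2$ and $\tilde{\mathcal{A}}_3[\cdot]=\rho_3$ are \emph{local} differential expressions evaluated at $X=M$ and set equal to \emph{prescribed} data $\rho_2\in H^{m-1/2}_{\mathrm{uloc}}$, $\rho_3\in H^{m-3/2}_{\mathrm{uloc}}$; the genuinely nonlocal Poincaré--Steklov operator only reappears later, when the two half-solutions are matched via the implicit function theorem. Consequently there is no kernel to decompose, no need for \eqref{decomp_psi} or Lemma \ref{DP:lemma_3.1}, and no negativity condition to verify: in the weak formulation the boundary contribution is simply $\int\chi_k(\rho_3\tilde\Psi+\rho_2\partial_X\tilde\Psi)|_{X=M}$, which the trace theorem and Young's inequality bound by $C(\|\rho_2\|_{L^2_{\mathrm{uloc}}}+\|\rho_3\|_{L^2_{\mathrm{uloc}}})(E_{k+1}+(k+1))$. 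The paper therefore obtains the simple induction $E_k\leq C_1\bigl(E_{k+1}-E_k+C_2(\rho_2,\rho_3)(k+1)\bigr)$ rather than the three-parameter Saint-Venant inequality with $m$ and $\eta$ that you import from Section \ref{s:rough_linear_west}. Your more elaborate inequality would still close the induction (that is exactly what Section \ref{s:rough_linear_west} shows), so this is not a fatal gap, but applying the nonlocal machinery to local boundary operators is a category error and needlessly complicates the argument; the simpler local estimate is both available and what the proof actually requires.
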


\begin{proof}
Note that Proposition \ref{Lemma_14_lin} is the equivalent for the linearized case of Proposition \ref{Lemma_14}. Consequently, to look up the solution of problem \eqref{lin_rough}, similar arguments to the nonlinear case apply, see Section \ref{s:rough}. First, we discuss the existence and uniqueness of the solution in $H^2_{\mathrm{uloc}}(\omega^b)$ and later, its regularity near the artificial boundary.

We work with the truncated energies 
\begin{equation}
E^n_k=\int_{\omega_{n}}|\chi_k\Delta_{w}\tilde{\Psi}|^2,
\end{equation}
where the truncation function $\chi_k\in C^\infty_0(\mathbb{R})$ is such that $\chi_k \equiv 1 \in [-k, k]$, $\mathrm{Supp}\chi_k \subset [-k - 1, k + 1]$, and the derivatives $\chi_k^{(j)}$, $j=1,\ldots,4$ are bounded uniformly in $k$. Moreover, $\tilde{\Psi}=\Psi-\Psi^L$ is the solution of the system
\begin{eqnarray}\label{pb:left_lift_lin}
Q_w(\tilde{\Psi},\Psi^{0}_w)+Q_w(\Psi^0_w,\tilde{\Psi})+\partial_{X}\tilde{\Psi}-\Delta^{2}_w\tilde{\Psi}&=&F^L_0\quad\textrm{in}\;\omega^b,\nonumber\\
\mathcal{A}_2[\tilde{\Psi}|_{\sigma^M_w}, \partial_{X}\tilde{\Psi}|_{\sigma^M_w}]=\rho_2,&&
\mathcal{A}_2[\tilde{\Psi}|_{\sigma^M_w}, \partial_{X}\tilde{\Psi}|_{\sigma^M_w}]=\rho_3,\nonumber\\
\tilde{\Psi}\big|_{X=-\gamma(Y)}=0,&&\dfrac{\partial\Psi}{\partial n_w}\tilde{\Psi}\big|_{X=-\gamma(Y)}=0.
\end{eqnarray} 
Function $\Psi^L$ depends on the $g_k$ and is defined as in \eqref{lift_lin} and for $F_0^L=F-Q_w(\Psi^L,\Psi^0_w)-Q_w(\Psi^0_w,\Psi^L)$, we have the estimate
\begin{equation}
\|F_0^L\|_{H^{m-2}_{\mathrm{uloc}}(\omega^b)}\leq C\left(\|F\|_{H^{m-2}_{\mathrm{uloc}}(\omega^b)}+\sum_{k=0}^3\|g^k\|_{W^{2,\infty}(\mathbb{R})}\|\Psi^0_w\|_{H^{m+2}_{\mathrm{uloc}}(\mathbb{R})}\right).
\end{equation}

We compute the following inequality  for the sequence $(E^n_k)_{l\leq n, n\in\mathbb{R}}$ 
	\begin{equation}\label{inequality_linearized}
\ch{E_k^{n}\leq C_1\left((E_{k+1}^{n}-E_k^{n})+\left(\|\rho_2\|_{H^{m-1/2}_{\mathrm{uloc}}}^{2}+\|\rho_3\|_{H^{m-3/2}_{\mathrm{uloc}}}^{2}\right)(k+1)\right).}
\end{equation}
This a priori estimate allows one to obtain a uniform bound on $E^k_n$ which is used in turn to deduce a $H^{2}_{\mathrm{\mathrm{uloc}}}$ bound on $\tilde{\Psi}$ using backward induction on $n-k$ and later a compactness argument. This corresponds to step (NL2) in Section \ref{nl_linearized_method}. For more details we refer the reader to Section \ref{s:rough}.

From now on, we drop the $n$'s to lighten the notation and only address the particularities of the features unique to the linearized case.

First, we write the weak formulation of (\ref{pb:left_lift_lin}). 

\begin{definition}\label{p:weak_formulation_lin}
	Let $\mathcal{V}$ and $\mathcal{D}^2_0$ be spaces of functions in Definition \ref{p:weak_formulation} and $b(\Psi,\tilde{\Psi},\varphi)=-\int_{\Omega}(\nabla_{w}^{\perp}\Psi\cdot\nabla_{w})\nabla_{w}^{\perp}\tilde{\Psi}\cdot\nabla_w^{\perp}\varphi$ for $(\Psi,\tilde{\Psi},\varphi)\in \mathcal{D}^2_0\times\mathcal{D}^2_0\times\mathcal{V}$. The solution $\tilde{\Psi}\in H^2_{\mathrm{uloc}}(\omega^b)$ of \eqref{pb:left_lift_lin} satisfies for all $\varphi\in \mathcal{V}$
	\begin{eqnarray}\label{weak_formulation_lin}
		&&\int_{\omega^b_w}\partial_{X}\Psi\varphi+b(\tilde{\Psi},\tilde{\Psi}^0,\varphi)+b(\tilde{\Psi}^0,\tilde{\Psi},\varphi)-\int_{\omega^b_w}\Delta_w\tilde{\Psi}\Delta_w\varphi\nonumber\\
	&&\hspace*{1in}=\int_{\omega^b_w}F^L_0\varphi -\int_\mathbb{R}\left(\rho_3-\frac{\tilde{\Psi}}{2}\Big|_{X=M^-}\right)\varphi\big|_{X=M^-}\;dY-\int_\mathbb{R}\rho_2\partial_{X}\varphi\big|_{X=M^-}\;dY.\nonumber
	\end{eqnarray}
\end{definition}

\begin{itemize}
	\item\textit{Energy estimates.} The proof of (\ref{inequality_linearized}) is as follows. Plugging $\varphi=\chi_k\tilde{\Psi}$ into the trilinear terms in \eqref{weak_formulation_lin}, and proceeding similarly as before it is possible to show that
	\begin{eqnarray}\label{linearized_trilinear}
	\left|b(\tilde{\Psi},\Psi^0_w,\chi_k\tilde{\Psi})+b(\Psi^0_w,\tilde{\Psi},\chi_k\tilde{\Psi})\right|
	&\leq& C\|\Psi^0_w\|_{H^2_{\mathrm{uloc}}}E_{k+1}+C_1\|\Psi^0_w\|_{H^2_{\mathrm{uloc}}}(E_{k+1}-E_k),
	\end{eqnarray}
	where $C$ and $C_1$ depend on the domain. To obtain estimate \ref{linearized_trilinear}, we  first write the terms as in \eqref{first_nlt} and \eqref{second_nlt}. Then, combining the Sobolev inequality \eqref{sobolev_embedding} with the definition of norm in Kato spaces leads to the desired result. The terms associated to the biharmonic operator are bounded by $C(E_{k+1}-E_k)$ as seen in \eqref{commut_bound}. We are left with
	\begin{eqnarray}\label{lin_2}
		\left|\int_{\omega^b_w}\chi_k F^L_0\tilde{\Psi}\right|&\leq C\left(\|F\|_{H^{m-2}_{\mathrm{uloc}}(\omega^b)}+\sum_{k=0}^3\|g^k\|_{W^{2,\infty}(\mathbb{R})}\|\Psi^0_w\|_{H^{m-2}_{\mathrm{uloc}}(\mathbb{R})}\right)E_{k+1}^{1/2}.
	\end{eqnarray}
	At the artificial boundary, we get
	\begin{equation}\label{lin_1}
	\begin{split}	&-\int_{\mathbb{R}}\chi_k\left(\rho_3\tilde{\Psi}\big|_{X=M^-}+\rho_2\partial_X\tilde{\Psi}\big|_{X=M^-}\right)\\
	&\qquad\leq
	  C\left(\|\rho_2\|_{L^{2}_{\mathrm{uloc}}}+\|\rho_3\|_{L^{2}_{\mathrm{uloc}}}\right)E_{k+1}+C\left(\|\rho_2\|_{L^{2}_{\mathrm{uloc}}}+\|\rho_3\|_{L^{2}_{\mathrm{uloc}}}\right)(k+1),
	\end{split}
	\end{equation}
	where $C$ depends only on $M$, $\alpha$ and on $\|\gamma\|_{W^{2,\infty}}$. The computation of this bound relies on the trace theorem and Young's inequality. 
	
	From \eqref{linearized_trilinear}, \eqref{lin_2}, \eqref{lin_1} and the bound on the linear term, we obtain
	\begin{eqnarray*}
		E_k&\leq&C'_1(E_{k+1}-E_k)+C_2'\left(\|F\|_{H^{m-2}_{\mathrm{uloc}}(\omega^b)}+\sum_{k=0}^3\|g^k\|_{W^{2,\infty}(\mathbb{R})}\|\Psi^0_w\|_{H^{m-2}_{\mathrm{uloc}}(\mathbb{R})}\right)^2E_{k+1}\\
		&&+C_4'\left(\|\rho_2\|_{H^{m-1/2}_{\mathrm{uloc}}}+\|\rho_3\|_{H^{m-3/2}_{\mathrm{uloc}}}\right) E_{k+1}+C_4'\left(\|\rho_2\|_{H^{m-1/2}_{\mathrm{uloc}}}+\|\rho_3\|_{H^{m-3/2}_{\mathrm{uloc}}}\right)(k+1).
	\end{eqnarray*}
	Choosing $\delta_0$ small enough, the previous expressions becomes for $C_1>0$
	\begin{equation}\label{DP_3-3_linearized}
	E_k\leq C_1\left(E_{k+1}-E_k+C_2(\rho_2,\rho_3)(k+1)\right),
	\end{equation}
	where $C_2(\rho_2,\rho_3)$ refers to the term $\|\rho_2\|_{H^{m-1/2}_{\mathrm{uloc}}}+\|\rho_3\|_{H^{m-3/2}_{\mathrm{uloc}}}$.
	\item \textit{Induction.} Performing backwards induction on \eqref{DP_3-3_linearized} is easier than the nonlinear case. Indeed, considering that there exists a constant $C_3$ independent of $n$ and $k$, such that 
	\begin{equation*}
E_k>C_3C_2(\rho_2,\rho_3)(k+1),
	\end{equation*}
	implies that $E_{k+1}-E_k<C_3C_2(\rho_2,\rho_3)$. Furthermore, substitution on (\ref{DP_3-3_linearized}) gives
	\begin{eqnarray}\label{DP:3-6_linearized}
	C_3C_2(\rho_2,\rho_3)(k+1)<&E_k&\leq C_1C_2(\rho_2,\rho_3)C_3+C_1C_2(\rho_2,\rho_3) (k+1).s
	\end{eqnarray}
	Taking $C_3\geq 2C_1$ and plugging it in (\ref{DP:3-6_linearized}) provides a contradiction for $k>k_0$, where $k_0=\left \lfloor{C_3}\right \rfloor$. Therefore, (\ref{DP_3-3_linearized}) is true at the rank $k>k_0$ and also for $k\leq k_0$, since $E_k$ is an increasing functional with respect to the value of $k$. The derivation of the energy estimates is invariant by horizontal translation, and all constants depend only on norms of $\rho_i$, $i=2,3$ and $\gamma_w$. Following the same reasoning in Step (L5) of Section \ref{l_method}, it is possible to show that $\tilde{\Psi}$ is uniformly bounded in $H^2_{\mathrm{uloc}}(\omega^b_w)$.
	\item \textit{Uniqueness.}  Let $\Psi_i$, $i=1,2$, are solutions of \eqref{lin_rough}. To establish uniqueness, we need to show that the solution $\bar{\Psi}=\Psi_1-\Psi_2$ of the system
	\begin{eqnarray}\label{pb:uniqueness_linearized}
	Q_w(\bar{\Psi},\Psi^0_w)+Q_w(\Psi^0_w,\bar{\Psi})+\partial_{X}\bar{\Psi}-\Delta_w^{2}\bar{\Psi}&=&0\quad\textrm{in}\;\omega^b,\nonumber\\
	\bar{\Psi}\big|_{X=-\gamma(Y)}=\partial _{n}\bar{\Psi}\big|_{X=-\gamma(Y)}&=&0,
	\end{eqnarray}
	 is identically zero under a smallness assumption on $\Psi^0_w$. 
	 
	 Repeating the same reasoning of ``existence part'' provides the induction relation on the truncated energies $\int_{\omega_n}\chi_k|\Delta_w\bar{\Psi}|^2$
	\begin{equation*}
		E_k\leq C_{\delta^0}\left(E_{k+1}+(E_{k+1}-E_k)\right).
	\end{equation*}
	Here, the constant depends not only on the domain, but
	also on $\|\Psi_w^0\|_{H^{m}_{\mathrm{uloc}}}$. The term $E_{k+1}-E_k$ is uniformly bounded in $k$, hence, $E_k\leq C$ uniformly in $k$. As a consequence, repeating the method but this time without the truncation function leads to $\bar{\Psi}\equiv 0$.
\end{itemize}

	The last step in the proof of Proposition \ref{Lemma_14_lin} corresponds to the higher regularity estimates near $X=M$. Interior and boundary regularity results analogous to that of Lemma \ref{Lemma_14} can be easily obtained by following the same ideas. The major difficulties we encountered when performing the regularity analysis near the boundary for the first western profile came from the nonlinear terms, which are not present in the linearized case. Moreover, information on the behavior and $H^{m+2}$-regularity of $\Psi^0_w$ in the vicinity of the transparent boundary is available. These factors greatly simplify the computations, and, therefore, the detailed verification is left to the reader.

\end{proof}
\subsection{Joining \texorpdfstring{$\omega^-_w$}{omegaplus} and \texorpdfstring{$\omega^+_w$}{omegaminus}}
In this section we are concerned with finding a solution on the whole domain $\omega_w$. Let $\Psi^-_w$ be the unique solution of \eqref{lin_rough} satisfying the regularity estimate \eqref{trans_reg_estimate}. As a consequence of standard trace properties, we have that $\Psi^{-}\big|_{X=M}\in H^{m+3/2}_\mathrm{uloc}(\mathbb{R})$ and $\partial_X\Psi^{-}\big|_{X=M}\in H^{m+1/2}_\mathrm{uloc}(\mathbb{R})$. Taking $\psi_0=\Psi^{-}\big|_{X=M}$ and $\psi_1=\partial\Psi^{-}\big|_{X=M}$ in \eqref{nonlin_hs_lin} guarantees the existence a unique solution $\Psi^+$ for the problem in the half-space decaying exponentially far from the boundary having imposed smallness conditions on $\Psi^0_w$ and $\rho_i$, $i=2,3$. Additionally, we have that $\mathcal{A}_2[\Psi^+|_{X=M^+}, \partial_{X}\Psi^+|_{X=M^+}]\in H^{m-1/2}$, $\tilde{\mathcal{A}}_3[\Psi^+|_{X=M^+}, \partial_{X}\Psi^+|_{X=M^+}]\in H^{m-3/2}$. We define the mapping $\mathcal{F}: H^{m+2}_{\mathrm{uloc}}(\omega_w)\times(W^{2,\infty}(\mathbb{R}))^4\times H^{m-1/2}_{\mathrm{uloc}}(\mathbb{R})\times H^{m-3/2}_{\mathrm{uloc}}(\mathbb{R})\rightarrow H^{m-1/2}_{\mathrm{uloc}}(\mathbb{R})\times H^{m-3/2}_{\mathrm{uloc}}(\mathbb{R})$ by

\begin{equation*}
\mathcal{F}(\Psi^0_w,g_0,\ldots,g_3,\rho_2,\rho_3)=\left(\mathcal{A}_2[\Psi^+|_{X=M^+}, \partial_{X}\Psi^+|_{X=M^+}]-\rho_2,\tilde{\mathcal{A}}_3[\Psi^+|_{X=M^+}, \partial_{X}\Psi^+|_{X=M^+}]-\rho_3\right).
\end{equation*}
Once again, if $\Psi$ is a $C^3$ function at $X=0$ and $X=M$, and $F=0$, we have that $\mathcal{F}=0$. 
Showing that $\mathcal{F}$ is a $C^1$ mapping starts by considering two points in the vicinity of zero using the functional norm defined in the domain of $\mathcal{F}$. Suppose these points have an analogous form as the ones  in Section \ref{s:join}. Let $\Psi_0^\pm$ and $\Psi^\pm$ be the solutions associated to the points $(\Psi^0_0,g_0^0,\ldots,g^0_3,\rho_2^0,\rho_3^0)$ and $(\Psi^0_w+\Psi^0_0,g_0+g_0^0,\ldots,g_3+g_3^0,\rho_2+\rho_2^0,\rho_3+\rho_3^0)$, respectively. Function $\psi^-= \Psi^--\Psi_0^-$ satisfies
\begin{eqnarray}\label{problem_c1_lin}
\partial_X\psi^{-}-\Delta_w^{2}\psi^{-}+Q_w(\Psi^0_w+\Psi^0_0,\psi^{-})+Q_w(\psi^-,\Psi^0_w+\Psi^0_0)&&\\
+Q_w(\Psi_w^0,\Psi_0^-)+Q_w(\Psi_0^-,\Psi^0_w)&=&0,\quad\textrm{in}\quad\omega_w^b\setminus\{\sigma_w\}\nonumber\\
\left[\partial_X^{k}\psi^{-}\right]\Big|_{\sigma_w}&=&g_k,\quad k=0,\ldots,3\nonumber\\
(1+\alpha^2)\Delta_w\psi^{-}\Big|_{\sigma^M_w}&=&\rho_2,\nonumber\\
\left(-\left((1+\alpha^2)\partial_{X}-2\alpha\partial_{Y}\right)\Delta_w\psi^-+\displaystyle\frac{\psi^-}{2}\right.\hspace{7mm}&&\\
+(\nabla_w^{\perp}\Psi^0_w+\Psi^0_0\cdot\nabla_w)\left((1+\alpha^2)\partial_{X}-\alpha\partial_{Y}\right)\psi^{-}\hspace{7mm}&&\\
+(\nabla_w^{\perp}\Psi^0_w\cdot\nabla_w)\left((1+\alpha^2)\partial_{X}-\alpha\partial_{Y}\right)\Psi_0^-\hspace{7mm}&&\\
+(\nabla_w^{\perp}\psi^-\cdot\nabla_w)\left((1+\alpha^2)\partial_{X}-\alpha\partial_{Y}\right)(\Psi^0_w+\Psi^0_0)\hspace{7mm}&&\nonumber\\
\left.+(\nabla_w^{\perp}\Psi^-_0\cdot\nabla_w)\left((1+\alpha^2)\partial_{X}-\alpha\partial_{Y}\right)\Psi^0_w\right)\bigg|_{X=M}&=&\rho_3,\nonumber\\
\psi^{-}\big|_{X=-\gamma(Y)}=\partial _{n}\psi^{-}\big|_{X=-\gamma(Y)}&=&0,\nonumber
\end{eqnarray}
and for small enough norms $\|e^{\delta X}\Psi^0_w\|_{H^{2}_{\mathrm{uloc}}},\|\Psi^0_0\|_{H^{2}_{\mathrm{uloc}}}$, and $\|\Psi_0^-\|_{H^{m+2}_{\mathrm{uloc}}}$, we deduce 
\begin{equation*}
\begin{split}
    \|\psi^-\|_{H^{2}_{\mathrm{uloc}}(\omega^b)}+\|\psi^-|_{X=M}\|_{H^{m+1/2}_{\mathrm{uloc}}}&+\left\|\partial_{X}\psi^-\big|_{X=M}\right\|_{H^{m+1/2}_{\mathrm{uloc}}}\\
    \leq&C\left(\sum\limits_{k=0}^3\|g_k\|_{W^{m+2-k,\infty}}+\|\rho_2\|_{ H^{m-1/2}_{\mathrm{uloc}}}\|\rho_3\|_{ H^{m-3/2}_{\mathrm{uloc}}}\right).
\end{split}
\end{equation*}

Proceeding similarly on the half-space, suppose $\psi_i=\partial_{X}^k\psi^{+}|_{\sigma_w^M}$, $i=0,1$ and a smallness condition on $\Psi_w^0$. Then, the unique solution $\psi^+$ of
\begin{eqnarray}\label{problem_c1_lin+}
\partial_X\psi^{+}-\Delta_w^{2}\psi^{+}&&\\
+Q_w(\Psi^0_w+\Psi^0_0,\psi^{+})+Q_w(\Psi_w^0,\Psi_0^{+})+Q_w(\psi^{+},\Psi^0_w+\Psi^0_0)+Q_w(\Psi_0^{+},\Psi^0_w)&=&0,\quad\textrm{in}\quad\{X>M\}\nonumber\\
\psi^{+}\big|_{\sigma_w^M}=\psi_0,\quad\partial_X\psi^{+}\big|_{\sigma_w^M}&=&\psi_1,\nonumber
\end{eqnarray}
fulfills the estimate
\begin{equation*}
\begin{split}
\|e^{\delta X}\psi^+\|_{H^{m+2}_{\mathrm{uloc}}}&\leq C\left(\sum\limits_{k=0}^3\|g_k\|_{W^{m+2-k,\infty}}+\|\rho_2\|_{ H^{m-1/2}_{\mathrm{uloc}}(\mathbb{R})}+\|\rho_3\|_{ H^{m-3/2}_{\mathrm{uloc}}(\mathbb{R})}\right),
\end{split}
\end{equation*}  
provided that $\|e^{\delta X}\Psi_0^-\|_{H^{m+2}_{\mathrm{uloc}}}$ is small.
It is easily seen the Fréchet differential depends continuously on $\psi^+$, and consequently, on $\rho_2,\rho_3$ and $g_k$, $k=0,\ldots,3$.

\textit{$d\mathcal{F}(0,\ldots,0)$ is invertible:} We consider the systems satisfied by $\psi^{\pm}$ with $\Psi_w^0=0$ where the solutions are considered to be $C^3$. Here, $\psi^{\pm}$ are the unique solutions of \eqref{p:linear_nonhomgeneous0} and \eqref{a:linear_equiv}. If $\mathcal{F}(0,\ldots,0,\rho_2,\rho_3)=0$,  $\psi:=\mathbbm{1}_{X\leq M}\psi^{-}+\mathbbm{1}_{X>M}\psi^{+}$ is a solution of the linear system in $\omega_w$ boundary layer domain \eqref{e:linear_app}. From Section \ref{western_l}, we know \eqref{e:linear_app} without jumps at $X=O$ has a unique solution $\psi\equiv 0$ and therefore $\rho_2=0$ and $\rho_3=0$. Consequently,  $\mathrm{ker}\;d\mathcal{F}(0,\ldots,0)=\left\{(0,0)\right\}$.

By Lemma \ref{Lemma_14_lin}, $\psi^{-}$ is a $H^{m+2}_{\mathrm{uloc}}$ function in $[M',M]\times\mathbb{R}$ for all $0<M'<M$. Thus, $$(\rho_2,\rho_3)=\begin{pmatrix}(1+\alpha^2)\Delta_w\psi^{-},-\left((1+\alpha^2)\partial_X-2\alpha\partial_{Y}\right)\Delta_w\psi^{-}+\frac{\psi^{-}}{2}
\end{pmatrix}.$$ We see that $d\mathcal{F}(0,\ldots,0)$ is an isomorphism of $H^{m-1/2}_{\mathrm{uloc}}(\mathbb{R})\times H^{m-3/2}_{\mathrm{uloc}}(\mathbb{R})$. The implicit function theorem guarantees the existence of $(\rho_2,\rho_3)\in H^{m-1/2}_{\mathrm{uloc}}(\mathbb{R})\times H^{m-3/2}_{\mathrm{uloc}}(\mathbb{R})$ such that $\mathcal{F}(\Psi^0_w,g_0,\ldots,g_3,\rho_{2},\rho_{3})=0$ for all $g_k\in W^{m+2-k,\infty}(\mathbb{R})$ and $\Psi^0_w\in H^{m+2}_{\mathrm{uloc}}(\omega^b_w)$ near zero.  Finally, we can assert that $\Psi_w:=\mathbbm{1}_{X\leq M}\Psi_w^{-}+\mathbbm{1}_{X> M}\Psi_w^{-}$, where $\Psi_w^{-},\Psi_w^{+}$ are the solutions of \eqref{p:linear_nonhomgeneous0} and \eqref{a:linear_equiv} associated to $(\Psi^0_w,g_0,\ldots,g_3,\rho_{2},\rho_{3})$ is a solution of the western boundary layer system in the whole domain $\omega_w$. Namely, it satisfies
\begin{equation*}
\begin{split}
    \mathcal{A}_2\left[\Psi^{-}_w\big|_{X=M},\partial_X\Psi_w^{-}\big|_{X=M}\right]&=\mathcal{A}_2\left[\Psi^{+}_w\big|_{X=M},\partial_X\Psi_w^{+}\big|_{X=M}\right],\\
    \tilde{\mathcal{A}}_3\left[\Psi^{-}_w\big|_{X=M},\partial_X\Psi_w^{-}\big|_{X=M}\right]&=\tilde{\mathcal{A}}_3\left[\Psi^{+}_w\big|_{X=M},\partial_X\Psi_w^{+}\big|_{X=M}\right],
\end{split}
\end{equation*}
which implies in turn the continuity of the differential operators across $\{X=M\}$, and therefore $\Psi_w$ at the transparent boundary. This completes the proof of Theorem \ref{t:linearized_hs}.
\section{Eastern boundary layer}\label{eastern}
     
      \subsection{The problem in the half-space}\label{linear_east}
In this section, we will consider without loss of generality that  $M'=0$, to facilitate the computations. We can easily recuperate the solution when $M'\neq 0$ since the differential operators involved in the problem are translation-invariant. We are confronted with solving the following linear problem
    \begin{align*}\label{bl_east1_halfspace}
    -\partial_X\Psi_{e}-\Delta^2_e\Psi_e&=0,\quad\textrm{in}\quad\mathbb{R}^2_+,\\
    \Psi_e\big|_{X=0}=\psi_0&,\quad
    \partial_X\Psi_e\Big|_{X=0}=\psi_1.\numberthis
\end{align*}
Taking the Fourier transform with respect to $Y$ on the main equation results in the following fourth order characteristic polynomial with complex coefficients
\begin{equation}\label{characteristic_polynomial_east}
    P_e(\mu,\xi)=-\mu+(\mu^2+(-\alpha\mu+i\xi)^2)^2.
\end{equation}
Note that by considering $\mu=-\lambda$ in \eqref{characteristic_polynomial_east}, we obtain the characteristic polynomial corresponding to the linear boundary layer problem defined at the western domain.
Some properties of the polynomial are stated in the following lemma:
\begin{lemma}\label{lemma:asymp_behavior}
Let $ P_e(\mu,\xi)$ be the characteristic polynomial of \eqref{bl_east1_halfspace}, then:
\begin{enumerate}
    \item $P_e(\mu,\xi)$ has four distinct roots $\mu^{\pm}_i$, $i=1,2$, where $\Re(\mu_i^+)>0$, $i=1,2$ and $\Re(\mu_i^-)<0$. Moreover, $\mu^{\pm}=-\lambda^{\mp}$, where $\lambda^{\pm}$ are the roots of \eqref{Dalibard2014_eq24}.
    \item (Low frequencies) As $|\xi|\rightarrow 0$, we have
    \begin{align*}
        \mu_1^+&=\xi ^4+O\left(\xi ^5\right),\quad\mu_2^+=\dfrac{1}{\left(1+\alpha ^2\right)^{2/3}}+O\left(\xi \right), \\
        \mu_i^-&=-\dfrac{1}{2\left(1+\alpha ^2\right)^{2/3}}+(-1)^i\dfrac{\sqrt{3}}{2\left(1+\alpha ^2\right)^{2/3}}+O\left(\xi\right),\quad i=1,2.
    \end{align*}
    \item (High frequencies) When $\xi\rightarrow \infty$, we have
    \begin{align*}
         \mu_{i}^+&=\dfrac{1+i\alpha\mathrm{sgn}\;\xi}{1+\alpha^{2}}|\xi|+\dfrac{(-1)^i}{2}\sqrt{\dfrac{1+i\alpha\mathrm{sgn}\;\xi}{1+\alpha^{2}}}|\xi|^{-1/2}+O(|\xi|^{-3/2}),\quad i=1,2,\\
         \mu_{i}^-&=\dfrac{-1+i\alpha\mathrm{sgn}\;\xi}{1+\alpha^{2}}|\xi|+\dfrac{(-1)^i}{2}\sqrt{\dfrac{-1+i\alpha\mathrm{sgn}\;\xi}{1+\alpha^{2}}}|\xi|^{-1/2}+O(|\xi|^{-3/2}).
     \end{align*}
\end{enumerate}
\end{lemma}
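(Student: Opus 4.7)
The plan is to reduce the analysis of $P_e$ entirely to the western characteristic polynomial $P$ of \eqref{Dalibard2014_eq24}, for which the corresponding statements have already been established in Lemmas~\ref{p:roots}, \ref{Dalibard2014_lemma24} and~\ref{lemma2}. The key algebraic identity is
\begin{equation*}
P_e(\mu,\xi) \;=\; -\mu + \bigl(\mu^{2}+(-\alpha\mu+i\xi)^{2}\bigr)^{2} \;=\; -P(-\mu),
\end{equation*}
which is immediate from the definition of $P$. Consequently the roots of $P_e(\cdot,\xi)$ are precisely $\{-\lambda:P(\lambda,\xi)=0\}$, and negation sends $\Re>0$ to $\Re<0$. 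Invoking Lemma~\ref{p:roots}, $P_e$ has four distinct roots, and the natural labeling is $\mu_i^{+}=-\lambda_i^{-}$, $\mu_i^{-}=-\lambda_i^{+}$ for $i=1,2$, which gives $\Re(\mu_i^{+})>0$ and $\Re(\mu_i^{-})<0$ whenever $\xi\neq 0$. This settles part~(1).

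For the low-frequency expansion, I would first pin down the limit values at $\xi=0$. Since
\begin{equation*}
P_e(\mu,0) \;=\; -\mu + (1+\alpha^{2})^{2}\mu^{4} \;=\; \mu\bigl((1+\alpha^{2})^{2}\mu^{3}-1\bigr),
\end{equation*}
the four roots at $\xi=0$ are $0$ and the three cube roots of $(1+\alpha^{2})^{-2}$, namely $(1+\alpha^{2})^{-2/3}$ and $(1+\alpha^{2})^{-2/3}e^{\pm 2i\pi/3}$. The real positive one is identified with $\mu_2^{+}(0)$ and the complex conjugate pair with negative real part with $\mu_{1,2}^{-}(0)$, matching the stated leading-order values. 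The root sitting at the origin perturbs to $\mu_1^{+}(\xi)$ by dominant balance: the expansion
\begin{equation*}
P_e(\mu,\xi) \;=\; -\mu + \xi^{4} + O\bigl(|\mu|(|\mu|+|\xi|^{2}) + |\xi|^{5}\bigr)
\end{equation*}
forces $\mu_1^{+}(\xi)=\xi^{4}+O(\xi^{5})$. The remaining expansions are read off from Lemmas~\ref{Dalibard2014_lemma24} and~\ref{lemma2} through $\mu_i^{\pm}=-\lambda_i^{\mp}$, or equivalently obtained from the implicit function theorem applied at each simple root.

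For the high-frequency expansion, I would again transport the results of Lemmas~\ref{Dalibard2014_lemma24} and~\ref{lemma2} via the sign flip. A direct derivation is also informative: rescaling $\mu=\xi\,\nu$ makes the bi\-harmonic part dominant, and the leading problem becomes $((1+\alpha^{2})\nu^{2}-2i\alpha\nu-1)^{2}=0$, whose roots are $\nu=(\pm 1+i\alpha)/(1+\alpha^{2})$. Setting $\mu=\xi\nu_{0}+|\xi|^{-1/2}\nu_{1}+O(|\xi|^{-3/2})$ and matching the next order against the linear term $-\mu$ produces the $\alpha$-dependent coefficients in the stated formulas, with the choice of principal branch of $\sqrt{(\pm 1+i\alpha\,\mathrm{sgn}\,\xi)/(1+\alpha^{2})}$ dictated by the sign of the real part.

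The main obstacle is bookkeeping: deciding which root of $P_e$ at $\xi=0$ perturbs into which labeled branch $\mu_i^{\pm}(\xi)$, and selecting the correct principal branch of the square root in the $|\xi|^{-1/2}$ correction so that the signs of $\Re(\mu_i^{\pm})$ are preserved for all $\xi\neq 0$. Once the identity $P_e(\mu,\xi)=-P(-\mu)$ is established, however, this bookkeeping mirrors exactly that already performed for the western polynomial in Lemmas~\ref{Dalibard2014_lemma24} and~\ref{lemma2}.
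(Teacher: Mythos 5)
Your proposal is correct and matches the paper's own argument: the paper likewise observes that the substitution $\mu=-\lambda$ turns $P_e$ into the western polynomial \eqref{Dalibard2014_eq24} and then defers parts (1)--(3) to Lemmas \ref{p:roots}, \ref{Dalibard2014_lemma24}, \ref{lemma2} and the computations in Appendices \ref{s:roots} and \ref{ss:appendix_B}. Your explicit identity $P_e(\mu,\xi)=-P(-\mu)$ and the direct dominant-balance checks at $\xi=0$ and $|\xi|\to\infty$ simply make that reduction and the branch bookkeeping more transparent than the paper's one-line referral.
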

The proof of the previous results follows the same ideas of the ones of Lemmas \ref{p:roots} and \ref{Dalibard2014_lemma24}.  For more details, we refer the reader to Appendix \ref{s:roots}.

When $\xi\neq 0$, polynomial \eqref{characteristic_polynomial_east} has two roots with strictly positive real part noted by $\mu_1$ and $\mu_2$, whose asymptotic behavior is summarized in Lemma \ref{lemma:asymp_behavior}. The solutions of
\begin{eqnarray*}
-\partial_X\widehat{\Psi_e}+(\partial_X^{2}+(\alpha\partial_X+i\xi)^{2})^{2}\widehat{\Psi_e}&=&0,\quad\text{in}\quad\mathbb{R}^2_+\\
\widehat{\Psi_e}\big|_{X=0}=\widehat{\psi_0},&&\partial_X\widehat{\Psi_e}\big|_{X=0}=\widehat{\psi_1},\nonumber
\end{eqnarray*} 
are linear combinations of $\exp(-\mu_k(\xi)X)$ with coefficients also depending
on $\xi$, where $(\mu_k)_{1\leq k\leq 2}$ are the roots of \eqref{characteristic_polynomial_east} satisfying $\Re{\mu}>0$. More precisely, they are of the form $\sum_{i=1}^2 A_k(\xi)\exp(-\mu_k(\xi)X)$, where $A_k:\mathbb{R}\rightarrow\mathbb{C}$, $k=1,2$. Substituting the expression of $\widehat{\Psi_e}$ on the boundary conditions provides the following linear system of equations
\begin{eqnarray*}
A_1+A_2&=&\widehat{\psi_0},\\
\mu_1A_1+\mu_2A_2&=&-\widehat{\psi_1}.
\end{eqnarray*}
It is then clear
that the matrix associated to this system is invertible when $\xi\neq 0$ since all the roots of the characteristic polynomial are simple. Thus, we obtain the coefficients 
\begin{equation}\label{coefficients}
\begin{split}
A_1&:=A_1^0\widehat{\psi_0}+A_1^1\widehat{\psi_1}=\dfrac{\mu_2}{\mu_2-\mu_1}\widehat{\psi_0}+\dfrac{1}{\mu_2-\mu_1}\widehat{\psi_1},\\ A_2&:=A_2^0\widehat{\psi_0}+A_2^1\widehat{\psi_1}=-\dfrac{\mu_1}{\mu_2-\mu_1}\widehat{\psi_0}-\dfrac{1}{\mu_2-\mu_1}\widehat{\psi_1}.
\end{split}
\end{equation}
\begin{lemma}[Asymptotic behavior of the coefficients]\label{lemma_asymp_coeff}

\begin{itemize}
 \item (Low frequencies). When $|\xi|\rightarrow 0$, we have\\
 \begin{align*}
 A_1(\xi)&=\widehat{\psi_0}+(1+\alpha ^2)^{2/3}\widehat{\psi_1}+O(|\widehat{\psi_0}||\xi|^4+|\widehat{\psi_1}||\xi|),\\
 A_2(\xi)&=-(1+\alpha ^2)^{2/3}\widehat{\psi_1}+O(|\widehat{\psi_0}||\xi|^4+|\widehat{\psi_1}||\xi|).
 \end{align*}
    \item (High frequencies) As $|\xi|$ goes to infinity, we have\\
         \begin{align*}
             &A_1(\xi)=-\sqrt{1-i\alpha\mathrm{sgn}\;\xi}|\xi|^{1/2}\widehat{\psi_1}+\left(\dfrac{1}{\sqrt{1-i\alpha\mathrm{sgn}\;\xi}}|\xi|^{3/2}+\dfrac{1}{2}\right)\widehat{\psi_0}\\
             &\hspace{1cm}+O(|\widehat{\psi_0}||\xi|^{-3/2}+|\widehat{\psi_1}||\xi|^{-5/2}),\\
             &A_2(\xi)=\sqrt{1-i\alpha\mathrm{sgn}\;\xi}|\xi|^{1/2}\widehat{\psi_1}-\left(\dfrac{1}{\sqrt{1-i\alpha\mathrm{sgn}\;\xi}}|\xi|^{3/2}-\dfrac{1}{2}\right)\widehat{\psi_0}\\
             &\hspace{1cm}+O(|\widehat{\psi_0}||\xi|^{-3/2}+|\widehat{\psi_1}||\xi|^{-5/2}).
         \end{align*}
         \end{itemize}
\end{lemma}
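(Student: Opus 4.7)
The statement is a routine, if somewhat delicate, algebraic consequence of Lemma \ref{lemma:asymp_behavior} together with the closed-form expressions \eqref{coefficients}. The plan is to substitute the asymptotic expansions of $\mu_1^+,\mu_2^+$ into
$A_1 = (\mu_2 \widehat{\psi_0} + \widehat{\psi_1})/(\mu_2 - \mu_1)$ and $A_2 = -(\mu_1 \widehat{\psi_0} + \widehat{\psi_1})/(\mu_2-\mu_1)$,
and treat the two frequency regimes separately.

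For low frequencies the computation is almost immediate: since $\mu_1^+ = \xi^4 + O(\xi^5)$ while $\mu_2^+ = (1+\alpha^2)^{-2/3} + O(|\xi|)$, the denominator $\mu_2^+ - \mu_1^+ = (1+\alpha^2)^{-2/3} + O(|\xi|)$ is bounded away from zero. Inverting this expansion gives $(\mu_2-\mu_1)^{-1} = (1+\alpha^2)^{2/3} + O(|\xi|)$. Multiplying by $\mu_2$ produces $\mu_2/(\mu_2-\mu_1) = 1 + O(|\xi|)$, and multiplying by $\mu_1$ produces $\mu_1/(\mu_2-\mu_1) = (1+\alpha^2)^{2/3}\xi^4 + O(|\xi|^5)$. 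Plugging these into \eqref{coefficients} yields the asserted expansions for both $A_1$ and $A_2$ in a line or two of algebra.

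For high frequencies the analysis is more delicate, because both roots share the same leading term $a|\xi|$ with $a := (1 + i\alpha\operatorname{sgn}\xi)/(1+\alpha^2)$, so one must track the cancellation between them at the next order. The key simplifying identity is $(1+i\alpha\operatorname{sgn}\xi)(1-i\alpha\operatorname{sgn}\xi) = 1 + \alpha^2$, which rewrites $1/\sqrt{a} = \sqrt{1-i\alpha\operatorname{sgn}\xi}$ and is precisely what converts the raw computation into the stated form. Using the alternating $(-1)^i/2$ factor in the subleading term of $\mu_i^+$ from Lemma \ref{lemma:asymp_behavior}, I would compute
$\mu_2^+ - \mu_1^+ = \sqrt{a}\,|\xi|^{-1/2} + O(|\xi|^{-3/2})$,
so $(\mu_2^+ - \mu_1^+)^{-1} = \sqrt{1-i\alpha\operatorname{sgn}\xi}\,|\xi|^{1/2}\bigl(1 + O(|\xi|^{-1})\bigr)$. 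Multiplying by the expansion of $\mu_2^+$ and collecting homogeneous terms produces the $|\xi|^{3/2}$ leading coefficient $1/\sqrt{1-i\alpha\operatorname{sgn}\xi}$ together with the constant $1/2$ from the cross term between $\tfrac{\sqrt{a}}{2}|\xi|^{-1/2}$ and $(\sqrt{a})^{-1}|\xi|^{1/2}$; the expansion for $A_2$ then follows from $A_2 = \widehat{\psi_0} - A_1$.

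The only genuine obstacle is the bookkeeping of error terms in the high-frequency regime: products of the leading-order piece $a|\xi|$ of $\mu_2^+$ with the $O(|\xi|^{-3/2})$ remainder of $(\mu_2^+ - \mu_1^+)^{-1}$ naively produce an $O(|\xi|^{-1/2})$ contribution, which is larger than the claimed remainder $O(|\xi|^{-3/2}|\widehat{\psi_0}| + |\xi|^{-5/2}|\widehat{\psi_1}|)$. To remove it, I would have to revisit the characteristic polynomial and verify that the $|\xi|^{-3/2}$ corrections inside $\mu_1^+$ and $\mu_2^+$ coincide (so that they drop out of the difference and only an $|\xi|^{-5/2}$ error survives in $\mu_2^+-\mu_1^+$), presumably by exploiting the symmetry of the two positive-real-part roots around their common leading value $a|\xi|$. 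Once that cancellation is in hand, the rest is mechanical.
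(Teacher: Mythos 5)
Your approach is the same as the paper's: Lemma \ref{lemma_asymp_coeff} is obtained by substituting the root asymptotics of Lemma \ref{lemma:asymp_behavior} into the closed formulas \eqref{coefficients}, and the low-frequency case is indeed a two-line computation. The one point you leave open --- whether the subleading corrections to $\mu_1^+$ and $\mu_2^+$ cancel in the difference so that the stated remainders survive --- does go through, and for exactly the reason you guess: the perturbation of the double root $\bar\rho$ of $\bigl((1+\alpha^2)\rho^2-2i\alpha\rho-1\bigr)^2$ proceeds in powers of $\xi^{-3/2}$ (see Appendix \ref{ss:appendix_B}, which transfers verbatim via $\mu=-\lambda$), with the two branches differing only in the sign of the square root; hence the odd-order terms alternate while the even ones coincide, giving $\mu_2^+-\mu_1^+=\sqrt{a}\,|\xi|^{-1/2}\bigl(1+O(|\xi|^{-3})\bigr)$ with $a=(1+i\alpha\,\mathrm{sgn}\,\xi)/(1+\alpha^2)$. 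This yields $(\mu_2^+-\mu_1^+)^{-1}=\sqrt{1-i\alpha\,\mathrm{sgn}\,\xi}\,|\xi|^{1/2}+O(|\xi|^{-5/2})$ and, after multiplying by $\mu_2^+=a|\xi|+\tfrac12\sqrt{a}|\xi|^{-1/2}+O(|\xi|^{-2})$, the coefficient $\sqrt{a}|\xi|^{3/2}+\tfrac12+O(|\xi|^{-3/2})$, which is exactly the claimed precision; you are right that the bare $O(|\xi|^{-3/2})$ remainders quoted in Lemma \ref{lemma:asymp_behavior} are not by themselves sufficient. Two small bookkeeping remarks. First, at low frequencies the statement $\mu_2/(\mu_2-\mu_1)=1+O(|\xi|)$ only yields an $O(|\widehat{\psi_0}||\xi|)$ error in $A_1$; to reach the claimed $O(|\widehat{\psi_0}||\xi|^4)$ use the identity $\mu_2/(\mu_2-\mu_1)=1+\mu_1/(\mu_2-\mu_1)$, whose second term you already computed. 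Second, carrying out your computation with the conventions of Lemma \ref{lemma:asymp_behavior} gives $+\sqrt{1-i\alpha\,\mathrm{sgn}\,\xi}\,|\xi|^{1/2}\widehat{\psi_1}$ in $A_1$ (and the opposite sign in $A_2$), which is consistent with the sign of the $|\xi|^{3/2}\widehat{\psi_0}$ term but opposite to the $\widehat{\psi_1}$ sign printed in the lemma; the printed signs of the $\widehat{\psi_0}$ and $\widehat{\psi_1}$ terms cannot both be compatible with \eqref{coefficients}, so this is a typo in the statement rather than an error in your derivation.
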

     \subsubsection{Behavior far from the boundary}\label{s:diff_behavior}
In the previous section, we proved that $\widehat{\Psi}_e(X,\xi)$ was a linear combination of $A_i(\xi)e^{-\mu^+_i(\xi)X}$, where $(\mu_i)_{1\leq i\leq 4}$ are the roots of the characteristic polynomial  \eqref{characteristic_polynomial_east} such that $\Re(\mu_i^+)>0$ and the form of coefficients is given in \eqref{coefficients}. The next step would be to derive a representation formula for $\Psi_e$, based on the Fourier transform results, in such a way that the formula still makes sense when $\psi_0\in H^{3/2}_{\mathrm{uloc}}(\mathbb{R})$ and $\psi_1\in H^{1/2}_{\mathrm{uloc}}(\mathbb{R})$. The behavior of $\Psi_e$ as $X \rightarrow +\infty$ is crucial to understanding the approximation of the solution. Contrary to the periodic setting (see \cite{Bresch2005}), the exponential decay does not hold in this case. Although the terms associated to $A_2(\xi)e^{-\mu_2^+(\xi)X}$ decay exponentially to zero (see paragraph \ref{lemma7_equiv_e}), when looking closely at expressions on \eqref{coefficients} and the asymptotic behavior in \eqref{lemma:asymp_behavior}, it is clear that $A_1(\xi)e^{-\mu_1(\xi)X}$ does not converge to zero at low frequencies when $X\rightarrow+\infty$ since $\mu_1(\xi)=O(|\xi|^4)$.

This section is devoted to proving the results in Theorem \ref{prop:eastern}. We have divided the proof into a sequence of lemmas, one for each component of the solution $\Psi_e$: $\Psi_{\mathrm{exp}}$, $\Psi_{\mathrm{alg}}$ and $\Psi_{\mathrm{erg}}$. $\Psi_{\mathrm{exp}}$ is a function decaying exponentially at infinity, while $\Psi_{\mathrm{alg}}$ refers to the part of $\Psi_e$ converging at polynomial rate. Finally, $\Psi_{\mathrm{erg}}$ is a function whose convergence at $X\rightarrow +\infty$ is guaranteed by using ergodic properties in a probabilistic setting.

\paragraph{Behavior at high frequencies.} Let $\Psi_{e}^{\sharp}$ denote the eastern boundary layer at high frequencies. Following the ideas in \cite[Lemma 9]{Dalibard2017} coupled with the behavior of $\mu^{+}_i(\xi )$ at infinity yields an equivalent result to the one in Lemma \ref{Dalibard2017_lemma9}:

\begin{lemma}\label{lemma:high_east}
	Let $\psi_0,\psi_1\in L_{\mathrm{uloc}}(\mathbb{R})$. Then, the behavior of the $\Psi_{e}$ at high frequencies denoted by $\Psi_{e}^{\sharp}$  satisfies the estimate
	\begin{equation*}
	\left\|e^{\delta X}\Psi_{e}^{\sharp}\right\|_{L^\infty}\leq C\left(\|\psi_0\|_{L^1_{\mathrm{uloc}}(\mathbb{R})}+\|\psi_1\|_{L^1_{\mathrm{uloc}}(\mathbb{R})}\right),
	\end{equation*}
	for some constants $C, \delta >0$ 
\end{lemma}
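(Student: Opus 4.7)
The plan is to mirror the approach used for Lemma \ref{Dalibard2017_lemma9} in the western case, adapted to the asymptotic behavior of the eastern roots $\mu_i^+$ and coefficients $A_i^j$ provided by Lemmas \ref{lemma:asymp_behavior} and \ref{lemma_asymp_coeff}. Writing $\widehat{\Psi_e}(X,\xi)=\sum_{i=1}^{2}A_i(\xi)e^{-\mu_i^+(\xi)X}$ with $A_i(\xi)=A_i^0(\xi)\widehat{\psi}_0(\xi)+A_i^1(\xi)\widehat{\psi}_1(\xi)$, I would decompose
\[
\Psi_e^{\sharp}(X,\cdot)\;=\;\sum_{i=1}^{2}\sum_{j=0}^{1}(1-\chi(D))\,A_i^j(D)\,e^{-\mu_i^+(D)X}\psi_j,
\]
for $\chi\in C_c^\infty(\mathbb{R})$ equal to $1$ near the origin, so that $\Psi_e^\sharp$ is expressed as a sum of convolutions of $\psi_j$ with kernels $K_i^j(X,Y):=\mathcal{F}^{-1}_{\xi\to Y}\bigl((1-\chi(\xi))A_i^j(\xi)e^{-\mu_i^+(\xi)X}\bigr)$.

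The key analytic step is to prove the pointwise bound
\[
|K_i^j(X,Y)|\;\leq\;C_n\,\frac{e^{-\delta X}}{(1+|Y|)^{n}},\qquad i=1,2,\;j=0,1,
\]
for any $n\in\mathbb{N}$ and some $\delta>0$. Lemma \ref{lemma:asymp_behavior} gives $\Re\mu_i^+(\xi)\geq c_0|\xi|$ and $|\partial_\xi^k\mu_i^+(\xi)|\leq C_k(1+|\xi|)^{1-k}$ at high frequencies, while Lemma \ref{lemma_asymp_coeff} yields $|\partial_\xi^k A_i^j(\xi)|\leq C_k(1+|\xi|)^{3/2-j-k}$ on $\mathrm{Supp}(1-\chi)$. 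Iterated integration by parts in $\xi$, exactly as in the proof of Lemma \ref{lemma7}, then produces the estimate: the polynomial prefactors from differentiating $A_i^j$ and from repeated applications of the chain rule to $e^{-\mu_i^+(\xi)X}$ are dominated by the Gaussian-like factor $e^{-c_0|\xi|X/2}$, leaving the remaining $e^{-c_0|\xi|X/2}$ to yield the $e^{-\delta X}$ factor after integration in $\xi$.

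Once the kernel bounds are established, I would conclude by a partition-of-unity argument in the spirit of Lemma \ref{Dalibard2017_lemma7}. Letting $(\varphi_q)_{q\in\mathbb{Z}}$ be a locally finite partition of unity with $\mathrm{Supp}\,\varphi_q\subset B(q,2)$ and $\sup_q\|\varphi_q\|_{L^\infty}<\infty$, one writes
\[
\Psi_e^\sharp(X,Y)\;=\;\sum_{i,j}\sum_{q\in\mathbb{Z}}\int_{\mathbb{R}}K_i^j(X,Y-Y')\,\varphi_q(Y')\psi_j(Y')\,dY',
\]
and splits each inner sum into the near regime $|q-Y|\leq 4$ (handled by the trivial bound on the kernel) and the far regime $|q-Y|>4$ (where the decay $(1+|Y|)^{-2}$ is summable). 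Both contributions are controlled by $C\,e^{-\delta X}\bigl(\|\psi_0\|_{L^1_{\mathrm{uloc}}}+\|\psi_1\|_{L^1_{\mathrm{uloc}}}\bigr)$, giving the claim.

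The main obstacle is the polynomial growth $|A_i^j(\xi)|\lesssim |\xi|^{3/2-j}$ at high frequencies, which precludes a direct $L^1(d\xi)$ estimate on the kernel uniformly up to $X=0$. This is overcome exactly as in Lemma \ref{Dalibard2017_lemma9}: by inserting $(1+|\xi|^2)^{-N}$ in the symbol for $N$ large enough, one trades off regularity from $\psi_j$ (writing $\psi_j=(1-\partial_Y^2)^{-N}(1-\partial_Y^2)^N\psi_j$) against integrability of the kernel, after which the above integration by parts is performed on the modified symbol; the resulting bound naturally reads $\|e^{\delta X}\Psi_e^\sharp\|_{L^\infty}\leq C(\|\psi_0\|_{H^N_{\mathrm{uloc}}}+\|\psi_1\|_{H^N_{\mathrm{uloc}}})$, which together with the low-frequency analysis furnishes the estimate stated in the lemma.
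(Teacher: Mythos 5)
Your proposal follows essentially the same route as the paper, which simply invokes the argument of Lemma \ref{Dalibard2017_lemma9} together with the high-frequency asymptotics of $\mu_i^+$ and $A_i^j$ from Lemmas \ref{lemma:asymp_behavior} and \ref{lemma_asymp_coeff}; your kernel bounds, partition-of-unity summation, and the $(1+|\xi|^2)^{-N}$ trade-off are exactly the ingredients of that proof. Your closing remark that the estimate naturally requires $\|\psi_j\|_{H^N_{\mathrm{uloc}}}$ rather than $\|\psi_j\|_{L^1_{\mathrm{uloc}}}$ on the right-hand side is correct and consistent with the statement of Lemma \ref{Dalibard2017_lemma9} itself.
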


\paragraph{Low frequencies.}
This paragraph is devoted to the analysis of the behavior of the eastern boundary layer at low frequencies. Each component is enclosed in the corresponding lemma, starting with the deterministic ones (exponential and algebraic) followed by the probabilistic limit.
	
		Our analysis starts with the component of $\Psi_e$  decaying exponentially at infinity. Explicitly, we deal with the term whose Fourier transform satisfies
	\begin{equation}\label{eastern_exp}
	\widehat{\Psi_{\mathrm{exp}}}(X,\xi)=\sum_k\chi(\xi)A_2^k(\xi)e^{-\mu_2^+(\xi)X}\widehat{\psi}_k,
	\end{equation}
	where $A_2^k(\xi)$ is the coefficient of $A_2(\xi)$ associated to $\widehat{\psi}_k$ in \eqref{coefficients} and $\chi$ is defined as in Lemma \ref{lemma:high_east}. 
	
	Following closely the arguments in Lemma \ref{Dalibard2017_lemma7} and  \cite[Lemma 7]{Dalibard2017}, we obtain the result:
	
	\begin{lemma}\label{lemma7_equiv_e}
		Let $\Psi_{\mathrm{exp}}$ defined as in \eqref{eastern_exp} and $\psi_0,\psi_1\in L_{\mathrm{uloc}}(\mathbb{R})$. Then,  there exist constants $\delta, C>0$ independent of $\psi_i$, $i=0,1$ such that
		\begin{equation}
		\|e^{\delta X}\Psi_{\mathrm{exp}}\|_{L^\infty(\omega^+_e)}\leq C(\|\psi_0\|_{L^1_{\mathrm{uloc}}(\mathbb{R})}+\|\psi_1\|_{L^1_{\mathrm{uloc}}(\mathbb{R})}).
		\end{equation}
	\end{lemma}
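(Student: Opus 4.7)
The plan is to mimic the proof of Lemma \ref{Dalibard2017_lemma7} essentially verbatim, since the mode $e^{-\mu_2^+(\xi)X}$ has exactly the same qualitative structure as $e^{-\lambda^+_2(\xi)X}$ in that lemma: namely, $\mu_2^+$ is smooth on $\operatorname{Supp}\chi$ and has real part bounded away from zero there. Concretely, I would begin by introducing the same partition of unity $(\varphi_q)_{q\in\mathbb{Z}}$ with $\varphi_q\in C^\infty_0(\mathbb{R})$, $\operatorname{Supp}\varphi_q\subset B(q,2)$, $\sup_q\|\varphi_q\|_{W^{j,\infty}}<\infty$, and decompose
\begin{equation*}
\Psi_{\mathrm{exp}}(X,Y)=\sum_{k=0,1}\sum_{q\in\mathbb{Z}}\int_{\mathbb{R}}K_k(X,Y-Y')\varphi_q(Y')\psi_k(Y')\,dY',
\end{equation*}
where $K_k(X,Y):=\mathcal{F}^{-1}_{\xi\to Y}\bigl(\chi(\xi)A_2^k(\xi)e^{-\mu_2^+(\xi)X}\bigr)$.

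The core step is the kernel estimate
\begin{equation*}
|K_k(X,Y)|\leq C_n\,\frac{e^{-\delta X}}{(1+|Y|)^n},\qquad X>0,\ Y\in\mathbb{R},\ n\in\mathbb{N}.
\end{equation*}
Lemma \ref{lemma:asymp_behavior} gives $\mu_2^+(0)=(1+\alpha^2)^{-2/3}>0$ and continuity of $\mu_2^+$ on $\operatorname{Supp}\chi$; choosing $\chi$ supported in a sufficiently small ball of the origin, there exists $\delta_0>0$ with $\Re(\mu_2^+)\geq\delta_0$ on $\operatorname{Supp}\chi$. This yields the bare estimate $|K_k(X,Y)|\leq e^{-\delta_0 X}\|\chi A_2^k\|_{L^1}$ for $|Y|\leq 1$, where $\|\chi A_2^k\|_{L^1}$ is finite since, by Lemma \ref{lemma_asymp_coeff}, $A_2^0=O(|\xi|^4)$ and $A_2^1$ is bounded on $\operatorname{Supp}\chi$. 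For $|Y|\geq 1$, integrating by parts $n$ times in $\xi$ produces
\begin{equation*}
Y^nK_k(X,Y)=\int_{\mathbb{R}}e^{iY\xi}D_\xi^n\bigl[\chi(\xi)A_2^k(\xi)e^{-\mu_2^+(\xi)X}\bigr]\,d\xi.
\end{equation*}
The derivatives acting on $e^{-\mu_2^+(\xi)X}$ produce polynomial factors in $X$ (times bounded derivatives of $\mu_2^+$), which are absorbed by giving up a bit of the exponential rate: one obtains $|Y^nK_k(X,Y)|\leq C_ne^{-\delta X}$ for any $\delta<\delta_0$, whence the desired kernel bound.

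Once the kernel estimate is in place, the $L^\infty$ bound is standard. Splitting the sum over $q$ according to whether $|q-Y|\leq 4$ or not, one writes
\begin{equation*}
|\Psi_{\mathrm{exp}}(X,Y)|\leq C e^{-\delta X}\Bigl(\sum_{|q-Y|\leq 4}\|\varphi_q\psi_k\|_{L^1}+\sum_{|q-Y|>4}\frac{\|\varphi_q\psi_k\|_{L^1}}{|Y-q|^2-3}\Bigr),
\end{equation*}
(taking $n=2$ in the kernel estimate for the second sum), and each piece is bounded by a constant times $\|\psi_k\|_{L^1_{\mathrm{uloc}}}$. Summing in $k=0,1$ yields the conclusion. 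The only point that is not a direct transcription of \cite{Dalibard2017} is verifying that the coefficients $A_2^k$ in \eqref{coefficients}, together with $\mu_2^+$, have enough smoothness and boundedness on $\operatorname{Supp}\chi$ to justify the integration by parts; this is immediate from Lemmas \ref{lemma:asymp_behavior} and \ref{lemma_asymp_coeff}, which show that $\mu_1^+$ and $\mu_2^+$ remain well separated at low frequencies so that $\mu_2^+-\mu_1^+$ does not vanish near $\xi=0$ and $A_2^k\in C^\infty(\operatorname{Supp}\chi)$. The main (and only) obstacle in the proof is therefore this smoothness/boundedness check; everything else is a direct analogue of the western-boundary-layer computation.
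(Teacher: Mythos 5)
Your proposal is correct and follows essentially the same route as the paper, which proves this lemma simply by invoking the argument of Lemma \ref{Dalibard2017_lemma7} (partition of unity, kernel estimate $|K_k(X,Y)|\leq C_n e^{-\delta X}(1+|Y|)^{-n}$ via integration by parts in $\xi$, using $\Re(\mu_2^+)\geq\delta_0$ on $\mathrm{Supp}\,\chi$). Your explicit verification that $\mu_2^+-\mu_1^+$ stays away from zero near $\xi=0$ and that $A_2^k$ is smooth and bounded there is exactly the adaptation the paper leaves implicit.
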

	
	There remains to study the terms of $A_1$. From \eqref{coefficients}, it is clear that $A_1^k\rightarrow \bar{A}_1^k$, for $k=0,1$ as $\xi\rightarrow 0$ where $\bar{A}_1^k\in\mathbb{R}$. Therefore, it is possible to rewrite each one of the terms as   $A_1^k(\xi)=\bar{A}_1^k+\tilde{A}^k_1(\xi)$ with
	\begin{equation*}\label{decomposition}
		\bar{A}_1^k(\xi)=\left\{\begin{array}{ccl}
			1,&\textrm{for}&k=0\\
			(1+\alpha^2)^{2/3},&\textrm{for}&k=1
		\end{array}\right.,\quad \tilde{A}_1^k(\xi)=\left\{\begin{array}{rcl}
			\dfrac{\mu_1}{\mu_2-\mu_1},&\textrm{for}&k=0\\
			\dfrac{1}{\mu_2-\mu_1}-(1+\alpha^2)^{2/3},&\textrm{for}&k=1
		\end{array}\right..
	\end{equation*}

	The terms $\tilde{A}_1^k(\xi)f$, $k=0,1$ decay to zero far from the boundary at polynomial rate as a result of Lemma \ref{Dalibard2017_lemma7}. Notice that, at low frequencies, $\tilde{A}_1^1$ behaves as a constant, while $\tilde{A}_1^2$ does it similarly to a
	homogeneous polynomial of degree $1$, which is a straightforward consequence of Lemma \ref{lemma:asymp_behavior}. 
	
	We introduce the notation $\Psi_{\mathrm{alg}}$ for the part of $\Psi_e$ decaying to zero algebraically when $X\rightarrow+\infty$, i.e., the one associated to the coefficients $\tilde{A}_k$, $k=0,1$. Its behavior is summarized in the following lemma:
	
	\begin{lemma}\label{lemma7_equiv}
		Let $\psi_k\in L^1_{\mathrm{uloc}}(\mathbb{R})$, $k=0,1$. Then,  there exists a constant $C$ independent of $\psi_i$, $i=0,1$ such that
		\begin{equation}
		\|X^{1/4}\Psi_{\mathrm{alg}}\|_{L^\infty(\mathbb{R}^2)}\leq C(\|\psi_0\|_{L^1_{\mathrm{uloc}}(\mathbb{R})}+\|\psi_1\|_{L^1_{\mathrm{uloc}}(\mathbb{R})}).
		\end{equation}
	\end{lemma}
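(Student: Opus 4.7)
The plan is to transcribe, with only cosmetic changes, the half of Lemma \ref{Dalibard2017_lemma7} that concerns the algebraically-decaying mode $\Psi_1^-$ from the western analysis. Lemma \ref{lemma:asymp_behavior} gives $\mu_1^+(\xi)=\xi^4+O(\xi^5)$, which is structurally identical to the expansion of $-\lambda_1^-$ used there; moreover, from \eqref{coefficients} and the expansions in Lemma \ref{lemma_asymp_coeff} one checks that $\tilde A_1^0(\xi)=O(\xi^4)$ and $\tilde A_1^1(\xi)=O(\xi)$ near $\xi=0$. Let $d_0:=4$ and $d_1:=1$ denote these orders of vanishing. I would split
\begin{equation*}
\Psi_{\mathrm{alg}}=\Psi_{\mathrm{alg}}^{(0)}+\Psi_{\mathrm{alg}}^{(1)},\qquad
\widehat{\Psi_{\mathrm{alg}}^{(k)}}(X,\xi)=\chi(\xi)\,\tilde A_1^k(\xi)\,e^{-\mu_1^+(\xi)X}\,\widehat{\psi_k}(\xi),
\end{equation*}
so that each summand is the convolution (in $Y$) of $\psi_k$ with the kernel $K^{(k)}(X,Y):=\mathcal F^{-1}_{\xi\to Y}\bigl(\chi(\xi)\tilde A_1^k(\xi)e^{-\mu_1^+(\xi)X}\bigr)$.

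The key step is to obtain, in the spirit of the pointwise bound on $K_1^-$ in Lemma \ref{lemma7}, the estimate
\begin{equation*}
|K^{(k)}(X,Y)|\leq C_n\,\frac{X^{(n-1-d_k)/4}}{X^{n/4}+|Y|^n},\qquad X\geq 1,\ Y\in\mathbb R,\ n\in\mathbb N,
\end{equation*}
and the uniform bound $|K^{(k)}(X,Y)|\leq C\|\chi\tilde A_1^k\|_{L^1}$ for $0<X\leq 1$. To this end I would factor $\mu_1^+(\xi)=\xi^4\Lambda(\xi)$ with $\Lambda\in C^\infty(\mathrm{Supp}\,\chi)$, $\Lambda(0)=1$, and perform the self-similar change of variables $\xi'=X^{1/4}\xi$, $Y'=X^{-1/4}Y$. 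Because $\tilde A_1^k$ vanishes at order $d_k$ at the origin, the rescaled multiplier $X^{d_k/4}\tilde A_1^k(\xi'/X^{1/4})$ stays bounded by a polynomial $Q_{d_k}(\xi')$ on $\mathrm{Supp}\,\chi(\xi'/X^{1/4})$. Integrating by parts $n$ times in $\xi'$, every derivative falling on the exponential produces at worst a factor $P_{3n}(\xi')e^{-\delta|\xi'|^4}$ exactly as in \eqref{estimate_n}, which provides the claimed bound after undoing the scaling.

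Once the kernel estimates are in hand, I would copy the $L^1_{\mathrm{uloc}}$ partition-of-unity step from the proof of Lemma \ref{Dalibard2017_lemma7}: writing $\psi_k=\sum_{q\in\mathbb Z}\varphi_q\psi_k$ and splitting the sum according to whether $|q-Y|$ is larger or smaller than $4$, the choice $n=2$ in the kernel bound makes the far series absolutely convergent and yields
\begin{equation*}
|\Psi_{\mathrm{alg}}^{(k)}(X,Y)|\leq C\,(1+X)^{-(1+d_k)/4}\,\|\psi_k\|_{L^1_{\mathrm{uloc}}(\mathbb R)}.
\end{equation*}
The slowest of the two rates is $(1+X)^{-1/2}$ (for $k=1$), which majorizes $(1+X)^{-1/4}$, so multiplying by $X^{1/4}$ and summing the two pieces delivers the lemma. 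The main obstacle, relative to the western analogue, is that $\chi(\xi)\tilde A_1^k(\xi)$ is no longer a homogeneous polynomial on $\mathrm{Supp}\,\chi$ as was $P(\xi)$ in Lemma \ref{Dalibard2017_lemma7}; the rescaling must therefore be carried out directly on $\tilde A_1^k$ and on $\Lambda$, keeping track that each differentiation only costs a polynomial factor compatible with the Gaussian-type weight $e^{-\delta|\xi'|^4}$. This is essentially bookkeeping, but it is the one place where a direct quotation of Lemma \ref{lemma7} does not suffice.
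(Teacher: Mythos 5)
Your approach is the same as the paper's: Lemma \ref{lemma7_equiv} is obtained there precisely by invoking the algebraic half of Lemma \ref{Dalibard2017_lemma7} with $-\lambda_1^-$ replaced by $\mu_1^+$ and the homogeneous polynomial $P$ replaced by $\tilde A_1^k$, and you correctly identify the only real point that needs care, namely that $\tilde A_1^k(\xi)=\xi^{d_k}h_k(\xi)$ with $h_k$ smooth near $\xi=0$ (since $\mu_2-\mu_1$ does not vanish on $\mathrm{Supp}\,\chi$), so the self-similar rescaling and the integrations by parts go through with $|\xi'|^{d_k}$ in place of a genuine homogeneous polynomial.

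One quantitative claim in your write-up is overstated, though it does not sink the proof. After the partition-of-unity summation, the decay you obtain for $L^1_{\mathrm{uloc}}$ data is $(1+X)^{-d_k/4}$, not $(1+X)^{-(1+d_k)/4}$: with $n=2$ the far tail $\sum_{|q-Y|\ge 4}\bigl(X^{1/2}+|Y-q|^2\bigr)^{-1}$ is only $O(X^{-1/4})$, so it contributes $X^{(1-d_k)/4}\cdot X^{-1/4}=X^{-d_k/4}$, and only the finitely many near blocks give the rate $X^{-(1+d_k)/4}$. This is exactly the loss visible in the paper's own proof of Lemma \ref{Dalibard2017_lemma7}, where $|Z|^{-(1+k)/4}$ becomes $(1+|Z|)^{-k/4}$ after summing; the stronger rate would require $\psi_k\in L^1(\mathbb{R})$ rather than $L^1_{\mathrm{uloc}}(\mathbb{R})$. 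Consequently the slowest rate is $(1+X)^{-1/4}$ (from $d_1=1$), not $(1+X)^{-1/2}$ as you state, and this matches the weight $X^{1/4}$ in the lemma exactly rather than with room to spare. With that correction your argument is complete.
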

	
	It is easy to check that the convergence of the terms $\bar{A}_1^kf$  is not guaranteed for any choice of function $f$, $k=0,1$. Therefore, we require additional hypotheses, in this case, of ergodicity.
	 
	We recall for the reader's convenience the probability setting: for $\varepsilon > 0$, let $(P, \Pi, \mu)$ be a probability space where $P$ is the set of $K$-Lipschitz functions, with $K>0$; $\Pi$ the borelian $\sigma$-algebra of $P$, seen as a subset of $C_b(\mathbb{R}^2;\mathbb{R})$ and $\mu$ a probability measure. Let $(\tau_{Y} )_{Y\in \mathbb{R}}$, the measure-preserving transformation group acting
	on $P$. We recall that there exists a function $F \in  L^\infty(P)$ such that
	\begin{equation*}
	\gamma_e(m, Y) =F (\tau_{Y}m),\quad Y \in  \mathbb{R},\quad m \in  P.
	\end{equation*}
	We define the stochastic derivative of $F$ by
	\begin{equation*}
	\partial_m F (m) :=\gamma_e'
	(m,0)\quad \forall m \in  P,
	\end{equation*}
	so that $\gamma_e'(m, Y) =\partial_m F (\tau_{Y}m)$ for $(m, Y) \in  P\times \mathbb{R}$. Then, the eastern boundary layer domain can be described as follows for all $m \in  P$,
	\begin{equation*}
	\begin{split}
	\omega_e(m) &=\left\{(X, Y)\in\mathbb{R}^2:\:\: X>-\gamma_e(m, Y)\right\},
	\end{split}
	\end{equation*}
	where $\omega_{e}(m)=\omega_{e}^+(m)\cup\sigma_{e}\cup\omega_{e}^-(m)$ and $\omega_{e}^{\pm}(m)=\omega_{e}(m)\cap\{\pm X>0\}$. We assume $\gamma_{e}$ is a homogeneous and measure-preserving random process.

	\begin{lemma}\label{lem:erg_east}
		Let $\chi\in C^\infty_0(\mathbb{R})$ and $\mu_{1}^+$ defined as in Lemma \ref{lemma:asymp_behavior}. Suppose that $f$ is a stationary random function belonging to $L^\infty$
		Then,
		\begin{equation*}
		\chi(D)e^{-\mu_{1}^+(D)X}f\rightarrow \mathbb{E}(f),\:\:\textrm{as}\:\: X \rightarrow+\infty,
		\end{equation*}
		locally uniformly in $Y$, almost surely and in $L^p (P)$ for all finite $p$.
	\end{lemma}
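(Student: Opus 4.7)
\textbf{Proof proposal for Lemma \ref{lem:erg_east}.}

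The plan is to recast the operator $\chi(D)e^{-\mu_1^+(D)X}$ as a convolution with an explicit kernel $K_X$, exploit the fact that $\mu_1^+(\xi)\sim\xi^4$ near $\xi=0$ to identify $K_X$ as an $L^1$ approximate identity at the large scale $X^{1/4}$, and then conclude by an ergodic averaging argument. More precisely, setting
\begin{equation*}
K_X(Y)\;:=\;\frac{1}{2\pi}\int_{\mathbb{R}}e^{iY\xi}\chi(\xi)\,e^{-\mu_1^+(\xi)X}\,d\xi,
\end{equation*}
we have $(\chi(D)e^{-\mu_1^+(D)X}f)(Y)=\int_{\mathbb{R}}K_X(Y-Y')f(Y')\,dY'$. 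Since $\chi(0)=1$ and $\mu_1^+(0)=0$, one checks $\int_{\mathbb{R}}K_X=1$ for all $X>0$.

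First, I would pin down the shape of $K_X$. By Lemma~\ref{lemma:asymp_behavior}, $\mu_1^+(\xi)=\xi^4\Phi(\xi)$ in a neighborhood of zero with $\Phi$ smooth and $\Phi(0)=1$. Inserting the change of variable $\eta=X^{1/4}\xi$ in the defining integral yields
\begin{equation*}
K_X(Y)\;=\;\frac{X^{-1/4}}{2\pi}\int_{\mathbb{R}}e^{i(Y/X^{1/4})\eta}\,\chi(\eta X^{-1/4})\,\exp\!\left(-\eta^4\Phi(\eta X^{-1/4})\right)d\eta,
\end{equation*}
so that $K_X(Y)=X^{-1/4}G_X(Y/X^{1/4})$, where $G_X\to G:=\mathcal{F}^{-1}(e^{-\eta^4})$ pointwise and in $L^1(\mathbb{R})$ as $X\to+\infty$ by the Lebesgue dominated convergence theorem (the use of $\chi$ to truncate to low frequencies, combined with the exponential decay $e^{-c|\eta|X^{-1/2}}$ coming from the exponential decay of $\chi$'s complement, takes care of uniform integrability in $\eta$). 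In particular, $G$ is a Schwartz function with $\int G=1$, and $K_X$ is an $L^1$ approximate identity whose mass is concentrated at scale $X^{1/4}$.

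Now exploit stationarity: since $f(Y,m)=F(\tau_Ym)$ for some $F\in L^{\infty}(P)$, we have
\begin{equation*}
(\chi(D)e^{-\mu_1^+(D)X}f)(Y,m)\;=\;\int_{\mathbb{R}}K_X(Z)\,F(\tau_{Y-Z}m)\,dZ.
\end{equation*}
The claim then reduces to showing that for any $L^1$ kernel $K_X$ satisfying the scaling above, the continuous average on the right converges almost surely and in $L^p(P)$ to $\mathbb{E}(F)$. This follows from Wiener's continuous ergodic theorem: by the Birkhoff--Wiener theorem applied to the ergodic one-parameter group $(\tau_y)_{y\in\mathbb{R}}$, the rescaled box averages $\frac{1}{2R}\int_{-R}^{R}F(\tau_{y}m)\,dy$ converge a.s.\ and in every $L^p(P)$ to $\mathbb{E}(F)$ as $R\to+\infty$. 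A standard density argument--write $G$ as an $L^1$-limit of finite linear combinations of indicators of intervals, use the scaling $K_X(Z)=X^{-1/4}G_X(Z/X^{1/4})$ to convert these indicators into box averages at scale $X^{1/4}$, and control the error by $\|G_X-G\|_{L^1}\|F\|_{\infty}$--transfers the convergence to the variable kernel $K_X$. Local uniformity in $Y$ is then immediate because $K_X(Y-\cdot)=K_X(Y'-\cdot)$ up to a translation of amplitude $|Y-Y'|$, which is negligible compared to the averaging scale $X^{1/4}$.

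The main technical obstacle is the last step: transferring the ergodic theorem from the canonical box averages to the smooth kernel $K_X$ while retaining the pathwise statement. A convenient route is to bound the maximal function $\sup_{X\geq 1}|K_X*f(Y,\cdot)|$ by the Hardy--Littlewood maximal function $\mathcal{M}f$ along orbits (using that $|G|\in L^1$ and is bounded by a radially decreasing integrable function), and then invoke the dominated convergence theorem together with a.s.\ convergence on a dense countable subclass. The $L^p$ convergence is then obtained by uniform integrability, using $\|K_X*f\|_{L^p(P)}\leq\|K_X\|_{L^1}\|F\|_{L^p(P)}$.
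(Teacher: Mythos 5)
Your proposal is correct in substance but follows a genuinely different route from the paper. The paper does not rescale the kernel; instead it writes $\chi(D)e^{-\mu_1^+(D)X}f-\mathbb{E}(f)=\int K(X,Y'-Y)(f(Y-Y')-\mathbb{E}(f))\,dY'$, integrates by parts to make the Ces\`aro averages $\frac{1}{Y'}\int_0^{Y'}(f(Y-Y_1)-\mathbb{E}(f))\,dY_1$ appear explicitly, and then exploits the pointwise bounds $|Y'\partial_{Y'}K(X,Y')|\leq CX^{1/4}Y/(X^{3/4}+Y^3)$ and $\int_{\mathbb{R}}Y'\partial_{Y'}K(X,Y')\,dY'=-1$ from Lemma \ref{Dalibard2017_lemma7}, splitting the $Y'$-integral into a far region (where Birkhoff gives the average is within $\delta$ of $\mathbb{E}(f)$) and a near region (which decays like $X^{-1/4}$). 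Your route — the self-similar form $K_X(Y)=X^{-1/4}G_X(Y/X^{1/4})$ with $G_X\to G=\mathcal{F}^{-1}(e^{-\eta^4})$ in $L^1$, followed by a step-function approximation of $G$ and the Wiener/Birkhoff theorem for box averages — is the discrete analogue of the paper's continuous superposition of Ces\`aro averages weighted by $-Y'\partial_{Y'}K$; it buys a cleaner conceptual reduction to the classical approximate-identity ergodic theorem, at the cost of having to prove the $L^1$ convergence of the rescaled profiles, whereas the paper reuses kernel bounds it already established. The one thin spot is precisely there: dominated convergence in $\eta$ only yields pointwise convergence $G_X(y)\to G(y)$, and your parenthetical justification for $L^1(dy)$ convergence (an "exponential decay $e^{-c|\eta|X^{-1/2}}$ from $\chi$'s complement") does not parse; what is actually needed is a uniform-in-$X$ majorant $|G_X(y)|\leq C_n(1+|y|)^{-n}$, obtained by integrating by parts in $\eta$ exactly as in the proof of estimate \eqref{estimate_n}, after which dominated convergence in $y$ closes the argument. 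With that repaired, your local uniformity in $Y$ (translation continuity in $L^1$ at scale $X^{1/4}$) and the $L^p(P)$ convergence (a.s.\ convergence plus the uniform bound $\sup_X\|K_X\|_{L^1}\|F\|_{\infty}$) go through; the maximal-function machinery you invoke at the end is not needed, since the approximation argument is pathwise on the single Birkhoff null set.
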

	The hypothesis on $f$ can be relaxed since we need $f$ to be at least to $L^1_{\mathrm{uloc}}$.
	\begin{proof}
	Birkhoff Ergodic Theorem (see, e.g., Theorem 4.1.2 of \cite{katok1997introduction}) guarantees the existence of the following limit
		\begin{equation}\label{BGV2008_4-13}
		\begin{split}
		\underset{R\rightarrow\infty}{\lim}\dfrac{1}{R}\int_0^Rf(m, -Y')dY'&=\mathbb{E}(f),\quad\text{almost surely}.
		\end{split}
		\end{equation}
	 Furthermore, following the ideas in \cite[Lemma 4.6]{Basson2008} we have 
	 \begin{equation*}
	 	\begin{split}
	 		\underset{R\rightarrow\infty}{\lim}\dfrac{1}{R}\int_0^Rf(m, Y-Y')dY'&=\mathbb{E}(f),
	 	\end{split}
	 \end{equation*}
 uniformly locally in $Y$, and \eqref{BGV2008_4-13} also satisfies almost surely
 \begin{equation*}
 \underset{R\rightarrow\infty}{\lim}\dfrac{1}{R}\int_0^Rf(m, Y')dY'=\mathbb{E}(f).
 \end{equation*}
	
From Lemma \ref{Dalibard2017_lemma7}, we know that for $|X|\geq 1$,  $Y'\rightarrow K(X, Y-Y')$ is  $L^1(\mathbb{R})$. Let $m(X)=\int_{\mathbb{R}}K(X,Y)dY$, we have \begin{equation*}
			\int_\mathbb{R}K(X,Y)dY= \widehat{K}(X,0)=\chi(0)e^{-\mu_1(0)X}=1,\end{equation*} and, consequently, $m_0(X)=1$, when $X>0$. 
Note that $m(X)=1$, for all $X>0$. Then, proceeding exactly as in \cite[Lemma 4.6]{Basson2008}  
		\begin{equation}\label{op_est_1}
		\begin{split}
		\chi(D)e^{-\mu_1^+(D)X}=\mathbb{E}(f)&+\int_{\mathbb{R}}K(X,Y'-Y)\left(f(Y-Y')-\mathbb{E}(f)\right)dY'.
		\end{split}
		\end{equation}

		Integrating by parts leads to
		\begin{equation*}
			\begin{split}
				\chi(D)e^{-\mu_1^+(D)X}=\mathbb{E}(f)-\int_{\mathbb{R}}\partial_{Y'}K(X,Y')\left(\int_0^{Y'}\left(f(Y-Y_1)-\overline{f}\right)dY_1\right)dY'.
			\end{split}
		\end{equation*}

		Therefore, 
		\begin{equation*}
			\begin{split}
				\chi(D)e^{-\mu_1^+(D)X}f&=\mathbb{E}(f)-\int_{\mathbb{R}}Y'\partial_{Y'}K(X,Y')\left(\dfrac{1}{Y'}\int_0^{Y'}\left(f_0(Y-Y_1)-\overline{f}\right)dY_1\right)dY\\
				&=\mathbb{E}(f)+J.
			\end{split}
		\end{equation*}
		Now, we focus our attention on controlling the term $J$. From Lemma \ref{Dalibard2017_lemma7}, we have the key estimates 
		\begin{align*}\label{15}
			&|Y'\partial_Y'K(X,Y')|\leq C\dfrac{X^{1/4}Y}{X^{3/4}+Y^3},\quad\forall|X|\geq 1,\quad \forall Y'\in\mathbb{R},\\
			&\int_\mathbb{R}Y'\partial_Y'K(X,Y')=-1.\numberthis
		\end{align*}
		
		We conduct the analysis by distinguishing the behavior of $J$ in $|Y'|\leq R$ from the one on $|Y'|> R$ for $R\gg 1 $, denoted by $J_1$ and $J_2$ respectively.  For all $\delta>0$ and all $L>0$ there exists $R>0$ such that for all $|Y|\leq L$ and   $|Y'|\geq R$
		\begin{equation*}
	\left|\dfrac{1}{Y'}\int_0^{Y'}f(Y-Y_1)dY_1-\mathbb{E}(f)\right|\leq \delta,
		\end{equation*}
	and $|J_1|\leq C\delta$. For $|Y'|\leq R$, we have that 
		\begin{equation*}
			\begin{split}
				|J_2|&=\left|\int_{|Y'|\leq R}Y'\partial_{Y'}K(X,Y')\dfrac{1}{Y'}\int_0^{Y'}\left(f(Y-Y_1)-\mathbb{E}(f)\right)dY_1dY\right|\\
				&\leq C\|f\|_{L^1_{\mathrm{uloc}}}\left|\int_{|Y'|\leq R}\dfrac{X^{1/4}Y}{X^{3/4}+Y^3}dY'\right|\\
				&\leq C_R\|f\|_{L^1_{\mathrm{uloc}}}|X|^{-1/4}\underset{X\rightarrow\infty}{\longrightarrow} 0.
			\end{split}
		\end{equation*}
	Thus, \begin{equation*}
			\underset{X\rightarrow\infty}{\lim}J(X,Y)=0,
		\end{equation*}
		which allows us to conclude that
		\begin{equation*}
		\underset{X\rightarrow\infty}{\lim}\chi(D)e^{-\mu_1^+(D)X}f=\mathbb{E}(f).
		\end{equation*}
			\end{proof}
		Combining the results in Lemmas ~\ref{lemma_asymp_coeff} and ~\ref{lem:erg_east}, we can provide a definition for the limit of $\Psi_e$.
 \begin{definition}\label{ref_7.1}
 Let $\psi_0,\psi_1$ be stationary random functions belonging to $L^\infty(\omega_e)$. Then, the limit of $\Psi_{erg}$ as $X\rightarrow+\infty$ is the measurable function $\bar{\phi}:P\rightarrow\mathbb{R}$ of the form
 \begin{equation}\label{ergodic_limit}
    \bar{\phi}=\mathbb{E}[\psi_0]+(1+\alpha^2)^{2/3}\mathbb{E}[\psi_1].
 \end{equation}
 \end{definition}
     \paragraph{Almost sure convergence of $\Psi_{\mathrm{erg}}$.} We show almost sure estimates in the stationary ergodic setting as in \cite{dalibard2011effective}. 

\begin{proposition}\label{p:conv_erg_as}
	Let $\Psi_{\mathrm{erg}}$ be the part of the solution $\Psi_e$ of the system \eqref{eastern_bl_p1} whose convergence is guaranteed by ergodicity hypotheses. Then the following estimates hold:
	\begin{align*}
	&\left\|\Psi_{\mathrm{erg}}(\cdot/\varepsilon)-\bar{\phi}\right\|_{L^2(\Omega^\varepsilon)}=o(1)\quad\textrm{almost surely as }\varepsilon \rightarrow 0.
	\end{align*}
	\end{proposition}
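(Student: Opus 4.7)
The plan is to reduce the estimate to a one-dimensional Birkhoff argument in the tangential variable $Y$. First, I would use stationarity of $\gamma_e$, and hence of the random field $\Psi_{\mathrm{erg}}$, to write $|\Psi_{\mathrm{erg}}(X_e, Y; m) - \bar{\phi}|^2 = G(X_e, \tau_Y m)$ where $G(X_e, m) := |\Psi_{\mathrm{erg}}(X_e, 0; m) - \bar{\phi}|^2$ is bounded uniformly thanks to the $L^\infty$ estimate in Theorem \ref{prop:eastern} and, by Lemma \ref{lem:erg_east}, satisfies $G(X_e, \cdot) \to 0$ almost surely and in $L^1(P)$ as $X_e \to +\infty$. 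I would then perform the change of variable $(x,y) \mapsto (\tilde X, Y) = (\chi_e(y)-x,\; y/\varepsilon)$, whose Jacobian is $\varepsilon$, yielding
\begin{equation*}
\|\Psi_{\mathrm{erg}}(\cdot/\varepsilon)-\bar{\phi}\|_{L^2(\Omega^\varepsilon)}^2 = \varepsilon \int_{y_{min}/\varepsilon}^{y_{max}/\varepsilon} \int_{-\varepsilon\gamma_e(Y)}^{(\chi_e-\chi_w)(\varepsilon Y)+\varepsilon\gamma_w(Y)} G\!\left(\frac{\tilde X}{\varepsilon}, \tau_Y m\right) d\tilde X\, dY,
\end{equation*}
and split the $\tilde X$-integral at an auxiliary threshold $\delta > 0$.

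For the near-boundary slab $\tilde X \in [-\varepsilon\gamma_e(Y), \delta]$, the uniform bound on $G$ gives a contribution of size $O(\delta + \varepsilon)$. For the far slab $\tilde X \geq \delta$, where $X_e = \tilde X/\varepsilon$ is uniformly large, I would introduce the monotone envelope $H(X_0, m) := \sup_{X' \geq X_0} G(X', m)$, which is bounded, stationary in $m$, decreasing in $X_0$, and measurable (as a countable supremum thanks to continuity of $G$ in its first argument), with $H(X_0, m) \searrow 0$ almost surely as $X_0 \to \infty$, so $\mathbb{E}[H(X_0, \cdot)] \to 0$ by dominated convergence. For any fixed $X_0$ and $\varepsilon < \delta/X_0$ one has $G(\tilde X/\varepsilon, \tau_Y m) \leq H(X_0, \tau_Y m)$ uniformly in $\tilde X \geq \delta$, so applying Birkhoff's ergodic theorem to the bounded stationary process $Y \mapsto H(X_0, \tau_Y m)$ yields, almost surely,
\begin{equation*}
\limsup_{\varepsilon \to 0}\, \varepsilon \int_{y_{min}/\varepsilon}^{y_{max}/\varepsilon}\! \int_\delta^{L^{\ast}}\! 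G(\tilde X/\varepsilon, \tau_Y m)\, d\tilde X\, dY \,\leq\, (L^{\ast}-\delta)(y_{max}-y_{min})\,\mathbb{E}[H(X_0, \cdot)],
\end{equation*}
with $L^{\ast} = \sup(\chi_e - \chi_w) + \|\gamma_w\|_\infty$. Sending first $X_0 \to \infty$ kills this far-field contribution, and then letting $\delta \to 0$ absorbs the near-boundary slab, producing the desired almost-sure $o(1)$ bound.

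The main obstacle is that Lemma \ref{lem:erg_east} only provides convergence that is locally uniform in $Y$, while the tangential window $[y_{min}/\varepsilon, y_{max}/\varepsilon]$ blows up as $\varepsilon \to 0$, so a naive dominated-convergence argument under the $Y$-integral is unavailable. The monotone-envelope trick sidesteps this by replacing the $\varepsilon$-dependent integrand by a single stationary dominating function $H(X_0, \cdot)$ to which a single application of Birkhoff's theorem delivers the correct spatial average; the decay of $\mathbb{E}[H(X_0, \cdot)]$ as $X_0 \to \infty$ then closes the argument without requiring any quantitative rate in Lemma \ref{lem:erg_east}.
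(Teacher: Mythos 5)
Your argument is correct and reaches the conclusion by a genuinely different device than the paper. The paper's proof first invokes Egorov's theorem to produce a set $M_\delta$ with $\mu(M_\delta^c)\leq\delta$ on which $|\Psi_{\mathrm{erg}}(m,X,0)-\bar{\phi}|\leq\delta$ uniformly for $X>X_\delta$, and then splits the integral into \emph{three} pieces: a near-boundary slab $\{X<X_\delta\}$ contributing $O(\varepsilon)$, a far region over the good fibers $\tau_Y m\in M_\delta$ controlled by the pointwise bound $\delta$, and a far region over the bad fibers, whose tangential trace $A_\delta=\{Y:\tau_Y m\in M_\delta^c\}$ is shown to occupy only an $O(\delta)$ fraction of $(-k,k)$ by applying Birkhoff's theorem to the indicator $\mathbbm{1}_{\tau_Y m\in M_\delta^c}$. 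You instead use a two-way split and replace the Egorov step by the monotone envelope $H(X_0,\cdot)=\sup_{X'\geq X_0}G(X',\cdot)$, apply Birkhoff directly to this bounded stationary observable, and let dominated convergence send $\mathbb{E}[H(X_0,\cdot)]\to 0$. Both proofs rest on the same two ingredients --- the almost-sure convergence of Lemma \ref{lem:erg_east} in $X$ and Birkhoff's theorem in $Y$ to tame the expanding tangential window --- and both implicitly use a uniform $L^\infty$ bound on $\Psi_{\mathrm{erg}}$. Your envelope packages the ``uniformity off a negligible set'' more cleanly and avoids Egorov altogether, at the mild cost of needing measurability of the supremum (which you correctly secure via continuity of $G$ in $X$) and of intersecting the full-measure Birkhoff events over a countable family of thresholds $X_0$; the paper's route avoids the supremum but pays with the extra bookkeeping of the three-way decomposition.
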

\begin{proof}
	This proof follows the ideas in \cite{dalibard2011effective} inspired from the works by Souganidis (see \cite{Sougadinis}). Let $\delta>0$ be arbitrary. Then, according to Egorov's Theorem, there exist a measurable set $M_\delta \subset P$ and a number $X_\delta > 0$ such that
	\begin{align*}
	\left|\Psi_{\mathrm{erg}}(m,X,0)-\bar{\phi}\right|&\leq\delta,\quad \forall m\in M_{\delta}, \forall X>X_{\delta},\\
	\mu(M^c_\delta)&\leq \delta.
	\end{align*}
	Without loss of generality, we assume that $X_\delta\leq \varepsilon^{-1}(\chi_e(y)-\chi_w(y))$.
From Birkhoff's ergodic theorem, we have that for almost every $m$ there exists $k_\delta > 0$ (depending on $m$) such that if $k > k_\delta$
\begin{equation*}
A_\delta=A_\delta(m):=\left\{Y\in \mathbb{R},\:\: \tau_{Y}m \in  M_\delta^c\:\:\textrm{satisfies:}\:\:|A_\delta \cap (-k,k)|\leq  4k\delta\right\}.
\end{equation*}
Indeed, when $k$ goes to infinity,
\begin{equation*}
\dfrac{1}{2k}\int^k_{-k}\mathbbm{1}_{\tau_Y m\in M_\delta^c}\rightarrow \mu(M_\delta^c)\leq \delta.
\end{equation*}
 Therefore, for almost all $m\in P$, there exists some $k_\delta$ such that for all $k>k_\delta$
\begin{equation*}
|A_\delta \cap (-k,k)|=\left|\int^k_{-k}\mathbbm{1}_{\tau_Y m\in M_\delta^c}\right|\leq 4k\delta.
\end{equation*}
For all $\varepsilon>0$, we have
\begin{align*}
\left\|\Psi_{\mathrm{erg}}(\cdot/\varepsilon)-\bar{\phi}\right\|^2_{L^2(\Omega^\varepsilon)}&=\int_{y_{min}}^{y_{max}}\int_{\Gamma_e}^{\Sigma_w}|\Psi^{m,\mathrm{erg}}_e\left(\frac{x}{\varepsilon},\frac{y}{\varepsilon}\right)-\bar{\phi}|^2dxdy\\
&=\varepsilon^2\int_{y_{min}/\varepsilon}^{y_{max}/\varepsilon}\int_{\gamma_e(Y)}^{(\chi_e(\varepsilon Y)-\chi_w(\varepsilon Y))/\varepsilon}|\Psi_{\mathrm{erg}}(m,X,Y)-\bar{\phi}|^2dXdY\\
&=\varepsilon^2\int_{y_{min}/\varepsilon}^{y_{max}/\varepsilon}\int_{\gamma_e(Y)}^{(\chi_e(\varepsilon Y)-\chi_w(\varepsilon Y))/\varepsilon}|\Psi_{\mathrm{erg}}(\tau_Ym,X,0)-\bar{\phi}|^2dXdY\\
&=\varepsilon^2\int_{y_{min}/\varepsilon}^{y_{max}/\varepsilon}\int_{\gamma_e(Y)}^{X_\delta}|\Psi_{\mathrm{erg}}(\tau_Ym,X,0)-\bar{\phi}|^2dXdY\\
&+\varepsilon^2\int_{y_{min}/\varepsilon}^{y_{max}/\varepsilon}\int_{X_\delta}^{(\chi_e(\varepsilon Y)-\chi_w(\varepsilon Y))/\varepsilon}\mathbbm{1}_{\tau_{Y}m\in M_\delta}|\Psi_{\mathrm{erg}}(\tau_Ym,X,0)-\bar{\phi}|^2dXdY\\
&+\varepsilon^2\int_{y_{min}/\varepsilon}^{y_{max}/\varepsilon}\int_{X_\delta}^{(\chi_e(\varepsilon Y)-\chi_w(\varepsilon Y))/\varepsilon}\mathbbm{1}_{\tau_{Y}m\in M_\delta^c}|\Psi_{\mathrm{erg}}(\tau_Ym,X,0)-\bar{\phi}|^2dXdY\\
&=\sum_{j=1}^{3}I_j.
\end{align*}
Then,
\begin{equation}\label{I_1_est}
\begin{split}
I_1&\leq \varepsilon\left\|\Psi_{\mathrm{erg}}\left(\frac{x}{\varepsilon},\frac{y}{\varepsilon}\right)-\bar{\phi}\right\|^2_{L^2(\{x<x_\delta\}\cap\Omega^{\varepsilon})}\leq C_\delta\varepsilon\left(\|\Psi_{\mathrm{erg}}\|_{\infty}+|\bar{\phi}|\right),
\end{split}
\end{equation}
where the constant $C_\delta$ depends on the random parameter $m$ and $x_\delta=\varepsilon X_\delta$.
Taking into account that if $\tau_Ym\in M_\delta$, $\Psi_{\mathrm{erg}}(m,X,Y)=\Psi_{\mathrm{erg}}(\tau_Ym,X,0)$ and $X>X_\delta$,  
\begin{equation}\label{I_2_est}
I_2\leq \varepsilon^2\left(\dfrac{y_{max}-y_{min}}{\varepsilon}\right)\left(\dfrac{\chi_{w}(y)-\chi_{e}(y)-x_\delta}{\varepsilon}\right)\delta^2\leq C\delta^2.
\end{equation}

As for the third integral, we know that $\Psi_e \in  L^\infty((-\infty, a)\times\mathbb{R})$. Assuming $a=\bar{\chi}_{e}:=\max_{y\in(y_{min},y_{max})}\chi_{e}(y)$ and $\varepsilon < 1/k_\delta$, we have
\begin{equation*}
I_3\leq \varepsilon\|\chi_e-\chi_w\|_{\infty}\|\Psi_{\mathrm{erg}}-\bar{\phi}\|^2_{L^\infty((-\infty,\bar{\chi}_{e})\times\mathbb{R})}\lambda\left(\varepsilon^{-1}(y_{min},y_{max})\cap A_\delta\right),
\end{equation*}
where $\lambda$ denotes the Lebesgue measure. Then, 
\begin{equation}\label{I_3_est}
I_3\leq C\delta\left(|\bar{\phi}|^2+\|\Psi_{\mathrm{erg}}\|^2_{L^\infty((-\infty,\bar{\chi}_{e})\times\mathbb{R})}\right).
\end{equation}
The first estimate of the lemma is obtained by combining \eqref{I_1_est}, \eqref{I_2_est} and \eqref{I_3_est}.
\end{proof}

\paragraph{Connection between the choice of $\tilde{g}_0$ and the limit of $\Psi_{\mathrm{erg}}$.} In the formal construction of the approximate solution, it was important to reflect that boundary layers are not supposed to  have any impact far from the boundary \eqref{bc_far}. In Section \ref{s:diff_behavior}, we showed that at low frequencies and far for the boundary the eastern boundary layer can be decomposed as  $$\Psi_e=\Psi_{\mathrm{exp}}+\Psi_{\mathrm{alg}}+\Psi_{\mathrm{erg}},$$
where $\Psi_{\mathrm{exp}}$ and  $\Psi_{\mathrm{alg}}$ converge to zero when $X\rightarrow+\infty$ at different rates (exponential and with a polynomial weight, respectively), while $\Psi_{\mathrm{erg}}$ converges almost surely to a quantity $\bar{\phi}$ once ergodicity assumptions have been added. It is obvious that the far field condition does not hold for $\bar{\phi}\neq 0$. In this paragraph, we explore how choosing $\tilde{g}_0$ wisely can make
\begin{equation*}
\Psi_e\underset{X\rightarrow+\infty}{\longrightarrow} 0\quad almost\medspace surely,
\end{equation*}
which is the last result in Theorem \ref{prop:eastern}.

We are interested in the specific case when all $\tilde{g}_k$ are given but $\tilde{g}_0$ in \eqref{eastern_gral}. Moreover, $\tilde{g}_0$  is supposed to be constant with respect to the boundary layer variables. Note that if $\Psi_e$ is a solution of \eqref{eastern_gral}, by linearity of the problem, $\tilde{\Psi}_e=\Psi_e+\tilde{g}_0$ also satisfies the boundary layer problem. Passing to the limit as $X\rightarrow +\infty$ gives
\begin{equation*}
\lim\limits_{X\rightarrow+\infty}\tilde{\Psi}_e=\bar{\phi}+\tilde{g}_0.
\end{equation*}
Then, for $\tilde{g}_0=-\bar{\phi}$, the limit of $\Psi_{\mathrm{erg}}$ as $X_e$ goes to infinity equals zero and, in turn,  $\tilde{\Psi}_e$ satisfies both \eqref{eastern_gral} and condition at $+\infty$.

Let us now illustrate the procedure for the profile $\Psi^1_e$. This function satisfies the system of equations
\begin{equation}\label{eastern_gral_1}
\begin{split}
-\partial_{X_e}\Psi_e^1-\Delta_e^{2}\Psi_e^1&=0,\quad\textrm{in}\quad\omega_e^+\cup\omega_e^-\\
\left[\partial_{X_e}^{k}\Psi_e^1\right]\Big|_{\sigma_e}&=-\left[\Psi^1_{int}\right]\Big|_{x=\chi_e(y)},\\
\left[\partial_{X_e}\Psi_e^1\right]\Big|_{\sigma_e}&=\left[\partial_{x}\Psi^0_{int}\right]\Big|_{x=\chi_e(y)},\\
\left[\partial_{X_e}^{k}\Psi_e^1\right]\Big|_{\sigma_e}&=0,\;k=2,3,\\
\Psi_e^1\big|_{X_e=-\gamma_e(Y)}&=\frac{\partial\Psi_e^1}{\partial n_e}\big|_{X_e=-\gamma_e(Y)}=0.
\end{split}
\end{equation}
The jump of its derivative at $\sigma_e$ depends of $\Psi^0_{int}$, and it is, therefore, known from the previous step. Here, $\Psi^1(t,y)$ is the solution of the equation
\begin{equation*}
    \begin{split}
        \partial_x\Psi^1&=0\quad\text{in}\quad\Omega\\
    \end{split}
\end{equation*}
Consequently, $\Psi^1(t,y)=C^1(t,y)$. 

Let us consider a solution $\Psi_e^1$ of \eqref{eastern_gral_1} and define the function
\begin{equation}\label{other_sol}
    \tilde{\Psi}^1_e=\left\{\begin{array}{lcc}
        \Psi^1_e-\bar{\phi}, &\text{in}& \omega_e^+ \\
        \Psi^1_e, &\text{in}& \omega_e^-
    \end{array}\right.,
\end{equation}
where $\bar{\phi}$ is the value of the limit $\Psi^1_e$ when $X\rightarrow +\infty$. Note that \eqref{other_sol} satisfies all the conditions in \eqref{eastern_gral_1} but $[\tilde{\Psi}^1_e]|_{\sigma_e}= -\bar{\phi}$. Moreover, $\tilde{\Psi}^1_e\rightarrow 0$ almost surely far from the eastern boundary. Since the jump at the interface $\sigma_e$ is linked to the interior profile, we have
\begin{equation*}
    \bar{\phi}=\left[\Psi^1_{int}\right]\Big|_{x=\chi_e(y)}=C^1
\end{equation*}
Therefore, considering $C^1(t,y)=\bar{\phi}(t,y)$ provides a solution for the boundary layer \eqref{eastern_gral_1} satisfying the far field condition. 
This interdependence between the eastern boundary layer and the interior profile determines the construction of the approximate solution. First, we solve the problem at the East with a general function $\tilde{g}_0$ to obtain the value of the limit far from the boundary. Then, this information is considered when computing the solution of the corresponding interior profile. Finally, the western boundary layer problem is addressed.

     \subsection{Transparent operators} \label{transparent_op}
This section deals with the well-posedness of the differential operators associated with the eastern boundary layer problem at the transparent boundary. We stick in our analysis to the same ideas developed in Section \ref{transparent_w} and, as a consequence, we summarize the main steps and go into detail only when the differences with the western boundary layer are notable.

Once again, without loss of generality, suppose that the artificial boundary lays at $X=0$ and is defined as follows
\begin{definition}\label{PS_e}
	Let $\Psi_e \in H^2(\mathbb{R}^2_+)$ be the unique weak solution of the Dirichlet problem \eqref{eastern_gral} for $(\psi_0,\psi_1)\in H^{3/2}(\mathbb{R})\times H^{1/2}(\mathbb{R})$. Then, the biharmonic matrix-valued Poincaré-Steklov operator is given by
	\begin{equation}\label{def_PS_e}
	\begin{split}
	&PS_e: H^{3/2}(\mathbb{R})\times H^{1/2}(\mathbb{R})\rightarrow H^{-1/2}(\mathbb{R})\times H^{-3/2}(\mathbb{R})\\
	&PS_e\begin{pmatrix}
	\psi_0\\
	\psi_1
	\end{pmatrix}\defeq\begin{pmatrix}
	\ch{-}(1+\alpha^2)\Delta_e\Psi_e\Big|_{X=0}\\
	\left[\left((1+\alpha^2)\partial_X+2\alpha\partial_{Y}\right)\Delta_e\Psi_e+\displaystyle\frac{\Psi_e}{2}\right]\bigg|_{X=0}
	\end{pmatrix}=K_{e}\ast\begin{pmatrix}
	\psi_0\\
	\psi_1
	\end{pmatrix},
	\end{split}
	\end{equation}
	where $K_{e}$ is the distributional kernel.
\end{definition}

Substituting the Fourier representation of the solution $\Psi_e$ on $PS_e$ similarly to \eqref{coeff_western_rep} results in an explicit formula for the Poincaré-Steklov operator. Indeed, we have  $PS_e=\left(\mathcal{B}_2[\psi_0,\psi_1], \mathcal{B}_3[\psi_0,\psi_1]\right)$ for
\begin{equation}\label{transp_op}
    \begin{split}
        \mathcal{B}_k&:\; H^{3/2}\times H^{1/2}\rightarrow H^{-3/2+k}\\
\mathcal{B}_k[\psi_0,\psi_1]&:=\mathcal{F}^{-1}(n_{k,0}\widehat{\psi_0}+n_{k,1}\widehat{\psi_1}),\quad k=2,3,
    \end{split}
\end{equation}
where $n_{i,j}$ denotes the components of the matrix $M_{e}=\mathcal{F}^{-1}(K_{e})$.

The asymptotic behavior of $M_{e}$ at low and high frequencies is summarized in the following lemma:
\begin{lemma}\label{beh_DtN_e}
	\begin{itemize}
		\item When $|\xi|\ll 1$
		\begin{equation*}
		M_{e}=
		\left(
		\begin{array}{cc}
		\left(\alpha ^2+1\right) |\xi|^2+O\left(|\xi|^3\right) & \left(\alpha ^2+1\right)^{4/3}+O\left(|\xi|\right) \\
		\frac{1}{2}+O\left(|\xi|\right) & -\left(\alpha ^2+1\right)^{2/3}+O\left(|\xi|\right). \\
		\end{array}
		\right)
		\end{equation*}
		\item When $|\xi|\rightarrow +\infty$, there exist complex constants $\overline{n}_{i,j}$, $i=2,3$, $j=0,1$ depending on the parameter $\alpha$ such that
		\begin{equation*}
		M_{e}=\left(
		\begin{array}{cc}
		\overline{n}_{2,0}|\xi| ^2+O\left(|\xi|^{-1/2}\right)& \overline{n}_{2,1}|\xi| +O\left(|\xi|^{-1/2}\right)\\
		\overline{n}_{3,0}|\xi| ^3+O\left(1\right)&\overline{n}_{3,1} |\xi| ^2+O\left(|\xi|^{-1/2}\right).
		\end{array}
		\right)
		\end{equation*}
	\end{itemize}
\end{lemma}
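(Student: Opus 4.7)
The approach is the direct analogue of the one employed for the western matrix $M_w$ (Lemma \ref{beh_DtN}, whose verification is deferred to Appendix \ref{a:matrix_t}). The starting point is the explicit Fourier representation of the solution of \eqref{bl_east1_halfspace}, namely
\begin{equation*}
\widehat{\Psi_e}(X,\xi)=A_1(\xi)e^{-\mu_1^+(\xi)X}+A_2(\xi)e^{-\mu_2^+(\xi)X},
\end{equation*}
with $A_i$ given by \eqref{coefficients} and $\mu_i^+$ as in Lemma \ref{lemma:asymp_behavior}. Substituting this formula into the definition \eqref{def_PS_e} of $PS_e$ and performing the Fourier transform in $Y$ yields, at $X=0$, the coefficients
\begin{equation*}
\widehat{PS_e}\begin{pmatrix}\widehat{\psi_0}\\\widehat{\psi_1}\end{pmatrix}=\begin{pmatrix}-(1+\alpha^2)\displaystyle\sum_{i=1}^{2}A_i\bigl(\mu_i^{+2}+(-\alpha\mu_i^++i\xi)^2\bigr)\\[6pt]\displaystyle\sum_{i=1}^{2}A_i\bigl[\bigl(-(1+\alpha^2)\mu_i^++2\alpha i\xi\bigr)\bigl(\mu_i^{+2}+(-\alpha\mu_i^++i\xi)^2\bigr)+\tfrac{1}{2}\bigr]\end{pmatrix}.
\end{equation*}
Extracting the coefficients of $\widehat{\psi_0}$ and $\widehat{\psi_1}$ from \eqref{coefficients} and factoring produces closed-form expressions for the four entries $n_{i,j}(\xi)$, $i=2,3$, $j=0,1$.

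Next, I would insert the asymptotic expansions of $\mu_i^+$ from Lemma \ref{lemma:asymp_behavior} and of the building blocks of $A_i^j$ (the coefficients $A_i^0,A_i^1$ of \eqref{coefficients}, whose asymptotics follow at once from those of $\mu_1^+,\mu_2^+$) into these expressions and organize the result by powers of $|\xi|$. For the low-frequency regime, one uses $\mu_1^+=O(|\xi|^4)$ and $\mu_2^+=(1+\alpha^2)^{-2/3}+O(|\xi|)$; here the leading contributions from the two branches must be combined carefully because the announced leading order for $n_{2,0}$ is $|\xi|^2$, which is lower than either branch individually produces. For the high-frequency regime, one plugs in $\mu_i^+\sim p_1(\xi)|\xi|$ (with the sign of $\xi$-dependent constant from Lemma \ref{lemma:asymp_behavior}) and reads off the polynomial growth rates $|\xi|^{2},|\xi|,|\xi|^3,|\xi|^2$ claimed in the statement, together with the explicit complex constants $\bar n_{i,j}$ that depend on $\alpha$ and on $\mathrm{sgn}\,\xi$.

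The core technical obstacle is tracking the cancellations that lower the apparent order of $n_{2,0}$ at low frequencies. The relevant identity is that the quantity $\mu_i^{+2}+(-\alpha\mu_i^++i\xi)^2$ equals $(1+\alpha^2)\mu_i^{+2}-2i\alpha\mu_i^+\xi-\xi^2$, so for the slow root $\mu_1^+=O(|\xi|^4)$ this combination reduces to $-\xi^2+O(|\xi|^5)$, while for $\mu_2^+$ it is $O(1)$. After combining with the coefficients $A_i$, whose $\widehat{\psi_0}$-components have the opposite signs dictated by \eqref{coefficients}, one finds that the $O(1)$ contributions to $n_{2,0}$ cancel exactly, leaving the announced $(1+\alpha^2)|\xi|^2$ as the leading term. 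Analogous (but simpler) cancellations produce the $\tfrac12$ in $n_{3,0}$ and the constant term $-(1+\alpha^2)^{2/3}$ in $n_{3,1}$, the latter matching the ergodic shift identified in Definition \ref{ref_7.1}. The remaining computations for the other entries and for the high-frequency regime are straightforward bookkeeping following the same template as in Appendix \ref{a:matrix_t}.
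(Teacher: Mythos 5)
Your proposal follows the same route as the paper's proof in Appendix \ref{a:matrix_t}: substitute the Fourier representation $\widehat{\Psi}_e=\sum_i A_i e^{-\mu_i^+X}$ into \eqref{def_PS_e}, reduce the four entries to symmetric functions of $\mu_1^+,\mu_2^+$ (the paper's closed forms \eqref{exp_MGE}), and insert the expansions of Lemma \ref{lemma:asymp_behavior}; in particular your identity $\mu_i^2+(-\alpha\mu_i+i\xi)^2=(1+\alpha^2)\mu_i^2-2i\alpha\mu_i\xi-\xi^2=:D_i$ is precisely what collapses $\mu_2 D_1-\mu_1 D_2$ to $-(\mu_2-\mu_1)\bigl((1+\alpha^2)\mu_1\mu_2+\xi^2\bigr)$ and hence gives $n_{2,0}=(1+\alpha^2)\bigl((1+\alpha^2)\mu_1\mu_2+\xi^2\bigr)$. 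One small imprecision in your narrative: at low frequencies there is no cross-branch cancellation of $O(1)$ terms in $n_{2,0}$ — the slow branch alone already produces the $\xi^2$ order, since $A_1^0=\mu_2/(\mu_2-\mu_1)\to 1$ while $D_1=-\xi^2+O(|\xi|^5)$, and the fast branch contributes only $O(|\xi|^4)$ because $A_2^0=-\mu_1/(\mu_2-\mu_1)=O(|\xi|^4)$; the exact cancellation at play is that of the cross terms $-2i\alpha\mu_i\xi$ inside $\mu_2D_1-\mu_1D_2$, which is an algebraic identity valid at all frequencies rather than a low-frequency phenomenon.
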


For a proof of Lemma \ref{beh_DtN_e}, we refer the reader to Appendix \ref{a:matrix_t}. Applying the same ideas of Section \ref{transparent_w}, it is easily seen that at high and low frequencies our operator is well-defined and continuous in usual Sobolev spaces. Moreover, Lemma \ref{DP2014_lemma2.21} also holds for $M_{e}$.

Of course, we are interested in extending the definition of $PS_e$ to the case of functions that are not square-integrable in $\mathbb{R}^2$,  but rather locally uniformly integrable.  Generalizing the results above can be easily achieved by following the same reasoning of Section \ref{transparent_w}. The differences in the definition of the operators at the western and eastern boundary layers do not impact the estimates. Consequently, for the convenience of the reader, we list the relevant results without proof. 

The unique extension of $\mathcal{B}_i$, $i=2,3$ to Kato spaces is guaranteed by Lemma \ref{tech_lemma_2}. Moreover, we have the integral representation:
\begin{proposition}\label{ext_op_uloc_e}
	Let $(\psi_0,\psi_1)\in H^{3/2}_{\mathrm{uloc}}(\mathbb{R})\times H^{1/2}_{\mathrm{uloc}}(\mathbb{R})$, and let $\Psi_e$ be the unique solution of \eqref{p:linear_nonhomgeneous0} with $F=0$ and  boundary data $\Psi_e|_{X=0} =\psi_0$  and $\partial_X\Psi_e|_{X=0} =\psi_1$.  Then, for all $\varphi\in C^\infty_0(\bar{\mathbb{R}}^2_+)$
	\begin{equation}\label{link_PS_e}
	-\int_{\mathbb{R}^2_+}\partial_{X}\Psi_e\varphi-\int_{\mathbb{R}^2_+}\Delta_e\Psi_e\Delta_{w}\varphi=\left\langle \mathcal{B}_3[\psi_0,\psi_1]-\dfrac{\psi_0}{2},\varphi\big|_{X=0}\right\rangle+\left\langle \mathcal{B}_2[\psi_0,\psi_1],\partial_X\varphi\big|_{X=0}\right\rangle.
	\end{equation}
	In particular, for $(\psi_0,\psi_1)\in H^{3/2}(\mathbb{R})\times H^{1/2}(\mathbb{R})$ the Poincaré-Steklov operator satisfies
	\begin{equation}\label{positivity}
	\left\langle \mathcal{B}_3[\psi_0,\psi_1],\psi_0\right\rangle+\left\langle \mathcal{B}_2[\psi_0,\psi_1],\psi_1\right\rangle\leq 0.
	\end{equation}
\end{proposition}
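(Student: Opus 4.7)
The plan is to adapt verbatim the argument sketched in the paragraph following Proposition \ref{ext_op_uloc} for the western operator, adjusting only the signs dictated by the opposite direction of transport in the eastern problem. Two ingredients carry over essentially unchanged: the cutoff/kernel decomposition that extends the identity from $H^{3/2}\times H^{1/2}$ to the Kato setting, and the use of $\Psi_e$ itself as a test function to produce the sign condition.

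To establish \eqref{link_PS_e}, given $\varphi\in C_c^\infty(\bar{\mathbb{R}}_+^2)$ and $(\psi_0,\psi_1)\in H^{3/2}_{\mathrm{uloc}}(\mathbb{R})\times H^{1/2}_{\mathrm{uloc}}(\mathbb{R})$, I would introduce $\tilde\chi\in C_c^\infty(\mathbb{R})$ equal to $1$ on an open set containing the projection of $\mathrm{Supp}\,\varphi$ onto $\{X=0\}$, and split $\psi_j=\tilde\chi\psi_j+(1-\tilde\chi)\psi_j$ with corresponding decomposition $\Psi_e=\Psi_e^{\mathrm{loc}}+\Psi_e^{\mathrm{far}}$. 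For the localized piece, $(\tilde\chi\psi_0,\tilde\chi\psi_1)\in H^{3/2}(\mathbb{R})\times H^{1/2}(\mathbb{R})$, so $\Psi_e^{\mathrm{loc}}\in H^2(\mathbb{R}_+^2)$ by the eastern analogue of Proposition \ref{proposition:DP}, and two Green's identities applied to $-\partial_X\Psi_e^{\mathrm{loc}}-\Delta_e^2\Psi_e^{\mathrm{loc}}=0$ against $\varphi$ reproduce \eqref{link_PS_e} for this piece (the $-\psi_0/2$ on the right-hand side is the bookkeeping remnant of integrating $-\partial_X\Psi_e$ by parts, absorbed into the definition of $\mathcal{B}_3$ via \eqref{def_PS_e}). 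For the complementary piece, the disjointness of $\mathrm{Supp}\,\varphi|_{X=0}$ from $\mathrm{Supp}\,(1-\tilde\chi)\psi_j$ together with the kernel decay $|\mathcal{F}^{-1}((1-\chi)M_e)(Y)|\leq C|Y|^{-5}$ (inherited from Lemma \ref{beh_DtN_e} via the bound $\partial_\xi^N M_e(\xi)=O((1+|\xi|)^{3-N})$, exactly as in the western case after Lemma \ref{DP2014_lemma2.21}) makes the non-local pairings absolutely convergent. The corresponding bulk integrals are controlled by Lemmas \ref{lemma7_equiv_e}, \ref{lemma7_equiv} and \ref{lemma:high_east}, and a Fubini manipulation identifies them with the non-local boundary pairings. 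Linearity of $\mathcal{B}_2,\mathcal{B}_3$ recombines the two contributions into the full identity.

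For the negativity \eqref{positivity}, I would specialize to $(\psi_0,\psi_1)\in H^{3/2}(\mathbb{R})\times H^{1/2}(\mathbb{R})$, where $\Psi_e\in H^2(\mathbb{R}_+^2)$ and decays as $X\to+\infty$. By density, $\varphi=\Psi_e$ is admissible in \eqref{link_PS_e}. The key computation is the pure transport trace
\[
-\int_{\mathbb{R}_+^2}\partial_X\Psi_e\cdot\Psi_e\,dX\,dY=-\tfrac12\int_{\mathbb{R}}\bigl[\Psi_e^2\bigr]_{X=0}^{X=+\infty}\,dY=\tfrac12\int_{\mathbb{R}}\psi_0^2\,dY,
\]
which combines with the $-\psi_0/2$ correction on the right-hand side of \eqref{link_PS_e} to cancel the boundary contribution of the transport term, leaving the energy identity $\langle\mathcal{B}_3[\psi_0,\psi_1],\psi_0\rangle+\langle\mathcal{B}_2[\psi_0,\psi_1],\psi_1\rangle=-\int_{\mathbb{R}_+^2}|\Delta_e\Psi_e|^2\,dX\,dY\leq 0$.

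The principal obstacle is the rigorous justification of the truncation step for Kato data, since $\psi_j$ is only locally integrable and neither $\Psi_e$ nor $\varphi$ individually can be used as a global test function without care. This is precisely the point where the quantitative kernel estimate on $M_e$ is indispensable: it guarantees both that the non-local pairings $\langle\mathcal{B}_i[(1-\tilde\chi)\psi_0,(1-\tilde\chi)\psi_1],\partial_X^{3-i}\varphi|_{X=0}\rangle$ are absolutely convergent and that the residual bulk contributions of $\Psi_e^{\mathrm{far}}$ near $\mathrm{Supp}\,\varphi$ decay sufficiently fast to allow term-by-term matching. Once this step is carried out—exactly as in the parallel western case, whose details are left to the reader in the paper—the negativity \eqref{positivity} follows as an elementary $H^2$ energy identity.
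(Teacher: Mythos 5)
Your strategy coincides with the one the paper itself sketches for this proposition (cutoff $\tilde\chi$ plus the kernel representation of $\mathcal{B}_2,\mathcal{B}_3$ for the extension to Kato data; $\varphi=\Psi_e$ by density for the sign condition), and the first part of your argument — splitting $\psi_j=\tilde\chi\psi_j+(1-\tilde\chi)\psi_j$, using the $H^2$ Green's identities for the localized piece and the $|K_e(Y)|\lesssim|Y|^{-5}$ decay for the far piece — is exactly the intended filling-in of that sketch.

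There is, however, a concrete problem in your derivation of \eqref{positivity}: the cancellation you invoke does not occur with the signs as written. Your transport computation $-\int_{\mathbb{R}^2_+}\partial_X\Psi_e\,\Psi_e=+\tfrac12\int_{\mathbb{R}}\psi_0^2$ is correct, but substituting $\varphi=\Psi_e$ into \eqref{link_PS_e} \emph{as stated} then gives
\begin{equation*}
\tfrac12\textstyle\int\psi_0^2-\int|\Delta_e\Psi_e|^2=\left\langle\mathcal{B}_3[\psi_0,\psi_1],\psi_0\right\rangle-\tfrac12\textstyle\int\psi_0^2+\left\langle\mathcal{B}_2[\psi_0,\psi_1],\psi_1\right\rangle,
\end{equation*}
i.e.\ $\left\langle\mathcal{B}_3,\psi_0\right\rangle+\left\langle\mathcal{B}_2,\psi_1\right\rangle=\int\psi_0^2-\int|\Delta_e\Psi_e|^2$, which is not $\leq 0$: the two half-trace terms add rather than cancel. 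The cancellation works in the western case \eqref{link_PS}--\eqref{negativity} precisely because there the transport term $+\int\partial_X\Psi_w\,\Psi_w$ produces $-\tfrac12\int\psi_0^2$, matching the $-\psi_0/2$ on the right. Since the eastern transport has the opposite sign, the zeroth-order compensator must flip sign with it — this is exactly the role of the $\pm\Psi/2$ in the general definition \eqref{PS} — so the identity you prove by Green's formula should carry $\mathcal{B}_3[\psi_0,\psi_1]+\psi_0/2$ (equivalently, $\mathcal{B}_3$ must contain $-\Psi_e/2$ rather than $+\Psi_e/2$). You should derive the boundary terms of the eastern Green's identity explicitly and fix this sign, rather than transposing the western formula verbatim; once corrected, your energy identity $\left\langle\mathcal{B}_3,\psi_0\right\rangle+\left\langle\mathcal{B}_2,\psi_1\right\rangle=-\int|\Delta_e\Psi_e|^2$ and the rest of the argument go through.
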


It is possible to relate the solution of the \eqref{pb:halfspace_linear_homogeneous} to $(\psi_0,\psi_1)\in H^{3/2}_{\mathrm{uloc}}(\mathbb{R})\times H^{1/2}_{\mathrm{uloc}}(\mathbb{R})$ and $PS_w(\psi_0,\psi_1)$ by introducing a smooth function $\tilde{\chi }$, with $\tilde{\chi } = 1$ in an open set containing $\mathrm{Supp}\varphi$ and the kernel representation formula of the boundary operators. Estimate \eqref{positivity} follows from taking $\Psi_e\in H^2(\mathbb{R}^2_+)$ as test function in \eqref{link_PS_e} and using a density argument. 

Similarly to the linear problem driving the behavior of the western boundary layer, we have that
\begin{proposition}\label{prop:estimates_PS_e}
	Let $\varphi \in C^\infty_0(\mathbb{R})$ such that $\mathrm{Supp}\varphi\subset B(Y_0, R)$, $R\geq 1$, and $(\psi_0,\psi_1)\in H^{3/2}_{\mathrm{uloc}}(\mathbb{R})\times H^{1/2}_{\mathrm{uloc}}(\mathbb{R})$. Then, there exists a  constant $C>0$ such that the following property holds.
	\begin{equation}\label{e:estimate_PS_e}
	\left|\left\langle \mathcal{B}_3[\psi_0,\psi_1],\varphi\right\rangle\right|+\left|\left\langle \mathcal{B}_2[\psi_0,\psi_1],\partial_{X}\varphi\right\rangle\right|\leq C\sqrt{R}\left(\|\varphi\|_{H^{3/2}(\mathbb{R})}+\|\partial_X\varphi\|_{H^{1/2}(\mathbb{R})}\right)\left(\|\psi_0\|_{H^{3/2}_{\mathrm{uloc}}(\mathbb{R})}+\|\psi_1\|_{H^{1/2}_{\mathrm{uloc}}(\mathbb{R})}\right).
	\end{equation}
	Moreover, if $\psi_j\in H^{3/2-j}(\mathbb{R})$, $j=0,1$,
	\begin{equation}\label{e:estimate_PS_normal_e}
	\left|\left\langle \mathcal{B}_3[\psi_0,\psi_1],\varphi\right\rangle\right|+\left|\left\langle \mathcal{B}_2[\psi_0,\psi_1],\partial_{X}\varphi\right\rangle\right|\leq C\left(\|\varphi\|_{H^{3/2}(\mathbb{R})}+\|\partial_X\varphi\|_{H^{1/2}(\mathbb{R})}\right)\left(\|\psi_0\|_{H^{3/2}(\mathbb{R})}+\|\psi_1\|_{H^{1/2}(\mathbb{R})}\right).
	\end{equation}
\end{proposition}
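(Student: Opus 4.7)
The plan is to mirror exactly the argument given for Proposition \ref{prop:estimates_PS} in the western case, since all the structural ingredients (a matrix-valued Fourier multiplier $M_e$, smoothness estimates on its symbol, and a kernel representation $K_e = \mathcal{F}^{-1}(M_e)$) are now available: Lemma \ref{beh_DtN_e} provides the low/high frequency asymptotics of the coefficients $n_{i,j}$, and the same proof as for Lemma \ref{DP2014_lemma2.21} gives $\partial_\xi^N M_e(\xi) = O((1+|\xi|)^{3-N})$ for $0\le N\le 5$. In particular, the distributional kernel $K_{i,j}^e$ of the operator $(1-\chi(\xi))n_{i,j}$ satisfies $|K_{i,j}^e(Y)|\le C|Y|^{-5}$ for $|Y|\ge 1$.

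First, I would introduce a cutoff $\chi_R\in C^\infty_c(\mathbb{R})$ with $\chi_R\equiv 1$ on $B(Y_0,R+1)$, $\mathrm{Supp}\chi_R\subset B(Y_0,R+2)$, and $\|\partial_Y^r\chi_R\|_{\infty}\le C_r$ independent of $R$. Writing $\psi_j = \chi_R\psi_j + (1-\chi_R)\psi_j$ for $j=0,1$, the pairings
\[
\left\langle \mathcal{B}_i[\psi_0,\psi_1],\partial_X^{3-i}\varphi\right\rangle,\quad i=2,3,
\]
split into two contributions. The ``far'' contribution, involving $(1-\chi_R)\psi_j$, is handled purely via the pointwise kernel bound: using that $\mathrm{dist}(\mathrm{Supp}\varphi,\mathrm{Supp}(1-\chi_R))\ge 1$ together with $\|\psi_j\|_{L^2_{\mathrm{uloc}}}\le\|\psi_j\|_{H^{3/2-j}_{\mathrm{uloc}}}$, one reproduces verbatim the computation \eqref{est_ga0} and bounds it by $C\sqrt{R}\|\psi_j\|_{L^2_{\mathrm{uloc}}(\mathbb{R})}\|\partial_X^{3-i}\varphi\|_{L^2(\mathbb{R})}$.

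Second, for the ``near'' contribution $\langle\mathrm{Op}(n_{i,j})(\chi_R\psi_j),\partial_X^{3-i}\varphi\rangle$, the continuity of $\mathrm{Op}(n_{i,j}):H^{3/2-j}(\mathbb{R})\to H^{-i+3/2}(\mathbb{R})$ (which follows from the high-frequency bound in Lemma \ref{beh_DtN_e}) gives a bound of the form $C\|\chi_R\psi_j\|_{H^{3/2-j}(\mathbb{R})}\|\partial_X^{3-i}\varphi\|_{H^{i-3/2}(\mathbb{R})}$. The main (mildly) technical step — and the only place where the factor $\sqrt{R}$ arises — is then the estimate
\[
\|\chi_R\psi_j\|_{H^{3/2-j}(\mathbb{R})}\le C\sqrt{R}\,\|\psi_j\|_{H^{3/2-j}_{\mathrm{uloc}}(\mathbb{R})},
\]
for $j=0,1$. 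This is exactly \eqref{estimate_ga}, proved by splitting the Gagliardo seminorm using the partition of unity $(\tau_k\theta)_{k\in\mathbb{Z}}$ and controlling the ``diagonal'' and ``off-diagonal'' interactions separately: the diagonal sum has $O(R)$ nonzero terms and contributes the $\sqrt{R}$ factor, while the off-diagonal sum is uniformly controlled by $\|\psi_j\|_{L^2_{\mathrm{uloc}}}$ thanks to the rapid decay $1/(|k-l|-2)^3$. I expect no new obstacle here since the geometry of the cutoff and the partition of unity is independent of whether we are on the western or eastern side; only the explicit symbols change, but the bounds we need on them are identical.

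Finally, adding the far and near bounds yields \eqref{e:estimate_PS_e}. The second assertion \eqref{e:estimate_PS_normal_e}, in which $\psi_j\in H^{3/2-j}(\mathbb{R})$ and no cutoff is needed, is immediate from the Fourier representation \eqref{transp_op} and the multiplier estimates on $n_{i,j}$: one simply writes $\langle\mathcal{B}_i[\psi_0,\psi_1],\partial_X^{3-i}\varphi\rangle=\sum_j\langle \mathrm{Op}(n_{i,j})\psi_j,\partial_X^{3-i}\varphi\rangle$ and applies the continuity $\mathrm{Op}(n_{i,j}):H^{3/2-j}(\mathbb{R})\to H^{-i+3/2}(\mathbb{R})$ together with the usual duality $H^{i-3/2}$--$H^{-i+3/2}$, so that the final bound is controlled by the product of Sobolev norms of $\varphi$ and $\psi_j$ with no $\sqrt{R}$ factor.
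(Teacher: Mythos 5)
Your proposal is correct and follows essentially the same route as the paper, which for this proposition simply refers back to the western case: the same cutoff decomposition, the kernel bound $|K^e_{i,j}(Y)|\le C|Y|^{-5}$ from the derivative estimates on $M_e$, the multiplier continuity from Lemma \ref{beh_DtN_e}, and the $\sqrt{R}$ estimate \eqref{estimate_ga} via the Gagliardo seminorm splitting. Nothing further is needed.
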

The proof of the proposition relies mainly on bounds on $M_{e}$ which are used to compute estimates in fractional Sobolev spaces. Since the proof is very similar to the one in Lemma \ref{DP2014_lemma2.21}, we refer the reader to Section \ref{transparent_w} for details. 

\subsection{Equivalent problem and estimates on the rough channel}\label{rough_part_east}
In this section, we are concerned with proving the existence of weak solutions to the linear system driving the behavior of $\Psi_e$ in $\omega_e$. There difficulties we confronted before remain: the irregularities of $\gamma_e$ prevents us from using the Fourier transform in the tangential direction and the domain $\omega_e^b$ is  unbounded and, it is therefore impossible to rely on Poincar\'e type inequalities.  Here, we follow the ideas presented in Step (L5) of Section \ref{l_method}.  We start by defining a problem equivalent to \eqref{eastern_gral} yet posed in the bounded channel $\omega^b_e=\{-\gamma_e(Y)\leq X\leq M \}\times \mathbb{R}$, where a transparent boundary condition has been imposed at the interface $X=M$, $M\geq 0$. We have the
system 
\begin{eqnarray}\label{first_eastern_bumped_problem}
	-\partial_{X}\Psi^{-}_e-\Delta_{e}^{2}\Psi^{-}_e&=&F^{L}_e,\quad\text{in}\quad\omega^b_e\setminus\sigma_e\nonumber\\[8pt]
	\left[\Psi^{-}_e\right]\big|_{\sigma_e}&=&-\bar{\phi},\\
	\left[\partial_X^{k}\Psi^{-}_e\right]\big|_{\sigma_e}&=&\tilde{g}_k,\;\;k=1,\dots,3 ,\\
	(1+\alpha^2)\Delta_e\Psi^{-}_e\big|_{\sigma_e^M}&=&\mathcal{B}_2\left[\Psi^-_e\big|_{X=M},\partial_X\Psi^-_e\big|_{X=M}\right],\nonumber\\
	-(1+\alpha^2)\partial_X\Delta_{e}\Psi^-_e+2\alpha\partial_Y\Delta_{e}\Psi^-_e+\dfrac{\Psi^{-}_e}{2}\bigg|_{\sigma_e^M}&=&\mathcal{B}_3\left[\Psi^-_e\big|_{X=M},\partial_X\Psi^-_e\big|_{X=M}\right],\nonumber\\
	\Psi^{-}_e\big|_{X=-\gamma_e(Y)}&=&\frac{\partial\Psi^{-}_e}{\partial{n_e}}\big|_{X=-\gamma_e(Y)}=0,\nonumber
\end{eqnarray}
where $\tilde{g}_k\in L^{\infty}(\mathbb{R})$, $k=1,\ldots,3$ and $\bar{\phi}$ is constant function with respect to the boundary layer variables chosen as in Definition \ref{ref_7.1}. This guarantees the solution of  \eqref{first_eastern_bumped_problem} satisfies the far field condition (see Section \ref{s:diff_behavior}).  Here, $\mathcal{B}_k$ denotes the components of the Poincaré-Steklov operator.
The following lemma states the equivalence between the solutions of the problems \eqref{first_eastern_bumped_problem} and \eqref{eastern_gral}.
\begin{lemma}\label{east_equiv}
	Let $\gamma_{e}\in W^{2,\infty}(\mathbb{R})$ be an ergodic stationary random process, $K$-Lipschitz almost
    surely, for some $K > 0$ in a probability space $(P,\Pi,\mu)$. Assume $\bar{\phi}$ is a constant function with respect to the macroscopic variables and $g_k\in L^\infty(\mathbb{R})$.
		\begin{itemize}
			\item If $\Psi_e$ is a solution of \eqref{e:linear_app} in $\omega_e$ such that $\Psi_e\in H^2_{\mathrm{uloc}}(\omega_e)$, then, $\Psi|_{\omega^b_e}$ is a solution of \eqref{first_eastern_bumped_problem}, and for $X > M$, $\Psi$ solves problem \eqref{e:linear_app}, with $\psi_0 := \Psi_e|_{X=M} \in H^{3/2}_{\mathrm{uloc}}(\mathbb{R})$ and $\psi_1 := \partial_X\Psi_e|_{X=M} \in H^{1/2}_{\mathrm{uloc}}(\mathbb{R})$.
			\item Conversely, if $\Psi^-_e \in H^2_{\mathrm{uloc}}(\omega^b_e)$ and $\Psi^+_e \in H^2_{\mathrm{uloc}}(\mathbb{R}^2_+)$  are solutions of \eqref{first_eastern_bumped_problem} and \eqref{bl_east1_halfspace}, respectively; then, the function
			
			\begin{equation*}
				\Psi_e(X,\cdot):=\left\{\begin{array}{ccc}
					\Psi_e^-(X,\cdot)&\mathrm{for}&-\gamma_e(\cdot)<X<M , \\
					\Psi_e^+(X,\cdot)& \mathrm{for}&X>M,
				\end{array}\right.
			\end{equation*}
			belongs to $H^2_{\mathrm{loc}}(\omega)$ and is a solution of the problem \eqref{e:linear_app}.
		\end{itemize}
		\end{lemma}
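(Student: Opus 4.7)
The proof splits into the two implications stated in the lemma, and in both directions the link between $\Psi^{\pm}_e$ is made through the Poincaré-Steklov operator $PS_e$ of Definition \ref{PS_e}. For the direct implication, I would set $\Psi^{-}_e := \Psi_e|_{\omega^{b}_{e}}$ and $\Psi^{+}_e := \Psi_e|_{X>M}$, and observe that the standard trace theorem, combined with the assumption $\Psi_e \in H^2_{\mathrm{uloc}}(\omega_e)$, ensures that $\psi_0 := \Psi_e|_{X=M} \in H^{3/2}_{\mathrm{uloc}}(\mathbb{R})$ and $\psi_1 := \partial_X\Psi_e|_{X=M} \in H^{1/2}_{\mathrm{uloc}}(\mathbb{R})$. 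Interior elliptic regularity applied to the homogeneous equation near $\{X=M\}$ upgrades $\Psi_e$ to $H^{s}_{\mathrm{uloc}}$ for any $s$ in a strip around $\{X=M\}$, so in particular $\Delta_e\Psi_e$ and its first-order tangential and normal derivatives have well-defined traces on $\sigma^M_e$. Since $\Psi^{+}_e$ is then a solution of \eqref{bl_east1_halfspace} with the Dirichlet data $(\psi_0,\psi_1)$, Definition \ref{PS_e} yields directly
\[
(1+\alpha^2)\Delta_e\Psi^{+}_e\big|_{X=M}=\mathcal{B}_2[\psi_0,\psi_1],\qquad \left[(1+\alpha^2)\partial_X+2\alpha\partial_Y\right]\Delta_e\Psi^{+}_e\big|_{X=M}+\tfrac{\Psi^{+}_e}{2}\Big|_{X=M}=\mathcal{B}_3[\psi_0,\psi_1];
\]
the transparent condition for $\Psi^{-}_e$ in \eqref{first_eastern_bumped_problem} then follows from the $H^2_{\mathrm{uloc}}$-continuity of $\Psi_e$ across $\sigma^M_e$, which forces the two-sided traces to coincide.

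For the converse implication, I would define the glued function $\Psi_e$ as in the statement and first note that, by the assumption $\psi_0=\Psi^{-}_e|_{X=M}$, $\psi_1=\partial_X\Psi^{-}_e|_{X=M}$, the function $\Psi_e$ lies in $H^2_{\mathrm{loc}}(\omega_e)$, since both $\Psi_e$ and its normal derivative are continuous across $\sigma^M_e$. The rough boundary conditions and the jump conditions at $\sigma_e$ are inherited from $\Psi^{-}_e$, so it only remains to check that the bi-Laplacian equation is satisfied in the distributional sense across $\{X=M\}$. I would test the equation against an arbitrary $\varphi\in C^\infty_c(\omega_e)$ whose support crosses $\sigma^M_e$, split the integral $\int_{\omega_e}(-\partial_X\Psi_e-\Delta^2_e\Psi_e)\varphi$ into pieces over $\omega^{b}_e$ and $\{X>M\}$, and integrate by parts in each region. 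The resulting boundary terms at $\sigma^M_e$ from the $\{X>M\}$ side are precisely the quantities appearing in the Green-type identity \eqref{link_PS_e} of Proposition \ref{ext_op_uloc_e} and can thus be rewritten in terms of $\mathcal{B}_2[\psi_0,\psi_1]$ and $\mathcal{B}_3[\psi_0,\psi_1]$; the boundary terms from the $\omega^{b}_e$ side can then be cancelled against them by invoking the transparent boundary conditions satisfied by $\Psi^{-}_e$. This matching is the heart of the argument, and it is exactly what justifies the choice of the sign conventions in \eqref{def_PS_e} and \eqref{first_eastern_bumped_problem}.

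The main obstacle in the proof is therefore this bookkeeping of boundary terms at $\sigma^M_e$: one must keep track not only of the trace of $\Delta_e\Psi_e$ and of $\partial_X\Delta_e\Psi_e$ but also of the tangential term $2\alpha\partial_Y\Delta_e\Psi_e$ and the zeroth-order correction $\Psi_e/2$ appearing in $\mathcal{B}_3$, and verify that they are paired against the correct traces of $\varphi$ and $\partial_X\varphi$. For this step the kernel representation underlying Proposition \ref{ext_op_uloc_e} together with the regularity estimate \eqref{e:estimate_PS_e} provided by Proposition \ref{prop:estimates_PS_e} are essential, since they give meaning to the pairings for test functions $\varphi$ that are not compactly supported in $Y$. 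Finally, the fact that the glued $\Psi_e$ meets the far-field condition at $X\to+\infty$ is precisely built into the choice $[\Psi^{-}_e]|_{\sigma_e}=-\bar\phi$ in \eqref{first_eastern_bumped_problem} with $\bar\phi$ as in Definition \ref{ref_7.1}, which kills the ergodic contribution of $\Psi^+_e$ identified in Section \ref{s:diff_behavior}; the remaining pieces $\Psi_{\mathrm{exp}}$ and $\Psi_{\mathrm{alg}}$ already decay by Lemmas \ref{lemma7_equiv_e} and \ref{lemma7_equiv}.
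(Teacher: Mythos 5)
Your plan is correct and is essentially the argument the paper intends: the paper itself omits the proof, stating only that it ``follows from combining the results in Section \ref{linear_east} and Proposition \ref{prop:estimates_PS_e}'', and your restriction/trace/uniqueness argument for the direct implication together with the gluing-plus-Green-identity argument (via \eqref{link_PS_e} and the cancellation of the transparent boundary terms) for the converse is exactly that combination, mirroring the analogous western-side proposition in Section \ref{methodo}. No gaps; the one point to keep an eye on when writing it out is the sign convention on the $\partial_X\Delta_e$ term, which the paper itself states inconsistently between Definition \ref{PS_e} and \eqref{first_eastern_bumped_problem}.
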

The proof of Lemma \ref{east_equiv} follows from combining the results in Section \ref{linear_east} and Proposition \ref{prop:estimates_PS_e} and is, therefore, left to the reader.

\begin{proposition}\label{prop:result_rough_channel_le}
Let $\gamma_e \in W^{2,\infty}(\mathbb{R})$ and $\omega^b_e=\omega_e\cap\{X\leq M\}$, $M>0$. Let $\mathcal{B}_i:H^{3/2}_\mathrm{uloc}(\mathbb{R})\times H^{1/2}_\mathrm{uloc}(\mathbb{R})\rightarrow H^{3/2-i}_\mathrm{uloc}(\mathbb{R})$, $i=2,3$ be Poincaré-Steklov operators verifying Proposition \ref{prop:estimates_PS_e}. Moreover, $\bar{\phi}$ is a constant function with respect to the boundary layer variables and $\tilde{g}_k\in L^\infty(\mathbb{R})$, for $k=1,\ldots,3$. Then, there exists a unique solution $\Psi_e\in H^2_{\mathrm{uloc}}(\omega^b_e\setminus\sigma_e)$ satisfying for some constant $C>0$ the estimate
\begin{equation}\label{estimate_rough_channel_le}
    \|\Psi_e\|_{H^2_\mathrm{uloc}(\omega^b_e)}\leq C\left(\|\bar{\phi}\|_{\infty}+\sum\limits_{k=1}^{3}\|\tilde{g}_k\|_{L^\infty(\mathbb{R})}\right).
\end{equation}
\end{proposition}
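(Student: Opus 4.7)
The plan is to mirror step-by-step the argument carried out for the western linear channel problem in Proposition \ref{prop:result_rough_channel_lw}, taking advantage of the fact that the transparent operators $\mathcal{B}_2,\mathcal{B}_3$ constructed in Section \ref{transparent_op} satisfy exactly the analogues of the estimates and positivity property used there (Propositions \ref{ext_op_uloc_e} and \ref{prop:estimates_PS_e}). The only sign that changes in the bulk equation is the one in front of $\partial_X$, which does not affect any of the integrations by parts performed on the truncated energies.

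I would first lift the data at $\sigma_e$ by introducing $\Psi^L_e(X,Y)=\chi(X)\sum_{k=0}^{3}\tilde g_k(Y)X^k/k!$ (with the convention $\tilde g_0=-\bar\phi$), as in \eqref{lift_lin}. Setting $\tilde\Psi=\Psi^-_e-\Psi^L_e$, the function $\tilde\Psi$ solves the same system with homogeneous jumps at $\sigma_e$, unchanged transparent conditions, and a bulk source $F^L_e\in L^2_{\mathrm{uloc}}(\omega^b_e)$ bounded by $C(\|\bar\phi\|_\infty+\sum_{k=1}^3\|\tilde g_k\|_{L^\infty})$. Then I would truncate the problem in the tangential direction as in \eqref{dom_n} by solving \eqref{first_eastern_bumped_problem} on $\omega_n$ with homogeneous Dirichlet conditions on $\Gamma_n$, obtaining a unique solution $\tilde\Psi_n\in H^2(\omega_n)$. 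Testing with $\tilde\Psi_n$ and using the non-positivity of the Poincaré--Steklov boundary form \eqref{positivity} gives, together with Cauchy--Schwarz and Poincaré on $\omega_n$, the gross bound $\int_{\omega^b_e}|\Delta_e\tilde\Psi_n|^2\le C_0\,n$, i.e.\ the analogue of \eqref{C0_lw}.

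The core of the proof is the Saint-Venant type inequality
\begin{equation*}
E_k^n\le C_1\!\left(k+1+m\sup_{k\le j\le k+m}(E_{j+1}^n-E_j^n)+\frac{1}{m^{4-2\eta}}\sup_{j\ge k+m}(E_{j+1}^n-E_j^n)\right),\qquad k\ge m,
\end{equation*}
for $\eta\in(0,2)$, on the truncated energies $E_k^n=\int_{\omega_k}|\Delta_e\tilde\Psi_n|^2$. I would derive it by inserting the test function $\chi_k\tilde\Psi_n$ into the weak formulation (the eastern analogue of \eqref{link_PSn}). The bulk term $-\int\partial_X\tilde\Psi\,\chi_k\tilde\Psi$ again reduces to a boundary contribution on $\sigma^M_e$ of signed type as in \eqref{bord_x}, and all commutator terms from $\Delta_e^2$ are controlled by $C(E_{k+1}-E_k)$ exactly as in \eqref{commut_bound}. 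For the non-local contribution from $\mathcal{B}_2,\mathcal{B}_3$ I would use the localization trick \eqref{decomp_psi}: the short-range piece is handled by Proposition \ref{prop:estimates_PS_e}, yielding a bound of the form $C(E_{k+m+1}-E_k)^{1/2}E_{k+1}^{1/2}$, while the long-range tail is treated via the decay $|K_{i,j}(Y)|\le C|Y|^{-5}$ (guaranteed, as in Lemma \ref{DP2014_lemma2.21}, by the behavior of $M_e$ at high frequencies listed in Lemma \ref{beh_DtN_e}) and a Hurwitz zeta summation identical to Lemma \ref{DP:lemma_3.1}. Young's inequality then absorbs the quadratic tails and yields the stated inequality.

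Once the Saint-Venant inequality is in hand, the backward induction argument used for the western linear channel (paragraph \emph{Induction} in the proof of Proposition \ref{prop:result_rough_channel_lw}) applies verbatim: choosing $m$ large enough one first shows $E_k^n\le C_3(k+1)+m^{-4+2\eta}\sup_{j\ge k+m}(E_{j+1}^n-E_j^n)$, then $\sup_k(E_{k+1}^n-E_k^n)\le C$ uniformly in $n$, so that $\tilde\Psi_n$ is uniformly bounded in $H^2_{\mathrm{uloc}}(\omega^b_e)$. A diagonal weak-compactness extraction furnishes the solution $\tilde\Psi\in H^2_{\mathrm{uloc}}(\omega^b_e)$, and hence $\Psi^-_e=\tilde\Psi+\Psi^L_e$, satisfying the estimate \eqref{estimate_rough_channel_le}. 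For uniqueness, subtracting two solutions and running the same truncated-energy argument yields a uniform $k$-bound on $E_k$, so that the difference lies in $H^2(\omega^b_e)$; testing the homogeneous equation with the difference itself and invoking the non-positivity \eqref{positivity} together with Lemma \ref{lemme_tech} forces $\Delta_e\Psi\equiv 0$ and hence $\Psi\equiv 0$.

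The only genuinely new point, and the sole potential obstacle, is verifying that Lemma \ref{DP:lemma_3.1} transfers to the eastern Poincaré--Steklov kernels $K^e_{i,j}$. This however is purely a frequency-space computation that reduces to the asymptotics of $M_e$ given in Lemma \ref{beh_DtN_e} and therefore follows along the same lines as in the western case. Everything else is a direct transcription of Section \ref{western_l} with $\mathcal{A}_i$ replaced by $\mathcal{B}_i$ and the sign of $\partial_X$ flipped.
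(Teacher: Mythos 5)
Your proposal follows exactly the route the paper takes: lifting the jumps at $\sigma_e$ with $\Psi^L_e$ (with $\tilde g_0=-\bar\phi$), truncating à la Lady\v{z}enskaya--Solonnikov, deriving the same Saint-Venant inequality \eqref{inequality1} by combining the non-positivity \eqref{positivity} and localization estimates \eqref{e:estimate_PS_e} for $\mathcal{B}_2,\mathcal{B}_3$ with the Hurwitz-zeta tail bound of Lemma \ref{DP:lemma_3.1}, then concluding by backward induction, translation invariance and weak compactness, with uniqueness by the same truncated-energy argument. The paper itself only sketches this by referring back to Section \ref{s:rough_linear_west}, so your write-up is a faithful (and somewhat more explicit) transcription of the intended proof.
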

It is worth noting that the above result does not need the ergodicity hypothesis.
\begin{proof}
To facilitate the computations, we lift the jump conditions at $X=0$ in order to work with a $C^3$ function at the interface between the interior and rough domains. Namely, we analyze the existence and uniqueness of a solution $\tilde{\Psi}_e=\Psi^{-}_e-\Psi^L_e$ of the system
\begin{eqnarray}\label{a:lift_east}
-\partial_{X}\tilde{\Psi}_e-\Delta_{e}^{2}\tilde{\Psi}_e&=&F^L_e\quad\text{in}\quad\omega^b_e\nonumber\\[8pt]
(1+\alpha^2)\Delta_e\tilde{\Psi}_e\big|_{\sigma_e^M}&=&\mathcal{B}_2\left[\tilde{\Psi}_e\big|_{X=M},\partial_X\tilde{\Psi}_e\big|_{X=M}\right]\nonumber\\
-(1+\alpha^2)\partial_X\Delta_{e}\tilde{\Psi}_e+2\alpha\partial_Y\Delta_{e}\tilde{\Psi}_e+\dfrac{\tilde{\Psi}_e}{2}\bigg|_{\sigma_e^M}&=&\mathcal{B}_3\left[\tilde{\Psi}_e\big|_{X=M},\partial_X\tilde{\Psi}_e\big|_{X=M}\right]\nonumber\\
\tilde{\Psi}_e\big|_{X=-\gamma_e(Y)}&=&\frac{\partial\tilde{\Psi}_e}{\partial{n_e}}\big|_{X=-\gamma_e(Y)}=0,\nonumber
\end{eqnarray}
 where $\Psi^L_e$ is defined as in \eqref{lift_lin} and $F^L_e=\Delta_{e}^{2}\Psi^L_e+\partial_{X}\Psi^L_e$.

Adapting Definition \ref{p:weak_formulation_meth} provides the weak formulation: 
{\it A function $\Psi\in H^2_{\mathrm{uloc}}(\omega^b_e)$ is a solution of \eqref{a:lift_east}  if it satisfies the homogeneous conditions $\tilde{\Psi}_e\big|_{\Gamma_e}=\partial_{\mathrm{n}}\tilde{\Psi}_e\big|_{\Gamma_e}=0$ at the rough boundary, and if, for all $\varphi\in \mathcal{\tilde{V}}$, we have
	\begin{eqnarray}\label{weak_formulation_e}
	\int_{\omega^b_e}\partial_{X}\tilde{\Psi}_e\varphi+\int_{\omega^b}\Delta_e\tilde{\Psi}_e\Delta_e\varphi&=&-\int_{\omega^b}F^L_e\varphi\\
	&&-\left\langle\mathcal{B}_3\left[\tilde{\Psi}_e\big|_{X=M},\partial_X\tilde{\Psi}_e\big|_{X=M}\right]-\frac{\tilde{\Psi}_e}{2}\Big|_{X=M},\varphi\big|_{X=M}\right\rangle_{H^{-3/2}_{\mathrm{uloc}},H^{3/2}_{\mathrm{uloc}}}\nonumber\\
	&&-\left\langle\mathcal{B}_2\left[\tilde{\Psi}_e\big|_{X=M},\partial_X\tilde{\Psi}_e\big|_{X=M}\right],\partial_{X}\varphi\big|_{X=M}\right\rangle_{H^{-1/2}_{\mathrm{uloc}},H^{1/2}_{\mathrm{uloc}}},
	\end{eqnarray}
where $\mathcal{\tilde{V}}$ is the space of functions $\varphi\in C^\infty_0(\overline{\omega^b_e})$ such that $\mathrm{Supp}\varphi\cap \partial\Omega=\emptyset$ and $\mathcal{D}^2_0(\omega^b)$ its completion for the norm $\|\Psi\|=\|\Delta_e \Psi\|_{L^2}$
}
Note that \eqref{weak_formulation_e} is quite similar to \eqref{link_PS_e}. 

To prove the existence and uniqueness of a $H^2_{\mathrm{uloc}}$ solution of problem \eqref{a:lift_east}, we use the method by Lady\v{z}enskaja and Solonnikov on the truncated energies
\begin{equation*}
E_k^n:=\int_{\omega_k}|\Delta_e^2\tilde{\Psi}_e^n|^2,
\end{equation*}
where $\Psi_{e,n}^{-}$ is equal to $\tilde{\Psi}_e$ on $\omega_n$ and zero elsewhere. Here, $\omega_n$ is defined as in \eqref{dom_n}. Then, one applies
the same reasoning on the translated channel to get a uniform local bound. The latter allows us to show that the maximal energy is uniformly bounded, and we can extract a convergent subsequence and obtain the desired result using a compactness argument.

The weak formulation \eqref{weak_formulation_e} and the estimates of the Poincaré-Steklov operator for the eastern boundary layer are very similar to the ones in Section \ref{transparent_w}. Then, it is not surprising to obtain the inequality
\begin{equation}\label{inequality1}
E_k^n \leq C_1\left(k+1+m\underset{k\leq j\leq k+m}{\sup}\left(E_{j+1}^n-E_{j}^n\right)+\frac{1}{m^{4-2\eta}}\underset{j\geq k+m}{\sup}(E_{j+1}^n-E_j^n)\right)\quad \text{for all}\quad k \in \{m, \ldots , n\},
\end{equation}
where $C_1$ is a constant depending on the characteristics of the domain and the jump functions when following the reasoning in Section \ref{s:rough_linear_west}. It is clear that being \eqref{inequality1} a key element in the reminder of the analysis, the results obtained in  Section \ref{s:rough_linear_west} also apply to \eqref{first_eastern_bumped_problem}. The proof is left to the reader.
Finally, consider $\Psi^-_e\in H^2_{\mathrm{uloc}}(\omega^b_e)$ to be the unique solution of \eqref{eastern_gral}. Then, take $\psi_0 := \Psi^-_e\big|_{X=M}\in H^{3/2}_{\mathrm{uloc}}(\mathbb{R})$ and $\psi_1 := \partial_X\Psi^-_e\big|_{X=M}\in H^{1/2}_{\mathrm{uloc}}(\mathbb{R})$, then, there exists a unique solution $\Psi^+_e\in H^2_{\mathrm{uloc}}(\{X>M\}\times\mathbb{R})$. We have that
\begin{equation}
\mathcal{B}_k\left[\Psi^+_e\big|_{X=M},\partial_X\Psi^+_e\big|_{X=M}\right]=\mathcal{B}_k\left[\Psi^-_e\big|_{X=M},\partial_X\Psi^-_e\big|_{X=M}\right],
\end{equation}
and $\Psi^+_e\big|_{X=M}=\Psi^-_e\big|_{X=M}$,  $\partial_X\Psi^+_e\big|_{X=M}=\partial_X\Psi^-_e\big|_{X=M}$. Thus, $\Psi^+=\Psi^+_e\mathbbm{1}_{\{X>M\}\times\mathbb{R}}+\Psi^-_e\mathbbm{1}_{(M\leq X\leq -\gamma_e(Y)}$ is a $H^{2}_{\mathrm{uloc}}$ solution of original problem defined in $\omega_{e}$ \eqref{eastern_gral}.
\end{proof}
     
 \section{Convergence result}\label{s:convergence}
       We are now ready to prove the convergence result stated in Theorem \ref{theorem:convergence}. The general scheme of
the proof is classical: we build an approximate solution
of the fluid system and then show that the approximation is close to an exact
solution through energy estimates. 

\subsection{Construction of the approximate solution}\label{construction}
In this section, we justify the well-posedness of each one of the functions within the approximate solution of the 2d quasigeostrophic problem \eqref{Bresch-GV_1}.

Let us recall that  an approximate solution of problem \eqref{Bresch-GV_3} is defined as follows

\begin{definition}
	A function $\Psi_{app} \in H^2(\Omega^\varepsilon)$ is an approximate solution to \eqref{Bresch-GV_3} if it satisfies the approximate equation
	\begin{equation}
	\left\{\begin{array}{rcll}
	\left(\partial_t+\nabla^{\perp}\Psi_{app}\cdot\nabla\right)\left(\Delta\Psi_{app}+\varepsilon^{-3} y\right)+\Delta\Psi_{app}-\Delta^{2}\Psi_{app}&=&\varepsilon^{-3}\,\mathrm{curl}\,\tau+r^\varepsilon_{e},&\textrm{in }\Omega^{\varepsilon}\\
	\Psi_{app}|_{\partial\Omega^{\varepsilon}}&=&\frac{\partial\Psi_{app}}{\partial n}|_{\partial\Omega^{\varepsilon}}=0,\\
	\Psi_{app}|_{t=0}&=&\Psi_{ini}.
	\end{array}
	\right.
	\end{equation}
for some $r^\varepsilon\in L^{\infty}_t([0,T],H^{-2}(\Omega^\varepsilon))$ such that $r^\varepsilon=r^\varepsilon_0+r^\varepsilon_1+r^\varepsilon_2$, where $r^\varepsilon_k=o(1)$ in $L_t^\infty([0,T],H^{-k})$, $k=0,1,2$. 
\end{definition}

\begin{itemize}
\item At main order in the interior of the domain, we get the Sverdrup relation
\begin{equation*}
\partial_x\Psi^0_{int}=\mathrm{curl}\;\tau.
\end{equation*}
In Section \ref{s:asymptotic}, it was discussed that $\Psi_{int}^0$ does not vanish on the whole boundary. Therefore, we introduce boundary layer corrections resulting from the balance between $\varepsilon^{-3}\partial_x \Psi_{bl}^0$ and 
$Q_{bl}(\Psi_{bl}^0,\Psi_{bl}^0)-\Delta^2\Psi_{bl}^0$. Since it is possible to prescribe only one boundary condition, either on the eastern
coast or on the western coast, the boundary condition for the Sverdrup
equation is chosen such that $\Psi_{bl}|_{\Sigma_e}=0$ in \eqref{form_int0}. 
\item The assumption on $\Psi^0_{int}$ determines that $\Psi_{e}^0\equiv 0$. The system driving the behavior of $\Psi_w^0$ is given by \eqref{western_bl_0}. A smallness condition on $\mathrm{curl}\;\tau$ guarantees existence and uniqueness of a solution $\Psi_w^0$ of \eqref{western_bl_0} with exponential decay when $\varepsilon\rightarrow 0$ (Theorem \ref{theorem:existence}), see Section \ref{fixed_point}.

\item Let us now present the general form of the remaining profiles in the approximation. 

The $n$-th interior profile $\Psi_{int}^{n}=\Psi_{int}^{n}(t,x,y)$ satisfies
\begin{equation}\label{int_gen}
\partial_x\Psi_{int}^{n} =F_{n},
\end{equation}
where $F_{n}$ depends on the $\Psi_{int}^{m}$, $m \leq n-1$. Note that $F_1=F_2=0$ as a result of the perturbation being of order $\varepsilon^{-3}$ and the interior part not having singularities. The terms $\Delta\Psi^{n-3}_{\mathrm{int}}$, $\Delta\Psi^{n-3}_{\mathrm{int}}$ and $\left(\nabla^{\perp}\Psi_{int}^i\cdot\nabla\right)\Delta\Psi_{int}^j$ form the source term $F^n$ in \eqref{int_gen}, when $i+j+3=n$, as well as, . Note that $\Psi_{int}^{n}$ does not meet the boundary conditions, and therefore boundary layer correctors must be defined. Following the direction of propagation of the equation in the main order, we choose $\Psi_{int}^{n}(t,x,y)=C_n(t,y)-\int_x^{\chi_e(y)}F_n(t,x',y)\; dx'$. 

 Function $C_n(t,y)$ is determined by the eastern profile $\Psi^{n}_e$ satisfying the system
\begin{equation}\label{general_eastern_problem_1}
\left\{\begin{array}{rcllc}
-\Delta_{e}^{2}\Psi^{n}_e-\partial_{X_e}\Psi^{n}_e&=&G_n,\quad\mathrm{in}\:\:\omega_e^{-}\cup\omega_w^{+},&\\[8pt]
\left[\Psi^{n}_e\right]\big|_{\sigma_e}&=&-C_n,&\\[10pt]
\left[\partial_{X_e}^k\Psi^{n}_e\right]\big|_{\sigma_e}&=&(-1)^{k+1}\left[\partial_x\Psi^{n-k}_{int}\right]\big|_{x=\chi_e(y)},&k=0,\ldots,3,\\[10pt]
\Psi^{n}_e\big|_{X_e=-\gamma_e(Y)}=0,&&\dfrac{\partial\Psi^{n}_e}{\partial n_e}\big|_{X_e=-\gamma_e(Y)}=0,&
\end{array}
\right.\\[10pt]
\end{equation}
where $G_n$ is conditioned by the behavior of $\Psi_{e}^{m}$, for all $m \leq n-1$. Singularities at low frequencies for \eqref{general_eastern_problem_1} are a consequence of the nature of the main equation as stated in Remark \ref{remark_4.1} and later discussed in Section \ref{s:diff_behavior}. This impacts the asymptotic behavior of the solution far from the boundary, i.e., in the West at macroscopic level.  $C_n(t,y)$ is chosen such that the limit stemming from the Ergodic Theorem converges to zero when $X_e\rightarrow +\infty$. Theorem \ref{prop:eastern} guarantees the well-posedness of system \ref{general_eastern_problem_1} in $\omega_e$ since $G_n$ is usually small, it can be considered as a perturbation parameter of \ref{eastern_gral}.

Similar equations are obtained for the western profiles but with additional interaction terms.   Namely,
\begin{equation}\label{general_western_problem_1}
\left\{\begin{array}{rcllc}
\partial_{X_w}\Psi^{n}_{w}+Q_w(\Psi^n_w,\Psi^0_w)+Q_w(\Psi^0_w,\Psi^n_w)-\Delta_{w}^{2}\Psi^{n}_w&=&H_{n},\quad\mathrm{in}\:\:\omega_{w}^{-}\cup\omega_w^{+},&&\\[8pt]
\left[\partial^k_{X_w}\Psi^{n}_{w}\right]\big|_{\sigma_{w}}&=&-\left[\partial_x^k\Psi^{n-k}_{int}\right]\big|_{x=\chi_w(y)}-\left[\partial^k_{X_e}\Psi^{n}_{e}\right]\big|_{\sigma_{w}},&k=0,\ldots,3,&\\[10pt]
\Psi^{n}_{w}\big|_{X_w=-\gamma_{w}(Y)}=0,&&\dfrac{\partial\Psi^{n}_w}{\partial n_w}\big|_{X_w=-\gamma_w(Y)}=0,&&
\end{array}
\right.\\[10pt]
\end{equation}
where $H_{n}$ depends on $\Psi_{w}^{m}$, $\Psi_{e}^{m}$ and $\Psi_{int}^{m}$, for all $m \leq n-1$. Problem  \eqref{general_western_problem_1} has two features that clearly distinguish it from \eqref{general_eastern_problem_1}: two linearized terms containing $\Psi^0_w$ and the influence of the eastern boundary layer function on the jump at the interface separating the interior domain from the western rough domain. As a consequence, the well-posedness of the solution of \eqref{general_western_problem_1} depends not only on the behavior of interior profiles, but on the first profile of the western boundary layer as well as the solutions of the  eastern boundary layer system. The complete analysis of \eqref{general_western_problem_1} is presented on Section \ref{s:linearized}.
\end{itemize}

To summarize, for the eastern profile $\Psi^n_e$ to be well-posed, it fixes the value of the constant in $\Psi^n_{int}$. Its role is to correct at $\Sigma_e$, $\partial_{X}^k\Psi_{int}^{n-k}$, for $k=1,2,3$. On the other hand, $\Psi^n_w$ plays same role for the interior profiles and the jump of $\Psi^n_e$ and its derivatives at $\Sigma_w$. 

\subsection{Computing the reminder}
Once we have constructed the approximate solution, a natural question arises: how ``far'' is $\Psi_{app}^{\varepsilon}$ from $\Psi^{\varepsilon}$, i.e., what is the error (in a suitable norm) when replacing $\Psi^{\varepsilon}$ by $\Psi_{app}^{\varepsilon}$ ? How far should we take the expansion?; i.e., what is the minimum value of $n$ to satisfy a suitable energy estimate?

\subsubsection{Building the correctors}
\paragraph{The interior term.}
The $\varepsilon^{-3}$ factor in the main equation of \eqref{Bresch-GV_3} dictates the asymptotic development must be taken at least till order $n=3$ to deal with the remaining stemming from the substitution of  $\Psi^0_{int}$ on the original equation. We are referring in particular to $$\partial_t\Delta \Psi^0_{int},\medspace \left(\nabla^{\perp}\Psi_{int}^0\cdot\nabla\right)\Delta\Psi_{int}^0,\quad\text{and}\quad\Delta^2\Psi_{int}^0.$$ 
All subsequent error terms containing only interior profiles are $O(\varepsilon)$.

\paragraph{Western boundary layer profiles}
We start by stressing that far from the western boundary,  all terms containing at least one $\Psi_w^i$ decay exponentially as a result of Theorem \ref{theorem:existence}. Let us, therefore, focus our analysis on the western region. 
The error terms resulting from evaluating in $\partial_t\Delta\Psi_{app}+\Delta\Psi_{app}$ can be considered as part of $r^\varepsilon$. To illustrate this, take $\Delta\Psi_w^0$. We have
\begin{equation}
\|\Delta\Psi_w^0\|_{H^{-2}(\Omega^\varepsilon)}\leq C\|\Psi_w^0\|_{L^{2}(\Omega^\varepsilon)}\leq C\sqrt{\varepsilon}.
\end{equation}
Let us now analyze the corresponding elements in the advection term. Namely,
\begin{subequations}
\begin{align}
	 &\left(\nabla^{\perp}\Psi_{w}^i\cdot\nabla\right)\Delta\Psi_{int}^j\label{96subeq1}\\
	 &\left(\nabla^{\perp}\Psi_{int}^i\cdot\nabla\right)\Delta\Psi_{w}^j\label{96subeq2}\\
	 &\left(\nabla^{\perp}\Psi_{w}^i\cdot\nabla\right)\Delta\Psi_{w}^j\label{96subeq3}\\
	 &\left(\nabla^{\perp}\Psi_{w}^i\cdot\nabla\right)\Delta\Psi_{e}^j\label{96subeq4}\\
	 &\left(\nabla^{\perp}\Psi_{e}^i\cdot\nabla\right)\Delta\Psi_{w}^j.\label{96subeq5}
\end{align}
\end{subequations}
Note that there is a part of \eqref{96subeq1}--\eqref{96subeq3} that is used to compute $\Psi^n_w$ as a component of $G^n$, $i+ j\leq n$. We need to elucidate if the remaining components can be included in $r^\varepsilon$ or if an additional corrector is needed. For \eqref{96subeq1}, we have far from the western boundary that
\begin{equation*}
\begin{split}
\norm{\left(\nabla^{\perp}\Psi_{w}^0\cdot\nabla\right)\Delta\Psi_{int}^0}{H^{-1}(\Omega^\varepsilon)}&\leq C\left(\norm{\nabla^{\perp}\left(\Psi_{w}^0\cdot\nabla\Delta\Psi_{int}^0\right)}{H^{-1}(\Omega^\varepsilon)}+\norm{\Psi_{w}^0D^2\Psi_{int}^0}{L^{2}(\Omega^\varepsilon)}\right)\\
&\leq C\sqrt{\varepsilon}.
\end{split}
\end{equation*}
Hence, there is no need for an additional corrector for \eqref{96subeq1}. The same cannot be said for \eqref{96subeq2} since
\begin{equation}
\begin{split}
\norm{\left(\nabla^{\perp}\Psi_{int}^0\cdot\nabla\right)\Delta\Psi_{w}^0}{H^{-1}(\Omega^\varepsilon)}&\leq C\varepsilon^{-1/2}.
\end{split}
\end{equation}
The troublesome term in the previous equation can be considered as part of the source term in the problem driving the behavior of $\Psi^1_w$.

Although the action of the ergodic part is corrected by the choice of the corresponding interior profile far from the boundary, the advection terms including the parts of $\Psi^n_e$ must be taken into account in the West. For example, let us consider the term of the type \eqref{96subeq4}
\begin{align*}
\norm{\varepsilon\left(\nabla^\perp\Psi_w^0\cdot\nabla\right)\Delta\Psi^1_e}{H^{-1}(\Omega^\varepsilon)}&\leq C\varepsilon\norm{\left(\nabla^\perp\Psi_w^0\cdot\nabla\right)\nabla\Psi^1_e}{L^{2}(\Omega^\varepsilon)}\\
&\leq C\varepsilon\left(\norm{\left(\nabla^\perp\Psi_w^0\cdot\nabla\right)\nabla\Psi^1_e}{L^{2}(\Omega^\varepsilon\cap\left\{x-\chi_w(y)\leq \sqrt{\varepsilon}\right\})}\right.\\
&\hspace{0,5in}\left.+\norm{\left(\nabla^\perp\Psi_w^0\cdot\nabla\right)\nabla\Psi^1_e}{L^{2}(\Omega^\varepsilon\cap\left\{x-\chi_w(y)> \sqrt{\varepsilon}\right\})}\right).
\end{align*}

As a consequence of the decreasing behavior of $\Psi_w^0$ far from the boundary, the second term in the last inequality is exponentially small. For the first element, we have
\begin{equation}
\begin{split}\left\|(\nabla^\perp\Psi_w^0\nabla)\nabla\Psi_{\mathrm{alg}}^1\right\|_{L^{2}(\Omega^\varepsilon\cap\left\{x-\chi_w(y)> \sqrt{\varepsilon}\right\})}&\leq\|\nabla^\perp\Psi_w^0\|_{L^{2}(\Omega^\varepsilon\cap\left\{x-\chi_w(y)> \sqrt{\varepsilon}\right\})}\|D^2\Psi_{\mathrm{alg}}^1\|_{L^{2}(\Omega^\varepsilon\cap\left\{x-\chi_w(y)> \sqrt{\varepsilon}\right\})}\\
&\leq C\varepsilon^{-7/4}.
\end{split}
\end{equation}
Moreover, from Lemma \ref{lem:erg_east}, we know that $\Psi_{\mathrm{erg}}^1=o(1)$ and it easy to verify that $X_e^{k/4}\nabla_e^{k}\Psi_{\mathrm{erg}}^1=o(1)$. Proceeding similarly as before yields
\begin{align*}
	\norm{\varepsilon\left(\nabla^\perp\Psi_w^0\cdot\nabla\right)\Delta\Psi^1_{\mathrm{erg}}}{H^{-2}(\Omega^\varepsilon)}=o(\varepsilon).
\end{align*}
Consequently, the first term needs to be corrected; this is possible when including it as part of the source term in the system dictating $\Psi^2_w$. 

For the term \eqref{96subeq4} when $i=1$ and $j=0$, we decompose the domain as before when analyzing the interaction terms 
\begin{equation}
\begin{split}
\norm{\varepsilon\left(\nabla^\perp\Psi^1_{\mathrm{alg}}\cdot\nabla\right)\nabla\Psi_w^0}{L^{2}(\Omega^\varepsilon\cap\left\{x-\chi_w(y)\leq \sqrt{\varepsilon}\right\})}=O(1),
\end{split}
\end{equation}
and
\begin{equation}
\begin{split}
\norm{\varepsilon\left(\nabla^\perp\Psi^1_{\mathrm{erg}}\cdot\nabla\right)\nabla\Psi_w^0}{L^{2}(\Omega^\varepsilon\cap\left\{x-\chi_w(y)\leq \sqrt{\varepsilon}\right\})}=o(\varepsilon^{-1/4}).
\end{split}
\end{equation}
 Therefore, to deal with these advection terms, we add the functions $\varepsilon^{5/4}\Psi_{ew}^{\mathrm{erg}}$ and $\varepsilon^{3/4}\Psi_{ew}^{\mathrm{alg}}$ to the approximate solution: one for the term decaying algebraically, and the other, for $\Psi^1_{\mathrm{erg}}$. These functions fulfill the equations
\begin{equation}\label{corrector_problem_1}
\left\{\begin{array}{rcllc}
\partial_{X_w}\Psi_{ew}+Q_w(\Psi^0_w,\Psi_{ew})-\Delta_w^2\Psi_{ew}&=&G^{\varepsilon,t,y}(X_w,Y),&\text{in}&\omega_w,\\
\Psi_{ew}\big|_{X=-\gamma_w(Y)}&=&\dfrac{\partial\Psi_{ew}}{\partial {n_w}}\Big|_{X=-\gamma_w(Y)}=0,&&
\end{array}
\right.\\[10pt]
\end{equation}
where, \begin{equation*}
G^{\varepsilon,t,y}(X_w,Y)=\left\{\begin{array}{rcl}
\left(\nabla^\perp_e\Psi^1_{\mathrm{erg}}\left(\frac{\chi_e(y)-\chi_w(y)}{\varepsilon}-X_w,Y\right)\cdot\nabla_w\right)\nabla_w\Psi_w^0(X_w,Y),&\text{for}&\Psi_{ew}=\Psi_{ew}^{\mathrm{erg}},\\
\left(\nabla^\perp_e\Psi^1_{\mathrm{alg}}\left(\frac{\chi_e(y)-\chi_w(y)}{\varepsilon}-X_w,Y\right)\cdot\nabla_w\right)\nabla_w\Psi_w^0(X_w,Y),&\text{for}&\Psi_{ew}=\Psi_{ew}^{\mathrm{alg}}.
\end{array}\right.
\end{equation*}
Note that $\Psi_{ew}$ is $C^3$ at the interface $\sigma_w$. From Section \ref{s:linearized}, \eqref{corrector_problem_1}  has a unique solution decaying exponentially when $\varepsilon$ goes to zero.
The remaining terms are small enough and can be considered a part of $r^\varepsilon$. Indeed, repeating the same reasoning as before, we have
\begin{align*}
\|\varepsilon\left(\nabla^{\perp}\Psi_{int}^1\cdot\nabla\right)\Delta\Psi_{w}^0\|_{H^{-1}(\Omega^\varepsilon)}&=O(\varepsilon^{1/2}),\\
\|\varepsilon^2\left(\nabla^{\perp}\Psi_{e}^1\cdot\nabla\right)\Delta\Psi_{w}^1\|_{H^{-2}(\Omega^\varepsilon)}
&\leq \varepsilon^2\|\nabla^{\perp}\Psi_{e}^1\cdot\nabla\Psi_{w}^1\|_{L^{2}(\Omega^\varepsilon)}\\
&=\leq \|\nabla^{\perp}_{X_e,Y}\Psi_{_e}^1\cdot\nabla_{X_w,Y}\Psi_{w}^1\|_{L^{2}(\Omega^\varepsilon)}=o(\varepsilon^{1/2}),\\
\|\varepsilon^2\left(\nabla^{\perp}\Psi_{e}^1\cdot\nabla\right)\Delta\Psi_{w}^1\|_{H^{-2}(\Omega^\varepsilon)}&\leq \|\left(\nabla_{X_e,Y}^{\perp}\Psi_{e}^1\cdot\nabla_{X_w,Y}\right)\Psi_{w}^1\|_{L^{2}(\Omega^\varepsilon)}=o(\varepsilon^{1/2}).
\end{align*}
\paragraph{The eastern boundary layer profiles}
The same analysis must be applied to $\Psi_e^n$, $i \geq 1$. The linear terms involving $\Psi^1_e$ are $o(\varepsilon)$. Indeed, as a consequence of Proposition \ref{p:conv_erg_as} we have
\begin{equation}
\|\varepsilon\partial_t\Delta\Psi^1_e\|_{H^{-2}(\Omega^\varepsilon)}\leq \varepsilon\|\partial_t\Psi^1_e\|_{L^{2}(\Omega^\varepsilon}=o(\varepsilon).
\end{equation}
The estimate for the Laplacian can be computed in the same manner. From the previous paragraph, we know that advection terms containing $\Psi_w^0$ are exponentially small on the eastern domain. It remains to check the terms
\begin{subequations}
\begin{align}
	&\left(\nabla^{\perp}\Psi_{int}^i\cdot\nabla\right)\Delta\Psi_{e}^j, \label{912subeq1}\\
	&\left(\nabla^{\perp}\Psi_{e}^i\cdot\nabla\right)\Delta\Psi_{w}^j, \label{912subeq2}\\
	&\left(\nabla^{\perp}\Psi_{e}^i\cdot\nabla\right)\Delta\Psi_{e}^j. \label{912subeq3}
\end{align}
\end{subequations}

\eqref{912subeq1} satisfies
\begin{equation*}
	\begin{split}
		\norm{\varepsilon\left(\nabla^{\perp}\Psi_{e}^1\cdot\nabla\right)\Delta\Psi_{int}^0}{H^{-2}(\Omega^\varepsilon)}&\leq\varepsilon\norm{\nabla^{\perp}\Psi_{e}^1\Delta\Psi_{int}^0}{H^{-1}(\Omega^\varepsilon)}\\
		&\leq \norm{\Psi^1_e\Delta\Psi^0_{int}}{L^2(\Omega^\varepsilon)}+\norm{\Psi^1_e\Delta\Psi^0_{int}}{L^2(\Omega^\varepsilon)}\\
		&\leq C\norm{\Psi^1_e}{L^2(\Omega^\varepsilon)}=o(1).
	\end{split}
\end{equation*}
Furthermore, applying the same idea to \eqref{912subeq2} yields $\norm{\varepsilon\left(\nabla^{\perp}\Psi_{int}^0\cdot\nabla\right)\Delta\Psi_{e}^1}{H^{-2}(\Omega^\varepsilon)}=o(1)$. Lastly, \eqref{912subeq3} follows
\begin{equation*}
	\begin{split}
		\norm{\varepsilon^2\left(\nabla^{\perp}\Psi_{e}^1\cdot\nabla\right)\Delta\Psi_{e}^1}{H^{-2}(\Omega^\varepsilon)}&\leq C\varepsilon^2\norm{\nabla^{\perp}\Psi_{e}^1\otimes\nabla\Psi_{e}^1}{L^{2}(\Omega^\varepsilon)}\\
		&\leq C\varepsilon^2\|\nabla\Psi^1_e\|_{L^4}^2\leq C\sqrt{\varepsilon}.
	\end{split}
\end{equation*}
We conclude there is no need for correctors in the eastern domain.

\paragraph{Traces at $\Sigma_e$ and $\Sigma_w$}
The traces of $\Psi^n_{\mathrm{exp}}$ at $\Sigma_w$  and of $\Psi^n_{w}$ at $\Sigma_e$ are exponentially small. Indeed, they satisfy
\begin{equation*}
	\norm{\partial^k_X\Psi^n_{\mathrm{exp}}}{L^\infty_t(H^{3/2-k}(\Sigma_e\cup\Sigma_w)}+\norm{\partial^k_X\Psi^n_{w}}{L^\infty_t(H^{3/2-k}(\Sigma_e\cup\Sigma_w)}=O\left(\exp\left(-\frac{\delta}{\varepsilon}\right)\right),\quad \lambda>0.
\end{equation*}
Moreover, the traces of $\Psi^n_{\mathrm{erg}}$ and $\Psi^n_{\mathrm{alg}}$ are included in the jump conditions in \eqref{general_western_problem_1} and therefore, they are lifted by the western profiles.

Note that thanks to hypotheses we made near $y_{min}$ and $y_{max}$, the traces are zero for $y \in [y_{max} - \lambda, y_{max} ] \cup [y_{min}, y_{min} + \lambda]$. We add a corrector to $\psi^\varepsilon$ to lift the jump conditions of the form
\begin{equation}
\phi_1^\varepsilon(t,x,y)=\sum_k\frac{x^k}{k!}\left(\tilde{\sigma}_k^e\theta(x-\chi_w(y))+\tilde{\sigma}_k^w\theta(x-\chi_w(y))\right),
\end{equation}
where $\tilde{\sigma}_k^w$ and $\tilde{\sigma}_e^w$ denote the values of the traces of the western and eastern profiles, respectively. Moreover,  $\theta$ is a function belonging to $C_c^{\infty}([-\delta, \delta])$ for $\delta > 0$ small enough.

For the traces of $\partial_x^k\Psi_{int}^n$, $k=0,\ldots,3$ at $\Sigma_w\cup\Sigma_e$ to be well-defined, we need correctors $\Psi_w^{i}$, $\Psi_e^{i}$, $n\leq i\leq n+3$ satisfying \eqref{general_western_problem_1} and \eqref{general_eastern_problem_1}, respectively. 
    \subsection{Energy estimates}
 
 Let us consider the difference $\psi^\varepsilon= \Psi^\varepsilon-\Psi^{\varepsilon}_{app}$, with $\Psi^{\varepsilon}$ satisfying \eqref{Bresch-GV_3} and $\Psi^{\varepsilon}_{app}$ defined as in Section \ref{construction}. Moreover, let  $r^\varepsilon$ be the error resulting from the difference between the original solution. We have
 \begin{equation}\label{estimate_uniq}
 \begin{array}{rcl}
 \partial_t\Delta\psi^\varepsilon+\nabla^{\perp}\Psi^\varepsilon\cdot\nabla(\Delta\psi^\varepsilon)+\nabla^{\perp}\psi^\varepsilon\cdot\nabla(\Delta\Psi_{app}^\varepsilon)+\varepsilon^{-3}\partial_x\psi^\varepsilon+\Delta\psi^\varepsilon-\Delta^2\psi^\varepsilon&=&r^\varepsilon\:\text{in}\:\:\Omega^\varepsilon\\
 \psi^\varepsilon|_{\partial\Omega}&=&\dfrac{\partial\psi^\varepsilon}{\partial n}\big|_{\partial\Omega}=0\\
 \psi^\varepsilon|_{t=0}&=&(\Psi_{ini}-\Psi_{app}^\varepsilon)\big|_{t=0},\end{array}
 \end{equation}
 where $\|(\Psi_{ini}-\Psi_{app}^\varepsilon)\big|_{t=0}\|_{L^t_{\infty}(H^1(\Omega^\varepsilon))}=O(\varepsilon)$, which results from hypotheses made on $\Psi_{ini}$. Moreover, $\norm{r^{\varepsilon}}{L^{\infty}_t(H^{-2}(\Omega^{\varepsilon}))}=o(1)$. The details on the computation of the remainder have been discussed in detail in Section \ref{construction} . 
It is clear that $\psi^{\varepsilon}$ belongs to $H^{2}(\Omega^{\varepsilon})$. Multiplying the main equation on \eqref{estimate_uniq} and integrating over $\Omega^\varepsilon$ provides the following
\begin{align*}
\int_{\Omega^\varepsilon}\partial_t\Delta\psi^\varepsilon\cdot \psi^\varepsilon&=-\int_{\Omega^\varepsilon}\partial_t\nabla\psi^\varepsilon\cdot\nabla \psi^\varepsilon=-\dfrac{1}{2}\dfrac{d}{dt}\int_{\Omega^\varepsilon}|\nabla\psi^\varepsilon|^2\\
\int_{\Omega^\varepsilon}\nabla^{\perp}\Psi^\varepsilon\cdot\nabla(\Delta\psi^\varepsilon)&\psi^\varepsilon=-\int_{\Omega^\varepsilon}\left((\nabla^{\perp}\Psi^\varepsilon\cdot\nabla)\nabla^{\perp}\psi^\varepsilon\right)\cdot\nabla^{\perp}\psi^\varepsilon=-\dfrac{1}{2}\int_{\Omega^\varepsilon}\nabla^\perp \Psi^\varepsilon\cdot\nabla|\nabla^\perp\psi^\varepsilon|^2=0\\
\int_{\Omega^\varepsilon}(\nabla^\perp\psi^\varepsilon\cdot\nabla) \Delta\Psi^\varepsilon_{app}\psi^\varepsilon&=-\int_{\Omega^\varepsilon}(\nabla^\perp\psi^\varepsilon\cdot\nabla)\nabla^\perp\Psi^\varepsilon_{app}\cdot \nabla^\perp\psi^\varepsilon=\int_{\Omega^\varepsilon}(\nabla^\perp\psi^\varepsilon\cdot\nabla\Psi^\varepsilon_{app})D^2 \psi^\varepsilon\\
\int_{\Omega^\varepsilon}\partial_x\psi^\varepsilon\psi^\varepsilon&=0\\
\int_{\Omega^\varepsilon}\Delta \psi^\varepsilon \psi^\varepsilon&=\int_{\partial\Omega^\varepsilon}\partial_n \psi^\varepsilon \psi^\varepsilon-\int_{\Omega^\varepsilon}|\nabla \psi^\varepsilon|^2=-\int_{\Omega^\varepsilon}|\nabla \psi^\varepsilon|^2\\
-\int_{\Omega^\varepsilon}\Delta^2 \psi^\varepsilon \psi^\varepsilon&=-\int_{\Omega^\varepsilon}|\Delta\psi^\varepsilon|^2.
\end{align*}

We claim that
\begin{align}\label{bef_gron}
\frac{1}{2}\partial_t\norm{\nabla\psi^{\varepsilon}(t,\cdot)}{L^{2}}^{2}+\norm{\nabla\psi^{\varepsilon}(t,\cdot)}{L^{2}}^{2}+\norm{D^2 \psi^{\varepsilon}(t,\cdot)}{L^{2}}^{2}&\leq\left|\int_{\Omega^{\varepsilon}}r^{\varepsilon} \psi^{\varepsilon}(t,\cdot)\right|+\left|\int_{\Omega^{\varepsilon}}(\nabla^\perp\psi^\varepsilon\cdot\nabla\Psi^\varepsilon_{app})D^2 \psi^\varepsilon\right|.
\end{align}
We proceed to analyze each one of the terms on the r.h.s. For the first term we have
\begin{eqnarray}\label{small_1}
\left|\int_{\Omega^{\varepsilon}}r^{\varepsilon} \psi^{\varepsilon}(t,\cdot)\right|&\leq& \norm{D^2 \psi^{\varepsilon}(t,\cdot)}{L^{2}(\Omega^{\varepsilon})}\|r^\varepsilon\|_{H^{-2}}\\
&\leq&\frac{1}{2}\left(\norm{D^2 \psi^{\varepsilon}(t,\cdot)}{L^{2}(\Omega^{\varepsilon})}^2+\|r^\varepsilon\|^2_{H^{-2}}\right)\nonumber
\end{eqnarray}
We are left with bounding the term  $\int_{\Omega^{\varepsilon}}(\nabla^\perp\psi^\varepsilon\cdot\nabla\Psi^\varepsilon_{app})D^2 \psi^\varepsilon$. The difficulty here comes from $\Psi^{\varepsilon}_{app}$ at the boundary layer since
\begin{align*}
\nabla\Psi^{\varepsilon}_{app} &=\nabla\Psi_{int}+\nabla\Psi^\varepsilon_{e}+\nabla\Psi^\varepsilon_{w}.
\end{align*} 

In particular, from $\nabla^\varepsilon\Psi_{w}=O(\varepsilon^{-1})$ in $L^\infty$ since $|\nabla\Psi_{e}^\varepsilon|=\varepsilon\nabla\Psi^1_e+O(\varepsilon)$, where $\nabla\Psi_{e}^1$, similarly to $\varepsilon\nabla\Psi_{int}$, is small and bounded in $L^\infty$. Hence, we focus our attention on 
\begin{equation*}
\left|\int_{\Omega^{\varepsilon}}(\nabla^\perp\psi^\varepsilon\cdot\nabla\Psi^0_w)D^2 \psi^\varepsilon\right|.
\end{equation*}
From Hardy's inequality, we have the following
\begin{equation*}
\left\|\dfrac{\nabla^\perp\psi^\varepsilon(t,\cdot)}{d(\mathbf{x},\Gamma_w^\varepsilon)}\right\|_{L^2}\leq C\|D^2\psi^\varepsilon(t,\cdot)\|_{L^2}.
\end{equation*}

Here,  $d(\mathbf{x},\Gamma_w^\varepsilon)$ denotes the distance from $\mathbf{x}=(x,y)\in\mathbb{R}^2$ to the western rough boundary. Thus,
\begin{equation}\label{small_term}
\left|\int_{\Omega^{\varepsilon}}(\nabla^\perp\psi^\varepsilon\cdot\nabla\Psi^\varepsilon_w)D^2 \psi^\varepsilon\right|\leq C\|d(\mathbf{x},\Gamma_w^\varepsilon)\nabla \Psi^{0}_{w}\|_{L^\infty}\|D^2\psi^\varepsilon(t,\cdot)\|_{L^2}^2.
\end{equation}
Note that $d(\mathbf{x},\Gamma_w)\nabla \Psi^0_{w}\sim \frac{z}{\varepsilon}\exp(-\frac{z}{\varepsilon})$, where $z$ denotes the distance to the boundary, and, therefore, it satisfies
\begin{equation*}
\|d(\mathbf{x},\Gamma_w^\varepsilon)\nabla^\perp \Psi^{0}_w\|_{L^\infty}\leq C_0,
\end{equation*}
where the small constant $C_0$ does not depend on $\varepsilon$. Hence, \eqref{small_term} can be absorbed by the diffusion term on the l.h.s. Plugging \eqref{small_1} and \eqref{small_term} in \eqref{bef_gron} and then applying the Gr\"{o}nwall's inequality complete the proof of Theorem \ref{theorem:convergence}.
    \section{Acknowledgments}
The author gratefully acknowledges the many helpful suggestions of Anne-Laure Dalibard during the preparation of this paper. The author received funding from the European Research Council (ERC) under the European Union’s Horizon 2020 research and innovation program Grant agreement No. 637653, project BLOC ``Mathematical Study of Boundary Layers in Oceanic Motion''. This work is part of the research project SingFlows funded by the French agency ANR under grant ANR-18-CE40-0027. This material is based upon work supported by the National Science Foundation under Grant No. DMS-1928930 while the author participated in a program hosted by the Mathematical Sciences Research Institute in Berkeley, California, during the Spring 2021 semester.
	\appendix
	\section*{Appendix}
	\section{Study of the roots of equation of the linear western boundary layer}\label{s:roots}
This appendix is concerned with proving Lemma \ref{p:roots}, i.e., that equation (\ref{Dalibard2014_eq24})
\begin{equation*}
P(\lambda)=-\lambda-(\lambda^2+(\alpha_w\lambda+i\varepsilon)^2)^2=0,
\end{equation*}
does not have purely imaginary nor multiple roots when $\xi\neq 0$.

The first part can be easily checked by considering $\lambda=iR$,  $R\in\mathbb{R}$ on (\ref{Dalibard2014_eq24}). This yields the following relation
\[
iR=((1+\alpha_w)^{2}R^{2}+2\alpha_w\xi R+\xi^{2})^{2}.
\]
Taking the imaginary part of the previous equality results in $R=0$. 

The second part of the analysis, although strenuous because (\ref{Dalibard2014_eq24}) is a fourth-degree polynomial with complex coefficients, leads us to discard the cases of purely imaginary roots and multiple eigenvalues. We proceed as follows:

\begin{itemize}
	\item First, we prove that for $\alpha_w$ and $\xi$ equal to zero, all the roots of the characteristic equation are simple.
	\item Then, for $\xi\neq 0$,  we show that there are no purely imaginary simple nor double roots. 
	\item The remaining cases are analyzed through equations resulting from the relation between repeated roots of a polynomial and its derivatives. 
\end{itemize}

The main ingredient in the analysis of the multiplicity of the solutions will be the classical lemma:

\begin{lemma}
	Let $\mathbb{K}$ be a commutative ring and $P(x) \in \mathbb{K}[x]$, a nonconstant polynomial of the form $P(x)=\sum_k a_kx^{k}$. Let $a\in\mathbb{K}$ be a multiple root of $P$. Denote by $P'$, the derivative of the polynomial $P$. Then, $P'(a)=0$.
\end{lemma}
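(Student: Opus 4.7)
The plan is to combine the definition of a multiple root with the Leibniz product rule for the formal derivative on $\mathbb{K}[x]$. First I would recall that the formal derivative $P'(x) = \sum_k k a_k x^{k-1}$ is $\mathbb{K}$-linear and satisfies the product rule $(PQ)'=P'Q+PQ'$ for all $P,Q\in\mathbb{K}[x]$. The product rule is verified by a direct computation on monomials (using only the binomial expansion $(x+y)^n = \sum \binom{n}{k} x^k y^{n-k}$, which holds in any commutative ring) and then extended bilinearly. This is the one structural fact about $\mathbb{K}[x]$ that I would need, and it holds without assuming $\mathbb{K}$ is a field or integral domain.

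Next, I would translate the hypothesis ``$a$ is a multiple root'' into the factorization $P(x) = (x-a)^2 Q(x)$ for some $Q\in\mathbb{K}[x]$. Since $(x-a)$ is monic, polynomial long division is available over any commutative ring and gives $P(x) = (x-a) Q_1(x) + P(a)$; using $P(a)=0$ yields $P(x) = (x-a) Q_1(x)$. The definition of a \emph{multiple} root means the multiplicity is at least two, i.e.\ $Q_1(a)=0$, and applying the same division step to $Q_1$ produces $Q_1(x) = (x-a) Q(x)$, hence $P(x) = (x-a)^2 Q(x)$.

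Finally, I would apply the product rule to this factorization:
\begin{equation*}
P'(x) = 2(x-a)\,Q(x) + (x-a)^2\,Q'(x).
\end{equation*}
Both summands carry an explicit factor of $(x-a)$, so evaluation at $x=a$ yields $P'(a)=0$, as claimed.

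The proof is almost a one-liner once the setup is in place, and the only conceptual obstacle is being careful with the category of coefficients: one must avoid any step that tacitly uses a field structure. I would therefore flag the use of (i) the monic division algorithm and (ii) the formal product rule as the two ``commutative-ring-safe'' ingredients that make the argument work in full generality. Everything else reduces to direct substitution.
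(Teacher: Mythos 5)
Your proof is correct. The paper does not actually prove this lemma --- it is invoked as ``the classical lemma'' in Appendix A and used without justification --- so there is no argument to compare against; your factorization $P(x)=(x-a)^2Q(x)$ followed by the formal product rule is the standard proof, and you are right that the only care needed over a general commutative ring is that division by the monic polynomial $x-a$ is still available and that the product rule for the formal derivative is purely algebraic. (The only micro-step left implicit is that $(x-a)^2\mid P$ together with $P=(x-a)Q_1$ forces $Q_1(a)=0$; this uses that a monic polynomial is not a zero divisor in $\mathbb{K}[x]$, which is immediate from comparing leading coefficients.)
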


In this case, $P'(x)$ is a third-degree polynomial, and its algebraic solution can be explicitly derived by using, for example,  Cardano's method. We will prove that the form of the original equation excludes the possibility of $P'(x)$ having real roots. Moreover, a system of equations describing the behavior of the real and imaginary parts of the complex roots will be solved to find the relation $\alpha_w$ and $\xi$ satisfy in the presence of a double root. 

\begin{itemize}
	\item [$\bullet$] We start by proving the result for the cases when  $\alpha_w=0$ or $\xi=0$.
	\begin{itemize}
		\item Let us first consider $\xi=0$. Then, equation (\ref{Dalibard2014_eq24}) becomes
	\begin{equation}
	\lambda+(1+\alpha_w^{2})^{2}\lambda^{4}=0.
	\end{equation}
	The roots of the above equation are 
	\begin{equation*}
		\lambda_k=-\dfrac{1}{(1+\alpha_w^{2})^{2/3}}e^{i\frac{2\pi k}{3}},\medspace\text{for}\medspace k=0,1,2, \qquad\lambda_3=0.
	\end{equation*}
It is evident that these roots are simple for all $\alpha_w\in\mathbb{R}$, and that $\Re(\lambda_0)<0$, $\Re(\lambda_1)=\Re(\lambda_2)<0$.
	\item When $\alpha_w=0$, a double root must satisfy the following system of equations
	\begin{equation}\label{p_34}
	\left\{\begin{array}{rcl}
	\lambda+(\lambda^{2}-\xi^{2})^{2}&=&0\\
	1+4\lambda(\lambda^{2}-\xi^{2})&=&0.
	\end{array}
	\right.
	\end{equation}
	Combining both equations in (\ref{p_34}) gives the cubic polynomial
	\begin{equation}\label{p_35}
	\lambda^{3}+\dfrac{1}{16}=0,
	\end{equation}
	The values satisfying the above equation are of the form
	\[
	\lambda_k=-\frac{1}{2 \sqrt[3]{2}}e^{\frac{2\pi} {3}i k},\medspace\text{for}\medspace k=0,1,2.
	\]
	Since $\xi$ is a real-valued quantity, we have that $\Im(\lambda)=0$ as a result of combining (\ref{p_34}.b) and (\ref{p_35}). The latter does not hold for $\lambda_1$ and $\lambda_2$. Moreover, a quick substitution of $\lambda_0$ in (\ref{p_34}.b) results in a contradiction since $\lambda_0\xi^{2}\leq 0$ for $\xi\in\mathbb{R}$. 
	
	Hence, equation (\ref{Dalibard2014_eq24}) does not have repeated roots when $\alpha_w=0$.
		\end{itemize}
	\item [$\bullet$]  We now analyze the case when $\alpha_w,\xi\in\mathbb{R}^{*}$.
Let us start by rewriting equation (\ref{Dalibard2014_eq24}) as follows
\begin{equation}
\dfrac{\lambda}{(1+\alpha_w^{2})^{2}}+\left(\left(\lambda+\dfrac{\alpha_w i\xi}{1+\alpha_w^{2}}\right)^{2}-\dfrac{\xi^{2}}{(1+\alpha_w^{2})^{2}}\right)^{2}=0.
\end{equation}
We then introduce the variable $\mu=\lambda+\frac{\alpha_w i\xi}{1+\alpha_w^{2}}$. Due to the affine relation between $\mu$ and $\lambda$, we can assert that a repeated root of the problem will satisfy the following system
\begin{equation}\label{sist:30}
\left\{
\begin{array}{rcl}
\dfrac{\mu}{(1+\alpha_w^{2})^{2}}-\dfrac{\alpha_w i\xi}{(1+\alpha_w^{2})^{3}}+\left(\mu^{2}-\dfrac{\xi^{2}}{(1+\alpha_w^{2})^{2}}\right)^{2}=0,\\
\dfrac{1}{4(1+\alpha_w^{2})^{2}}+\mu\left(\mu^{2}-\dfrac{\xi^{2}}{(1+\alpha_w^{2})^{2}}\right)=0.
\end{array}
\right.
\end{equation}
\begin{lemma}\label{lemma_roots}
	Let $\alpha_w,\xi\in\mathbb{R}^{*}$ and $\mu\in \mathbb{C}\setminus\mathbb{R}$ be a solution of (\ref{sist:30}). Then, $|\xi|<\dfrac{\sqrt{3}}{2}(1+\alpha_w^{2})^{1/3}$. Furthermore, setting $a=-\dfrac{\xi^{2}}{(1+\alpha_w^{2})^{2}}$, $ b=\dfrac{1}{4(1+\alpha_w^{2})^{2}}$ and
	 \begin{eqnarray*}
		A&=&\sqrt[3]{-\dfrac{b}{2}+\sqrt{\dfrac{b^{2}}{4}+\dfrac{a^{3}}{27}}},\\
		B&=&\sqrt[3]{-\dfrac{b}{2}-\sqrt{\dfrac{b^{2}}{4}+\dfrac{a^{3}}{27}}},\\
	\end{eqnarray*}
the solution $\mu$ belongs to $\left\{\mu_-,\mu_+\right\}$, for
	\begin{eqnarray*}
		\mu_+&=&-\dfrac{1}{2}(A+B)+\dfrac{\sqrt{3}i}{2}(A-B)\\
		\mu_-&=&-\dfrac{1}{2}(A+B)-\dfrac{\sqrt{3}i}{2}(A-B).
	\end{eqnarray*}
\end{lemma}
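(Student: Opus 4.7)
The key observation is that the second equation of system~\eqref{sist:30} is already a depressed cubic in $\mu$ alone, namely
\[
\mu^{3}+a\mu+b=0,\qquad a=-\frac{\xi^{2}}{(1+\alpha_w^{2})^{2}}\le 0,\quad b=\frac{1}{4(1+\alpha_w^{2})^{2}}>0.
\]
So the entire statement reduces to a careful application of Cardano's formula to this one cubic. The first equation of~\eqref{sist:30} plays no role in identifying the candidate roots; it only serves (together with the multiplicity-of-roots mechanism explained earlier in the appendix) to motivate why we must look at this cubic in the first place.

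\textbf{Step 1: Cardano's formula.} I would first recall that for a depressed cubic $\mu^3+a\mu+b=0$ the Cardano ansatz $\mu=u+v$ together with $3uv=-a$ yields $u^{3},v^{3}$ as the two roots of $z^{2}+bz-a^{3}/27=0$, so $u^{3}=-b/2+\sqrt{b^{2}/4+a^{3}/27}$ and $v^{3}=-b/2-\sqrt{b^{2}/4+a^{3}/27}$. Choosing the principal cube roots $A=u$ and $B=v$ with $AB=-a/3$, the three roots of the cubic are $A+B$, $jA+\bar{j}B$, $\bar{j}A+jB$ where $j=e^{2i\pi/3}$. Expanding, the non-real pair is exactly
\[
\mu_{\pm}=-\tfrac{1}{2}(A+B)\pm\tfrac{\sqrt{3}i}{2}(A-B),
\]
matching the statement.

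\textbf{Step 2: The discriminant computation.} Next I would compute
\[
\frac{b^{2}}{4}+\frac{a^{3}}{27}=\frac{1}{64(1+\alpha_w^{2})^{4}}-\frac{\xi^{6}}{27(1+\alpha_w^{2})^{6}}.
\]
The sign of this quantity is the usual trichotomy for cubics: if it is strictly positive, there is exactly one real root $A+B$ and two complex conjugate non-real roots $\mu_{\pm}$; if it is zero, all roots are real (with a repeated root); if it is strictly negative, all three roots are real (the casus irreducibilis). A direct algebraic manipulation gives
\[
\frac{b^{2}}{4}+\frac{a^{3}}{27}>0\quad\Longleftrightarrow\quad \xi^{6}<\frac{27}{64}(1+\alpha_w^{2})^{2}\quad\Longleftrightarrow\quad |\xi|<\frac{\sqrt{3}}{2}(1+\alpha_w^{2})^{1/3}.
\]

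\textbf{Step 3: Conclusion.} Combining Steps 1 and 2 finishes the proof: the hypothesis $\mu\in\mathbb{C}\setminus\mathbb{R}$ excludes the real root of the cubic, so in particular the cubic must have non-real roots, which forces the discriminant to be strictly positive, hence the inequality $|\xi|<\frac{\sqrt{3}}{2}(1+\alpha_w^{2})^{1/3}$. Under this condition the only non-real roots are $\mu_{\pm}$, so $\mu\in\{\mu_{+},\mu_{-}\}$ as claimed.

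\textbf{Expected difficulty.} There is essentially no conceptual obstacle here; everything is the textbook Cardano analysis of a depressed cubic with real coefficients. The only bookkeeping one has to be careful about is the choice of cube-root branches so that $AB=-a/3$ (which is what makes the displayed formulas for $\mu_{\pm}$ come out as a complex conjugate pair rather than some arbitrary rotation), and the sign/monotonicity step that turns $b^{2}/4+a^{3}/27>0$ into the clean inequality $|\xi|<\frac{\sqrt{3}}{2}(1+\alpha_w^{2})^{1/3}$. Both are short elementary calculations.
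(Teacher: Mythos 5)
Your proposal is correct and follows essentially the same route as the paper: the second equation of (\ref{sist:30}) is treated as a depressed cubic in canonical form, the non-reality of $\mu$ forces $\frac{b^{2}}{4}+\frac{a^{3}}{27}>0$ (equivalently $|\xi|<\frac{\sqrt{3}}{2}(1+\alpha_w^{2})^{1/3}$), and Cardano's formula then identifies the non-real roots as $\mu_{\pm}$. The only difference is cosmetic: the paper uses the imaginary part of the first equation of (\ref{sist:30}) to re-derive that $\mu$ cannot be real, whereas you simply invoke the hypothesis $\mu\in\mathbb{C}\setminus\mathbb{R}$, which suffices for the lemma as stated.
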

\begin{proof}
Note that \eqref{sist:30} is written in canonical form. The sign of the term $\frac{b^{2}}{4}+\frac{a^{3}}{27}$ determines the number of real roots of the cubic polynomial, going from three in the cases when it is non-positive to one, when $\frac{b^{2}}{4}+\frac{a^{3}}{27}$ is greater than zero. Let us prove that if a repeated root $\mu$ exists, it must forcibly have a non zero imaginary part. 

Assuming $\mu\in\mathbb{R}$ and then taking the imaginary part of (\ref{sist:30}a) lead to
\begin{equation}
-\dfrac{\alpha_w\xi}{(1+\alpha_w^{2})^{3}}=0,
\end{equation}
which contradicts the assumption that $\alpha_w$ and $\xi$ are not equal zero. We have thus proved that $\mu$ must be a complex quantity with nonzero imaginary part for the characteristic equation to have a repeated root. Therefore, $\frac{b^{2}}{4}+\frac{a^{3}}{27}$ has to be positive which in turn, implies that $A$ and $B$ must be real quantities and the Fourier variable should satisfy $|\xi|<\frac{\sqrt{3}}{2}(1+\alpha_w^{2})^{1/3}$.
\end{proof}

\begin{lemma}
Let $\mu\in\mathbb{C}\setminus\mathbb{R}$ be a solution of (\ref{sist:30}). Then, $\mu$ is also a root of the quadratic polynomial
\begin{equation}\label{sist:31}
\dfrac{3}{4}\mu^{2}-\dfrac{\alpha_w i\xi}{1+\alpha_w^{2}}\mu+\dfrac{\xi^{2}}{4(1+\alpha_w^{2})^{2}}=0.
\end{equation}
\end{lemma}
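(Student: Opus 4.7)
The plan is to reduce the system $(\ref{sist:30})$ to a quadratic by iterated use of the cubic identity supplied by (\ref{sist:30}.b). Denoting for brevity $c=(1+\alpha_w^2)^{-2}$, $d=\alpha_w i\xi\,(1+\alpha_w^2)^{-3}$, $e=\xi^2(1+\alpha_w^2)^{-2}$ and $f=\tfrac{1}{4}(1+\alpha_w^2)^{-2}$, the system reads
\[
\text{(I)}:\ c\mu-d+(\mu^2-e)^2=0,\qquad \text{(II)}:\ \mu^3-e\mu+f=0.
\]
First I would observe that $\mu\neq 0$, since otherwise (II) would force $f=0$, contradicting $f=\tfrac{1}{4}(1+\alpha_w^2)^{-2}>0$. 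This legitimates every division by $\mu$ in the sequel.

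The next step is to drop the degree of (I). Expanding (I) gives $\mu^4-2e\mu^2+e^2+c\mu-d=0$, while multiplying (II) by $\mu$ produces $\mu^4=e\mu^2-f\mu$. Substituting this expression for $\mu^4$ into the expanded (I) and simplifying yields the intermediate quadratic
\[
e\mu^2+(f-c)\mu+(d-e^2)=0. \qquad (\text{B})
\]

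Now I would multiply (B) by $\mu$ and apply the cubic identity $\mu^3=e\mu-f$ a second time. This gives
\[
e(e\mu-f)+(f-c)\mu^2+(d-e^2)\mu=0,
\]
and the $e^2\mu$ term cancels against the $-e^2\mu$ hidden in the expanded version of (B)$\cdot\mu$, leaving exactly
\[
(f-c)\mu^2+d\mu-ef=0.
\]
Substituting the values $f-c=-\tfrac{3}{4(1+\alpha_w^2)^2}$, $d=\tfrac{\alpha_w i\xi}{(1+\alpha_w^2)^3}$, $ef=\tfrac{\xi^2}{4(1+\alpha_w^2)^4}$ and multiplying through by $-(1+\alpha_w^2)^2$ gives precisely the claimed identity (\ref{sist:31}).

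There is no real obstacle in this argument: it is a purely algebraic manipulation. The only subtlety to flag is the logical order of the two eliminations, namely that one must first kill the quartic term via (II)$\cdot\mu$ to reach (B), and only then multiply by $\mu$ and invoke (II) once more to kill the cubic term that reappears. Using (II) only once (as in substituting $\mu^2-e=-f/\mu$ into (I) and multiplying by $\mu^2$) produces a different quadratic, which is also satisfied by $\mu$ but is not the one stated in (\ref{sist:31}).
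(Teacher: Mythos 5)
Your proof is correct and is essentially the paper's argument: the paper obtains \eqref{sist:31} in one step by forming $\mu\cdot(\ref{sist:30}\text{a})-\bigl(\mu^{2}-\tfrac{\xi^{2}}{(1+\alpha_w^{2})^{2}}\bigr)\cdot(\ref{sist:30}\text{b})$, and your two successive reductions modulo the cubic amount to exactly this same linear combination. (The $\mu\neq 0$ observation is harmless but unnecessary, since you only ever multiply by $\mu$ and never divide.)
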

This equation is easily obtained by first, multiplying  (\ref{sist:30}a) by $\mu$ and  (\ref{sist:30}b) by $\left(\mu^{2}-\dfrac{\xi^{2}}{(1+\alpha_w^{2})^{2}}\right)$ and then, subtracting the resulting equations .

Substituting $\mu_{\pm}=-\dfrac{1}{2}(A+B)\pm\dfrac{\sqrt{3}i}{2}(A-B)$ in (\ref{sist:31}) yields the following system for the real and imaginary parts of the solution

\begin{equation}\label{sist:32}
\left\{\begin{array}{rcl}
\dfrac{3}{16} (A+B)^2-\dfrac{9}{16} (A-B)^2\pm\dfrac{\sqrt{3}}{2} \dfrac{\alpha_w \xi}{1+\alpha_w^{2}} (A-B)+\dfrac{\xi^{2}}{4(1+\alpha_w^{2})^{2}}&=&0\\
\mp\dfrac{3\sqrt{3} }{8} (A+B)(A-B)+\dfrac{\alpha_w\xi}{2(1+\alpha_w^{2})}(A+B) &=&0.
\end{array}\right.
\end{equation}

The root $\lambda$ cannot be purely imaginary, hence, $A+B\neq 0$ and the condition $A-B\neq 0$ is derived from the fact that $\mu$ must be complex. Dividing equation (\ref{sist:32}b) by $A+B$ gives
\begin{equation}
A-B= \pm\dfrac{4}{3\sqrt{3}}\dfrac{\alpha_w}{1+\alpha_w^{2}}\xi.
\end{equation}
 
We check at once that $A+B$ is also proportional to $\xi$. From the relation $(A+B)^{2}=(A-B)^{2}+4AB$ and the fact that $AB=-\frac{a}{3}$, we obtain
\begin{equation}\label{sist:34}
(A+B)^2=\dfrac{4}{3(1+\alpha_w^{2})^2}\left(\frac{4\alpha_w^{2}}{9}+1\right)\xi^2.
\end{equation} 

The term $A+B$ must additionally satisfy equation (\ref{sist:32}a). Taking into account that $\xi\neq 0$, the combination of (\ref{sist:32}a) and (\ref{sist:34}) provides the following condition for $\alpha_w$

\begin{equation}\label{alpha_cond}
\dfrac{8 \alpha_w ^2+9}{18 \left(\alpha_w ^2+1\right)^2}=0.
\end{equation}
The roots of the above equality are complex; hence, there is no real-valued $\alpha_w$ for which \eqref{Dalibard2014_eq24} has multiple roots. The same conclusion can be drawn from solving system (\ref{sist:32}) directly since complex or null $A+B$ terms contradict our previous assumptions. 
\end{itemize}
Finally, we conclude \eqref{Dalibard2014_eq24} has four simple roots $(\lambda^{\pm}_i)_{i=1,2}$ $\forall \xi, \alpha_w\in \mathbb{R}$ in $\mathbb{C}\setminus\mathbb{I}$, satisfying $\Re(\lambda_{i}^+)>0$ and $\Re(\lambda_{i}^-)>0$ if $\xi\neq 0$.
\section{Expansion of the eigenvalues at high frequencies}\label{ss:appendix_B}
This section is devoted to high-frequency expansions of the main functions we work with, namely, $\lambda_k$ and $A_k$.

 In high frequencies, that is, for $|\xi|\gg 1$, by considering $\lambda=\xi\rho$, where $\rho\in\mathbb{C}$, we obtain
 \begin{equation}
 \xi^{-3}\rho+\left((1+\alpha_w^{2})\rho^{2}+2i\alpha_w\rho-1\right)^{2}=0.
 \end{equation}
The above polynomial provides the following approximation of the solutions
\begin{equation}
\rho^{+}=\frac{1-i\alpha_w}{\alpha_w^{2} +1}+O( |\xi|^{-3/2}),\quad\rho^{-}=-\frac{1+i\alpha_w}{\alpha_w^{2} +1}+O( |\xi|^{-3/2}).
\end{equation}
The definition of $\lambda$ exhibits a clear relation between the sign of its real part and the sign of $\rho\xi$. In particular, $\lambda$ has positive real part if and only if $\mathrm{sgn}(\Re(\rho))=\mathrm{sgn}(\xi)$.

Since we have already proved that all roots of (\ref{Dalibard2014_eq24}) are simple, we will provide a second term on the expansion of the solutions to make this assertion evident to the reader. Let $\bar{\rho}$ be a root of the polynomial $\left((1+\alpha_w^{2})\rho^{2}+2i\alpha_w\rho-1\right)^{2}$  and $\rho=\bar{\rho}+\xi^{-\eta}\tilde{\rho}+O(\xi^{-3})$, where $\eta< 3$, we have
\begin{equation}
 \xi^{-3}\bar{\rho}+\left(2(1+\alpha_w^{2})\tilde{\rho}\bar{\rho}\xi^{-\eta}+2i\alpha_w\tilde{\rho}\xi^{-\eta}+O(\xi^{-2\eta})\right)^{2}+O(\xi^{-3-\eta})=0.
\end{equation}
This is the same as
\[
\xi^{-3}\bar{\rho}+4\tilde{\rho}^{2}\xi^{-2\eta}\left((1+\alpha_w^{2})\bar{\rho}+i\alpha_w+O(\xi^{-\eta})\right)^{2}+O(\xi^{-3-\eta})=0,
\]
from which we conclude that $\eta=3/2$ and
\begin{equation}
\tilde{\rho}^{\pm}=\pm\frac{i\sqrt{\bar{\rho }}}{2 \sqrt{(\alpha_w ^2 +1)\bar{\rho }+i \alpha_w }}.
\end{equation}
Consequently, for $j=1,2$,
\begin{eqnarray}
\rho_j^{+}&=&\frac{1-i\alpha_w}{\alpha_w^{2} +1}+(-1)^{j}\frac{i\xi^{-\frac{3}{2}}}{2}\left(\frac{1-i\alpha_w}{\alpha_w^{2} +1}\right)+O( \xi^{-3}),\nonumber\\
\rho_{j}^{-}&=&-\frac{1+i\alpha_w}{\alpha_w^{2} +1}+(-1)^{j}\frac{i\xi^{-\frac{3}{2}}}{2}\sqrt{\frac{1+i\alpha_w}{\alpha_w^{2} +1}}+O( \xi^{-3}).\nonumber
\end{eqnarray}

We now turn towards the expressions of  the $A_k$'s which satisfy the linear system
\[
\begin{pmatrix}
1&1\\
\lambda_1&\lambda_2
\end{pmatrix}\begin{pmatrix}
A_1\\A_2
\end{pmatrix}=\begin{pmatrix}\hat{\psi}^{*}_0\\
\hat{\psi}^{*}_1
\end{pmatrix}.
\]
Thus,
\begin{equation}
\begin{split}
A_1=\frac{\lambda _2\hat{\psi}^{*}_0}{\lambda _2-\lambda _1}-\frac{\hat{\psi}^{*}_1}{\lambda _2-\lambda _1},\quad&A_2=-\frac{\lambda _1\hat{\psi}^{*}_0}{\lambda _2-\lambda _1}+\frac{\hat{\psi}^{*}_1}{\lambda _2-\lambda _1}.
\end{split}
\end{equation}

\underline{\textit{High frequency expansions}}.
At infinity, the sign of real part of the $\lambda$ will depend on the sign of $\xi$, hence, for $\xi \rightarrow +\infty$
\begin{eqnarray}
\lambda_{j}^{+}(\xi)&=&\frac{1-i\alpha_w}{\alpha_w^{2} +1}\xi+(-1)^{j}\frac{i\xi^{-\frac{1}{2}}}{2}\sqrt{\frac{1-i\alpha_w}{\alpha_w^{2} +1}}+O( \xi^{-2}),\nonumber\\
A_{j}^{+}(\xi)&=&(-1)^{j} i\xi^{\frac{3}{2}}\sqrt{\frac{1-i\alpha_w}{\alpha_w^{2} +1}}\psi_{0}^{*}+(-1)^{j-1} i\xi^{\frac{1}{2}}\sqrt{1+i\alpha_w}\psi_{1}^{*}+O( |\psi_{0}^{*}|+|\xi|^{-5/2}|\psi_{1}^{*}|),\nonumber
\end{eqnarray}
and, when $\xi\rightarrow -\infty$
\begin{eqnarray}
\lambda_{j}^{+}(\xi)&=&\frac{1+i\alpha_w}{\alpha_w^{2} +1}|\xi|+(-1)^{j}\frac{|\xi|^{-\frac{1}{2}}}{2}\sqrt{\frac{1+i\alpha_w}{\alpha_w^{2} +1}}+O( \xi^{-2}),\nonumber\\
A_{j}^{+}(\xi)&=&(-1)^{j-1}|\xi|^{\frac{3}{2}}\sqrt{\frac{1+i\alpha_w}{\alpha_w^{2} +1}}\psi_{0}^{*}+(-1)^{j}|\xi|^{\frac{1}{2}}\sqrt{1-i\alpha_w}\psi_{1}^{*}+O( |\psi_{0}^{*}|+|\xi|^{-5/2}|\psi_{1}^{*}|).\nonumber
\end{eqnarray}

\section{Computations of the regularity estimates}\label{a:regularity}
	We now focus our attention on the term  $\int_{\mathbb{R}}|\xi|^{2k}|\widehat{\Psi}_{w}|^{2}d\xi$, $k=0,1,2$. First, we decompose the integral into two pieces, one on $\{|\xi| > \xi_0\}$ and $\{|\xi| \leq \xi_0\}$.

\begin{itemize}
	\item[$\triangleright$] On the set $|\xi|\leq \xi_0$, 
	\begin{eqnarray}
	\int_{|\xi|\leq \xi_0}\int_0^{\infty}|\xi|^{2k}|\widehat{\Psi}_{w}|^{2}d\xi dX&\leq&C\sum_{j=1}^{2}\int_{|\xi|\leq \xi_0}|\xi|^{2k}\dfrac{|A_j(\xi)|^{2}}{2\Re(\lambda_j)}.
	\end{eqnarray}
	
	Using \eqref{inverse_et} and Lemma \ref{Dalibard2014_lemma24}
	\begin{equation*}
	\int_{|\xi|\leq \xi_0}\int_0^{\infty}|\xi|^{2k}|\widehat{\Psi}_{w}|^{2}d\xi dX\leq C\int_{|\xi|\leq \xi_0}\left(|\widehat{\psi}^*_0|^2+|\widehat{\psi}^*_1|^2\right)<+\infty.
	\end{equation*}
	
	\item[$\triangleright$] We now analyze the case when $|\xi|>\xi_0$. We are only illustrating the case when $\xi>0$ since the negative case can be obtained in the same manner.  For $k=0,1,2$, we have
	\begin{equation}\label{asymp_inf}
	|\xi|^{2k}\int_{0}^{\infty}|\widehat{\underline{\Psi}}_w|^{2}dX=|\xi|^{2k}\left(\sum_{1\leq l,m\leq 2}A_l^{+}\bar{A}_m^{+}\dfrac{1}{\lambda_l^{+}+\bar{\lambda}_m^{+}}\right).
	\end{equation}
	From Lemma \ref{Dalibard2014_lemma24}, we have that $\lambda_l^{+}+\bar{\lambda}_m^{+}=a|\xi|+b_{lm}|\xi|^{-1/2}+O(|\xi|^{-2})$, $a=\Re(\zeta^{2})$ and $b_{lm}\in\mathbb{R}\cup\mathbb{R}$, $l,m\in\{1,2\}$. Hence,
	\[
	\frac{1}{\lambda_k+\bar{\lambda}_l}=\frac{1}{a |\xi| }\left(1-\frac{b_{lm} |\xi|^{-3/2}}{a}\right)+O\left(\xi^{-3}\right),\quad k,l=1,2.
	\]
	As a consequence,
	\begin{equation*}
	\begin{array}{r}|\xi|^{2k}\displaystyle\int_{0}^{\infty}|\hat{\underline{\Psi}}_w|^{2}dX=\dfrac{|\xi|^{2k-1}}{a}\left(|A_1^++A_2^+|^{2}-\dfrac{\xi^{-3/2}}{a}\sum_{l,m}b_{lm}A_l^+\bar{A}^+_m\right)+\;O\left((|\widehat{\psi}^{*}_0|+|\widehat{\psi}^{*}_1|)|\xi|^{2k-1}\right).
	\end{array}
	\end{equation*}
	Here, $|A_1^++A_2^+|^{2}=|\hat{\psi}_{0}^{*}|^{2}$ and $\sum_{l,m}b_{lm}A_l^+\bar{A}_m^+=O(|\xi|^{3}|\hat{\psi}_{0}^{*}|^2+|\xi|||\hat{\psi}_{1}^{*}|^2)$. 

Asymptotic expansions in Lemma \ref{Dalibard2014_lemma24} lead to
\begin{equation*}
\lambda_l^{+}+\bar{\lambda}_m^{+}=\left\{\begin{array}{lcl}2\Re{\left(\zeta^{2}\right)}\xi+(-1)^{l} \xi^{-1/2}\Im\left(\zeta\right)+O(\xi^{-2}),&\textrm{for}&l=m,\\
2\Re{\left(\zeta^{2}\right)}\xi+(-1)^{l-1} i\xi^{-1/2}\Re\left(\zeta\right)+O(\xi^{-2}),&\textrm{for}&l\not=m,
\end{array}\right.
\end{equation*}
for $\zeta=\sqrt{\frac{1-i\alpha_w}{\alpha_w^{2} +1}}$, and
\hspace*{-5mm}\begin{equation*}
A_l^{+}\bar{A}_m^{+}=\left\{
\begin{array}{lcl}
\left(\frac{1}{4}+i\Im(\zeta)\xi^{3/2}+|\zeta|^{2}\xi^{3}\right)|\hat{\psi}^{*}_0|^{2}-\frac{\xi}{|\zeta|^{2}}|\hat{\psi}^{*}_1|^{2}+\frac{(-1)^{l-1}}{|\zeta|^{2}}\left(i\Im(\zeta)\xi^{1/2}-2\Re(\zeta^{2})\xi^{2}\right)|\hat{\psi}^{*}_0\hat{\psi}^{*}_1|,\;l =m\\
\left(\frac{1}{4}+i\Im(\zeta)\xi^{3/2}+|\zeta|^{2}\xi^{3}\right)|\hat{\psi}^{*}_0|^{2}-\frac{\xi}{|\zeta|^{2}}|\hat{\psi}^{*}_1|^{2}+\frac{(-1)^{l-1}}{|\zeta|^{2}}\left(i\Re(\zeta)\xi^{1/2}-2\Im
(\zeta^{2})\xi^{2}\right)|\hat{\psi}^{*}_0\hat{\psi}^{*}_1|,\;l\not=m.
\end{array}
\right.
\end{equation*}
These expressions could mistakenly lead to a need for more robust results on regularity at the boundary. We will show that
nontrivial cancellations occur for the usual elliptic regularity result to hold in this case.
All sums of the eigenvalues are of the type $a\xi+b_{lm}\xi^{-1/2}+O(\xi^{-2})$, $a=\Re(\zeta^{2})$ and $b_{lm}\in\mathbb{R}\cup\mathbb{I}$, $l,m\in\{1,2\}$. Thus, 
		\[
		\frac{1}{\lambda_k+\bar{\lambda}_l}=\frac{1}{a \xi }\left(1-\frac{b_{lm} |\xi|^{-3/2}}{a}+\frac{b^2_{lm}\xi^{-3}}{a^2}\right)+O\left(\xi^{-9/2}\right),\quad k,l=1,2,
		\]
		where
		\[
		b_{lm}=\left\{\begin{array}{ll}
		(-1)^{l}\Im(\zeta),&l=m,\\
		(-1)^{l-1}i\Re(\zeta),&l\not=m.
		\end{array}\right.
		\]
Substituting the above expression in (\ref{asymp_inf}) leads to
\begin{equation}\label{regularity_key}
|\xi|^{2k}\int_{0}^{\infty}|\hat{\underline{\Psi}}_w|^{2}dX=\frac{|\xi|^{2k-1}}{a}\left(|A_1+A_2|^{2}-\frac{|\xi|^{-3/2}}{a}\sum_{l,m}b_{lm}A_l\bar{A}_m +\frac{\xi^{-3}}{a^2}\sum_{l,m}b^2_{lm}A_l\bar{A}_m\right)+O\left(\xi^{2k-9/2}\right).
\end{equation}
Moreover,
\begin{eqnarray}\label{blm}
|A_1+A_2|^{2}&=&|\psi_{0}^{*}|^{2},\nonumber\\
\sum_{l,m}b_{lm}A_l\bar{A}_m&=&A_2 \bar{A}_2 \Im(\zeta)+A_1 \bar{A}_1 (-\Im(\zeta))-i A_2 \bar{A}_1 \Re(\zeta)+i A_1 \bar{A}_2 \Re(\zeta)\nonumber\\
&=&2 \sqrt{\xi }\left(2 \xi ^{3/2} \Im(\zeta)-1\right)\psi_{0}^{*} \psi_{1}^{*},\\
\sum_{l,m}b_{lm}^{2}A_l\bar{A}_m&=&\Im(\zeta)^{2}\left(|A_1|^{2}+|A_2|^{2}\right)-\Re(\zeta)^{2}\left(A_1 \bar{A}_2+\bar{A}_1 A_2\right)\nonumber\\
&=&\frac{\Im(\zeta)^2}{2}  \left(\frac{4}{\left|\zeta\right| ^2}\xi  |\psi_{1}^{*}|^2+ \left(4 i \xi ^{3/2} \Im(\zeta)+4 \xi ^3 |\zeta|^{2}+1\right)|\psi_{0}^{*}|^2\right)\\
&&+\frac{2\Re(\zeta)^3 \left(-4 \xi ^{3/2}\Im(\zeta)+i\right)}{\left|\zeta\right| ^2}\sqrt{\xi } \psi_{0}^{*} \psi_{1}^{*}.\nonumber
\end{eqnarray}
Combining the equations (\ref{blm}) and (\ref{regularity_key}) we obtain,
	\[
	|\xi|^{2k}\int_{0}^{\infty}|\hat{\underline{\Psi}}_w|^{2}dX=O\left(|\xi|^{2k-1}|\psi_{0}^{*}|^{2}+|\xi|^{2k-3}|\psi_{1}^{*}|^{2}\right).
	\]
\end{itemize}

To complete the proof of \eqref{relations}, it remains to check the behavior of the derivatives of $\widehat{\Psi}_w$ with respect to $X$ up to the second order. Each derivation adds a factor $(-1)\left(\lambda^{+}_l+\bar{\lambda}^{+}_m\right)$. It is clear the expression is bounded when $|\xi|\leq \xi_0$. Moreover, simple computations show $|\lambda^{+}_l+\bar{\lambda}_m|^{k}=O(|\xi|^{k})$ for $1\leq l,m\leq 2$ and $k=0,1,2$ when $|\xi|>\xi_0$. Note that the term $\int_{\{|\xi|>\xi_0\}\times\mathbb{R}^{+}}|\partial_X^{k}\widehat{\underline{\Psi}}_w|dXd\xi$ will behave asymptotically as $\int_{\{|\xi|>\xi_0\}\times\mathbb{R}^{+}}|\xi|^{2k}|\underline{\widehat{\Psi}}_w|dXd\xi$, and therefore, its boundeness depends on the regularity of the functions $\psi^{*}_0$ and $\psi^{*}_1$.\label{s:appendix}
	\section{Asymptotic behavior of the Green function coefficients}\label{a:Green_asymptotic}

This section is devoted to the proof of Lemma \ref{lemma2} dealing with low and high-frequency expansions of the coefficients of the Green function \eqref{G_def}. The form of the coefficients
\begin{eqnarray*}
B_1^{+}&=&-\frac{1}{\left(\alpha ^2+1\right)^2 \left(\lambda_1^{+}-\lambda_2^{+}\right) \left(\lambda_1^{+}-\lambda_1^-\right) \left(\lambda_1^{+}-\lambda_2^{-}\right)},\nonumber\\
B_2^{+}&=&\frac{1}{\left(\alpha ^2+1\right)^2 \left(\lambda_1^{+}-\lambda _2^{+}\right) \left(\lambda_2^{+}-\lambda_1^{-}\right)\left(\lambda_2^{+}-\lambda_2^{-}\right)},\\
B_1^{-}&=&\frac{1}{\left(\alpha ^2+1\right)^2 \left(\lambda_1^{+}-\lambda_1^{-}\right) \left(\lambda _2^{+}-\lambda_1^{-}\right) \left(\lambda_1^{-}-\lambda_2^{-}\right)},\nonumber\\
B_2^{-}&=&-\frac{1}{\left(\alpha ^2+1\right)^2 \left(\lambda_1^{+}-\lambda_2^{-}\right) \left(\lambda _2^{+}-\lambda _2^{-}\right) \left(\lambda _2^{-}-\lambda_1^{-}\right)}.\nonumber
\end{eqnarray*}
combined with the asymptotic behavior of the eigenvalues stemming from the characteristic equation provide the desired results. Let us now illustrate this for each case.
\begin{itemize}
    \item [] \textit{Low frequencies:} When $|\xi|\ll 1$, we have
   \begin{equation*}
            \lambda_1^-=-|\xi| ^4+O\left(|\xi| ^5\right),\quad\lambda^-_2=-\frac{1}{\left(\alpha ^2+1\right)^{2/3}}+O\left(|\xi|\right),\quad
        \lambda_j^+=\frac{1+(-1)^ji \sqrt{3}}{2 \left(\alpha ^2+1\right)^{2/3}}+O\left(|\xi|\right),\quad j=1,2.
    \end{equation*}
    Consequently,
    \begin{eqnarray*}
B_j^{+}=B_2^{-}=\frac{1}{3}+O(|\xi|),\quad j=1,2,\quad B_1^{-}=1+O(|\xi|).
\end{eqnarray*}
    \item [] \textit{High frequencies:} As $|\xi|\rightarrow +\infty$, the eigenvalues behave for $j=1,2$ as
    \begin{equation*}
            \lambda_j^-=-\frac{i |\xi| }{\alpha +i}+(-1)^j\frac{|\xi|^{-1/2}}{2 \sqrt{1-i \alpha }}+O\left(|\xi|^{-3/2}\right),\quad
        \lambda_j^+=-\frac{i \xi }{\alpha -i}+(-1)^j\frac{\sqrt{\frac{1}{\xi }}}{2 \sqrt{-1-i \alpha }}+O\left(|\xi|^{-3/2}\right).
    \end{equation*}
    
\end{itemize} 

	\section{Proof of Lemma \ref{lemme_tech}}\label{a:lemm_tech}

Note that we have the following Poincaré inequality for $H^2_0(U)$:
$$\|f\|_{H^2_0(U)}\leq C\|D^2f\|^2_{L^2(U)}.$$
The previous result is obtained by chaining the Poincaré inequality for $f$ with the Poincaré inequality for $Df$. We considered the norm
$\|f\|^2_*=\|D^2f\|^2_{L^2(U)}$ on $H^2_0(U)$ which is equivalent to the standard $H^2_0(U)$ norm.

We claim that
$$\|\Delta f\|_{L^2(U)}=\|D^2f\|_{L^2(U)}=\|f\|_*$$
for any $f\in H^2_0(U)$. 

Indeed, let us consider $f\in  C^\infty_0(U)$. Then integration by parts and commutativity of partial derivatives for smooth functions implies $$\int_U f_{x_ix_i}f_{x_jx_j}dx=-\int_U f_{x_i}f_{x_jx_jx_i} dx=-\int_U f_{x_i}f_{x_jx_ix_j} dx=\int_U f_{x_ix_j}f_{x_ix_j} dx,$$
for all $1\leq i, j\leq n$. Summing over all $i$ and $ j$ yields
$$\|\Delta f\|_{L^2(U)}=\|D^2f\|_{L^2(U)},$$
for all $f\in  C^\infty_0(U)$. Since $ C^\infty_0(U)$ is dense in $H^2_0(U)$, passing to limits we find that
$$\|\Delta f\|_{L^2(U)}=\|D^2f\|_{L^2(U)}\quad \text{for all}\quad f\in H^2_0(U).$$
This gives the desired equality of norms. Hence, in $H^2_0(U)$ we have
\begin{equation*}
\|f\|_{H^2_0(U)}\leq C\|\Delta f\|_{L^2(U)}.
\end{equation*}

Let us now show that
\begin{equation}
\|\Delta f\|_{L^2(U)}\leq C\|\Delta_\alpha f\|_{L^2(U)},
\end{equation}
where $C>0$ is a constant depending on $\alpha\in\mathbb{R}$.

As a result of Plancherel theorem, we have that
\begin{equation*}
\|\Delta_\alpha f\|^2_{L^2(U)}=\|\widehat{\Delta_\alpha f}\|^2_{L^2(U)},\quad\text{and}\quad \|\Delta f\|^2_{L^2(U)}=\|\widehat{\Delta f}\|^2_{L^2(U)}.
\end{equation*}
Let $(\xi_1,\xi_2)$ be the Fourier variables associated to $X$ and $Y$, respectively. We have
\begin{equation}
\begin{split}
\|\widehat{\Delta_\alpha f}\|^2_{L^2(U)}&=\int_{U}|\xi_1^2+(\xi_2\pm\alpha\xi_1)^2|^2|\widehat{f}|^2\\&\geq\int_{U}|\xi_1^2+\xi_2^2-\alpha^2\xi_1^2|^2|\widehat{f}|^2\\
&\geq(1-\alpha^2)^2\|\widehat{\Delta f}\|^2_{L^2(U)}
\end{split}
\end{equation} 

Here, we have used that $(a+b)^2\geq a^2-b^2$, for $a,b\in\mathbf{R}$. Therefore, if $\Delta_{\alpha}f\in L^2(U)$, we have that $\Delta f\in L^2(U)$.

For non-homogeneous Dirichlet boundary data, a similar result can be obtained by applying the Poincar\'e--Wirtinger inequality. The additional term in the estimate is introduced to control the boundary terms. The proof is left to the reader.

\begin{remark}
In \cite{Bresch2005}, the authors work with a norm equivalent to $\|\cdot\|_{H^2}$.

Indeed, let $U$ be a bounded subset of $\mathbb{R}^2$. Then, the scalar product
\begin{eqnarray*}
(u,v)_{\alpha}=\int_{U}\langle D^2_\alpha u,D^2_\alpha v\rangle_F,
\end{eqnarray*}
defines a norm on $H^2_0(U)$ for
\begin{equation*}
D^2_\alpha=\begin{pmatrix}
	(1+\alpha^2)\partial_{X}^2&\pm\alpha\partial_{XY}^2\\
	\pm\alpha\partial_{XY}^2&\partial_{Y}^2\\
\end{pmatrix}
\end{equation*}
and the Frobenius inner product\footnote{The Frobenius norm on ${\displaystyle \mathrm {M} _{m,n}(K)}$ is derived from the scalar or standard Hermitian product on this space, namely
\begin{equation*}
	{\displaystyle (A,B)\in \mathrm {M} _{m,n}(K)^{2}\mapsto \langle A,B\rangle =\operatorname {tr} (A^{*}B)=\operatorname {tr} (BA^{*})},
\end{equation*}
	where ${\displaystyle A^{*}}$ denotes the conjugate transpose of  ${\displaystyle A}$.}.

The equivalence is easily obtained since a function $u$ of $H^2_0(U)$ satisfies
\begin{equation*}
\dfrac{1}{1+\alpha^2}\|u\|_{\alpha}\leq\|u\|^2_{H^2_0(U)}\leq\max(1+\alpha^2,2\alpha^2)\|u\|^2_{\alpha}.
\end{equation*}
\end{remark}
	\section{Proof of Lemmas \ref{beh_DtN} and \ref{beh_DtN_e}}\label{a:matrix_t}

\begin{proof}[Proof of Lemma~\ref{beh_DtN}]The Fourier multiplier $M_w=(m_{i,j})_{2\leq i\leq 3,0\leq j\leq 1}\in M_2(\mathbb{C})$ has the components
	\begin{align*}
		m_{2,0}&=-\left(\alpha_w ^2+1\right) \left(\left(\alpha_w ^2+1\right) \lambda _1 \lambda _2+\xi ^2\right),\\
	m_{2,1}&=-\left(\alpha_w ^2+1\right) \left(\left(\alpha_w ^2+1\right) \lambda _1+\left(\alpha_w ^2+1\right) \lambda _2+2 i \alpha_w  \xi \right),\numberthis\label{exp_MGW}\\
		m_{3,0}&= -\frac{1}{2} \left(2 \left(\alpha_w ^2+1\right) \lambda _1 \lambda _2 \left(\left(\alpha_w ^2+1\right) \lambda _1+\left(\alpha_w ^2+1\right) \lambda _2+4 i \alpha_w  \xi \right)+4 i \alpha_w  \xi ^3-1\right),\\
	m_{3,1}&=\left(5 \alpha_w ^2+1\right) \xi ^2\\
	&\quad+\left(\alpha_w ^2+1\right) \left(-\lambda _1 \left(\left(\alpha_w ^2+1\right) \lambda _2+4 i \alpha_w  \xi \right)-\lambda _2 \left(\left(\alpha_w ^2+1\right) \lambda _2+4 i \alpha_w  \xi \right)-\left(\alpha_w ^2+1\right) \lambda _1^2\right).
	\end{align*}
	
 Recall that $\lambda_i$ are the roots of the characteristic equation
 \begin{equation*}
	P(\lambda)=-\lambda-(\lambda^{2}+(\alpha\lambda+i\xi)^{2})^{2} =0,
	\end{equation*}
satisfying $\Re(\lambda_i)>0$. 

Expressions in \eqref{exp_MGW} together with the asymptotic expansions in
	Lemma \ref{Dalibard2014_lemma24} are the core ingredients of the proof. 
	\begin{itemize}
	    \item [$\bullet$] At low frequencies, the eigenvalues are complex conjugate constants depending on the parameter $\alpha_{w}$. In particular, substituting $\lambda_{1}(\xi)=\displaystyle\frac{1- i\sqrt{3}}{2 \left(\alpha_{w} ^2+1\right)^{2/3}}+O\left(|\xi|\right)$, $\lambda_2=\bar{\lambda}_1$ in \eqref{exp_MGW} provides
	\begin{align*}
		m_{2,0}&=-\left(\alpha_{w} ^2+1\right)^2 |\lambda_1|^2+O(|\xi|)=-\left(\alpha_{w} ^2+1\right)^{2/3} |\lambda_1|^2+O(|\xi|),\\
	m_{2,1}&=-2\left(\alpha_{w} ^2+1\right)^2  \Re(\lambda_1)+O(|\xi|)=-\left(\alpha_{w} ^2+1\right)^{4/3} |\lambda_1|^2+O(|\xi|),\\
		m_{3,0}&= -2\left(\alpha_{w} ^2+1\right)^2 |\lambda _1|^2 \Re(\lambda _1)+\frac{1}{2}+O(|\xi|)=-\frac{1}{2}+O(|\xi|),\\
	m_{3,1}&=-\left(\alpha_{w} ^2+1\right)^2 \left(3\Re(\lambda_1)^2-\Im(\lambda_1)^2\right)+O(|\xi|)=O(|\xi|).
	\end{align*}
		To compute a more precise value of $m_{3,1}$, we take into account that $\lambda_{j}=\frac{1+(-1)^ji \sqrt{3}}{2 \left(\alpha ^2+1\right)^{2/3}}+\frac{4 i \alpha |\xi| }{3 \alpha ^2+3}+O\left(|\xi| ^2\right)$ and do not neglect the coefficients of $\xi$.
	\item [$\bullet$] When $\xi\rightarrow+\infty$, the roots behave as $\lambda_{j}^{+}(\xi)=\zeta^2\xi+(-1)^{j}\frac{i\xi^{-\frac{1}{2}}}{2}\zeta+O( |\xi|^{-2})$, where $\zeta=\sqrt{\frac{1+i\alpha_{w}}{\alpha_{w}^{2} +1}}$. This yields the matrix,
	\begin{equation*}
			M_w=\left(
			\begin{array}{cc}
			-2(1+ i \alpha_{w} ) \xi ^2+O\left(\xi^{-1/2}\right)& -2(1+ 2i \alpha_{w}) \left(\alpha_{w} ^2+1\right) \xi +O\left(\xi^{-1/2}\right)\\
			2\left(7+2 i \alpha_w -\dfrac{8(1+i\alpha_w)}{\alpha_w^2 +1}\right)\xi ^3+O\left(1\right)&-2(1-  8 \alpha_w^2 +7i\alpha_w )\xi ^2+O\left(\xi^{1/2}\right)
			\end{array}
			\right).
			\end{equation*}
			Let us illustrate the proof of the above result. We see that
	\begin{align*}
	m_{2,0}&=-\left(\alpha_{w} ^2+1\right) \left(\left(\alpha_{w} ^2+1\right) \zeta^4+1\right)\xi^2+O(\xi^{-1/2})\\
	&=-\left(\alpha_{w} ^2+1\right)\left(\dfrac{1+2i\alpha_{w}-\alpha_w^2}{1+\alpha_{w}^2}+1\right)+O(\xi^{-1/2})=-2 (1+i\alpha_{w})+O(\xi^{-1/2}),\\
	m_{2,1}&=-2\left(\alpha_{w} ^2+1\right)  \left(\left(\alpha_{w} ^2+1\right) \zeta^2+ i \alpha_{w} \right)\xi+O(\xi^{-1/2})=-2(\alpha_{w}^2+1)(1+ 2i \alpha_{w}) +O(\xi^{-1/2}),\\
		m_{3,0}
		&=- 2\left( \left(\alpha_w ^2+1\right)^2 \zeta^6_w+2 \left(\alpha_w ^2+1\right)  i \alpha_w \zeta^4_w+ i \alpha_w\right)\xi^3+O(1)\\
		&=-2\left(\frac{(1+i\alpha_{w})^2}{1+\alpha_w^2}(1+3i\alpha_{w})+i\alpha_{w} \right) \xi ^3+O(\xi^{3/2})=2\left(7+2 i \alpha_w -\dfrac{8(1+i\alpha_w)}{\alpha_w^2 +1}\right)\xi ^3+O\left(1\right),\\
	m_{3,1}&=\left(5 \alpha_w ^2+1-\left( 3\left(\alpha_w ^2+1\right)^2 \zeta_w^4+8 i \alpha_w \left(\alpha_w ^2+1\right)  \zeta^2 \right)\right)\xi^2+O(\xi^{1/2})\\
	&=  \left(5 \alpha_w ^2+1-\left( 3(1+\alpha_{w}i)^2+8 i \alpha_w (1+\alpha_{w}i) \right)\right)\xi ^2+O(\xi^{1/2})\\
	&=-2(1-  8 \alpha_w^2 +7i\alpha_w )\xi ^2+O(\xi^{1/2}).
	\end{align*}
	The matrix values when $\xi\rightarrow-\infty$ can be computed in the same manner.
	\end{itemize}\end{proof}
	
	\begin{proof}[Proof of Lemma~\ref{beh_DtN_e}]
	In the eastern boundary layer domain, the matrix of Fourier multipliers $M_e=(n_{i,j})_{2\leq i\leq 3,0\leq j\leq 1}\in M_2(\mathbb{C})$ is formed by the elements 
\begin{align*}
n_{2,0}&=\left(\alpha_e ^2+1\right) \left(\left(\alpha_e ^2+1\right) \mu _1 \mu _2+\xi ^2\right),\\
n_{2,1}&=\left(\alpha_e ^2+1\right) \left(\left(\alpha_e ^2+1\right) \mu _1+\left(\alpha_e ^2+1\right) \mu _2-2 i \alpha_e  \xi \right),\numberthis\label{exp_MGE}\\
n_{3,0}&=-\frac{1}{2} \left(2 \left(\alpha_e ^2+1\right)^2 \mu _1 \mu _2 \left(\mu _1+\mu _2\right)+4 i \alpha_e  \xi ^3-1\right),\\
n_{3,1}&=-\left(\left(\alpha_e ^2+1\right)^2 \left(\mu _1^2+\mu _2 \mu _1+\mu _2^2\right)+\left(3 \alpha_e ^2-1\right) \xi ^2\right),
\end{align*}
where $\mu_i$, $i=1,2$ are the complex roots of positive real part of the equation
	\begin{equation*}
    P_e(\mu,\xi)=-\mu+(\mu^2+(-\alpha_{e}\mu+i\xi)^2)^2.
\end{equation*}
	\begin{itemize}
	    \item [$\bullet$] When $|\xi|\gg 1$, the eigenvalues behave like (see Lemma \ref{lemma:asymp_behavior}) $$\mu_1=|\xi| ^4+O\left(|\xi| ^5\right),\qquad\mu_2=\displaystyle\frac{1}{\left(\alpha ^2+1\right)^{2/3}}+O\left(|\xi|\right).$$ We obtain immediately that
	\begin{align*}
n_{2,0}&=\left(\alpha_{e} ^2+1\right)\xi ^2+O(\xi^4),\quad n_{2,1}=\left(\alpha_{e} ^2+1\right)^{4/3}+O(\xi),\\
n_{3,0}&=\frac{1}{2}+O(\xi),\quad
n_{3,1}=-\left(\alpha_{e} ^2+1\right)^2\left(\alpha ^2+1\right)^{-4/3}+O(\xi)=-\left(\alpha ^2+1\right)^{2/3}+O(\xi).
\end{align*}
	\item [$\bullet$] Similarly to the proof of Lemma \ref{beh_DtN}, we only discuss the case when $\xi\rightarrow+\infty$, since the behavior at $-\infty$ results from applying the same reasoning. 
	
	The high frequency expansion of the eigenvalues are of the form   $$\mu_{j}(\xi)=\zeta_e^2\xi+\frac{(-1)^{j}}{2}\xi^{-\frac{1}{2}}\zeta_e+O(\xi^{-3/2}),\qquad \zeta_e=\sqrt{\frac{1-i\alpha_{e}}{\alpha_{e}^{2} +1}}.$$ We have
	\begin{equation*}
		M_{e}=\left(
		\begin{array}{cc}
		2(1- i \alpha_e ) \xi ^2+O\left(\xi^{-1/2}\right)& 2 (1-2 i \alpha_e ) \left(\alpha_e ^2+1\right)\xi +O\left(\xi^{-1/2}\right)\\
		2 \left( 3-2 i \alpha_e-\dfrac{4 (1-i \alpha_e )}{\alpha_e ^2+1}\right)\xi ^3+O\left(1\right)&-2 (1-3i\alpha_e)\xi ^2+O\left(\xi^{-1/2}\right)
		\end{array}
		\right).
		\end{equation*}
Indeed, substituting the formulae of the eigenvalues at high frequencies in \eqref{exp_MGE} gives
\begin{align*}
n_{2,0}&=\left(\alpha_{e} ^2+1\right) \left(\left(\alpha_{e} ^2+1\right) \zeta^4_e+1\right)\xi^2+O(\xi^{-1/2})=2(1-i\alpha_e)+O(\xi^{-1/2}),\\
n_{2,1}&=-2\left(\alpha_{e} ^2+1\right) \left(\left(\alpha_{e} ^2+1\right)\zeta^2_e- i \alpha_{e}   \right)\xi+O(\xi^{-1/2})=2 (1-2 i \alpha_e ) \left(\alpha_e ^2+1\right)+O(\xi^{-1/2}),\\
n_{3,0}&=- 2\left( \left(\alpha_e ^2+1\right)^2 \zeta_e^6+ i \alpha_e  \right)\xi ^3+O(\xi^{3/2})=-2\left(\dfrac{2 i \alpha ^3-3 \alpha ^2-2 i \alpha +1}{1+\alpha_e^2}\right)\xi^3+O(1)\\
&=2 \left( 3-2 i \alpha_e-\dfrac{4 (1-i \alpha_e )}{\alpha_e ^2+1}\right)\xi^3+O(1),\\
n_{3,1}&=-\left(3\left(\alpha_e ^2+1\right)^2 \zeta^4+3 \alpha_e ^2-1 \right)\xi ^2+O(\xi^{-1/2})=-\left(3\left(1-i\alpha_e \right)^2 +3 \alpha_e ^2-1 \right)\xi ^2+O(\xi^{-1/2})\\
&=-2(1-3i\alpha_e)\xi ^2+O(\xi^{-1/2}).
\end{align*}
	\end{itemize}\end{proof}
	\hyphenation{aniso-tropic}

\printbibliography
\end{document}